\documentclass[oneside,english]{amsart}
\usepackage[T1]{fontenc}
\usepackage[latin9]{inputenc}
\usepackage{amsbsy}
\usepackage{amssymb}

\makeatletter
\usepackage[latin9]{inputenc}
\usepackage{color}
\usepackage{amsthm}
\usepackage{amssymb}
\usepackage{hyperref}
\makeatletter
\numberwithin{equation}{section}
\numberwithin{figure}{section}


\usepackage{times}
\usepackage{amsfonts}\usepackage{mathabx}\usepackage{amsrefs}\usepackage{mathrsfs}\usepackage{tikz}\usetikzlibrary{decorations,decorations.pathmorphing}

\newtheorem{thm}{Theorem}[section]
\newtheorem{prop}[thm]{Proposition}
\newtheorem{cor}[thm]{Corollary}
\newtheorem{lem}[thm]{Lemma}
\newtheorem{fact}[thm]{Fact}
\theoremstyle{definition}
\newtheorem{rem}[thm]{Remark}
\newtheorem{exa}[thm]{Example}
\newtheorem{ques}[thm]{Question}
\newtheorem{notation}[thm]{Notation}
\newtheorem{theorem}{Theorem}

%

%
\keywords{}
\date{}

\makeatother

\usepackage{babel}
\begin{document}
\title{Growth of periodic Grigorchuk groups }
\author{Anna Erschler and Tianyi Zheng}
\thanks{The work of the authors is supported by the ERC grant GroIsRan. The
first named author also thanks the support of the ANR grant MALIN. }
\begin{abstract}
On torsion Grigorchuk groups we construct random walks of finite entropy
and power-law tail decay with non-trivial Poisson boundary. Such random
walks provide near optimal volume lower estimates for these groups.
In particular, for the first Grigorchuk group $G$ we show that its
growth $v_{G,S}(n)$ satisfies $\lim_{n\to\infty}\log\log v_{G,S}(n)/\log n=\alpha_{0}$,
where $\alpha_{0}=\frac{\log2}{\log\lambda_{0}}\approx0.7674$, $\lambda_{0}$
is the positive root of the polynomial $X^{3}-X^{2}-2X-4$.
\end{abstract}

\maketitle

\section{Introduction}

Let $\Gamma$ be a finitely generated group and $S$ be a finite generating
set of $\Gamma$. The word length $l_{S}(\gamma)$ of an element $\gamma\in\Gamma$
is the smallest integer $n\ge0$ for which there exists $s_{1},\ldots,s_{n}\in S\cup S^{-1}$
such that $\gamma=s_{1}\ldots s_{n}$. The growth function $v_{\Gamma,S}(n)$
counts the number of elements with word length $l_{S}(\gamma)\le n$,
that is 
\[
v_{\Gamma,S}(n)=\#\{\gamma\in\Gamma:\ l_{S}(\gamma)\le n\}.
\]
A finitely generated group $\Gamma$ is of polynomial growth if there
exists $0<D<\infty$ and a constant $C>0$ such that $v_{\Gamma,S}(n)\le Cn^{D}$.
It is of exponential growth if there exists $\lambda>1$ and $c>0$
such that $v_{\Gamma,S}(n)\ge c\lambda^{n}$. If $v_{\Gamma,S}(n)$
is sub-exponential but $\Gamma$ is not of polynomial growth, we say
$\Gamma$ is a group of intermediate growth. Any group of polynomial
growth is virtually nilpotent by Gromov's theorem \cite{gromov}.
For some classes of groups the growth is either polynomial or exponential
(for example, for solvable groups by results of Milnor and Wolf \cite{Milnor}, \cite{Wolf},
and for linear groups as it follows from Tits alternative \cite{Tits}).
The first examples of groups of intermediate growth are constructed
by Grigorchuk in \cite{Grigorchuk84}. 

Grigorchuk groups are indexed by infinite strings $\omega$ in $\{{\bf 0},{\bf 1},{\bf 2}\}^{\infty}$
(whose definitions are recalled in Subsection \ref{subsec:Grigorchuk-def}).
Following \cite{Grigorchuk84}, for any such $\omega$, we associate
automorphisms $a,b_{\omega},c_{\omega},d_{\omega}$ of the rooted
binary tree. Consider the group $G_{\omega}$ generated by $S=\{a,b_{\omega},c_{\omega},d_{\omega}\}$.
For the string $\omega=({\bf 012})^{\infty}$, the corresponding group
is called the \emph{first Grigorchuk group}, which was introduced
in Grigorchuk \cite{Grigorchuk80}. By \cite{Grigorchuk84}, if $\omega$
is eventually constant then $G_{\omega}$ is virtually abelian (hence
of polynomial growth); otherwise $G_{\omega}$ is of intermediate
growth. The group $G_{\omega}$ is periodic (also called torsion)
if and only if $\omega$ contains all three letters ${\bf 0},{\bf 1},{\bf 2}$
infinitely often.

A long-standing open question, since the construction of Grigorchuk
groups of intermediate growth in \cite{Grigorchuk84}, is to find
the asymptotics of their growth functions. Among them the first Grigorchuk
group $G=G_{012}$ has been the most investigated. In Grigorchuk \cite{Grigorchuk84},
it was shown that 
\[
\exp(cn^{1/2})\le v_{G,S}(n)\le\exp\left(Cn^{\beta}\right),
\]
where $\beta=\log_{32}31$. The upper bound above which shows that
$G$ is of sub-exponential growth is based on norm contracting property.
The property considered in \cite{Grigorchuk84} is as follows. There
is an integer $d\ge2$ and an injective homomorphism $\psi=(\psi_{i})_{i=1}^{d}$
from a finite index subgroup $H$ of $G$ into $G^{d}$ such that
for any $h\in H$, with respect to word length $\left|\cdot\right|=l_{S}$
on $G$,
\begin{equation}
\sum_{i=1}^{d}\left|\psi_{i}(h)\right|\le\eta|h|+C\label{eq:contraction}
\end{equation}
for some constant $C>0$ and contraction coefficient $\eta\in(0,1)$.
Later Bartholdi in \cite{Bartholdi98} has shown that there exist
choices of norm that achieve better contraction coefficient than the
word length $l_{S}$, and therefore a better upper bound for growth;
in \cite{MuchnikPak} Muchnik and Pak show similar growth upper bounds
for more general $\omega$ including $({\bf 012})^{\infty}$ and for
further generalizations of $G_{\omega}$.  The upper bound in \cite{Bartholdi98,MuchnikPak}
states that
\[
v_{G,S}(n)\le\exp\left(Cn^{\alpha_{0}}\right),
\]
where $\alpha_{0}=\frac{\log2}{\log\lambda_{0}}\approx0.7674$, where
$\lambda_{0}$ is the positive root of the polynomial $X^{3}-X^{2}-2X-4$. 

The lower bound $e^{n^{1/2}}$ proved by Grigorchuk uses what he calls
anti-contraction properties on $G$. It remained open whether $v_{G,S}$
grows strictly faster than $e^{n^{1/2}}$ until Leonov \cite{Leonov}
introduced an algorithm which led to a better anti-contraction coefficient
and showed that $v_{G,S}(n)\ge\exp\left(cn^{0.504}\right)$. Bartholdi
\cite{Bartholdi01} obtained that $v_{G,S}(n)\ge\exp\left(cn^{0.5157}\right)$.
In the thesis of Brieussel \cite{brieussel}, Corollary 1.4.2 states
that $v_{G,S}(n)\ge\exp\left(cn^{0.5207}\right)$. 

Let $\mu$ be a probability measure on a group $\Gamma$. A function
$f:\Gamma\to\mathbb{R}$ is \emph{$\mu$-harmonic} if $f(x)=\sum_{y\in G}f(xy)\mu(y)$
for all $x\in\Gamma$. Among several definitions of the Poisson boundary
of $(\Gamma,\mu)$, also called the Poisson-Furstenberg boundary,
one is formulated in terms of the measure theoretic boundary that
represents all bounded $\mu$-harmonic functions on $\Gamma$, see
\cite{KV83}. In particular the Poisson boundary of $(\Gamma,\mu)$
is trivial if and only if all bounded $\mu$-harmonic functions on
$\Gamma$ are constant. The (Shannon) entropy of a measure $\mu$
is defined as $H(\mu)=-\sum_{g\in\Gamma}\mu(g)\log\mu(g)$. When $\Gamma$
is finitely generated, the entropy criterion of Kaimanovich-Vershik
\cite{KV83} and Derriennic \cite{derriennic} provides a quantitative
relation between the growth function of $\Gamma$ and the tail decay
of a measure $\mu$ of finite entropy and non-trivial Poisson boundary
on $\Gamma$, see more details reviewed in Subsection \ref{subsec:growthlemma}. 

In this paper we construct probability measures with non-trivial Poisson
boundary on torsion Grigorchuk groups. The measures are chosen to
have finite entropy and explicit control over their tail decay, that
is, with an upper bound on $\mu\left(\left\{ g:l_{S}(g)\ge n\right\} \right).$
The main contribution of the paper is that on a collection of Grigorchuk
groups, including these indexed by periodic strings containing three
letters ${\bf 0},{\bf 1},{\bf 2}$, we construct such measures with
good tail decay that provide near optimal volume lower estimates.
Our main result Theorem \ref{periodic1} specialized to the first
Grigorchuk group $G_{012}$ is the following.

\begin{theorem}\label{bound1st}

Let $\alpha_{0}=\frac{\log2}{\log\lambda_{0}}\approx0.7674$, where
$\lambda_{0}$ is the positive root of the polynomial $X^{3}-X^{2}-2X-4$.
For any $\epsilon>0$, there exists a constant $C_{\epsilon}>0$ and
a non-degenerate symmetric probability measure $\mu$ on $G=G_{012}$
of finite entropy and nontrivial Poisson boundary, where the tail
decay of $\mu$ satisfies that for all $r\ge1$,
\[
\mu\left(\left\{ g:l_{S}(g)\ge r\right\} \right)\le C_{\epsilon}r^{-\alpha_{0}+\epsilon}.
\]
As a consequence, for any $\epsilon>0$, there exists a constant $c_{\epsilon}>0$
such that for all $n\ge1$,
\[
v_{G,S}(n)\ge\exp\left(c_{\epsilon}n^{\alpha_{0}-\epsilon}\right).
\]

\end{theorem}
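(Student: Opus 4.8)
The plan is to obtain this statement from the general construction of the paper (Theorem~\ref{periodic1}) by specializing to the string $\omega=(\mathbf{012})^{\infty}$, for which the relevant parameter is $\lambda_{0}$, the root of $X^{3}-X^{2}-2X-4$; so the real content, and the object I would build first, is a non-degenerate symmetric probability measure $\mu$ on $G=G_{012}$ with $H(\mu)<\infty$, non-trivial Poisson boundary, and tail $\mu(\{g:l_{S}(g)\ge r\})\le C_{\epsilon}r^{-\alpha_{0}+\epsilon}$. Granting such a $\mu$, the growth lower bound is the soft part and is exactly the entropy argument of Kaimanovich--Vershik and Derriennic recalled in Subsection~\ref{subsec:growthlemma}. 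Non-triviality of the Poisson boundary gives $h(\mu):=\lim_{n}H(\mu^{\ast n})/n>0$; writing $X_{n}$ for the $\mu$-random walk and conditioning on the value of $l_{S}(X_{n})$ one has
\[
n\,h(\mu)\le H(\mu^{\ast n})=H(X_{n})\le H\big(l_{S}(X_{n})\big)+\mathbb{E}\big[\log v_{G,S}(l_{S}(X_{n}))\big],
\]
since given $l_{S}(X_{n})=\ell$ the element $X_{n}$ takes at most $v_{G,S}(\ell)$ values. A symmetric walk with step tail of order $r^{-(\alpha_{0}-\epsilon)}$ has $\mathbb{E}[l_{S}(X_{n})^{\beta}]$ of order $n^{\beta/(\alpha_{0}-\epsilon)}$ for every $\beta<\alpha_{0}-\epsilon$, while $H(l_{S}(X_{n}))$ is of order $\log n$; hence a hypothetical bound $v_{G,S}(n)\le\exp(Cn^{\beta})$ with $\beta<\alpha_{0}-\epsilon$ would make the right-hand side $o(n)$ and contradict $h(\mu)>0$. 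This is the form in which the growth lemma of Subsection~\ref{subsec:growthlemma} yields the conclusion $v_{G,S}(n)\ge\exp(c_{\epsilon}n^{\alpha_{0}-\epsilon})$ for all $n\ge1$, so everything reduces to constructing $\mu$.

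To construct $\mu$ I would exploit the contracting self-similar structure~\eqref{eq:contraction}: a finite-index subgroup $H\le G$ embeds via $\psi=(\psi_{1},\psi_{2})$ into $G\times G$ with $|\psi_{1}(h)|+|\psi_{2}(h)|\le\eta|h|+C$. Iterating this along the period-$3$ string $(\mathbf{012})^{\infty}$ produces, for every $k\ge1$, an injective homomorphism of a finite-index subgroup of $G$ into a product of $8^{k}$ copies of $G$ indexed by the level-$3k$ vertices of the rooted binary tree, whose coordinate maps expand word length by a factor comparable to $\rho^{k}$ for a fixed $\rho>1$; as in \cite{Bartholdi98,MuchnikPak}, once the norm is optimized the balance between this expansion and the branching number $8^{k}$ is governed precisely by the polynomial $X^{3}-X^{2}-2X-4$, so that the best tail exponent available in this scheme is $\alpha_{0}=\log2/\log\lambda_{0}$. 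I would then set $\mu=\sum_{k\ge0}q_{k}\nu_{k}$, where $\nu_{0}$ is a fixed symmetric generating measure on $S\cup S^{-1}$ and, for $k\ge1$, $\nu_{k}$ is a fixed finite symmetric probability measure supported on elements of word length $\asymp\rho^{k}$ that act inside a single depth-$3k$ subtree --- obtained by pulling a fixed finite symmetric measure back through the $k$-fold self-similarity map into one coordinate and then symmetrizing --- and where the weights $q_{k}$ decay geometrically with the ratio chosen so that $\sum_{k:\,\rho^{k}\ge r}q_{k}\asymp r^{-\alpha_{0}}$. The passage from $r^{-\alpha_{0}}$ to the stated $r^{-\alpha_{0}+\epsilon}$ absorbs the additive constant in~\eqref{eq:contraction} and the non-tightness of the contraction; $\mu$ is symmetric and non-degenerate by construction (through $\nu_{0}$); and $H(\mu)\le H\big((q_{k})_{k\ge0}\big)+\sum_{k}q_{k}H(\nu_{k})<\infty$ because $H(\nu_{k})$ grows at most polynomially in $k$ while $q_{k}$ decays geometrically.

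The hard part --- the step I expect to be the main obstacle --- is proving that this $\mu$ has non-trivial Poisson boundary. The heuristic I would try to make rigorous is that, since each $q_{k}>0$, the trajectory $(X_{n})$ performs for every scale $k$ infinitely many $\nu_{k}$-type steps, each acting deep inside a depth-$3k$ subtree; decomposing the walk through the iterated self-similarity maps presents it as an infinite branching family of coupled $\mu$-type walks on $G$ living in the subtrees, and one wants to show that the configuration the walk records in the tree stabilizes and retains a definite amount of entropy at every scale --- the deep moves, being summably rare inside any fixed subtree, are eventually never cancelled, exactly as the lamp configuration freezes for a transient lamplighter walk, while the quantitative scale separation $\rho>1$ coming from~\eqref{eq:contraction} is what keeps the scale-$k$ contributions to the entropy from collapsing as $k\to\infty$. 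Turning this picture into an honest non-constant bounded $\mu$-harmonic function --- equivalently, into a linear-in-$n$ lower bound for $H(\mu^{\ast n})$ --- is the technical core of Theorem~\ref{periodic1}; once it is in place, the specialization to $\omega=(\mathbf{012})^{\infty}$ needs only the explicit value of $\rho$ and the root $\lambda_{0}$ of $X^{3}-X^{2}-2X-4$, which then also fixes the exponent $\alpha_{0}$.
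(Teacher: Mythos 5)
Your reduction of the growth bound to ``tail decay plus non-trivial Poisson boundary'' is the same soft step the paper takes (Lemma \ref{moment_growth} and Corollary \ref{tailgrowth}), and your identification of $\alpha_{0}$ with spectral data of the self-similar structure is right in spirit. But the proposal has a genuine gap exactly where you flag it: non-triviality of the Poisson boundary is the entire content of the theorem, and the heuristic you offer does not engage with the actual mechanism. Since $G_{012}$ is torsion, there is no infinite-order element to reduce to, and the paper's criterion (Proposition \ref{transience}) is that the $\left\langle b\right\rangle$-coset of the germ of $W_{n}$ at $o=1^{\infty}$ stabilizes almost surely; this requires not merely that ``deep moves are rare'' but the finiteness of the weighted sum $\sum_{x}\mathbf{G}_{P_{\mu}}(o,x)\,\mu\left(\left\{ g:(g,x)\notin\mathcal{H}^{b}\right\} \right)$, i.e.\ simultaneous control of the Green function of the induced walk on the Schreier graph and of where the non-$\left\langle b\right\rangle$ germs of the step distribution sit. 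Nothing in your sketch produces either estimate.

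Moreover, the measure you propose cannot work quantitatively. Single elements pulled back into one depth-$3k$ coordinate lie in rigid stabilizers, hence fix every orbit point outside one subtree; a convex combination of such measures does not make the induced walk on the half-line Schreier graph transient with tail anywhere near $r^{-\alpha_{0}}$. The paper's mechanism is the cube independence property (Proposition \ref{indep_tran}): the $n$-th building block is a set $F_{n}$ of $2^{n}$ products $g_{n}^{\varepsilon_{n}}\cdots g_{1}^{\varepsilon_{1}}$ with $g_{j}=\zeta^{j}(ac)$ or $\zeta^{j}(ad)$, of length $\le C\lambda_{0}^{n}$, mapping every orbit point injectively to $2^{n}$ images; this yields the isoperimetric inequality forcing transience and is what makes the exponent $\log2/\log\lambda_{0}$ appear. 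Even granting transience, one cannot restrict to elements with only $\left\langle b\right\rangle$-germs: the paper shows at the end of Section \ref{sec: growth-first} that such measures are capped at tail exponent $\approx0.57<\alpha_{0}$. The elements $\zeta^{n}(ac)$ with $3\mid n$ carry $c$-germs at $2^{n}$ Schreier-graph points, and the decisive steps of the paper --- conjugating them inside rigid stabilizers to push those bad locations to distance $\asymp2^{n+k_{n}}$ from $o$, and randomizing over $2^{k_{n}/3}$ such conjugates so that Davies-method heat kernel bounds (Propositions \ref{green} and \ref{zetasum}) make the weighted Green function sum converge --- are absent from your argument. Without them no stabilization, hence no non-trivial boundary, is obtained.
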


We construct measures of non-trivial Poisson boundary needed for the
proof of Theorem \ref{bound1st} by introducing and studying some
asymptotic invariants of the group, related to the diameter and shape
of what we call \textquotedbl quasi-cubic sets\textquotedbl{} and
\textquotedbl uniformised quasi-cubic sets\textquotedbl{} (see Subsection
\ref{subsec:sketch} where we sketch the proof). The main problem
we address is how to use the geometric and algebraic structure of
the groups to construct measures with good control over the tail decay
such that careful analysis of the induced random walks on the Schreier
graph can be performed to show non-triviality of the Poisson boundary. 

The volume lower bound in Theorem \ref{bound1st} matches up in exponent
with Bartholdi's upper bound in \cite{Bartholdi98}. In particular,
combined with the upper bound we conclude that the volume exponent
of the first Grigorchuk group $G$ exists and is equal to $\alpha_{0}$,
that is 
\[
\lim_{n\to\infty}\frac{\log\log v_{G,S}(n)}{\log n}=\alpha_{0}.
\]
It was open whether the limit exists. Our argument applies to a large
class of $\omega$ satisfying certain assumptions, see Theorem \ref{periodic1}.
In particular the assumption of the theorem is verified by all periodic
strings containing $3$ letters ${\bf 0},{\bf 1},{\bf 2}$, or strings
that are products of ${\bf 201}$ and ${\bf 211}.$ For general $\omega$,
volume upper bounds on $G_{\omega}$ are based on the norm contracting
property (\ref{eq:contraction}) first shown in Grigorchuk \cite{Grigorchuk84},
and suitable choices of norms are given in the work of Muchnik and
Pak \cite{MuchnikPak}, Bartholdi and the first named author \cite{BE2}.
Combining with upper bounds from \cite{BE2}, we derive the following
two corollaries from Theorem \ref{periodic1}. 

For a periodic string $\omega$ that contains all three letters ${\bf 0},{\bf 1},{\bf 2}$,
we show that the volume exponent of $G_{\omega}$ exists. Let $M_{i}$,
$i\in\{{\bf 0},{\bf 1},{\bf 2}\}$, be matrices defined as

\[
M_{{\bf 0}}=\left[\begin{array}{ccc}
2 & 0 & 1\\
0 & 2 & 1\\
0 & 0 & 1
\end{array}\right],M_{{\bf 1}}=\left[\begin{array}{ccc}
2 & 1 & 0\\
0 & 1 & 0\\
0 & 1 & 2
\end{array}\right],M_{{\bf 2}}=\left[\begin{array}{ccc}
1 & 0 & 0\\
1 & 2 & 0\\
1 & 0 & 2
\end{array}\right].
\]

\begin{theorem}\label{periodic}

Let $\omega\in\{{\bf 0},{\bf 1},{\bf 2}\}^{\infty}$ be a periodic
string $\omega=(\omega_{0}\ldots\omega_{q-1})^{\infty}$ of period
$q$ which contains all three letters ${\bf 0},{\bf 1},{\bf 2}$.
Then the growth exponent of the Grigorchuk group $G_{\omega}$ exists
and 
\[
\lim_{n\to\infty}\frac{\log\log v_{G_{\omega},S}(n)}{\log n}=\alpha_{\omega},
\]
where $\alpha_{\omega}=\frac{q\log2}{\log\lambda}$ and $\lambda=\lambda_{\omega_{0}\ldots\omega_{q-1}}$
is the spectral radius of the matrix $M_{\omega_{0}}\ldots M_{\omega_{q-1}}$. 

\end{theorem}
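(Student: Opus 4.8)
The plan is to obtain matching upper and lower bounds for $\log\log v_{G_\omega,S}(n)$ whose common value is $\alpha_\omega\log n + o(\log n)$, and then read off the limit. The lower bound is the new input of this paper: it should follow from the general result Theorem \ref{periodic1} once one checks that a periodic string containing all three letters $\mathbf{0},\mathbf{1},\mathbf{2}$ satisfies the assumptions stated there. As noted in the excerpt, this verification is claimed to be routine (``the assumption of the theorem is verified by all periodic strings containing $3$ letters''), so the bulk of the argument here is the identification of the exponent $\alpha_\omega$ produced by Theorem \ref{periodic1} with $\tfrac{q\log 2}{\log\lambda}$, where $\lambda$ is the spectral radius of $M_{\omega_0}\cdots M_{\omega_{q-1}}$. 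For the upper bound one invokes the norm-contraction property \eqref{eq:contraction} in the sharpened form due to Muchnik--Pak and to Bartholdi and the first named author \cite{BE2}: for periodic $\omega$ one chooses, at each level of the tree, a weight vector adapted to the letter $\omega_j$, the three possible transitions being encoded precisely by the matrices $M_{\mathbf{0}},M_{\mathbf{1}},M_{\mathbf{2}}$ acting on weight vectors in $\mathbb{R}^3$ (the three coordinates recording the contributions of the generators of type $b,c,d$ versus $a$). Iterating one period multiplies weights by $M_{\omega_0}\cdots M_{\omega_{q-1}}$, so after passing to the $q$-fold decomposition $\psi^{(q)}\colon H_q\to G_\omega^{2^q}$ the effective contraction coefficient is governed by $\lambda^{-1}$, giving $v_{G_\omega,S}(n)\le\exp(C n^{\beta})$ with $\beta=\tfrac{q\log 2}{\log\lambda}=\alpha_\omega$.

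More precisely, for the upper bound I would fix a norm $|\cdot|_\ast$ on $G_\omega$, built from level-dependent generator weights, such that the $q$-step section map satisfies $\sum_{i}|\psi^{(q)}_i(h)|_\ast\le \eta_q|h|_\ast + C$ with $\eta_q$ essentially $\lambda/2^q<1$ (after rescaling so that the combinatorics of $a$-syllables and $\{b,c,d\}$-syllables across one period is exactly recorded by the product matrix); the Grigorchuk--type counting argument (Grigorchuk \cite{Grigorchuk84}, see also \cite{Bartholdi98,MuchnikPak}) then yields $\log v_{G_\omega,S}(n)\lesssim n^{\log 2^q/\log(2^q/\eta_q)}$, and $\log(2^q/\eta_q)=\log\lambda$ up to a bounded additive error that does not affect the exponent. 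Dividing by $q$ to return from the $q$-fold to the $1$-fold decomposition does not change the exponent. Thus $\limsup_n \log\log v_{G_\omega,S}(n)/\log n\le\alpha_\omega$.

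For the lower bound, Theorem \ref{periodic1} (applied to the present periodic $\omega$, whose hypotheses hold since every periodic string with all three letters has the required ``quasi-cubic set'' diameter growth behaviour) produces, for every $\epsilon>0$, a finite-entropy symmetric measure $\mu_\epsilon$ on $G_\omega$ with nontrivial Poisson boundary and tail bound $\mu_\epsilon(\{g:l_S(g)\ge r\})\le C_\epsilon r^{-\alpha_\omega+\epsilon}$; the entropy criterion of Kaimanovich--Vershik \cite{KV83} and Derriennic \cite{derriennic}, in the form reviewed in Subsection \ref{subsec:growthlemma}, then converts positive entropy plus this tail decay into $v_{G_\omega,S}(n)\ge\exp(c_\epsilon n^{\alpha_\omega-\epsilon})$, so $\liminf_n \log\log v_{G_\omega,S}(n)/\log n\ge\alpha_\omega$. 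Combining the two inequalities gives the stated limit. The main obstacle is the bookkeeping that identifies the abstract exponent coming out of Theorem \ref{periodic1} and the abstract contraction exponent coming out of the norm-contraction machinery as one and the same quantity $\tfrac{q\log 2}{\log\lambda}$ attached to the explicit matrices $M_{\mathbf{0}},M_{\mathbf{1}},M_{\mathbf{2}}$; once the three elementary transition matrices are correctly matched to how one period of $\omega$ transforms generator weights under the section map, the Perron--Frobenius eigenvalue $\lambda$ of their product controls both bounds simultaneously, and everything else is the standard passage between contraction/tail exponents and growth via the counting and entropy arguments already in the literature.
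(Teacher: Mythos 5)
Your overall strategy coincides with the paper's: lower bound from Theorem \ref{periodic1}, upper bound from the known contraction-type results, with the two exponents matched through the product $M_{\omega_{0}}\cdots M_{\omega_{q-1}}$. On the lower bound you do exactly what the paper does, with two small remarks: the hypothesis to verify is not a ``quasi-cubic diameter growth'' condition but the combinatorial Assumption $({\rm Fr}(D))$ --- after a renaming of the letters, every window of length $D=2q$ of a periodic string containing all three letters contains ${\bf 201}$ or ${\bf 211}$ (a short argument: such a string has a position $i$ with $\omega_{i+1}\neq\omega_{i}$ and $\omega_{i+2}\neq\omega_{i}$, and one renames so that $\omega_{i}\mapsto{\bf 2}$, $\omega_{i+2}\mapsto{\bf 1}$) --- and the identification of the exponent is exactly the estimate $L_{n}^{\omega}\le3\lambda^{n/q}$ fed into Theorem \ref{periodic1} together with Corollary \ref{tailgrowth}, which you correctly flag as bookkeeping. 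Where you genuinely diverge is the upper bound: the paper constructs no weights at all, but quotes \cite[Proposition 4.4]{BE2}, which gives $v_{W_{\omega},T}(n)\simeq\exp\left(n^{q\log2/\log\lambda}\right)$ for the permutational wreath extension $W_{\omega}=(\mathbb{Z}/2\mathbb{Z})\wr_{\mathcal{S}}G_{\omega}$, and then uses the monotonicity $v_{G_{\omega},S}\le v_{W_{\omega},T}$. Your route --- a level-periodic weighted norm with per-period contraction coefficient $\eta_{q}\approx2^{q}/\lambda$ plus Grigorchuk's counting --- is the substance behind that citation, but as written it has a gap if meant to be self-contained: the matrices $M_{{\bf 0}},M_{{\bf 1}},M_{{\bf 2}}$ of this paper are the expansion matrices of the substitutions $\zeta_{i}$, i.e.\ they govern the word lengths $L_{n}^{\omega}$ used on the lower-bound side, and they do not ``precisely encode'' the action of the section map on generator weights; the weight/contraction system is a dual (adjoint-type) one, and proving that admissible weights achieve contraction $2^{q}/\lambda$ per period --- equivalently, that the contraction exponent meets the Perron root of $M_{\omega_{0}}\cdots M_{\omega_{q-1}}$ --- is precisely the nontrivial content of Bartholdi, Muchnik--Pak and \cite{BE2}. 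If you intend to cite those works for this step (as your wording suggests), your argument is correct and essentially equivalent to the paper's; if you intend to rederive it, the existence of such weights must actually be established, and the paper's use of the packaged wreath-product growth estimate is the cleaner way to avoid that work.
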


By \cite[Theorem 7.2]{Grigorchuk84} the family $\{G_{\omega}\}$
provides a continuum of mutually non-equivalent growth functions.
When $\omega$ is not periodic, Grigorchuk shows in \cite{Grigorchuk84}
that for some choices of $\omega$, the growth of $G_{\omega}$ exhibits
oscillating behavior: $\lim_{n\to\infty}\log\log v_{G_{\omega}}(n)/\log n$
does not exist. Together with upper bounds from \cite{BE2}, our lower
bounds show that given numbers $\alpha\le\beta$ in $[\alpha_{0},1]$,
there exists a $G_{\omega}$ which realizes these prescribed numbers
as lower and upper volume exponents.

\begin{theorem}\label{liminf}

For any $\alpha,\beta$ such that $\alpha_{0}\le\alpha\le\beta\le1$,
where $\alpha_{0}\approx0.7674$ is the volume exponent of $G_{012}$,
there exists $\omega\in\{{\bf 0},{\bf 1},{\bf 2}\}^{\infty}$ such
that the Grigorchuk group $G_{\omega}$ satisfies
\begin{align*}
\liminf_{n\to\infty}\frac{\log\log v_{G_{\omega},S}(n)}{\log n} & =\alpha,\\
\limsup_{n\to\infty}\frac{\log\log v_{G_{\omega},S}(n)}{\log n} & =\beta.
\end{align*}

\end{theorem}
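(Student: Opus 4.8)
The plan is to realise $\omega$ as an infinite concatenation $\omega=w_1^{K_1}w_2^{K_2}w_3^{K_3}\cdots$ of very large powers of periodic patterns $w_m$, chosen so that the growth of $G_\omega$ oscillates between the growth rates attached to the $w_m$. The starting point is the mechanism behind Theorems \ref{periodic1} and \ref{periodic}, combined with the upper bounds of \cite{BE2}: for the class of strings to which these apply, the growth of $G_\omega$ at scale $n$ is controlled, up to $\epsilon$ in the doubly-logarithmic exponent, by the \emph{local} quantity
\[
R_\omega(k)\;:=\;\frac{k\log 2}{\log\bigl\|M_{\omega_0}M_{\omega_1}\cdots M_{\omega_{k-1}}\bigr\|},
\]
in the sense that $\log\log v_{G_\omega,S}(n)/\log n=R_\omega(k(n))+o(1)$ as $n\to\infty$, where $k(n)$ is the largest $k$ with $\|M_{\omega_0}\cdots M_{\omega_{k-1}}\|\le n$. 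Since consecutive norms differ by the bounded factor $\|M_{\omega_k}\|$, this identifies $\liminf_n$ and $\limsup_n$ of $\log\log v_{G_\omega,S}(n)/\log n$ with $\liminf_k R_\omega(k)$ and $\limsup_k R_\omega(k)$; and for periodic $\omega=(\omega_0\ldots\omega_{q-1})^\infty$ one has $R_\omega(k)\to\alpha_\omega=q\log 2/\log\lambda_{\omega_0\ldots\omega_{q-1}}$. So we need, besides this localisation, that the set of values $\alpha_{w^\infty}$ over finite words $w$ containing all three letters is dense in $[\alpha_0,1]$: the value $\alpha_0$ is attained by $w=\mathbf{012}$; since $M_{\mathbf 0}M_{\mathbf 1}$ has eigenvalues $4,4,1$, the words $(\mathbf{01})^j\mathbf 2$ satisfy $\alpha_{((\mathbf{01})^j\mathbf 2)^\infty}\to 2\log 2/\log 4=1$; and the intermediate values are filled in by interpolation, using that for the (eventually positive) products $M_A,M_B$ of the $M_i$ one has $\log\|M_A^{a}M_B^{b}\|=a\log\lambda_A+b\log\lambda_B+O(1)$, so with $A=\mathbf{012}$, $B=(\mathbf{01})^j\mathbf 2$ and $a|A|:b|B|\to t:(1-t)$ the exponents $\alpha_{(A^{a}B^{b})^\infty}$ accumulate on all of $\log 2/\bigl(t\tfrac{\log\lambda_A}{|A|}+(1-t)\tfrac{\log\lambda_B}{|B|}\bigr)$, $t\in[0,1]$, $j\to\infty$.

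Fix now $\alpha_0\le\alpha\le\beta\le 1$. Using this density, choose finite words $w_m$ containing all three letters with $\alpha_{w_m^\infty}\in[\alpha,\beta]$ for every $m$, with $\alpha_{w_{2l-1}^\infty}\to\alpha$ and $\alpha_{w_{2l}^\infty}\to\beta$ as $l\to\infty$ (if $\alpha=\beta$, let both subsequences tend to $\alpha$). Set $\omega=w_1^{K_1}w_2^{K_2}\cdots$ and $\ell_m:=\sum_{i<m}K_i|w_i|$; the exponents $K_m$ are chosen recursively, $K_m$ depending on $\ell_m$ and on $D_\omega(\ell_m):=\log\|M_{\omega_0}\cdots M_{\omega_{\ell_m-1}}\|$, so large that $R_\omega(\ell_{m+1}-1)$ lies within $1/m$ of $\alpha_{w_m^\infty}$. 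The analysis of $R_\omega$ uses two facts about $D_\omega$. First, by Perron--Frobenius for the eventually positive products of the $M_i$, $D_\omega(k)=\sum_{j<k}s_{\omega,j}+O(m_k)$, where $s_{\omega,j}:=\log\lambda_{w_m}/|w_m|$ when position $j$ lies in the $m$-th block and $m_k$ is the block index of position $k$; since $s_{\omega,j}\in[\log 2/\beta,\log 2/\alpha]$ and $m_k=o(k)$ by the rapid growth of the $K_m$, this gives $R_\omega(k)=k\log 2/D_\omega(k)\in[\alpha-o(1),\beta+o(1)]$, so $\liminf_k R_\omega(k)\ge\alpha$ and $\limsup_k R_\omega(k)\le\beta$. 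Second, on each block $[\ell_m,\ell_{m+1})$ the function $D_\omega$ is affine with slope $\log\lambda_{w_m}/|w_m|$, hence $R_\omega$ is monotone there, lies between its value $R_\omega(\ell_m)$ at the start of the block and $\alpha_{w_m^\infty}$, and tends to $\alpha_{w_m^\infty}$ as the block is lengthened --- which is exactly what makes the recursive choice of $K_m$ possible, and shows that along the ends of the long odd blocks $R_\omega$ approaches the values $\alpha_{w_{2l-1}^\infty}\to\alpha$ and along the ends of the long even blocks it approaches $\alpha_{w_{2l}^\infty}\to\beta$. Together with the first fact this yields $\liminf_k R_\omega(k)=\alpha$ and $\limsup_k R_\omega(k)=\beta$, which by the first paragraph is exactly the conclusion for $G_\omega$ (and when $\alpha=\beta$ the same computation gives $R_\omega(k)\to\alpha$). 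Finally $\omega$ contains all three letters infinitely often, so it is eligible for whatever closure condition Theorem \ref{periodic1} requires, provided the $w_m$ are picked among the admissible patterns.

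The main obstacle is the localisation asserted in the first paragraph: that at scale $n$ the growth of $G_\omega$ depends, up to $\epsilon$ in the doubly-logarithmic exponent, only on the prefix $\omega_0\ldots\omega_{\lceil C\log n\rceil}$, with the precise exponent $R_\omega(k(n))$. For the upper bound this is obtained by iterating the contraction $(\ref{eq:contraction})$ about $\log n$ times, each step replacing $G_{\sigma^i\omega}$ by $G_{\sigma^{i+1}\omega}$ and consuming only the letter $\omega_i$, as in \cite{Bartholdi98,MuchnikPak,BE2}; for the lower bound one must verify that the measures constructed in the proof of Theorem \ref{periodic1} for $\omega$ and for any string agreeing with $\omega$ on a long prefix induce the same random-walk asymptotics up to the corresponding scale, and that the hypotheses of Theorem \ref{periodic1} are stable under the block concatenations above --- this stability is the reason the patterns $w_m$ must be taken among the admissible ones. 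Once this is in place, the rest is the bookkeeping with $R_\omega$ sketched above, together with the elementary (if slightly tedious) verification that $\{\alpha_{w^\infty}\}$ is dense in $[\alpha_0,1]$.
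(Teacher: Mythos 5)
Your overall strategy --- concatenating long powers of periodic patterns and converting the oscillation of the local exponent $R_\omega(k)=k\log 2/\log\|M_{\omega_0}\cdots M_{\omega_{k-1}}\|$ into oscillation of $\log\log v_{G_\omega,S}(n)/\log n$ via the two-sided bounds at the scales $L_n^\omega$ --- is a reasonable route, and it differs from the paper, which instead deduces Theorem \ref{liminf} from the growth-prescription statement (Theorem \ref{prescribe}), i.e.\ from matching the growth of the wreath extension $W_\omega$ to a prescribed function $f$ using the machinery of \cite[Section 5]{BE2} and then invoking Theorem \ref{periodic1} for the lower bound. But as written your proposal has a genuine gap: the lower-bound input, Theorem \ref{periodic1}, requires the \emph{whole infinite string} $\omega$ to satisfy Assumption $({\rm Fr}(D))$ for a single finite $D$, i.e.\ every window of length $D$ must contain ${\bf 201}$ or ${\bf 211}$. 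Your building blocks do not guarantee this. The words $B=({\bf 01})^{j}{\bf 2}$, which you use both to approach exponent $1$ and to run the density/interpolation argument via $A^aB^b$ with $j\to\infty$, contain stretches of $({\bf 01})^{j}$ of length $2j$ in which neither ${\bf 201}$ nor ${\bf 211}$ occurs; once $j$ exceeds (roughly) $D/2$, some length-$D$ window of the concatenated string violates the assumption. Since the theorem must cover $\beta=1$ (and, more generally, blocks whose exponents approach $\beta$), your construction forces $j\to\infty$ along the string, so \emph{no} uniform $D$ works, and the stabilization/Green-function machinery behind Theorem \ref{periodic1} cannot be applied to your $\omega$. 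Your closing caveat ``provided the $w_m$ are picked among the admissible patterns'' flags the issue but does not resolve it, because your density argument is carried out only for the inadmissible family, and you never show that admissible words realize a dense set of exponents in $[\alpha_0,1]$.

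The repair is exactly what the paper does: build the oscillating string from the two blocks ${\bf 201}$ and ${\bf 211}$ only. Any $\omega\in\{{\bf 201},{\bf 211}\}^{\infty}$ satisfies $({\rm Fr}(3))$ automatically, and mixed powers $({\bf 201})^{a}({\bf 211})^{b}$ already give exponents dense in $[\alpha_0,1]$ (the block ${\bf 211}$ has spectral radius $8$, hence exponent $1$; ${\bf 201}$ gives $\alpha_0$), so your bookkeeping with $R_\omega$ can then be run verbatim --- at which point the argument becomes essentially parallel to the paper's use of \cite[Section 5]{BE2}, where the required norm estimates for such mixed products are supplied. Two smaller remarks: the ``prefix-stability'' of the measure construction that you list as part of the main obstacle is not needed, since Theorem \ref{periodic1} applies directly to the infinite string and gives the lower bound at every scale $L_n^\omega$ simultaneously; and your Perron--Frobenius-type claims ($\log\|M_A^aM_B^b\|=a\log\lambda_A+b\log\lambda_B+O(1)$ and the affine-with-error description of $D_\omega$ across blocks) are plausible but are asserted rather than proved --- in the paper the corresponding control comes from the contraction-coefficient estimates of \cite{BE2}.
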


Theorem \ref{liminf} is a special case of the more precise statement
in Theorem \ref{prescribe}. We mention that for some groups of intermediate
growth there is no way to obtain a near optimal lower bound of the
growth function using tail decay of measures of non-trivial Poisson
boundary. For instance, there are extensions of the first Grigorchuk
group that are of intermediate growth, where the growth function can
be arbitrarily close to exponential but any random walk with finite
$\alpha_{0}$-moment has trivial Poisson boundary, see Remark \ref{piecewise}
at the end of the paper. 

\subsection{Sketch of the arguments for the first Grigorchuk group\label{subsec:sketch}}

For a nilpotent group $\Gamma$ and any probability measure $\mu$
on $\Gamma$, it is known that the Poisson boundary of $(\Gamma,\mu)$
is trivial. For non-degenerate measure $\mu$ on a finitely generated
nilpotent group this result is due to Dynkin and Maljutov \cite{DynMal},
for a general case see Azencott \cite[Proposition 4.10]{azencott}.
If we assume that the growth function is sub-exponential, then as
a corollary of the entropy criterion, any finitely supported measure
$\mu$ on $\Gamma$ (more generally, any measure $\mu$ on $\Gamma$
of finite first moment $\sum_{g\in\Gamma}l_{S}(g)\mu(g)<\infty$)
has trivial Poisson boundary. If the growth of $\Gamma$ is bounded
by $v_{\Gamma,S}(n)\le\exp(Cn^{\alpha})$ for some $0<\alpha<1$,
then all measures with finite $\alpha$-moment, that is $\sum_{g\in\Gamma}l_{S}(g)^{\alpha}\mu(g)<\infty$,
have trivial Poisson boundary (see Corollary \ref{trivialmoment}).
Since the volume growth of the first Grigorchuk group $G=G_{012}$
satisfies $v_{G,S}(n)\le\exp(Cn^{\alpha_{0}})$ by \cite{Bartholdi98,MuchnikPak},
we have to look for measures with infinite $\alpha_{0}$-moment in
order to have non-trivial Poisson boundary. 

\begin{figure}\label{schreiergrig} \includegraphics[scale=0.8]{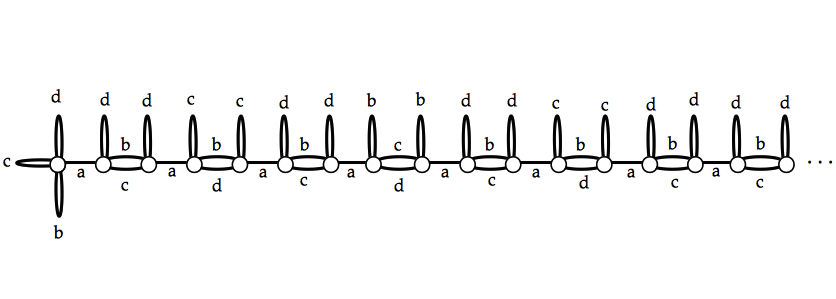} \caption{Orbital Schreier graph of the right most ray $o=1^{\infty}$ under the action of the first Grigorchuk group.} \end{figure}

The first Grigorchuk group $G$ is generated by four automorphisms
$a,b,c,d$ of the rooted binary tree (in Subsection \ref{subsec:Grigorchuk-def}
we recall the definition of these automorphisms, see also Figure \ref{bcd}).
The group $G$ acts on the boundary of the tree $\partial\mathsf{T}$
by homeomorphisms. The orbital Schreier graph $\mathcal{S}$ of $o$
is a half line as shown in Figure \ref{schreiergrig}, where $o=1^{\infty}$
is the right most ray of the tree.

In \cite{Erschler04} random walks with non-trivial Poisson boundary
are used to show that for $G_{\omega}$, where $\omega$ is not eventually
constant and contains only finite number of ${\bf 2}$ (for example
$\omega=({\bf 01})^{\infty}$), its volume growth satisfies
\[
v_{G_{\omega},S}(n)\ge\exp\left(c_{\epsilon}n/\log^{1+\epsilon}n\right).
\]
The measures with non-trivial Poisson boundary constructed in \cite{Erschler04}
come from transient symmetric random walks on $\mathbb{Z}$, where
it is well-known and easy to see what is the best tail condition on
such measures on $\mathbb{Z}$. The situation remained completely
unclear for other Grigorchuk groups with $\omega$ containing all
three letters infinitely often. Since for such $\omega$ the group
$G_{\omega}$ is torsion, the argument in \cite{Erschler04} relying
on an infinite order element cannot be applied. Here we need to introduce
a new construction of measures for the first Grigorchuk group. 

Consider the groupoid of germs of the action $G\curvearrowright\partial\mathsf{T}$.
The isotropy group $\mathcal{G}_{x}$ of the groupoid of germs of
$G$ at $x\in\partial\mathsf{T}$ is the quotient of the stabilizer
$G_{x}$ of $x$ by the subgroup of elements of $G$ that act trivially
in a neighborhood of $x$. For the first Grigorchuk group $G$, at
$x$ in the orbit $o\cdot G$, $\mathcal{G}_{x}$ is isomorphic to
$\left(\mathbb{Z}/2\mathbb{Z}\right)\times\left(\mathbb{Z}/2\mathbb{Z}\right)$
and its elements can be listed as $\{id,b,c,d\}$. See more details
in Section \ref{sec:Transience}. Given a group element $g$, we denote
its germ at $x$ by $\Phi_{g}(x)$, which is the germ of $g\tau^{-1}$
in the neighborhood of $x$ where $\tau$ is a finitary automorphism
of the tree such that $x\cdot g=x\cdot\tau$. This does not depend
on the choice of $\tau$, see precise definition in formula (\ref{eq:confiPhi})).
Our goal is to construct a non-degenerate probability measure $\mu$
on $G$ such that along the $\mu$-random walk trajectory $(W_{n})_{n=1}^{\infty}$,
the coset of $\Phi_{W_{n}}(o)$ in $\mathcal{G}_{o}/\left\langle b\right\rangle $
stabilizes with probability $1$. Such almost sure stabilization provides
non-trivial events in the tail $\sigma$-field of the random walk,
which implies the Poisson boundary of $(G,\mu)$ is non-trivial. 

Our first construction of a measure with controlled (but not near
optimal) tail decay and non-trivial Poisson boundary on $G$ is of
the form $\mu=\frac{1}{2}({\bf u}_{S}+\eta)$, where ${\bf u}_{S}$
is the uniform measure on $S=\{a,b,c,d\}$ and $\eta$ is a symmetric
measure supported on the subgroup ${\rm H}^{b}$ of $G$. Here ${\rm H}^{b}$
is the subgroup of $G$ that consists of $g\in G$ such that germs
$\Phi_{g}(x)$ are in $\left\langle b\right\rangle $ for all $x\in o\cdot G$.
For such a measure $\mu$, if the $\eta$-induced random walk on the
orbit of $o$ is transient, then the Poisson boundary of $\mu$ is
non-trivial, see Remark \ref{small}. In the situation of $\omega$
containing only ${\bf 0}$ and ${\bf 1}$ infinitely often, the subgroup
${\rm H}^{b}$ contains an infinite order element $ab$, thus in this
case one can apply well known results for random walks on $\left\langle ab\right\rangle =\mathbb{Z}$.
For the first Grigorchuk group, the subgroup ${\rm H}^{b}$ is locally
finite and the orbit of $o$ under ${\rm H}^{b}$ is not the full
orbit $o\cdot G$, but rather rays cofinal with $1^{\infty}$ such
that at locations $3k+1$, $k\ge1$, the digits are $1$ (see Lemma
\ref{Horbit}). 

Since the subgroup ${\rm H}^{b}$ is locally finite, unlike $\mathbb{Z}$
there is no \textquotedbl simple random walk\textquotedbl{} on it
that one can take convex combinations of convolution powers of the
corresponding measure to produce a measure with transient induced
random walk on the orbit of $o$. Rather, we build the measures from
a sequence of group elements satisfying an injectivity property for
their action on the orbit $o\cdot G$. For $g_{1},g_{2},\ldots\in G$,
we say the sequence satisfies the \emph{cube independence property}
on $o\cdot G$ if for any $n\ge1$ and $x\in o\cdot G$, the map from
the hypercube to the orbit $\{0,1\}^{n}\to o\cdot G$
\[
(\varepsilon_{1},\ldots\varepsilon_{n})\mapsto x\cdot g_{n}^{\varepsilon_{n}}\ldots g_{1}^{\varepsilon_{1}}
\]
is injective. We call the set $F_{n}=\left\{ g_{n}^{\varepsilon_{n}}\ldots g_{1}^{\varepsilon_{1}}:(\varepsilon_{1},\ldots\varepsilon_{n})\in\{0,1\}^{n}\right\} $
an $n$-\emph{quasi-cubic set} and the uniform measure ${\bf u}_{F_{n}}$
on $F_{n}$ an $n$-\emph{quasi-cubic measure}. Given such a sequence
$\left(g_{n}\right)$, we take convex combination over all $n$-quasi-cubic
measures and their inverses. The cube independence property guarantees
that for such measures, the induced random walks on the orbit of $o$
satisfy good isoperimetric inequalities. We prove a sufficient criterion
for such induced random walks to be transient in Proposition \ref{indep_tran}. 

\begin{figure}[!htb] \centering \includegraphics[scale=.6]{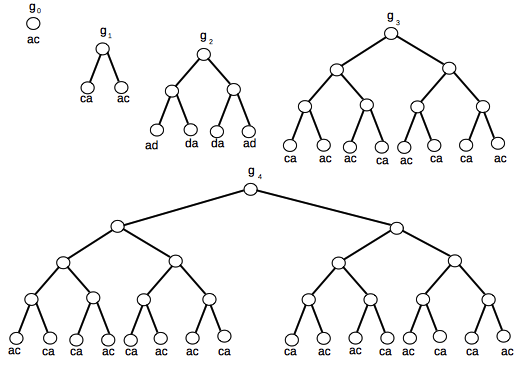}
\caption{Portrtraits of $g_j,\ j=0,\dots,4$ in the first Grigorchuk group.  More generally, words $\zeta^{n}(ax)$, where $x=c$ or $x=d$, represents an element in the stabilizer of level $n$,  and the labels (sections) at the vertices of level $n$  are $ax$ or $xa$.   }  
\label{fig:portaritzeta3AC} \end{figure} Our choice of elements satisfying the cube independence property on
$G$ is closely related to words that are known to have what is called
asymptotically maximal inverted orbit growth, see \cite{BE1}. Let
$\zeta$ be the substitution on $\{ab,ac,ad\}^{\ast}$ defined as
\begin{equation}
\zeta:ab\mapsto abadac,\ ac\mapsto abab,\ ad\mapsto acac.\label{eq:subzeta}
\end{equation}
Then $(\zeta^{n}(ac))_{n=1}^{\infty}$ achieves asymptotic maximal
inverted orbit growth, see the proof of \cite[Proposition 4.7]{BE1}.
With considerations of germs in mind, we take the sequence to be $g_{n}=\zeta^{n}(ac)$
if $n\equiv0,1\mod3$ and $g_{n}=\zeta^{n}(ad)$ if $n\equiv2\mod3$,
see Figure \ref{fig:portaritzeta3AC}. Then $g_{n}$ has $\left\langle b\right\rangle $-germs
for $n\equiv1,2\mod3$ and has $\left\langle c\right\rangle $-germs
for $n\equiv0\mod3$. For our construction it is important that this
sequence $\left(g_{n}\right)$ satisfies the cube independence property.
As an illustration of Proposition \ref{indep_tran}, in Subsection
\ref{subsec:critical} we find an optimal moment condition for measures
on $G$ which induce transient random walks on the orbit of $o=1^{\infty}$.
Such a measure can be taken to be of the form
\[
\eta_{0}=\sum_{n=1}^{\infty}c_{n}\left({\bf u}_{F_{n}}+{\bf u}_{F_{n}^{-1}}\right),
\]
where $c_{n}=\frac{C_{\epsilon}n^{1+\epsilon}}{2^{n}}$ , $F_{n}$
is the $n$-quasi-cubic set formed by $g_{1},\ldots,g_{n}$ and ${\bf u}_{F}$
is the uniform measure on $F$. 

In Section \ref{sec: growth-first} we explain how to use the subsequence
$g_{1},g_{2},g_{4},g_{5},g_{7},g_{8},\ldots$ of only $\left\langle b\right\rangle $-germs
and a related sequence that is more adapted to ${\rm H}^{b}$ to produce
finite entropy symmetric measures supported on ${\rm H}^{b}$ such
that the induced random walks on the orbit of $o$ are transient.
In particular it already provides a volume lower bound $\exp\left(cn^{0.57}\right)$
for $G$ better than previously known bounds, see Corollary \ref{Hbound}. 

However taking a measure supported on ${\rm H}^{b}$ is too restrictive,
see the remark at the end of Section \ref{sec: growth-first}. In
Section \ref{sec:Main} we carry out the main construction which removes
this restriction and produces measures with non-trivial Poisson boundary
and near optimal tail decay.

We now give an informal sketch of the construction. Return to the
\textquotedbl full\textquotedbl{} cube independent sequence $(g_{n})_{n=1}^{\infty}$.
The measure $\eta_{0}$ built from this sequence has near optimal
tail decay among measures that induce transient random walks on the
Schreier graph. However in the sequence we have germs of both types:
$g_{n}$ has $\left\langle b\right\rangle $-germs for $n\equiv1,2\mod3$
and has $\left\langle c\right\rangle $-germs for $n\equiv0\mod3$.
One can check that $\left\langle \gamma\right\rangle $-coset of $\Phi_{W_{n}}(o)$
does not stabilize for any $\gamma\in\{b,c,d\}$, where $W_{n}$ is
the random walk driven by $\frac{1}{2}({\bf u}_{S}+\eta_{0})$. With
the goal to achieve almost sure stabilization of $\left\langle b\right\rangle $-coset
of $\Phi_{W_{n}}(o)$ along the random walk trajectory, we want to
perform modifications to the elements used in the construction of
$\eta_{0}$ that prevent such stabilization, namely these $g_{n}$
with $\left\langle c\right\rangle $-germs.

It turns out for such considerations, transience of the induced random
walks on the Schreier graph is not sufficient. To observe almost sure
stabilization of $\left\langle b\right\rangle $-coset of $\Phi_{W_{n}}(o)$
for a $\mu$-random walk $W_{n}$, we need to show a weighted sum
of the Green function $\mathbf{G}_{P_{\mu}}$ over the Schreier graph
is finite, see the criterion in Proposition \ref{transience}:
\[
\sum_{x\in o\cdot G}\mathbf{G}_{P_{\mu}}(o,x)\mu\left(\left\{ g\in G:\ (g,x)\mbox{ is not a }\left\langle b\right\rangle \mbox{-germ}\right\} \right)<\infty.
\]
Thus the behavior of the Green function of the induced random walk
is essential. Informally we call a ray $x\in\partial\mathsf{T}$ that
carries a non-$\left\langle b\right\rangle $ germ a bad location.
Assume that the Green function $\mathbf{G}_{P_{\mu}}(o,x)$ decays
with distance $d_{\mathcal{S}}(o,x)$, then to control the effect
of $\left\langle c\right\rangle $-germ elements, it is to our advantage
to push the bad locations to be further away from the ray $o$. Note
that when we perform modifications to the measure, all terms in the
summation are affected: both the Green function of $P_{\mu}$ and
the $\mu$-weights on the bad locations. 

Recall that for $n$ divisible by $3$, $g_{n}=\zeta^{n}(ac)$, it
is in the $n$-level stabilizer and the section at any vertex $v$
in level $n$ is in $\{ac,ca\}$. Therefore $g_{n}$ has $2^{n}$
bad locations on the Schreier graph and the distance of these locations
to $o$ is bounded by $C2^{n}$. To move these locations further from
$o$, without causing disturbances globally, we rely on certain branching
properties of the group. Let $k_{n}$ be an integer divisible by $3$,
we introduce a conjugation of the generator $c$ that swaps on level
$k_{n}+2$ the section at $1^{k_{n}+2}$ with the section at its sibling.
Explicitly, take the element $h_{k_{n}}$ which is in the rigid stabilizer
of $1^{k_{n}}$ and acts as $baba$ in the subtree rooted at $1^{k_{n}}$.
Denote by $\mathfrak{c}_{k_{n}}$ the conjugate $\mathfrak{c}_{k_{n}}=h_{k_{n}}^{-1}ch_{k_{n}}$.
The portrait of $\mathfrak{c}_{k_{n}}$ is illustrated in Figure \ref{fig:portaitck}.
For $n$ and $k_{n}$ both divisible by $3$, take $\tilde{g}_{n}=\zeta^{n}\left(a\mathfrak{c}_{k_{n}}\right)$.
Then $\tilde{g}_{n}$ still has $2^{n}$ locations on the Schreier
graph such that $(\tilde{g}_{n},x)$ is a $c$-germ, but now the distance
of these locations to $o$ is comparable to $2^{n+k_{n}}$. 

\begin{figure}[!htb] \centering \includegraphics[scale=.6]{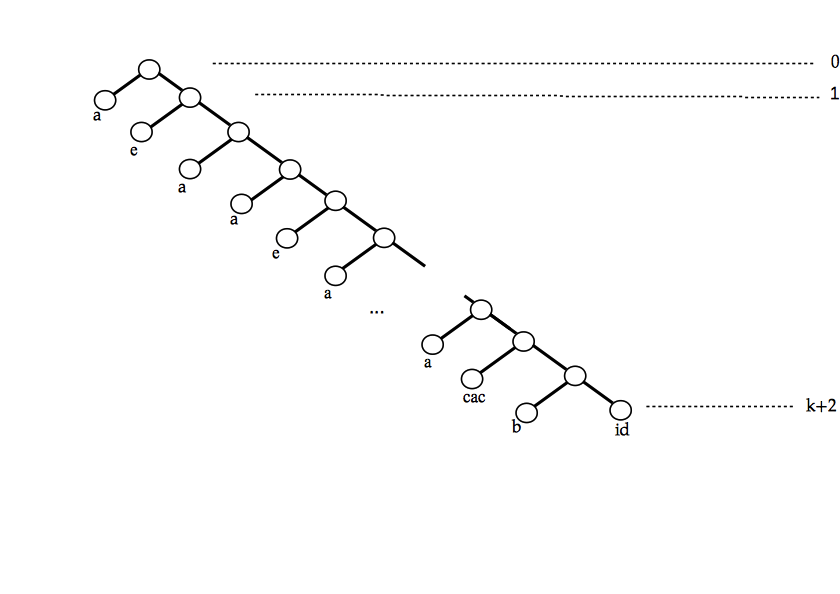} \caption{A portrtrait of $\mathfrak{c}_k$, for $k \equiv 0 \mod 3$. There are  $k+3$ levels of the tree shown on the picture. The portrait is the same as the portrait of $c$ up to level $k$, while at level $k+2$ left and right branches are permuted. } \label{fig:portaitck} \end{figure} 

Take a parameter sequence $(k_{n})$, in the cube independent sequence
$(g_{n})$, keep these $g_{n}$ such that $n\equiv1,2\mod3$ and replace
these $g_{n}$ with $n\equiv0\mod3$ by the modified element $\tilde{g}_{n}$.
This modified sequence satisfies the cube independence property, we
can form the $n$-quasi-cubic sets and define a measure $\eta$ of
the same form as $\eta_{0}$ using this sequence. It is possible that
with some choice like $c_{n}=2^{-n\beta}$, $\beta$ close to $1$
and $k_{n}=A\log n$, one can observe almost sure stabilization for
the resulting measure $\frac{1}{2}({\bf u}_{S}+\eta)$. However for
such a measure $\mu$ we didn't manage to obtain good enough estimates
on the induced Green function to verify the summability condition.
Here the issue is that because of the modifications, the induced transition
kernel on the Schreier graph becomes more singular (compared to the
one induced by $\eta_{0}$ ).

\begin{figure} \includegraphics[scale=1.0]{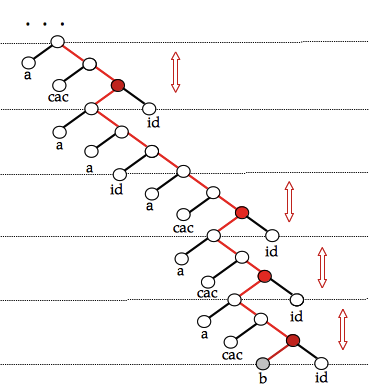} \caption{A conjugated element $\mathfrak{c}^{v}$,  in usual notation on the first Grigorchuk group. The levels of the form $3j-1$ are traced with dotted lines. There are two possiblities for the portraits between $3j-1$ and $3j+2$. Here the picture is shown for indexing vertex $v =11(11\ast)\dots(11\ast)(110)(111)(110)(110)110$.} \label{fig:big}\end{figure} 

To overcome this difficulty, we introduce an extra layer of randomness
to produce less singular induced transition kernels. We take a collection
of conjugates $\mathfrak{c}^{v}$ of $c$, indexed by vertices $v$
of the form $11\underbrace{11\ast11\ast\ldots11\ast}_{k_{n}\ {\rm digits}}110$,
where at $\ast$ the digit can be $0$ or $1$, see Figure \ref{fig:big}.
Given such a conjugate $\mathfrak{c}^{v}$, as before apply the substitution
$\zeta$ to obtain $\tilde{g}_{n}^{v}=\zeta^{n}\left(a\mathfrak{c}^{v}\right)$.
We call an $n$-quasi-cubic set formed by $\left(\tilde{g}_{j}^{v}\right)$
with each $v$ a random vertex chosen uniformly from the corresponding
indexing set a uniformised quasi-cubic set. The final measure we take
to prove Theorem \ref{bound1st} is of a similar form to $\eta$ discussed
in the previous paragraph, but involves what we call uniformised quasi-cubic
measures. For more detailed description of these measures see Example
\ref{firstmain}. We remark that the parameters $\left(k_{n}\right)$
need to be chosen carefully: the word length of element $\tilde{g}_{n}$
is approximately $2^{k_{n}}(2/\eta_{0})^{n}$, therefore if $k_{n}$
is too large, the tail decay wouldn't be near $n^{-\alpha_{0}}$;
on the other hand if $k_{n}$ is not sufficiently large then the effect
it introduces is not strong enough to achieve the goal of stabilization.
In the end we will take $k_{n}\approx A\log n$ with constant $A$
large enough. 

After the extra randomization, the resulting measure on $G$ induces
a transition kernel on the Schreier graph to which known heat kernel
estimates can be applied. More precisely, we apply techniques from
Barlow-Grigor'yan-Kumagai \cite{BGK} based on what is called the
Meyer's construction and the Davies method for off-diagonal heat kernel
estimates of jumping processes. In this way we obtain good enough
decay estimate for the induced Green function in terms of the Schreier
graph distance on the orbit of $o$. Together with explicit estimates
of the $\mu$-weights at bad locations we show the weighted sum of
Green function over the Schreier graph is finite with appropriate
choice of parameters. The construction provides finite entropy measures
on $G$ with near optimal tail decay and non-trivial Poisson boundary.

\subsection{Organization of the paper}

Section \ref{sec:Preliminaries} reviews background on Poisson boundary
and its connection to growth, groups acting on rooted trees and some
basic properties of Grigorchuk groups. In Section \ref{sec:Transience}
we formulate the criterion for stabilization of cosets of germ configurations.
Section \ref{sec:existence} provides quick examples of measures (without
explicit tail control) with non-trivial Poisson boundary on the first
Grigorchuk group. In Section \ref{sec:construction} we define the
cube independence property and show how to build explicit measures
with transient induced random walks from sequences satisfying this
property. Section \ref{sec: growth-first} discusses measures with
restricted germs on the first Grigorchuk group that induce transient
random walks on the orbit of $1^{\infty}$, in particular volume lower
bounds such measures provide and their limitations. Section \ref{sec:Main}
contains the main result of the paper: we construct finite entropy
measures on $G_{\omega}$ with near optimal tail decay and non-trivial
Poisson boundary, where $\omega$ satisfies what we call Assumption
$({\rm Fr}(D))$. In Section \ref{sec:maingrowth} we derive volume
lower estimates for $G_{\omega}$ from measures constructed in Section
\ref{sec:Main}. We close the paper with a few remarks and questions. 

\medskip{}

\emph{Conventions that are used throughout the paper:}
\begin{itemize}
\item We consider right group actions $\Gamma\curvearrowright X$ and write
the action of group element $g$ on $x\in X$ as $x\cdot g$. 
\item Given a probability measure $\mu$ on $\Gamma$, we say $\mu$ is
non-degenerate if its support generates $G$ as a semigroup. Let $X_{1},X_{2},\ldots$
be i.i.d. random variables with distribution $\mu$ on $\Gamma$ and
$W_{n}=X_{1}\ldots X_{n}$. We refer to $(W_{n})_{n=1}^{\infty}$
as a $\mu$-random walk.
\item On a regular rooted tree $\mathsf{T}$, given a vertex $u\in\mathsf{T}$
and $v\in\mathsf{T}\cup\partial\mathsf{T}$, $uv$ denotes the concatenation
of the strings, that is the ray with prefix $u$ followed by $v$.
Given $u,v\in\mathsf{T}\cup\partial\mathsf{T}$, $u\wedge v$ denotes
the longest common prefix of $u$ and $v$. 
\item Given a finite set $F$, ${\bf u}_{F}$ denotes the uniform distribution
on $F$. 
\item Let $v_{1},v_{2}$ two non-decreasing functions on $\mathbb{N}$ or
$\mathbb{R}$, $v_{1}\lesssim v_{2}$ if there is a constant $C>0$
such that $v_{1}(x)\le v_{2}(Cx)$ for all $x$ in the domain. Let
$\simeq$ be the equivalence relation that $v_{1}\simeq v_{2}$ if
$v_{1}\lesssim v_{2}$ and $v_{2}\lesssim v_{1}$. Note that for finitely
generated infinite groups, the $\simeq$ equivalence class of the
growth function does not depend on the choice of generating set. 
\end{itemize}

\subsection*{Acknowledgment }

We are grateful to the anonymous referee whose comments and suggestions
improved the exposition of the paper. We thank J\'er\'emie Brieussel
for helpful comments on the preliminary version of the paper.

\section{Preliminaries\label{sec:Preliminaries}}

\subsection{Boundary behavior of random walks and connection to growth\label{subsec:growthlemma}}

We recall the notion of the invariant $\sigma$-field and the tail
$\sigma$-field of a random walk. Let $\mu$ be a probability measure
on a countable group $\Gamma$. For $x\in\Gamma$, let $\mathbb{P}_{x}$
be the law of the random walk trajectory $\left(W_{0},W_{1},\ldots\right)$
starting at $W_{0}=x$. Endow the space of infinite trajectories $G^{\mathbb{N}}$
with the usual Borel $\sigma$-field $\mathcal{A}$. The shift $\mathfrak{s}:\Gamma^{\mathbb{N}}\to\Gamma^{\mathbb{N}}$
acts by $\mathfrak{s}(g_{1},g_{2},\ldots)=(g_{2},g_{3},\ldots)$.
The \emph{invariant $\sigma$-field} $\mathcal{I}$ is defined as
$\mathcal{I}=\{A\in\mathcal{A:\ \mathfrak{s}}^{-1}(A)=A\}$, it is
also referred to as the \emph{stationary $\sigma$-field}. There is
a correspondence between $\mathcal{I}$ and the space of bounded $\mu$-harmonic
functions on $\Gamma$. In particular, the invariant $\sigma$-field
$\mathcal{I}$ is trivial (i.e. any event $A\in\mathcal{I}$ satisfies
$\mathbb{P}_{x}(A)=0$ or $1$) if and only if all bounded $\mu$-harmonic
functions are constant, see \cite{KV83}. 

The intersection $\mathcal{A}_{\infty}=\cap_{n=0}^{\infty}\mathfrak{s}^{-n}\mathcal{A}$
is called the \emph{tail $\sigma$-field} or \emph{exit $\sigma$-field}
of the random walk. When $\mu$ is aperiodic, the $\mathbb{P}_{x}$-completion
of $\mathcal{A}_{\infty}$ and the $\mathbb{P}_{x}$-completion of
$\mathcal{I}$ coincide. Therefore for $\mu$ aperiodic, all bounded
$\mu$-harmonic functions are constant if and only if the tail $\sigma$-field
$\mathcal{A}_{\infty}$ is $\mathbb{P}_{x}$-trivial for some (hence
all ) $x\in\Gamma$.

The (Shannon) \emph{entropy} of a probability measure $\mu$ on $\Gamma$
is 
\[
H(\mu)=-\sum_{g\in\Gamma}\mu(g)\log\mu(g).
\]
In the case that the measure $\mu$ has finite entropy $H(\mu)<\infty$,
the (Avez) \emph{asymptotic entropy} of $\mu$ is defined as 
\[
\mathbf{h}_{\mu}=\lim_{n\to\infty}\frac{H\left(\mu^{(n)}\right)}{n}.
\]
The entropy criterion of Kaimanovich-Vershik \cite{KV83} and Derriennic
\cite{derriennic}, states that for $\mu$ of finite entropy, all
bounded $\mu$-harmonic functions are constant if and only if the
Avez asymptotic entropy $\mathbf{h}_{\mu}=0$. 

Given a probability measure of finite entropy and non-trivial Poisson
boundary on a finitely generated group $\Gamma$, by the entropy criterion
and the Shannon theorem, one can derive lower estimate for the volume
growth of $\Gamma$ from the tail decay of the measure. The following
lemma extends \cite[Lemma 6.1]{Erschler04}. 

\begin{lem}\label{moment_growth}

Let $\Gamma$ be a finitely generated group and $\left|\cdot\right|=l_{S}$
a word distance on $\Gamma$. Suppose $\Gamma$ admits a probability
measure $\mu$ of finite entropy such that the Poisson boundary $(\Gamma,\mu)$
is nontrivial. Let 
\[
\varrho_{n}=\inf\left\{ r>0:\ \mu\left(\left\{ g:|g|\ge r\right\} \right)<1/n\right\} \mbox{ and }\ \phi(r)=\sum_{g:|g|\le r}|g|\mu(g).
\]
Then there exists a constant $c=c(\Gamma,\mu)>0$ such that 
\[
v_{\Gamma,S}(R_{n})\ge\exp\left(cn\right)\ \mbox{where }R_{n}=n\phi(\varrho_{n}).
\]

\end{lem}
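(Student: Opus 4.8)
Following the scheme of \cite[Lemma 6.1]{Erschler04}, the plan is to feed the prescribed tail decay of $\mu$ into the asymptotic equipartition property of the $\mu$-random walk, using that non-triviality of the Poisson boundary forces positive asymptotic entropy. Since $H(\mu)<\infty$ and the Poisson boundary of $(\Gamma,\mu)$ is non-trivial, the entropy criterion of \cite{KV83,derriennic} gives $h:=\mathbf{h}_{\mu}>0$. By the Shannon theorem for entropy of random walks, $-\tfrac1n\log\mu^{(n)}(W_n)\to h$ in probability, so there is $n_0$ such that for all $n\ge n_0$,
\[
\mathbb{P}\big(\mu^{(n)}(W_n)\le e^{-hn/2}\big)\ \ge\ \tfrac{15}{16}.
\]

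Next I would confine $W_n$ to a metric ball of radius comparable to $R_n$. Write $W_n=X_1\cdots X_n$ with $X_i$ i.i.d.\ of law $\mu$, so that $|W_n|\le\sum_{i=1}^{n}|X_i|$ by subadditivity of word length. The point of the definition of $\varrho_n$ is that $\mu(\{g:|g|>\varrho_n\})\le 1/n$; hence the event $A_n=\{|X_i|\le\varrho_n\text{ for all }i\le n\}$ has $\mathbb{P}(A_n)\ge(1-1/n)^{n}\ge\tfrac14$ for $n\ge2$. On $A_n$ we have $|W_n|\le S_n:=\sum_{i=1}^{n}|X_i|\,\mathbf{1}\{|X_i|\le\varrho_n\}$, and by the definition of $\phi$, $\mathbb{E}[S_n]=n\sum_{g:\,|g|\le\varrho_n}|g|\,\mu(g)=n\,\phi(\varrho_n)=R_n$. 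Markov's inequality gives $\mathbb{P}(S_n>16R_n)\le\tfrac1{16}$, so $\mathbb{P}(|W_n|\le 16R_n)\ge\tfrac14-\tfrac1{16}=\tfrac{3}{16}$.

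Intersecting this with the event of the first paragraph, for $n\ge\max(n_0,2)$ one gets
\[
\mathbb{P}\big(|W_n|\le 16R_n\ \text{ and }\ \mu^{(n)}(W_n)\le e^{-hn/2}\big)\ \ge\ \tfrac{3}{16}+\tfrac{15}{16}-1\ =\ \tfrac18 .
\]
Put $E_n=\{g\in\Gamma:\ |g|\le 16R_n,\ \mu^{(n)}(g)\le e^{-hn/2}\}$; the displayed inequality says $\tfrac18\le\mu^{(n)}(E_n)\le\#E_n\cdot e^{-hn/2}$, hence $\#E_n\ge\tfrac18 e^{hn/2}$. Since $E_n$ lies in the ball of radius $16R_n$ about the identity and the growth function satisfies $v_{\Gamma,S}(16t)\le v_{\Gamma,S}(t)^{16}$ for $t\ge1$, we conclude $v_{\Gamma,S}(R_n)\ge\big(\tfrac18 e^{hn/2}\big)^{1/16}$, which is $\ge e^{cn}$ for a suitable $c=c(\Gamma,\mu)>0$ once $n$ is large; this is the asserted estimate.

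I expect the genuine point of the argument to be the heavy tail of $\mu$: since $\mu$ need not have finite first moment, $\mathbb{E}|W_n|$ may be infinite and one cannot localise $W_n$ directly. The remedy is the truncation at level $\varrho_n$, and $\varrho_n$ is calibrated precisely so that $\mathbb{P}(A_n)$ stays bounded away from $0$ while the truncated increments have expectation $\phi(\varrho_n)$---this is what makes $R_n=n\phi(\varrho_n)$ the natural scale in the statement. The remaining ingredients---positivity of $\mathbf{h}_{\mu}$ from the entropy criterion, and the counting of ``typical'' group elements inside a metric ball via the Shannon theorem---are the familiar route from positive asymptotic entropy to a volume lower bound.
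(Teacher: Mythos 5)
Your proof is correct and follows essentially the same route as the paper's: truncation at the scale $\varrho_n$ plus Markov's inequality to confine the walk to a ball of radius comparable to $n\phi(\varrho_n)$ with probability bounded below, then the entropy criterion and Shannon's theorem to count the typical points inside that ball. The only differences are cosmetic (you truncate the i.i.d.\ increments of the walk where the paper truncates the measure $\mu$ itself, and your constant $16$ in the submultiplicativity step should be relaxed slightly for non-integer radii), and neither affects the conclusion.
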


\begin{proof}

If the measure $\mu$ has finite first moment, in other words the
function $\phi(r)$ is bounded, then $\Gamma$ has exponential growth.
In this case the claim is true. In what follows we assume that $\phi(r)\to\infty$
as $r\to\infty$. 

Let $\mu_{\varrho}$ be the truncation of measure $\mu$ at radius
$\varrho$, that is $\mu_{\varrho}(g)=\mu(g)\mathbf{1}_{\{|g|\le\varrho\}}$.
Because of the triangle inequality $|gh|\le|g|+|h|$, we have 
\[
\sum_{g}|g|\mu_{\varrho}^{(n)}(g)\le n\sum_{g}|g|\mu_{\varrho}(g)=n\phi(\varrho).
\]
It follows from Markov inequality that 
\[
\mu_{\varrho}^{(n)}(\{g:\ |g|\ge R\})\le\frac{n\phi(\varrho)}{R}.
\]
For all $n\ge2$ , the total mass of the convolution power $\mu_{\varrho_{n}}^{(n)}$
satisfies
\[
\sum_{g}\mu_{\varrho_{n}}^{(n)}(g)=\left(1-\mu(\{g:|g|>\varrho_{n}\})\right)^{n}\ge(1-1/n)^{n}\ge\frac{1}{4}.
\]
Take $R_{n}=8n\phi(\varrho_{n})$, then 
\[
\mu^{(n)}(B(e,R_{n}))\ge\mu_{\varrho_{n}}^{(n)}(B(e,R_{n}))\ge\frac{1}{4}-\frac{n\phi(\varrho_{n})}{R_{n}}=\frac{1}{8}.
\]
Denote by $\mathbf{h}$ the asymptotic entropy of $\mu$. By the entropy
criterion, non-triviality of Poisson boundary of $(G,\mu)$ is equivalent
to $\mathbf{h}>0$. By Shannon's theorem, for any $\epsilon>0$, there
exists a constant $N_{\epsilon}$ such that for any $n>N_{\epsilon}$,
there is a finite set $V_{n}\subset G$ such that $\mu^{(n)}(V_{n})\ge1-\epsilon$
and 
\[
e^{-n(\mathbf{h}+\epsilon)}\le\mu^{(n)}(x)\le e^{-n(\mathbf{h}-\epsilon)}\mbox{ for }x\in V_{n}.
\]
Consider the intersection $V_{n}'=\{g:\ |g|\le R_{n}\}\cap V_{n}$,
we have 
\[
\frac{1}{8}-\epsilon\le\mu^{(n)}(V_{n}')\le|V_{n}'|\max_{g\in V_{n}'}\mu^{(n)}(g)\le|V_{n}'|e^{-n(\mathbf{h}-\epsilon)}.
\]
Choose $\epsilon>0$ to be a constant less than $\frac{1}{2}\min\{1/8,\mathbf{h}\}$.
We obtain that for all $n$ sufficiently large,
\[
v_{\Gamma,S}(R_{n})\ge|V_{n}'|\ge\frac{1}{16}e^{\frac{1}{2}n\mathbf{h}}.
\]

\end{proof}

We record the following two corollaries of Lemma \ref{moment_growth}.

\begin{cor}\label{tailgrowth}

Suppose that $\Gamma$ admits a probability measure $\mu$ of finite
entropy with nontrivial Poisson boundary. Suppose $\mu$ satisfies
\[
\mu\left(\{g:|g|\ge r\}\right)\le\frac{C}{r^{\alpha}\ell(r)},
\]
where $\alpha\in(0,1)$ and $\ell$ is a slowly varying function such
that $\ell(x)\sim\ell(x^{b})$ as $x\to\infty$ for some $b>0$. Then
there exists a constant $c=c(\Gamma,\mu)>0$ such that
\[
v_{\Gamma,S}(n)\ge\exp(cn^{\alpha}\ell(n)).
\]

\end{cor}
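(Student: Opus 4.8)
The plan is to derive Corollary \ref{tailgrowth} from Lemma \ref{moment_growth} by estimating the two quantities $\varrho_n$ and $\phi(\varrho_n)$ appearing in the lemma, and then tracking through the substitution $R_n=n\phi(\varrho_n)$ to recover a lower bound $v_{\Gamma,S}(R_n)\ge\exp(cn)$, which after inverting the relation between $n$ and $R_n$ gives the stated bound $v_{\Gamma,S}(n)\ge\exp(cn^\alpha\ell(n))$.

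First I would estimate $\varrho_n$. By definition $\varrho_n=\inf\{r>0:\mu(\{g:|g|\ge r\})<1/n\}$, and the tail hypothesis $\mu(\{g:|g|\ge r\})\le C/(r^\alpha\ell(r))$ forces $\mu(\{g:|g|\ge r\})<1/n$ as soon as $C/(r^\alpha\ell(r))<1/n$, i.e.\ roughly $r^\alpha\ell(r)>Cn$. Since $r\mapsto r^\alpha\ell(r)$ is regularly varying of index $\alpha$, its asymptotic inverse is regularly varying of index $1/\alpha$, so one gets $\varrho_n\lesssim (n/\ell'(n))^{1/\alpha}$ for an appropriate slowly varying $\ell'$; more simply, up to slowly varying corrections $\varrho_n\asymp n^{1/\alpha}/\ell(n)^{1/\alpha}$, and using the hypothesis $\ell(x)\sim\ell(x^b)$ one can absorb the power on $\ell$ and write $\varrho_n^\alpha\ell(\varrho_n)\asymp n$.

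Next I would estimate $\phi(\varrho_n)=\sum_{g:|g|\le\varrho_n}|g|\mu(g)$. Writing $\phi(r)=\sum_{|g|\le r}|g|\mu(g)$ and summing by parts (Abel summation / layer-cake over the tail function $T(t)=\mu(\{|g|\ge t\})$), one has $\phi(r)\le \int_0^r T(t)\,dt + O(1)$ roughly, and with $T(t)\le C/(t^\alpha\ell(t))$ and $\alpha\in(0,1)$ the integral $\int_1^r t^{-\alpha}\ell(t)^{-1}\,dt$ is again regularly varying, of index $1-\alpha>0$, hence $\phi(r)\asymp r^{1-\alpha}/\ell(r)$ up to slowly varying factors (by Karamata's theorem). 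Therefore $R_n=n\phi(\varrho_n)\asymp n\cdot \varrho_n^{1-\alpha}/\ell(\varrho_n)$. Substituting $\varrho_n\asymp n^{1/\alpha}/\ell(n)^{1/\alpha}$ gives, after collecting the powers of $n$ and using $\ell(x^b)\sim\ell(x)$ to simplify the nested slowly varying terms, $R_n\asymp n^{1/\alpha}/\ell(n)^{1/\alpha}$ — i.e.\ $R_n\asymp\varrho_n$ up to slowly varying factors. Inverting, $n\asymp R_n^\alpha\ell(R_n)$. Plugging into the conclusion of Lemma \ref{moment_growth}, $v_{\Gamma,S}(R_n)\ge\exp(cn)\ge\exp(c'R_n^\alpha\ell(R_n))$, and since the sequence $(R_n)$ is (eventually) increasing and has controlled gaps ($R_{n+1}/R_n\to1$, so consecutive values are within a bounded ratio), monotonicity of $v_{\Gamma,S}$ upgrades this from the subsequence $(R_n)$ to all $n$, at the cost of adjusting the constant.

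The main obstacle I expect is the bookkeeping with slowly varying functions: one must be careful that the asymptotic inverse of $r^\alpha\ell(r)$ is $n^{1/\alpha}\tilde\ell(n)$ for a slowly varying $\tilde\ell$, and that the hypothesis $\ell(x)\sim\ell(x^b)$ (for the relevant exponents $b=1/\alpha$ and $b=1-\alpha$ etc.) is exactly what is needed so that all these nested slowly varying factors collapse back to $\ell$ itself in the final exponent, rather than leaving behind an uncontrolled discrepancy. The use of Karamata's theorem for $\int^r t^{-\alpha}\ell(t)^{-1}dt\sim r^{1-\alpha}\ell(r)^{-1}/(1-\alpha)$ and the regular-variation inversion theorem are the two analytic inputs; everything else is elementary. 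I would also handle the degenerate case $\phi$ bounded separately, as in the proof of Lemma \ref{moment_growth}, since then $\Gamma$ has exponential growth and the inequality is trivial.
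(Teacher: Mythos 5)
Your proposal is correct and follows essentially the same route as the paper: apply Lemma \ref{moment_growth}, bound $\varrho_{n}$ by inverting the regularly varying tail, bound $\phi$ via Karamata's theorem ($\int_{1}^{r}t^{-\alpha}\ell(t)^{-1}dt\sim\frac{1}{1-\alpha}r^{1-\alpha}/\ell(r)$), use the hypothesis $\ell(x^{b})\sim\ell(x)$ to collapse the de Bruijn--conjugate factors so that $R_{n}\le C\left(n/\ell(n)\right)^{1/\alpha}$, and then invert to pass from $v_{\Gamma,S}(R_{n})\ge\exp(cn)$ to the stated bound. The only caveat is that your $\asymp$ for $\varrho_{n}$ and $\phi(r)$ should be $\lesssim$, since the tail hypothesis is one-sided, but the upper bounds are the only direction your chain of estimates actually uses, so the argument stands.
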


\begin{proof}Let $f$ be the asymptotic inverse of $r^{\alpha}\ell(r)$,
then $f(r)\sim(r\ell^{\#}(r))^{1/\alpha}$, where $\ell^{\#}$ is
the de Bruijn conjugate of $\ell,$ see \cite[Proposition 1.5.15.]{RV}.
Let 
\[
\tilde{\phi}(r)=\int_{1}^{r}\frac{dx}{x^{\alpha}\ell(x)}.
\]
Then $\tilde{\phi}(r)\sim\frac{1}{1-\alpha}r^{1-\alpha}/\ell(r)$,
see \cite[Proposition 1.5.8.]{RV}. It follows that 
\[
R_{n}=4n\phi(\varrho_{n})\le C_{1}n\tilde{\phi}(f(n))\le C_{2}n^{1/\alpha}\ell^{\#}(n)^{-1+1/\alpha}/\ell(n^{1/\alpha}\ell^{\#}(n)^{1/\alpha}).
\]
Under the assumption that $\ell(x)\sim\ell(x^{b})$ for some $b>0$,
we have $\ell^{\#}(x)\sim1/\ell(x)$ and the bound simplifies to 
\[
R_{n}\le C(n/\ell(n))^{1/\alpha}.
\]
By Lemma \ref{moment_growth}, there exists constants $C,c>0$ which
depend on $\mu$ such that 
\[
v_{\Gamma,S}\left(C(n/\ell(n))^{1/\alpha}\right)\ge\exp(cn).
\]
Since the asymptotic inverse of $(x/\ell(x))^{1/\alpha}$ is $x^{\alpha}\ell(x)$
in this case, we conclude that there exists a constant $c=c(\Gamma,\mu)>0$
such that 
\[
v_{\Gamma,S}(n)\ge\exp(cn^{\alpha}\ell(n)).
\]

\end{proof}

\begin{cor}\label{trivialmoment}

Let $\Gamma$ be a finitely generated group such that $v_{\Gamma}(r)\lesssim\exp(r^{\alpha})$,
where $\alpha\in(0,1)$. Then any probability measure $\mu$ on $\Gamma$
with finite $\alpha$-moment
\[
\sum_{g\in\Gamma}|g|^{\alpha}\mu(g)<\infty
\]
has finite entropy and trivial Poisson boundary. 

\end{cor}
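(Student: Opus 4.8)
The plan is to deduce Corollary \ref{trivialmoment} from Lemma \ref{moment_growth} by a contrapositive argument: if $\mu$ had non-trivial Poisson boundary, its tail decay would force the growth of $\Gamma$ to exceed $\exp(r^\alpha)$, contradicting the hypothesis. First I would dispose of the finite entropy claim: a probability measure with finite $\alpha$-moment has finite entropy, since the entropy of $\mu$ is bounded in terms of the number of elements in each ball $B(e,r)$ and the tail mass $\mu(\{g:|g|>r\})$, and the growth bound $v_{\Gamma}(r)\lesssim\exp(r^\alpha)$ together with the $\alpha$-moment condition (which forces $\mu(\{g:|g|\ge r\})=o(r^{-\alpha})$, hence summability of $r^{\alpha-1}\mu(\{|g|\ge r\})$ type quantities) makes the standard entropy estimate $H(\mu)\le \sum_r \mu(A_r)\log\#A_r + (\text{Shannon term})$ converge, where $A_r$ is the annulus $\{r-1<|g|\le r\}$.

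For the triviality of the Poisson boundary, I would argue by contradiction. Suppose $\mu$ has finite entropy and non-trivial Poisson boundary; apply Lemma \ref{moment_growth}. The key is to estimate $R_n=n\phi(\varrho_n)$ from above using the finite $\alpha$-moment. Since $\sum_g|g|^\alpha\mu(g)<\infty$, the tail $\mu(\{g:|g|\ge r\})$ is $o(r^{-\alpha})$, so $\varrho_n=\inf\{r:\mu(\{|g|\ge r\})<1/n\}$ satisfies $\varrho_n^{-\alpha}$ being eventually at least of order $1/n$ up to the $o(\cdot)$, which gives $\varrho_n \le C n^{1/\alpha}$ (more precisely, $\varrho_n = o(n^{1/\alpha})$ but $\le C n^{1/\alpha}$ suffices). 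Then I would bound $\phi(\varrho_n)=\sum_{|g|\le\varrho_n}|g|\mu(g)$: writing $|g|=|g|^{1-\alpha}\cdot|g|^{\alpha}$ and using $|g|\le\varrho_n\le Cn^{1/\alpha}$ on the truncated sum, $\phi(\varrho_n)\le \varrho_n^{1-\alpha}\sum_g|g|^\alpha\mu(g) \le C' n^{(1-\alpha)/\alpha}$. Hence $R_n=n\phi(\varrho_n)\le C'' n^{1/\alpha}$, and Lemma \ref{moment_growth} gives $v_{\Gamma,S}(C''n^{1/\alpha})\ge\exp(cn)$, i.e. $v_{\Gamma,S}(r)\ge\exp(c'r^\alpha)$ along the relevant scale, contradicting $v_{\Gamma}(r)\lesssim\exp(r^\alpha)$ once we check the $\simeq$-comparison is with an implied multiplicative constant strictly less than $c'$ — this is where I need to be slightly careful and instead argue that $v_{\Gamma}(r)\lesssim \exp(r^\alpha)$ only provides $v_{\Gamma,S}(r)\le\exp((Cr)^\alpha)=\exp(C^\alpha r^\alpha)$, so I must arrange the contradiction to be about the exponent rather than the constant.

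To make the exponent-level contradiction work cleanly, I would exploit that the finite $\alpha$-moment actually yields $\varrho_n=o(n^{1/\alpha})$ and $\phi(\varrho_n)=o(n^{(1-\alpha)/\alpha})$ — indeed by dominated convergence the tail contributions vanish — so $R_n = o(n^{1/\alpha})$, equivalently $n = \omega(R_n^\alpha)$. Then $v_{\Gamma,S}(R_n)\ge\exp(cn)$ with $n/R_n^\alpha\to\infty$ contradicts $v_{\Gamma,S}(r)\le\exp((Cr)^\alpha)$ for any fixed $C$, since along the sequence $r=R_n$ we would need $cn\le C^\alpha R_n^\alpha = o(n)$. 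The main obstacle I anticipate is precisely this passage from the crude bound $\varrho_n\le Cn^{1/\alpha}$ to the sharper $\varrho_n=o(n^{1/\alpha})$ and the analogous refinement for $\phi$: one must split the $\alpha$-moment sum at a slowly growing cutoff $T$, use $\sum_{|g|>T}|g|^\alpha\mu(g)<\epsilon$, and track how this forces both $n\cdot\mu(\{|g|\ge \varrho_n\})\approx 1$ and the truncated first moment to be $o$ of the worst case. The rest — the entropy estimate and plugging into Lemma \ref{moment_growth} — is routine.
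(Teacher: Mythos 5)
Your proposal is correct and follows essentially the same route as the paper: finite entropy via an annulus decomposition whose within-annulus entropy is controlled by the volume bound and whose cross-annulus term is controlled by the moment condition, and triviality of the boundary by contradiction through Lemma \ref{moment_growth}, including the splitting-at-a-cutoff argument that upgrades $\varrho_n\le Cn^{1/\alpha}$ and $\phi(\varrho_n)\le Cn^{(1-\alpha)/\alpha}$ to little-$o$ bounds so the contradiction is at the level of exponents rather than constants. The one place the paper is more careful is the entropy step: it uses dyadic annuli $B(e,2^k)\setminus B(e,2^{k-1})$ so that the Shannon term $\sum_k-\lambda_k\log\lambda_k$ converges via Markov's inequality and $-a\log a\le 2e^{-1}\sqrt a$, whereas with your unit-width annuli that pointwise bound would diverge and you would need to invoke the (true but not immediate from your stated summability) fact that an integer-valued random variable with finite $\alpha$-moment has finite entropy.
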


\begin{proof}

We first show that $\mu$ has finite entropy. Write $M=\sum_{g\in\Gamma}|g|^{\alpha}\mu(g)$.
Denote by $\mu_{k}$ the restriction of $\mu$ to $B\left(e,2^{k}\right)\setminus B(e,2^{k-1})$,
normalized to be a probability measure, that is $\mu_{k}(g)=\frac{1}{\lambda_{k}}\mu(g)\mathbf{1}_{B\left(e,2^{k}\right)\setminus B(e,2^{k-1})}$,
where $\lambda_{k}=\mu\left(B\left(e,2^{k}\right)\setminus B(e,2^{k-1}\right)$.
It follows that $\mu=\sum_{k=0}^{\infty}\lambda_{k}\mu_{k}$. Recall
the fact that the entropy of any probability measure supported by
a finite set is maximal for the uniform measure. Let $X_{1}$ be a
random variable with distribution $\mu$ and $Z=\sum_{k=1}^{\infty}k\mathbf{1}_{B\left(e,2^{k}\right)\setminus B(e,2^{k-1})}(X_{1})$.
Then 
\begin{align*}
H(\mu) & \le\sum_{k=0}^{\infty}\lambda_{k}H(\mu_{k})+\sum_{k=0}^{\infty}-\lambda_{k}\log\lambda_{k}\\
 & \le\sum_{k=0}^{\infty}\lambda_{k}\log\left|B(e,2^{k})\right|+\sum_{k=0}^{\infty}-\lambda_{k}\log\lambda_{k}.
\end{align*}
Now we plug in the assumption on the volume function of $\Gamma$
that $\left|B(e,2^{k})\right|\le\exp(Cr^{\alpha})$ for some constant
$C$, then 
\[
\sum_{k=0}^{\infty}\lambda_{k}\log\left|B(e,2^{k})\right|\le C\sum_{k=0}^{\infty}\lambda_{k}2^{k\alpha}\le2^{\alpha}C\sum_{g\in\Gamma}|g|^{\alpha}\mu(g)=2^{\alpha}CM.
\]
By the Markov inequality we have $\lambda_{k}\le\frac{M}{2^{(k-1)\alpha}}$.
Then for the second term, by the inequality $-a\log a\le2e^{-1}\sqrt{a}$,
we have 
\[
\sum_{k=0}^{\infty}-\lambda_{k}\log\lambda_{k}\le\sum_{k=0}^{\infty}2e^{-1}\sqrt{\lambda_{k}}\le2e^{-1}\sqrt{M}\sum_{k=0}^{\infty}2^{-\frac{(k-1)\alpha}{2}}<\infty.
\]
We conclude that $\mu$ has finite entropy. 

Suppose on the contrary $\mu$ has non-trivial Poisson boundary. Since
$\mu$ has finite $\alpha$-moment, we have 
\begin{align*}
\varrho_{n} & =\inf\{r:\ \mu(\{g:|g|\ge r\})\le1/n\}\le n^{1/\alpha},\\
\phi(r) & =\sum_{g:|g|\le r}|g|\mu(g)=o(r^{1-\alpha})\mbox{ as }r\to\infty.
\end{align*}
Therefore by Corollary \ref{tailgrowth}, we have that $v_{\Gamma}(R_{n})\gtrsim e^{n}$
where $R_{n}/n^{1/\alpha}\to0$ as $n\to\infty$. This contradicts
with the assumption that $v_{\Gamma}(r)\lesssim\exp(r^{\alpha})$. 

\end{proof}

\subsection{Permutational wreath products}

Let $\Gamma$ be a group acting on a set $X$ from the right. Let
$A$ be a group, the \emph{permutational wreath product} $W=A\wr_{X}\Gamma$
is defined as the semi-direct product $\sum_{X}A\rtimes\Gamma$, where
$\Gamma$ acts on $\sum_{X}A$ by permuting coordinates. The \emph{support
${\rm supp}f$ }of $f:X\to A$ consists of points of $x\in X$ such
that $f(x)\neq id_{A}$. Elements of $\sum_{X}A$ are viewed as finitely
supported functions $X\to A$. The action of $\Gamma$ on $\sum_{X}A$
is 
\[
\tau_{g}f(x)=f(x\cdot g).
\]
It follows that ${\rm supp}(\tau_{g}f)=({\rm supp}f)\cdot g^{-1}$.
Elements in $W=A\wr_{X}\Gamma$ are recorded as pairs $(f,g)$ where
$f\in\sum_{X}A$ and $g\in\Gamma$. Multiplication in $W$ is given
by 
\[
(f_{1},g_{1})(f_{2},g_{2})=(f_{1}\tau_{g_{1}}f_{2},g_{1}g_{2}).
\]
When $\Gamma$ and $A$ are finitely generated, $W=A\wr_{X}\Gamma$
is finitely generated as well. For more information about Cayley graphs
of permutational wreath products, see \cite[Section 2]{BE1}. 

\subsection{Groups acting on rooted trees }

We review some basics for groups acting on rooted trees. For more
background and details, we refer to \cite[Part 1]{BGShandbook} or
\cite[Chapter 1]{NekraBook}. Let $\mathbf{d}=(d_{j})_{j\in\mathbb{N}}$
be a sequence of integers, $d_{j}\ge2$ for all $j\in\mathbb{N}$. 

The spherically symmetric rooted tree $\mathsf{T}_{\mathbf{d}}$ is
the tree with vertices $v=v_{1}\ldots v_{n}$ with each $v_{j}\in\{0,1,\ldots,d_{j}-1\}$.
The root is denoted by the empty sequence $\emptyset$. Edge set of
the tree is $\left\{ \left(v_{1}\ldots v_{n},v_{1}\ldots v_{n}v_{n+1}\right)\right\} $.
The index $n$ is called the depth or level of $v$, denoted $\left|v\right|=n$.
Denote by $\mathsf{T}_{\mathbf{d}}^{n}$ the finite subtree of vertices
up to depth $n$ and $\mathsf{L}_{n}$ the vertices of level $n$.
The boundary $\partial\mathsf{T}_{\mathbf{d}}$ of the tree $\mathsf{T}_{\mathbf{d}}$
is the set of infinite rays $x=v_{1}v_{2}\ldots$ with $v_{j}\in\{0,1,\ldots,d_{j}-1\}$
for each $j\in\mathbb{N}$. 

The group of automorphisms ${\rm {Aut}}(\mathsf{T}_{\mathbf{d}})$
of the rooted tree is the group of tree automorphisms that fix the
root $\emptyset$. The group ${\rm {Aut}}(\mathsf{T}_{\mathbf{d}})$
admits the following canonical isomorphism 
\[
\psi:{\rm Aut}(\mathsf{T}_{\mathbf{d}})\to{\rm Aut}(\mathsf{T}_{\mathbf{\mathfrak{s}d}})\wr_{\{0,1,\ldots,d_{1}-1\}}\mathfrak{S}_{d_{1}},
\]
where $\mathfrak{S}_{d}$ is the permutation group on the set $\{0,1,\ldots,d-1\}$,
and $\mathfrak{s}$ is the shift on the sequences, $\mathfrak{s}(d_{1},d_{2},\ldots)=(d_{2},d_{3},\ldots)$.
We refer to the isomorphism $\psi$ as the \emph{wreath recursion}.
As the isomorphism is canonical, we omit $\psi$ and identify an automorphism
and its image under $\psi$: for $g\in{\rm Aut}(\mathsf{T}_{\mathbf{d}})$,
write 
\[
g=\left(g_{0},\ldots,g_{d_{1}-1}\right)\tau_{1}(g)
\]
where the sections $g_{j}\in{\rm Aut}(\mathsf{T}_{\mathfrak{s}\mathbf{d}})$
and $\tau_{1}(g)\in S_{d_{1}}$. The section $g_{j}$ represents the
action of $g$ on the subtree $\mathsf{T}_{j}$ rooted at the vertex
$j$. The permutation $\tau_{1}(g)$ describes how the subtrees rooted
at $j$, $j\in\{0,1,\ldots,d_{1}-1\}$ are permuted. We refer to $\tau_{1}(g)$
as the \emph{root permutation} of $g$. 

Apply the wreath recursion $n$ times, we have 
\begin{align*}
\psi^{n}:{\rm Aut}(\mathsf{T}_{d}) & \to{\rm Aut}(\mathsf{T}_{\mathfrak{s}^{n}{\bf d}})\wr_{\mathsf{L}_{n}}{\rm Aut}(\mathsf{T}_{\mathbf{d}}^{n}),\\
g & \mapsto(g_{v})_{v\in\mathsf{L}_{n}}\tau_{n}(g),
\end{align*}
where finitary permutation $\tau_{n}(g)$ is the projection of $g$
to the automorphism group of the finite tree $\mathsf{T}_{\mathbf{d}}^{n}$,
and $g_{v}$ is called the \emph{section} of $g$ at vertex $v$.
Since the isomorphism $\psi^{n}$ is canonical, we write $g=(g_{v})_{v\in\mathsf{L}_{n}}\tau_{n}(g)$. 

Let $G$ be a subgroup of ${\rm {Aut}}(\mathsf{T}_{\mathbf{d}})$.
Given a vertex $u\in\mathsf{T}_{\mathbf{d}}$, the \emph{vertex stabilizer}
${\rm St}_{G}(u)$ consists of these elements in $G$ that fix $u$,
that is ${\rm St}_{G}(u)=\{g\in G:\ u\cdot g=u\}$. The $n$\emph{-th
level stabilizer} ${\rm St}_{G}(\mathsf{L}_{n})$ consists of the
automorphisms in $G$ that fix all the vertices on level $n$:
\[
{\rm St}_{G}(\mathsf{L}_{n}):=\cap_{v\in\mathsf{L}_{n}}{\rm St}_{G}(v).
\]

The \emph{rigid vertex stabilizer} of $u$ in $G$, denoted by ${\rm Rist}_{G}(u)$,
consists of the automorphisms in $G$ that fix all vertices not having
$u$ as a prefix. An element $g\in{\rm Rist}_{G}(u)$ only acts on
the subtree rooted at $u$: if $|u|=n$, then under the wreath recursion
$\psi^{n}$, the finitary permutation $\tau_{n}(g)$ is equal to identity,
and the section $g_{v}$ is equal to identity for any $v\neq u$.
Since automorphisms in different rigid vertex stabilizers on the same
level act on disjoint subtrees, they commute with other other. The
rigid $n$-th level stabilizer is defined as 
\[
{\rm Rist}_{G}(\mathsf{L}_{n}):=\prod_{u\in\mathsf{L}_{n}}{\rm Rist}_{G}(u).
\]

We now recall the definitions of branch groups. Let $\mathsf{T}$
be a rooted tree of constant valency $d$ and $G$ be a subgroup of
${\rm {Aut}}(\mathsf{T})$ acting level transitively. 
\begin{enumerate}
\item If all rigid $n$-th level stabilizers ${\rm Rist}_{G}(\mathsf{L}_{n})$,
$n\in\mathbb{N}$, have finite index in $G$, then we say $G$ is
a \emph{branch group}. 
\item We say $G$ is a \emph{regular branch group} if it acts on a regular
tree $\mathsf{T}$ of constant valency $d$, and there exists a finite
index subgroup $K<G$ such that $K^{d}$ is contained in $\psi(K)$
as a finite index subgroup, that is $(\underbrace{K,\ldots,K}_{d\ {\rm copies}})$
is a finite index subgroup of $\psi(K)$. In this case we say $G$
is branching over $K$. 
\end{enumerate}
The group of automorphisms of the subtree rooted at $u$ is embedded
in ${\rm Aut}(\mathsf{T}_{d})$ as rigid vertex stabilizer of $u$
by 
\begin{align}
\iota(\cdot:u):{\rm Aut}(\mathsf{T}_{\mathfrak{s}^{n}{\bf d}}^{u}) & \to{\rm Aut}(\mathsf{T}_{\mathfrak{s}^{n}{\bf d}})\wr_{\mathsf{L}_{n}}{\rm Aut}(\mathsf{T}_{\mathbf{d}}^{n})\label{eq:embed-rigid}\\
h & \mapsto(g_{v}),\ \mbox{where }g_{u}=h\mbox{ and }g_{v}=e\mbox{ for any }v\neq u.\nonumber 
\end{align}
It follows from the definition that if $G$ is branching over $K$,
then for any vertex $u\in\mathsf{T}$, ${\rm Rist}_{G}(u)$ contains
$K$ as a subgroup. In particular, we have that $\iota(h,u)\in K$
for any $h\in K$, $u\in\mathsf{T}$. 

\subsection{Grigorchuk groups\label{subsec:Grigorchuk-def}}

The groups $\left\{ G_{\omega}\right\} $ were introduced by Grigorchuk
in \cite{Grigorchuk84} as first examples of groups of intermediate
growth. We recall below the definition of these groups. See also exposition
in the books \cite[Chapter VIII]{delaHarpeBook} and \cite{Mann}.

Let $\{{\bf 0},{\bf 1},{\bf 2}\}$ be the three non-trivial homomorphisms
from the $4$-group $(\mathbb{Z}/2\mathbb{Z})\times(\mathbb{Z}/2\mathbb{Z})=\{id,b,c,d\}$
to the $2$-group $\mathbb{Z}/2\mathbb{Z}=\{id,a\}$. They are ordered
in such a way that ${\bf 0},{\bf 1},{\bf 2}$ vanish on $d,c,b$ respectively.
For example, ${\bf 0}$ maps $id,d$ to $id$ and $b,c$ to $a$.
Let $\Omega=\left\{ {\bf 0},{\bf 1},{\bf 2}\right\} ^{\mathbb{\infty}}$
be the space of infinite sequences over letters $\left\{ {\bf 0},{\bf 1},{\bf 2}\right\} $.
The space $\Omega$ is endowed with the shift map $\mathfrak{s}:\Omega\to\Omega$,
$\mathfrak{s}(\omega_{0}\omega_{1}\ldots)=\omega_{1}\omega_{2}\ldots$.

Given an $\omega\in\Omega$, the Grigorchuk group $G_{\omega}$ acting
on the rooted binary tree is generated by $\left\{ a,b_{\omega},c_{\omega},d_{\omega}\right\} $,
where $a=(id,id)\varepsilon$, $\varepsilon$ transposes $0$ and
$1$, and the automorphisms $b_{\omega},c_{\omega},d_{\omega}$ are
defined recursively according to $\omega$ as follows. The wreath
recursion sends
\[
\psi_{n}:G_{\mathfrak{s}^{n}\omega}\to G_{\mathfrak{s}^{n+1}\omega}\wr_{\{0,1\}}\mathfrak{S}_{2}
\]
by
\begin{align*}
\psi_{n}\left(b_{\mathfrak{s}^{n}\omega}\right) & =\left(\omega_{n}(b),b_{\mathfrak{s}^{n+1}\omega}\right),\\
\psi_{n}\left(c_{\mathfrak{s}^{n}\omega}\right) & =\left(\omega_{n}(c),c_{\mathfrak{s}^{n+1}\omega}\right),\\
\psi_{n}\left(d_{\mathfrak{s}^{n}\omega}\right) & =\left(\omega_{n}(d),d_{\mathfrak{s}^{n+1}\omega}\right).
\end{align*}
The string $\omega$ determines the portrait of the automorphisms
$b_{\omega},c_{\omega},d_{\omega}$ by the recursive definition above.
The \emph{first Grigorchuk group} corresponds to the periodic sequence
$\omega=({\bf 012})^{\infty}$ and is often denoted as $G_{012}$.
See Figure \ref{bcd} for the portraits of these tree automorphisms.
The construction of $G_{012}$ is given by Grigorchuk in \cite{Grigorchuk80},
where he shows that $G_{012}$ is a torsion group. Merzlyakov has
shown later in \cite{Merzlyakov} that the construction of $G_{012}$
is closely related to an earlier construction of Aleshin \cite{Aleshin}.
Indeed it can be shown that $G_{012}$ and Aleshin's group in \cite{Aleshin}
are commensurable. 

\begin{figure}[!htb] \centering \includegraphics[scale=0.6]{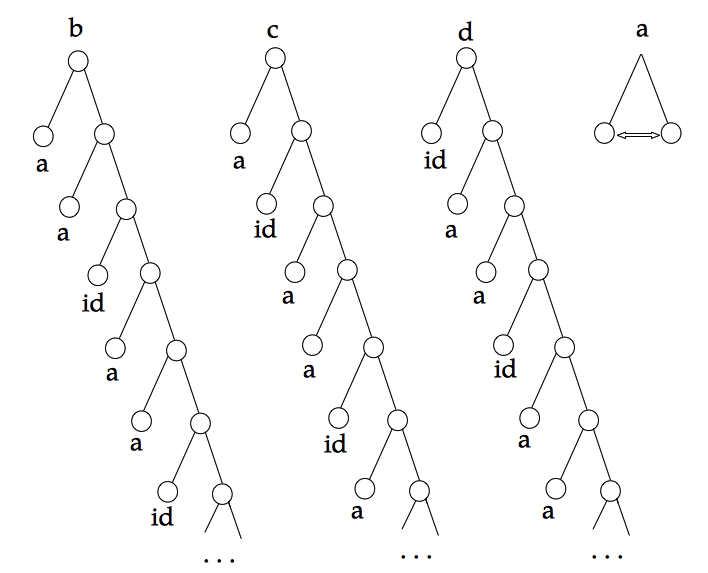} \caption{Portraits of generators of the first Grigorchuk group.}\label{bcd}
\end{figure} 

Main interest in Grigorchuk's construction results from his paper
\cite{Grigorchuk84} where it is shown that $G_{012}$, and more generally
$G_{\omega}$ with $\omega$ not eventually constant, is of intermediate
growth. If $\omega$ is eventually constant, then $G_{\omega}$ has
polynomial growth. When $\omega$ contains infinitely many of each
of the three symbols $\{{\bf 0},{\bf 1},{\bf 2}\}$, $G_{\omega}$
is an infinite torsion group. 

The following substitutions are used to obtain words with asymptotically
maximal inverted orbit growth in \cite[Section 3.2]{BE2}. Define
self-substitutions $\zeta_{i}$, $i\in\{{\bf 0},{\bf 1},{\bf 2}\}$,
of $\{ab,ac,ad\}$ by
\begin{align}
\zeta_{{\bf 0}}:ab\mapsto abab, & \ ac\mapsto acac,\ ad\mapsto abadac,\label{eq:subzetai}\\
\zeta_{{\bf 1}}:ab\mapsto abab,\  & ac\mapsto adacab,\ ad\mapsto adad,\nonumber \\
\zeta_{{\bf 2}}:ab\mapsto acabad,\  & ac\mapsto acac,\ ad\mapsto adad.\nonumber 
\end{align}
It is understood that $\zeta_{\omega_{n}}$ sends $\left\{ ab_{\mathfrak{s}^{n+1}\omega},ac_{\mathfrak{s}^{n+1}\omega},ad_{\mathfrak{s}^{n+1}\omega}\right\} ^{\ast}$
to $\left\{ ab_{\mathfrak{s}^{n}\omega},ac_{\mathfrak{s}^{n}\omega},ad_{\mathfrak{s}^{n}\omega}\right\} ^{\ast}$
by the rules specified above, for example if $\omega_{n}={\bf 0}$,
then $\zeta_{\omega_{n}}\left(ab_{\mathfrak{s}^{n+1}\omega}\right)=ab_{\mathfrak{s}^{n}\omega}ab_{\mathfrak{s}^{n}\omega}$.
One step wreath recursion gives that for $w\in\left\{ ab,ac,ad\right\} ^{\ast}$,
\begin{equation}
\psi_{n}(\zeta_{\omega_{n}}\left(w\right))=\begin{cases}
(awa,w) & \mbox{if }\zeta_{\omega_{n}}(w)\mbox{ contains an even number of }a,\\
(wa,aw)\varepsilon & \mbox{if }\zeta_{\omega_{n}}(w)\mbox{ contains an odd number of }a.
\end{cases}\label{eq:subformula}
\end{equation}
This formula is used in \cite[Section 3.2]{BE2} (where there is a
misprint), see also lecture notes of Bartholdi \cite[Section F]{bartholdinote}.
It follows from (\ref{eq:subformula}) that if $g\in G_{\mathfrak{s}^{n+1}\omega}$
can be represented by a word in $\{ab_{\mathfrak{s}^{n+1}\omega},ac_{\mathfrak{s}^{n+1}\omega},ad_{\mathfrak{s}^{n+1}\omega}\}$,
then the substitution $\zeta_{\omega_{n}}$ can be applied to $g$
and the resulting image in $G_{\mathfrak{s}^{n}\omega}$ does not
depend on the choice of the representing word. The composition $\zeta_{\omega_{0}}\circ\ldots\circ\zeta_{\omega_{n}}$
maps such $g\in G_{\mathfrak{s}^{n+1}\omega}$ into $G_{\omega}$. 

Usually, the recursion on the first Grigorchuk group $G_{012}$ is
recorded as 
\begin{equation}
a=(id,id)\varepsilon,\ b=(a,c),\ c=(a,d),\ d=(id,b).\label{eq:un}
\end{equation}
Compared with the general notation for $G_{\omega}$, the following
identification is made: $\omega=({\bf 012})^{\infty}$ and
\begin{align}
b & =b_{\omega}=c_{\mathfrak{s^{2}\omega}}=d_{\mathfrak{s}\omega},\label{eq:identification}\\
c & =b_{\mathfrak{s}\omega}=c_{\omega}=d_{\mathfrak{s}^{2}\omega},\nonumber \\
d & =b_{\mathfrak{s}^{2}\omega}=c_{\mathfrak{s}\omega}=d_{\omega}.\nonumber 
\end{align}
Note that under this identification, when applying the substitutions
$\zeta_{i}$ along the string $\omega=({\bf 012})^{\infty}$, we have
the substitution $\zeta$ mentioned in the Introduction:
\[
\zeta:ab\mapsto abadac,\ ac\mapsto abab,\ ad\mapsto acac.
\]
For example, for $n\equiv0\mod3$, $\zeta_{\omega_{n-1}}=\zeta_{{\bf 2}}$
and $\zeta_{\omega_{n-1}}\left(ab_{\mathfrak{s}^{n}\omega}\right)=ac_{\mathfrak{s}^{n-1}\omega}ab_{\mathfrak{s}^{n-1}\omega}ad_{\mathfrak{s}^{n-1}\omega}$,
$\zeta_{\omega_{n-1}}\left(ac_{\mathfrak{s}^{n}\omega}\right)=ac_{\mathfrak{s}^{n-1}\omega}ac_{\mathfrak{s}^{n-1}\omega}$,
$\zeta_{\omega_{n-1}}\left(ad_{\mathfrak{s}^{n}\omega}\right)=ad_{\mathfrak{s}^{n-1}\omega}ad_{\mathfrak{s}^{n-1}\omega}$
in general notation. By the identification explained in (\ref{eq:identification}),
$\zeta_{\omega_{n-1}}$ can be written as the substitution $\zeta$. 

We mention another useful substitution on $G_{012}$: the substitution
$\sigma$ on $\{a,b,c,d\}^{\ast}$ such that
\begin{equation}
\sigma(a)=aca,\ \sigma(b)=d,\ \sigma(c)=b,\ \sigma(d)=c.\label{eq:subsigma}
\end{equation}
It gives an homomorphism from $G_{012}$ to itself. The substitution
$\sigma$ is used in the proof of the anti-contraction property which
leads to the lower bound $e^{n^{1/2}}$ by Grigorchuk \cite{Grigorchuk84}.
It also appears in Lysionok's recursive presentation of the group
$G_{012}$ in \cite{Lysionok}. 

In later sections where only the first Grigorchuk group is discussed
we will adopt the usual notation as in (\ref{eq:un}). One should
keep in mind the identification above to avoid possible confusions
with the general notation for $G_{\omega}$. 

\subsection{Sections along a ray\label{subsec:portrait}}

A tree automorphism can be described by its \emph{portrait}, see for
example \cite[Section 1.4]{BGShandbook}. For our purposes, it is
useful to describe an automorphism by its sections along a ray (finite
or infinite) which it fixes. 

Let $\mathsf{T}$ be the rooted binary tree. Let $g\in{\rm Aut}(\mathsf{T})$
and suppose $v\in\mathsf{T}$ is a vertex such that $v\cdot g=v$.
Write $v=v_{1}v_{2}\ldots v_{n}$. Since $v$ is fixed by $g$, $g$
is completely described by its sections at $\check{v}_{1},v_{1}\check{v}_{2},\ldots,v_{1}\ldots v_{n-1}\check{v}_{n}$
and $v$, where $\check{v}_{j}$ denotes the opposite of $v_{j}$,
$\check{v}_{j}=1-v_{j}$. More formally, under the wreath recursion,
$g=(g_{0},g_{1})$ (there is no root permutation because $v$ is fixed
by $g$). Then record $g_{\check{v}_{1}}$ and continue to perform
wreath recursion to $g_{v_{1}}$. Record the section $g_{v_{1}\check{v}_{2}}$
and continue with $g_{v_{1}v_{2}}$. The procedure stops when we reach
level $n$ and record the sections at $v$ and its sibling. 

For an automorphism $g$ which fixes an infinite ray $v=v_{1}v_{2}\ldots\in\partial\mathsf{T}$,
$g$ is described by the collection of sections $\left\{ g_{v_{1}\ldots\check{v}_{n}}\right\} _{n\in\mathbb{N}}$.
When all the sections $g_{v_{1}\ldots\check{v}_{n}}\in\mathfrak{S}_{2}$,
following the terminology of \cite[Definition 1.24]{BGShandbook},
we say that the automorphism $g$ is \emph{directed along the infinite
ray} $v$, . For example, in the Grigorchuk group $G_{\omega}$, the
generators $b_{\omega},c_{\omega},d_{\omega}$ are directed along
the ray $1^{\infty}$. 

When $v\cdot g=v$ and all the sections $g_{v_{1}\ldots\check{v}_{n}}$
are represented by short words from an explicit collection, we draw
the picture of these sections along $v$ to describe the element $g$.
Although this picture is not a portrait in the strict sense, it effectively
describes how $g$ acts on the tree.

\begin{figure} \centering \includegraphics[scale=0.05]{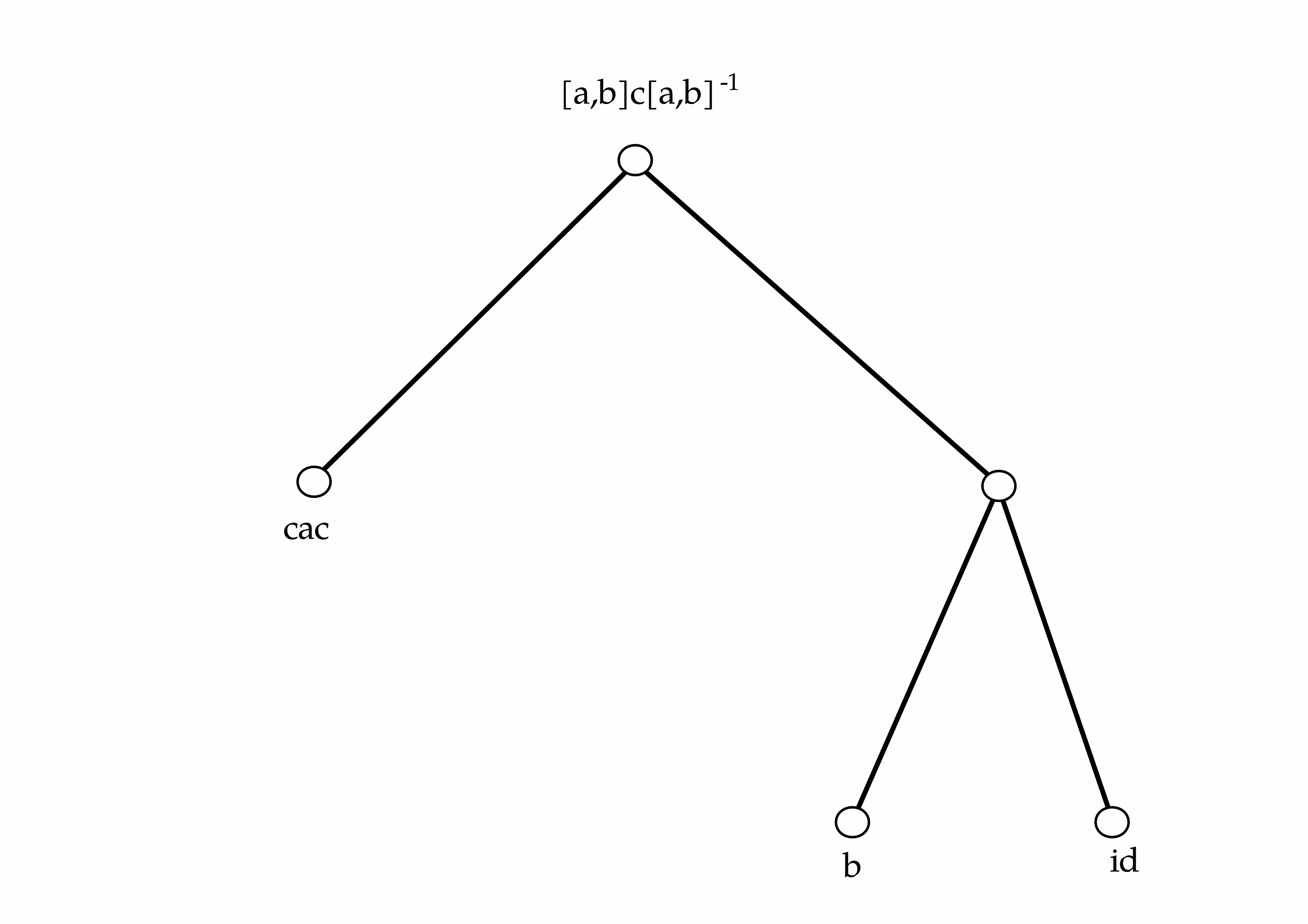} \caption{A portrait of $[a,b]c[a,b]^{-1}$ along $10$}\label{absmall}
\end{figure} 

\begin{exa}

Take in the first Grigorchuk group the element $g=[a,b]c[a,b]^{-1}$.
Then the vertex $10$ is fixed by $g$. The portrait of $g$ along
$10$ is given by $g_{0}=cac$, $g_{11}=id$ and $g_{10}=b$. See
Figure \ref{absmall}. 

\end{exa}

\section{Stabilization of germ configurations \label{sec:Transience}}

The main purpose of this section is to formulate and prove Proposition
\ref{transience} which provides a sufficient condition for stabilization
of cosets of germ configurations. 

We recall the notion of germs of homeomorphisms. Let $\mathcal{X}$
be a topological space. A \emph{germ} of homeomorphism of $\mathcal{X}$
is an equivalence class of pairs $(g,x)$ where $x\in\mathcal{X}$
and $g$ is a homeomorphism between a neighborhood of $x$ and a neighborhood
of $g(x)$; and two germs $(g_{1},x_{1})$ and $(g_{2},x_{2})$ are
equal if $x_{1}=x_{2}$ and $g_{1},g_{2}$ coincide on a neighborhood
of $x_{1}$. A composition $(g_{1},x_{1})(g_{2},x_{2})$ is defined
if and only if $x_{1}\cdot g_{1}=x_{2}$. The inverse of the a germ
$(g,x)$ is $(g,x)^{-1}=(g^{-1},x\cdot g)$. A groupoid of germs of
homeomorphisms on $\mathcal{X}$ is a set of germs of homeomorphisms
of $\mathcal{X}$ that is closed under composition and inverse, and
that contains all identity germs $(id_{\mathcal{X}},x)$, $x\in\mathcal{X}$.

Let $G$ be a group acting by homeomorphisms on $\mathcal{X}$ from
the right. Its \emph{groupoid of germs}, denoted by $\mathcal{G}$,
is the set of germs $\left\{ (g,x):\ g\in G,x\in\mathcal{X}\right\} $.
For $x\in\mathcal{X}$, the isotropy group of $\mathcal{G}$ at $x$,
denoted by $\mathcal{G}_{x}$, is the set of germs $\left\{ (g,x):\ g\in{\rm St}_{G}(x)\right\} $.
In other words, $\mathcal{G}_{x}$ is the quotient of the stabilizer
${\rm St}_{G}(x)$ by the subgroup of $G$ which consists of elements
acting trivially on a neighborhood of $x$. 

Suppose there is a group $L$ acting by homeomorphisms on $\mathcal{X}$
such that for any point $x\in\mathcal{X}$, the orbits $x\cdot L=x\cdot G$
and the isotropy group $\mathcal{L}_{x}$ of its groupoid $\mathcal{L}$
is trivial. We refer to such an $L$ as an \emph{auxiliary group with
trivial isotropy}. Write conjugation of a germ $(g,x)\in\mathcal{G}_{x}$
by $\sigma\in L$ as
\[
(g,x)^{\sigma}:=\left(\sigma^{-1}g\sigma,x\cdot\sigma\right)\in\mathcal{G}_{x\cdot\sigma}.
\]
It's easy to see that since $L$ has trivial isotropy groups, if $x\cdot\sigma_{1}=x\cdot\sigma_{2}$
for $\sigma_{1},\sigma_{2}\in L$, then $(g,x)^{\sigma_{1}}=(g,x)^{\sigma_{2}}$. 

With the auxiliary group $L$ chosen, let $\hat{\mathcal{G}}$ be
the groupoid of germs of the group $\left\langle G,L\right\rangle $.
The isotropy group $\hat{\mathcal{G}}_{x}$ is called the group of
germs in \cite{Erschler04}. 

\begin{notation}\label{Hsubgroupoid}

Let $G\curvearrowright\mathcal{X}$ by homeomorphisms and $L$ be
an auxiliary group with trivial isotropy. Suppose the isotropy group
$\mathcal{\hat{G}}_{o}$ of $\hat{\mathcal{G}}$ is non-trivial at
some point $o\in\mathcal{X}$. Let $H_{o}\lvertneqq\mathcal{\hat{G}}_{o}$
be a proper subgroup of $\mathcal{\hat{G}}_{o}$ . For each point
$x\in o\cdot G$, fix a choice of $\sigma_{x}\in L$ such that $o\cdot\sigma_{x}=x$.
Let
\[
H_{x}:=\left\{ (g,x)\in\mathcal{\hat{G}}_{x}:\ (\sigma_{x}g\sigma_{x}^{-1},o)\in H_{0}\right\} ,
\]
then $H_{x}$ is a proper subgroup of $\hat{\mathcal{G}}_{x}$. Let
$\mathcal{H}=\mathcal{H}(H_{o})$ be the following sub-groupoid of
$\mathcal{G}$:
\begin{equation}
\mathcal{H}:=\{(g,x):\ g\in G,\ x\in o\cdot G,\ (g\sigma^{-1},x)\in H_{x}\mbox{ where }\sigma\in L,\ x\cdot g=x\cdot\sigma\}.\label{eq:H_v}
\end{equation}

\end{notation}

We now discuss some examples of groupoid of germs of groups acting
on rooted trees. Let $G<{\rm Aut}(\mathsf{T}_{\mathbf{d}})$, then
$G$ acts on the boundary of the tree $\mathcal{X}=\partial\mathsf{T}_{\mathbf{d}}$
by homeomorphisms. Let $L$ be the subgroup of ${\rm Aut}(\mathsf{T}_{\mathbf{d}})$
that consists of all finitary automorphisms. In other words, an element
$g\in{\rm Aut}(\mathsf{T}_{\mathbf{d}})$ is in $L$ if there exists
a finite level $n$ such that all sections $g_{v}$ for $v\in\mathsf{L}_{n}$
are trivial. The group $L$ is locally finite. For $G<{\rm Aut}(\mathsf{T}_{\mathbf{d}})$,
we use the group $L_{G}=\left\{ \gamma\in L:\ x\cdot\gamma\in x\cdot G\mbox{ for all }x\in\partial\mathsf{T}\right\} $
as an auxiliary group with trivial isotropy groups. In particular
if $G$ acts level transitively, then $L$ is used as the auxiliary
group. 

The isotropy groups of the groupoid of germs are easy to recognize
in directed groups (called spinal groups in \cite[Chapter 2]{BGShandbook}). 

\begin{exa}\label{ggerms}

Let $G_{\omega}$ be a Grigorchuk group and $o=1^{\infty}$. For any
$\omega$, $G_{\omega}$ acts level transitively. Use the finitary
automorphisms $L$ as the auxiliary group. From the definition of
the group we have that if $x\notin o\cdot G_{\omega}$, that is $x$
is not cofinal with $1^{\infty}$, then the isotropy group $(\mathcal{G}_{\omega})_{x}$
is trivial.

For $\omega$ eventually constant, say eventually constant ${\bf 0}$,
we have that $(d_{\omega},o)=(id,o)$ and $(b_{\omega},o)=(c_{\omega},o)$
in $(\mathcal{G}_{\omega})_{o}$. In this case for $x$ cofinal with
$1^{\infty}$, $(\mathcal{G}_{\omega})_{x}\simeq\mathbb{Z}/2\mathbb{Z}$.
For $\omega$ not eventually constant then $\left(\hat{\mathcal{G}}_{\omega}\right)_{x}=(\mathcal{G}_{\omega})_{x}=\{id,b,c,d\}$
for $x$ cofinal with $1^{\infty}$. In this case a group element
$g$ has $\gamma$-germ at $x$, where $\gamma\in\{b,c,d\}$, if there
is a finite level $n$ such that the section of $g$ at $x_{1}\ldots x_{n}$
is $\gamma_{\mathfrak{s}^{n}\omega}$. 

When $\omega$ is not eventually constant, the subgroup $\left\langle b\right\rangle =\{id,b\}$
is a proper subgroup of $\hat{\mathcal{G}}_{o}$. We refer to the
corresponding sub-groupoid $\mathcal{H}^{b}=\mathcal{H}(\left\langle b\right\rangle )$
of $\mathcal{G}$ as the groupoid of $\left\langle b\right\rangle $-germs.
The groupoid of $\left\langle c\right\rangle $-germs ($\left\langle d\right\rangle $-germs
resp.) is defined in the same way from the subgroup $\{id,c\}$ ($\{id,d\}$
resp.) of $\hat{\mathcal{G}}_{o}$. 

\end{exa}

Proposition \ref{transience} below provides a sufficient condition
for stabilization of $\mathcal{H}$-cosets of germs based on the Green
function of the induced random walk on the orbit. This criterion is
applied to verify non-triviality of Poisson boundary for the measures
with good control over tail decay constructed in Section \ref{sec:Main}
on Grigorchuk groups. Given a probability measure $\mu$ on $G$,
its induced transition kernel $P_{\mu}$ on an orbit $o\cdot G$ is
given by
\[
P_{\mu}(x,y)=\sum_{g\in G}\mu(g)\mathbf{1}_{\{y=x\cdot g\}}.
\]
The Green function $\mathbf{G}_{P_{\mu}}(x,y)$ of the $P_{\mu}$-random
walk is 
\[
\mathbf{G}_{P_{\mu}}(x,y)=\sum_{n=0}^{\infty}P_{\mu}^{n}(x,y).
\]
We refer to the book \cite[Chapter I]{WoessBook} for general background
on random walks on graphs. In many situations, estimates on transition
probabilities of the induced $P_{\mu}$-random walk are available
while the $\mu$-random walk on the group $G$ is hard to understand. 

\begin{prop}\label{transience}

Let $G$ be a countable group acting by homeomorphisms on $\mathcal{X}$
from the right and $L$ be an auxiliary group with trivial isotropy.
Assume that the isotropy group $\mathcal{\hat{G}}_{o}$ is nontrivial
at some point $o\in\mathcal{X}$ and $\hat{\mathcal{G}}_{o}=\mathcal{G}_{o}$. 

Let $\mu$ be a non-degenerate probability measure on $G$. Let $P_{\mu}$
be the induced transition kernel on the orbit $o\cdot G$ and $\mathcal{\mathbf{G}}_{P_{\mu}}$
the Green function of the $P_{\mu}$-random walk. Suppose there exists
a proper subgroup $H_{o}\lvertneqq\mathcal{\hat{G}}_{o}$ such that
\begin{equation}
\sum_{x\in o\cdot G}\mathbf{G}_{P_{\mu}}(o,x)\mu\left(\left\{ g\in G:\ (g,x)\notin\mathcal{H}\right\} \right)<\infty,\label{eq:summable}
\end{equation}
where $\mathcal{H}=\mathcal{H}(H_{0})$ is the sub-groupoid of $\mathcal{G}$
associated with $H_{o}$ defined in (\ref{eq:H_v}). Then the Poisson
boundary of $(G,\mu)$ is non-trivial. 

\end{prop}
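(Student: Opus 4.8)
The plan is to build an explicit non-constant bounded harmonic function on $(G,\mu)$ by showing that a certain coset-valued functional of the random walk stabilizes almost surely, and that its limiting value genuinely depends on the starting point. Concretely, fix the choices $\sigma_x\in L$ with $o\cdot\sigma_x=x$ from Notation \ref{Hsubgroupoid}, and for a group element $g$ with $x\cdot g=y$ define the germ cocycle $\Phi(g,x):=(\sigma_y g\sigma_x^{-1},o)\in\hat{\mathcal G}_o$; because $L$ has trivial isotropy this is well defined, and it satisfies the cocycle identity $\Phi(g_1g_2,x)=\Phi(g_1,x)\,\Phi(g_2,x\cdot g_1)$. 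Along the $\mu$-random walk $W_n=X_1\cdots X_n$, set $Z_n$ to be the image of $\Phi(W_n,o)$ in the coset space $\hat{\mathcal G}_o/H_o$ (a left or right coset space; one checks the cocycle respects the appropriate side). The key point is that $Z_{n}$ changes from $Z_{n-1}$ only when the increment $(X_n,W_{n-1}\cdot$-translate$)$ fails to lie in the subgroupoid $\mathcal H$, i.e. only when $(X_n,x)\notin\mathcal H$ with $x=o\cdot W_{n-1}$.

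The first step is a Borel--Cantelli argument: I would bound the expected number of times $Z_n$ moves. The probability that step $n$ is a "jump" is $\sum_{x} \mathbb P(o\cdot W_{n-1}=x)\,\mu(\{g:(g,x)\notin\mathcal H\})$, and since $\mathbb P(o\cdot W_{n-1}=x)=P_\mu^{\,n-1}(o,x)$, summing over $n$ gives exactly $\sum_{x\in o\cdot G}\mathbf G_{P_\mu}(o,x)\,\mu(\{g:(g,x)\notin\mathcal H\})$, which is finite by hypothesis \eqref{eq:summable}. Hence almost surely only finitely many jumps occur, so $Z_n$ is eventually constant; call the limit $Z_\infty$, a random variable measurable with respect to the tail $\sigma$-field $\mathcal A_\infty$ of the trajectory. (Here I use that $\hat{\mathcal G}_o=\mathcal G_o$, so that the germ of $W_n$ at $o$ is realized by an element of $\mathrm{St}_G(o)$ composed with a finitary correction, making $\Phi(W_n,o)$ genuinely $\hat{\mathcal G}_o$-valued rather than requiring elements outside $G$.)

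The second step is to show $Z_\infty$ is non-degenerate, so that the induced bounded harmonic function $h(g)=\mathbb P_g(Z_\infty = z_0)$ for a suitable value $z_0$ is non-constant. This is where non-degeneracy of $\mu$ enters: because $\mathrm{supp}\,\mu$ generates $G$ as a semigroup, for any target coset one can find a finite word $g=s_1\cdots s_k$ in the support whose germ cocycle $\Phi(g,o)$ realizes a prescribed coset move; conditioning on the first $k$ steps being exactly $s_1,\dots,s_k$ (an event of positive probability) and then using the already-proved a.s.\ stabilization from the new position, one sees that $Z_\infty$ takes at least two values each with positive probability. Therefore $\mathcal A_\infty$ is non-trivial, hence (for $\mu$ aperiodic, or after the standard reduction passing to $\mu*\check\mu$ or a lazy version, which does not change triviality of the Poisson boundary) the invariant $\sigma$-field $\mathcal I$ is non-trivial, and by the correspondence with bounded $\mu$-harmonic functions recalled in Subsection \ref{subsec:growthlemma} the Poisson boundary of $(G,\mu)$ is non-trivial.

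The main obstacle I anticipate is the bookkeeping around the germ cocycle: one must check carefully that $Z_n$ really is adapted (i.e.\ the coset of $\Phi(W_n,o)$ in $\hat{\mathcal G}_o/H_o$ is unchanged precisely when the increment lies in $\mathcal H$, using the definition \eqref{eq:H_v} of $\mathcal H$ via the $H_x$'s and the independence of $\sigma_x$-choices guaranteed by trivial isotropy of $L$), and that the "coset" is taken on the side that makes the cocycle multiplicativity collapse telescopically. The probabilistic core --- Borel--Cantelli for the jump count, plus a positive-probability perturbation for non-degeneracy --- is then routine, and the identity $\sum_n P_\mu^{n-1}(o,x)=\mathbf G_{P_\mu}(o,x)$ is exactly what turns hypothesis \eqref{eq:summable} into the summable-expectation bound.
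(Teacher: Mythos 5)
Your proposal is correct and follows essentially the same route as the paper: identify the change of the $H_o$-coset of the germ at $o$ with the event $(X_{n+1},o\cdot W_n)\notin\mathcal H$, sum the probabilities to get exactly the weighted Green function series, apply Borel--Cantelli for a.s.\ stabilization, and use non-degeneracy of $\mu$ to show the tail-measurable limit coset is non-deterministic. The only cosmetic difference is that you transport all germs to $\hat{\mathcal G}_o$ via the $\sigma_x$'s and phrase things as a cocycle, whereas the paper keeps the germ configuration $\Phi_{W_n}$ valued in the fibers $\hat{\mathcal G}_x$ of a permutational wreath product; these are equivalent bookkeeping devices.
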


\begin{rem}\label{small}

Suppose $\mu$ have the following additional property: there exists
a finite subset $V\subseteq o\cdot G$ such that for any element $g$
in the support of $\mu$, $(g,x)\in\mathcal{H}$ for all $x\notin V$,
then the condition in Proposition \ref{transience} is equivalent
to $G_{P_{\mu}}(o,o)<\infty$. In other words, the induced $P_{\mu}$-random
walk is transient on the orbit $o\cdot G$. In this special case the
claim is given by \cite[Proposition 2]{Erschler04}. For example,
it can be applied to measures of the form $\mu=\frac{1}{2}(\nu+\eta)$,
where $\nu$ is of finite support and for any element $g$ in the
support of $\eta$, $(g,x)\in\mathcal{H}$ for all $x\in o\cdot G$.
In this case we say $\eta$ is supported on a subgroup of restricted
germs.

\end{rem}

It is natural to consider $G$ as an embedded subgroup of the (unrestricted)
permutational wreath product $\mathcal{\hat{G}}_{o}\wr\wr_{o\cdot G}G$
as follows. 

Denote by $\mathcal{W}$ the semi-direct product $\left(\prod_{x\in o\cdot G}\mathcal{\hat{G}}_{x}\right)\rtimes G$.
Record elements of $\mathcal{W}$ as pairs $\left(\Phi,g\right)$,
where $g\in G$ and $\Phi$ assigns each point $x\in o\cdot G$ an
element in the isotropy group $\mathcal{\hat{G}}_{x}$ at $x$. Denote
by ${\rm supp}\Phi$ the set of points $x$ where $\Phi(x)$ is not
equal to identity of $\mathcal{G}_{x}$. The action of $G$ on $\prod_{x\in o\cdot G}\mathcal{\hat{G}}_{x}$
is given by 
\[
(\tau_{g}\Phi)(x)=\sigma\Phi(x\cdot g)\sigma^{-1},
\]
where $\sigma\in L$ satisfies $x\cdot\sigma=x\cdot g$. One readily
checks that $\tau$ is a well-defined (doesn't depend on the choice
of $\sigma$) left action of $G$ on $\prod_{x\in o\cdot G}\mathcal{\hat{G}}_{x}$.
Multiplication in $\mathcal{W}$ is given by $(\Phi,g)(\Phi',g')=\left(\Phi\tau_{g}\Phi',gg'\right)$.
The semi-direct product $\mathcal{W}$ is isomorphic to the permutational
wreath product $\mathcal{\hat{G}}_{x}\wr\wr_{o\cdot G}G$. We now
describe an embedding $\vartheta:G\hookrightarrow\mathcal{W}$. 

\begin{fact}\label{mapgerm}

Let $\vartheta:G\to\mathcal{W}$ by defined as $\vartheta(g)=(\Phi_{g},g)$
such that for $x\in o\cdot G$,
\begin{equation}
\Phi_{g}(x)=(g\sigma^{-1},x)\in\mathcal{G}_{x},\ \mbox{where }\sigma\in L,\ x\cdot g=x\cdot\sigma.\label{eq:confiPhi}
\end{equation}
Then $\vartheta$ is a monomorphism.

\end{fact}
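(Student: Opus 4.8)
The plan is to verify directly that $\vartheta$ is a well-defined group homomorphism and then check injectivity. First I would confirm that $\Phi_g(x)$ is well-defined: given $x\in o\cdot G$ and $g\in G$, the point $x\cdot g$ again lies in $o\cdot G$, so by the defining property of the auxiliary group $L$ (it has trivial isotropy and $x\cdot L = x\cdot G$ for every $x$), there exists $\sigma\in L$ with $x\cdot g = x\cdot\sigma$. Any two such choices $\sigma_1,\sigma_2$ satisfy $x\cdot\sigma_1 = x\cdot\sigma_2$, hence (as noted in the discussion preceding the Fact, using that $L$ has trivial isotropy groups) the germs $(g\sigma_1^{-1},x)$ and $(g\sigma_2^{-1},x)$ agree; in both cases $g\sigma_i^{-1}$ fixes $x$, so $(g\sigma_i^{-1},x)$ is indeed an element of the isotropy group $\mathcal{G}_x = \hat{\mathcal{G}}_x$. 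Thus $\Phi_g\in\prod_{x\in o\cdot G}\hat{\mathcal{G}}_x$ and $\vartheta(g) = (\Phi_g,g)\in\mathcal{W}$ makes sense.

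Next I would check the homomorphism property $\vartheta(g)\vartheta(g') = \vartheta(gg')$, i.e. $\Phi_g\,\tau_g\Phi_{g'} = \Phi_{gg'}$ in $\prod_x \hat{\mathcal{G}}_x$. Fix $x\in o\cdot G$. Choose $\sigma\in L$ with $x\cdot g = x\cdot\sigma$ and $\sigma'\in L$ with $(x\cdot\sigma)\cdot g' = (x\cdot\sigma)\cdot\sigma'$; then $x\cdot(gg') = x\cdot(\sigma\sigma')$ and $\sigma\sigma'\in L$, so $\Phi_{gg'}(x) = (gg'(\sigma\sigma')^{-1},x) = (gg'\sigma'^{-1}\sigma^{-1},x)$. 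On the other side, $\Phi_g(x) = (g\sigma^{-1},x)$ and, by the definition of the $G$-action $\tau$ (with the same $\sigma$, since $x\cdot\sigma = x\cdot g$),
\[
(\tau_g\Phi_{g'})(x) = \sigma\,\Phi_{g'}(x\cdot g)\,\sigma^{-1} = \sigma\,(g'\sigma'^{-1},x\cdot g)\,\sigma^{-1} = (\sigma g'\sigma'^{-1}\sigma^{-1},\,x\cdot\sigma^{-1}\cdot\sigma)
\]
using the conjugation rule for germs and $x\cdot g = x\cdot\sigma$; this last germ is $(\sigma g'\sigma'^{-1}\sigma^{-1},x)$. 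Composing the germs $\Phi_g(x)\cdot(\tau_g\Phi_{g'})(x) = (g\sigma^{-1},x)(\sigma g'\sigma'^{-1}\sigma^{-1},x)$ — the composition is defined because $x\cdot(g\sigma^{-1}) = x$ — yields $(g\sigma^{-1}\sigma g'\sigma'^{-1}\sigma^{-1},x) = (gg'\sigma'^{-1}\sigma^{-1},x) = \Phi_{gg'}(x)$, as required. The second coordinates obviously multiply correctly, so $\vartheta$ is a homomorphism.

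Finally, injectivity: if $\vartheta(g)$ is the identity of $\mathcal{W}$ then in particular $g = e$ in the second coordinate, so $\vartheta$ is injective. (Alternatively one notes the map $\mathcal{W}\to G$ forgetting the first coordinate is a left inverse of $\vartheta$.) The only genuinely delicate point is bookkeeping the various $\sigma$'s consistently in the homomorphism computation — making sure the conjugation rule $(h,y)^{\sigma}=(\sigma^{-1}h\sigma,y\cdot\sigma)$ and the composition rule for germs are applied with compatible base points, and invoking well-definedness of $\tau$ at the right moment — but this is routine once the matching conditions $x\cdot g = x\cdot\sigma$ are tracked carefully.
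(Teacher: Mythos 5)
Your proposal is correct and follows essentially the same route as the paper: a direct verification of $\Phi_{g}(x)\,(\tau_{g}\Phi_{g'})(x)=\Phi_{gg'}(x)$ using the same choices of $\sigma,\sigma'\in L$ matching $g$ at $x$ and $g'$ at $x\cdot g$, followed by the observation that injectivity is immediate from the second coordinate. The extra well-definedness remarks you include are consistent with the discussion the paper gives just before the Fact.
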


\begin{proof}

Let $g_{1},g_{2}\in G$ and $\sigma_{1},\sigma_{2}\in L$ such that
$x\cdot g_{1}=x\cdot\sigma_{1}$, $x\cdot g_{1}g_{2}=x\cdot g_{1}\sigma_{2}$.
Then
\begin{align*}
\Phi_{g_{1}}(x)\left(\tau_{g_{1}}\Phi_{g_{2}}\right)(x) & =(g_{1}\sigma_{1}^{-1},x)\left(g_{2}\sigma_{2}^{-1},x\cdot g_{1}\right)^{\sigma_{1}^{-1}}\\
 & =(g_{1}\sigma_{1}^{-1},x)(\sigma_{1}g_{2}\sigma_{2}^{-1}\sigma_{1}^{-1},x\cdot g_{1}\sigma_{1}^{-1})\\
 & =(g_{1}g_{2}\sigma_{2}^{-1}\sigma_{1}^{-1},x)=\Phi_{g_{1}g_{2}}(x).
\end{align*}
It follows that $\vartheta$ is homomorphism. It is clearly injective.

\end{proof}

\begin{proof}[Proof of Proposition \ref{transience}]

Let $X_{1},X_{2}\ldots$ be i.i.d. random variables on $G$, $W_{n}=X_{1}\ldots X_{n}$.
Let $\vartheta:G\to\mathcal{W}$, $\vartheta(g)=(\Phi_{g},g)$, be
the embedding as described in Fact \ref{mapgerm}. Along the random
walk trajectory $W_{n}$, consider the $H_{o}$-coset of the germ
at $o$. Denote by $\pi\left(\Phi_{W_{n}}(o)\right)$ the (right)
coset of $\Phi_{W_{n}}(o)$ in $\mathcal{G}_{o}/H_{o}$. By the rule
of multiplication in $\mathcal{W}$,
\[
\Phi_{W_{n+1}}(o)=\Phi_{W_{n}}(o)\left(\Phi_{X_{n+1}}\left(o\cdot W_{n}\right)\right)^{\sigma_{n}^{-1}},\ \sigma_{n}\in L,\ o\cdot W_{n}=o\cdot\sigma_{n}.
\]
Therefore the event
\[
\left\{ \pi\left(\Phi_{W_{n}}(o)\right)\neq\pi\left(\Phi_{W_{n+1}}(o)\right)\right\} =\left\{ \Phi_{X_{n+1}}\left(o\cdot W_{n}\right)\notin H_{o\cdot W_{n}}\right\} .
\]
By definition of the sub-groupoid of germs $\mathcal{H}$, 
\[
\left\{ \Phi_{X_{n+1}}\left(o\cdot W_{n}\right)\notin H_{o\cdot W_{n}}\right\} =\left\{ (X_{n+1},o\cdot W_{n})\notin\mathcal{H}\right\} .
\]
 Then
\begin{align*}
\sum_{n=0}^{\infty}\mathbb{P}\left(\pi\left(\Phi_{W_{n}}(o)\right)\neq\pi\left(\Phi_{W_{n+1}}(o)\right)\right) & =\sum_{n=0}^{\infty}\mathbb{P}\left((X_{n+1},o\cdot W_{n})\notin\mathcal{H}\right)\\
 & =\sum_{n=0}^{\infty}\sum_{x\in o\cdot G}\sum_{g\in G}\mu(g)\mathbf{1}_{\{(g,x)\notin\mathcal{H}\}}\mathbb{P}\left(x=o\cdot W_{n}\right)\\
 & =\sum_{x\in o\cdot G}{\bf G}_{P_{\mu}}(o,x)\mu\left(\left\{ g:\ (g,x)\notin\mathcal{H}\right\} \right).
\end{align*}
If the summation is finite, by the Borel-Cantelli lemma, $\pi\left(\Phi_{W_{n}}(o)\right)$
stabilizes a.s. along the random walk trajectory $\left(W_{n}\right)$.
For each coset $\gamma H_{o}$ in $\hat{\mathcal{G}}_{o}/H_{o}$,
consider the tail event 
\[
A_{\gamma H_{o}}=\left\{ \lim_{n\to\infty}\pi\left(\Phi_{W_{n}}(o)\right)=\gamma H_{o}\right\} .
\]
Given $\gamma\in\hat{\mathcal{G}}_{o}=\mathcal{G}_{o}$, let $g\in G$
be a group element such that $o\cdot g=o$ and $\Phi_{g}(o)=\gamma$.
Note that $\mathbb{P}_{id}\left(A_{H_{o}}\right)=\mathbb{P}_{g}\left(A_{\gamma H_{o}}\right)$
and for any $m\in\mathbb{N}$,
\begin{align*}
\mathbb{P}_{id}\left(A_{H_{o}}\right)=\mathbb{P}_{g}\left(A_{\gamma H_{o}}\right) & \ge\mu^{(m)}(g^{-1})\mathbb{P}_{id}\left(A_{\gamma H_{o}}\right),\\
\mathbb{P}_{id}\left(A_{\gamma H_{o}}\right) & \ge\mu^{(m)}(g)\mathbb{P}_{g}\left(A_{\gamma H_{o}}\right)=\mu^{(m)}(g)\mathbb{P}_{id}\left(A_{H_{o}}\right).
\end{align*}
Since $\mu$ is non-degenerate, there exists a finite $m$ such that
$\mu^{(m)}(g),\mu^{(m)}(g^{-1})>0$. Then $\mathbb{P}_{id}\left(A_{H_{o}}\right)=0$
would imply $\mathbb{P}_{id}\left(A_{\gamma H_{o}}\right)=0$ for
all $\gamma\in\mathcal{G}_{o}$. On the other hand since $\pi\left(\Phi_{W_{n}}(o)\right)$
stabilizes with probability $1$, we have $\mathbb{P}_{id}\left(\cup_{\gamma H_{o}\in\mathcal{G}_{o}/H_{o}}A_{\gamma H_{o}}\right)=1$.
It follows that $\mathbb{P}_{id}\left(A_{H_{o}}\right)>0$. Since
$\mathbb{P}_{id}\left(A_{\gamma H_{o}}\right)\ge\mu^{(m)}(g)\mathbb{P}_{id}\left(A_{H_{o}}\right)>0$,
it follows that $\mathbb{P}_{id}\left(A_{H_{0}}\right)<1$. We conclude
that $A_{H_{o}}$ is a non-trivial tail event and the Poisson boundary
of $(G,\mu)$ is non-trivial. 

\end{proof}

The boundary behavior of random walk in Proposition \ref{transience}
can be viewed as a lamplighter boundary. Under the embedding $\vartheta:g\mapsto(\Phi_{g},g)$,
we refer to $\Phi_{g}$ as the germ configuration of $g$. Let $(W_{n})$
be a random walk on $G$ with a non-degenerate step distribution $\mu$.
If (\ref{eq:summable}) holds, then for every point $x\in o\cdot G$,
the coset of $\Phi_{W_{n}}(x)$ in $\mathcal{\hat{G}}_{x}/H_{x}$
stabilizes with probability one along an infinite trajectory of the
random walk $(W_{n})$. Thus if we view $\prod_{x}\mathcal{\hat{G}}_{x}/H_{x}$
as the space of germ configurations mod $\prod_{x}H_{x}$, then when
stabilization occurs, we have a limit configuration $\Phi_{W_{\infty}}$
in this space of cosets. Endowed with the hitting distribution, it
can be viewed as a $\mu$-boundary. 

\section{measures with nontrivial Poisson boundaries on the first Grigorchuk
group\label{sec:existence}}

In this section, we give quick examples of non-degenerate symmetric
probability measures on the first Grigorchuk group with non-trivial
Poisson boundary. Choices of such measures with better quantitative
control over the tail decay will be discussed in Section \ref{sec: growth-first}
and Section \ref{sec:Main}.

Recall that the first Grigorchuk group $G=G_{012}$ acts on the rooted
binary tree, and it is generated by $\{a,b,c,d\}$ where 
\[
a=(1,1)\varepsilon,\ b=(a,c),\ c=(a,d),\ d=(1,b).
\]
As explained in Example \ref{ggerms}, the generators $b,c,d$ have
nontrivial germs at $o=1^{\infty}$. The isotropy group of $\mathcal{G}$
at $o$ is $\mathcal{G}_{o}\simeq\left(\mathbb{Z}/2\mathbb{Z}\right)\times\left(\mathbb{Z}/2\mathbb{Z}\right)$.
List the elements of $\mathcal{G}_{o}$ as $\left\{ id,b,c,d\right\} $,
where $b$ stands for the germ $(b,o)$, similarly $c$ and $d$ stand
for the corresponding germs. Take the following proper subgroup of
$\mathcal{G}_{o}$:
\[
H_{o}=\{id,b\}.
\]
The group $L$ of finitary automorphisms of the rooted binary tree
plays the role of the auxiliary group. Recall the definition of the
sub-groupoid $\mathcal{H}^{b}=\mathcal{H}(H_{o})$ as in (\ref{eq:H_v}),
which we will refer to as the sub-groupoid of $b$-germs. Now consider
the subgroup $\mathrm{H}^{b}<G$ which consists of group elements
that only have trivial or $b$-germs, that is
\begin{equation}
\mathrm{H}^{b}=\{g\in G:\ (g,x)\in\mathcal{H}^{b}\ \mbox{for all }x\in1^{\infty}\cdot G\}.\label{eq: Hb}
\end{equation}
Given an element in $G$, one can recognize whether it is in ${\rm H}^{b}$
from its sections in a deep enough level.

\begin{fact}\label{bsection}

Let $g\in G$ be a group element. Then $g\in\mathrm{H}^{b}$ if and
only if there exists $k\in\mathbb{N}$ such that the section $g_{v}$
is in the set $\{id,a,b\}$ for all $v\in\mathsf{L}_{3k}$. 

\end{fact}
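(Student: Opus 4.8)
The plan is to set up a \emph{section--germ dictionary} and then run it in both directions. The dictionary: for a vertex $v$ and $g\in G$, near the ray $v1^\infty$ the element $g$ agrees, up to the finitary part $\tau_{|v|}(g)$ of $g$ truncated to level $|v|$, with its section $g_v$ acting inside the subtree rooted at $v$. So, after transporting the base point back to $o=1^\infty$ by finitary automorphisms (which do not affect membership in $\mathcal H^{b}$), the germ $(g,v1^\infty)$ is the germ of $g_v$ at $1^\infty$ lifted from the subtree rooted at $v$ to the whole tree. Since the wreath recursion $b=(a,c)$, $c=(a,d)$, $d=(1,b)$ permutes $\{b,c,d\}$ along $1^\infty$ with period $3$, this lift shifts the germ label inside $\mathcal G_o=\{id,b,c,d\}$ cyclically by $|v|\bmod 3$; as $H_o=\{id,b\}$ is fixed by the trivial shift, I get: \emph{if $|v|\equiv 0\mod 3$, then $(g,v1^\infty)\in\mathcal H^{b}$ iff $(g_v,1^\infty)\in\mathcal H^{b}$.} I will also use that for $h\in\{id,a,b,c,d\}$ one has $(h,1^\infty)\in\mathcal H^{b}$ exactly when $h\in\{id,a,b\}$: the germs of $c,d$ at $1^\infty$ are the $c$- and $d$-germ, not in $H_o$, while $id,a$ act finitarily near $1^\infty$ and $b$ gives the $b$-germ $(b,o)\in H_o$ (Example \ref{ggerms}).

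For the ``if'' direction I would argue: assume $g_v\in\{id,a,b\}$ for all $v\in\mathsf{L}_{3k}$; taking sections three levels further stays inside $\{id,a,b\}$, so the same holds at every level $3k'$ with $k'\ge k$. Any $x\in 1^\infty\cdot G$ is cofinal with $1^\infty$, hence equals $v1^\infty$ for some $v\in\mathsf{L}_{3k'}$ with $k'\ge k$; then $(g_v,1^\infty)\in\mathcal H^{b}$ by the dictionary, so $(g,x)\in\mathcal H^{b}$. This holds throughout the orbit of $o$, so $g\in\mathrm H^{b}$.

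For the ``only if'' direction: assume $g\in\mathrm H^{b}$. Since $G=G_{012}$ is contracting with nucleus $\{id,a,b,c,d\}$ --- a standard fact, see \cite[Part 1]{BGShandbook} or \cite{NekraBook} --- there is $k$ with $g_v\in\{id,a,b,c,d\}$ whenever $|v|\ge 3k$, in particular for all $v\in\mathsf{L}_{3k}$. For such $v$, $v1^\infty\in 1^\infty\cdot G$, so $(g,v1^\infty)\in\mathcal H^{b}$, hence $(g_v,1^\infty)\in\mathcal H^{b}$ by the dictionary ($|v|\equiv 0\mod 3$); combined with $g_v\in\{id,a,b,c,d\}$ this forces $g_v\in\{id,a,b\}$. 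Thus $g_v\in\{id,a,b\}$ for all $v\in\mathsf{L}_{3k}$, as required.

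The hard part will be the italicised equivalence in the dictionary: I need to spell out precisely how the germ label in $\mathcal G_o\cong(\mathbb Z/2\mathbb Z)^{2}$ transforms when one lifts the action of $g_v$ from the subtree rooted at $v$ to the whole tree and then conjugates the base point from $v1^\infty$ to $1^\infty$ by a finitary automorphism, and to check that the composite is exactly the cyclic shift of $\{b,c,d\}$ by $|v|\bmod 3$ --- a finitary conjugation alters the automorphism only on a bounded neighbourhood of the ray, hence leaves the eventual sections (which determine the label) unchanged, while a lift by a multiple of $3$ levels is trivial on labels by the period-$3$ recursion. I expect this bookkeeping, together with the minor extra finitary correction needed when $g_v$ moves $1^\infty$ (as $a$ does), to be the only real obstacle; the remaining inputs --- that $1^\infty\cdot G$ is the cofinality class of $1^\infty$, that $\mathcal G_o=\{id,b,c,d\}$ with germ labels as above, and the nucleus of $G_{012}$ --- are standard and available in the paper (Example \ref{ggerms}) or the cited references.
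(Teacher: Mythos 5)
Your proof is correct and follows essentially the same route as the paper's: the ``only if'' direction uses the contracting property to reduce to nucleus sections at a level divisible by $3$ and then rules out $c$- and $d$-sections via the germ at $v1^{\infty}$, while the ``if'' direction unwinds the definition of $\mathrm{H}^{b}$. The section--germ dictionary you flag as the hard part is precisely the description of germs already given in Example \ref{ggerms} combined with the period-$3$ identification (\ref{eq:identification}), so it poses no real obstacle.
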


\begin{proof}

The ``if'' direction follows from definition of $\mathrm{H}^{b}$.

We show the only if direction. Since $G$ is contracting, for any
$g\in G$, there exists a finite level $n$ such that all sections
$g_{v}$, $v\in\mathsf{L}_{n}$, are in the nucleus $\mathcal{N}=\{id,a,b,c,d\}$,
see \cite[Section 2.11]{NekraBook}. Perform the wreath recursion
down further $1$ or $2$ levels such that $n\equiv0\ \mod3$. Suppose
on the contrary there exists a vertex $v\in\mathsf{L}_{n}$ such that
the section is $c$ or $d$. Then $(g,v1^{\infty})$ is a $c$-germ
or $d$-germ, a contradiction with $g\in{\rm H}^{b}$. Thus the level
$n$ satisfies the condition in the statement.

\end{proof}

\begin{figure}[!htb] \centering \includegraphics[scale=.4]{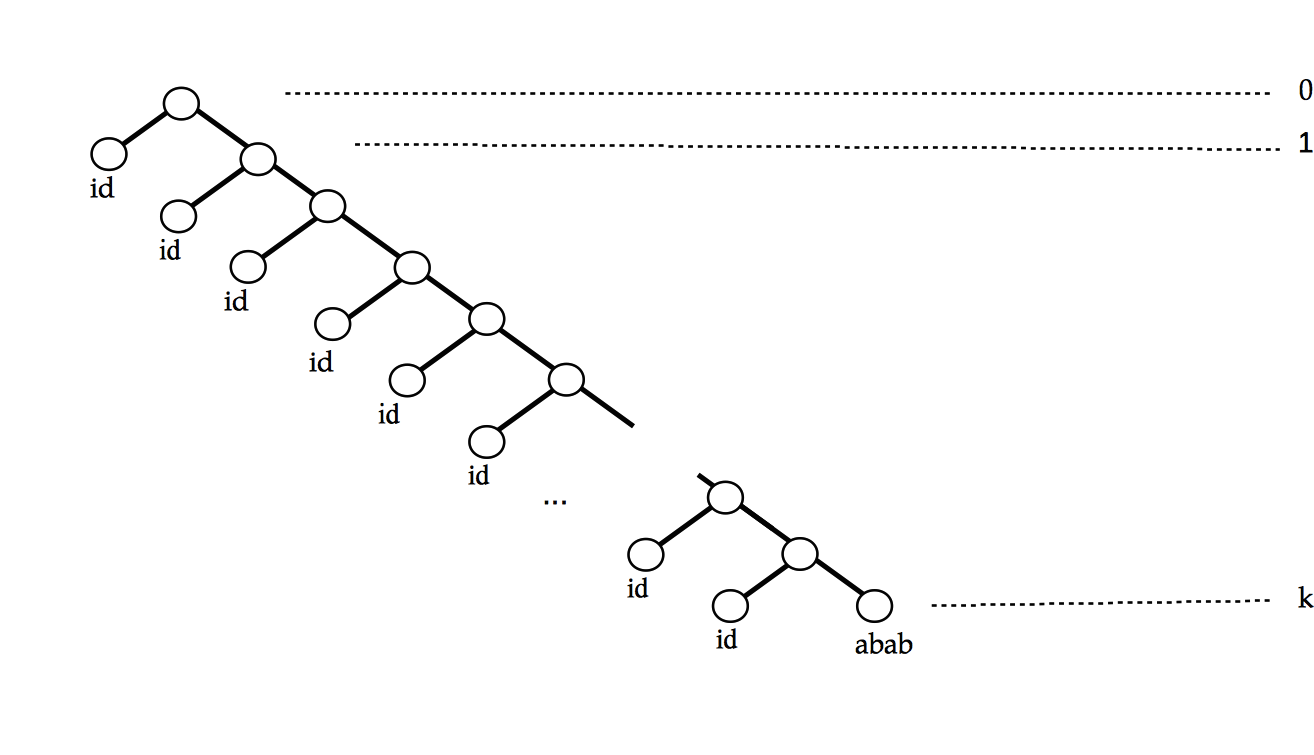} 
\caption{Portrtrait of $\sigma^k(abab)$ in the first Grigorchuk group in usual notation.      If $k$ is divisible by $3$, then the element belongs to $\rm{H}^b$ (the group with germs equal to the germ of $b$ or trivial.)  } \label{fig:portraitsigmaABAB} \end{figure} 

A key property we will use is that the orbit of $1^{\infty}$ under
the action of ${\rm H}^{b}$ is infinite. Recall the notation $\iota(g,u)$
as defined in (\ref{eq:embed-rigid}), which denotes the automorphism
in the rigid stabilizer of $u$ that acts as $g$ in the subtree rooted
at $u$. Note that for any $n\in\mathbb{N}$, $\iota([a,b],1^{n})\in G$.
Explicitly we have $\iota([a,b]:1^{n})=\sigma^{n}(abab)$ where $\sigma$
is the homomorphism given by substitution $a\mapsto cac,b\mapsto d,c\mapsto b,d\mapsto c$.
More generally, $\iota(g,u)\in G$ for any $g\in K=\left\langle \left[a,b\right]\right\rangle ^{G}$
and any vertex $u\in\mathsf{T}$, because $G$ is regularly branching
over the subgroup $K$, see \cite[Proposition 1.25]{BGShandbook}.
The portrait of $\iota\left([a,b],1^{k}\right)$ is drawn in Figure
\ref{fig:portraitsigmaABAB}. The subset $\{\iota\left([a,b],1^{3k}\right)$:
$k\ge0\}$ is contained in ${\rm H}^{b}$. Since $1^{\infty}\cdot[a,b]=1001^{\infty}$,
it follows that 
\[
1^{\infty}\cdot\iota\left([a,b],1^{3k}\right)=1^{3k+1}001^{\infty}.
\]
Therefore the orbit $1^{\infty}\cdot{\rm H}^{b}$ is infinite. 

By Proposition \ref{transience} and Remark \ref{small} we can take
a measure with non-trivial Poisson boundary of the form $\mu=\frac{1}{2}(\nu+\eta)$,
where $\mbox{supp}\nu=\{a,b,c,d\}$ and $\eta$ is a symmetric measure
supported on ${\rm H}^{b}$ with transient induced random walk on
the orbit $1^{\infty}\cdot{\rm H}^{b}$. Existence of such a measure
$\eta$ is deduced from the fact that the orbit $1^{\infty}\cdot{\rm H}^{b}$
is infinite by the following lemma. It is \cite[Lemma 7.1]{Erschler04}
with the assumption of $A$ being finitely generated dropped. Since
the subgroup ${\rm H}^{b}$ is not finitely generated, we include
a proof of the lemma. Let $A$ be a countable group and $B$ a subgroup
of $A$. Given a probability measure $\eta$ on $A$, recall that
the induced transition kernel $P_{\eta}$ on the (left) cosets of
$B$ is 
\[
P_{\eta}(Bx,By)=\sum_{g\in A}\eta(g)\mathbf{1}_{\{Bxg=By\}},
\]
where $x,y\in A$ and $Bx,By$ are cosets of $B$. We say the measure
$\eta$ is transient with respect to $B$ if ${\bf G}_{P_{\eta}}(B,B)=\sum_{n=0}^{\infty}P_{\eta}^{n}(B,B)<\infty$. 

\begin{lem}\label{subordinate-1}

Let $A$ be a countable group and $B<A$ a subgroup of infinite index.
Then there exists a non-degenerate symmetric probability measure $\eta$
on $A$ that is transient with respect to $B$. 

\end{lem}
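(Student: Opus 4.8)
The plan is to construct $\eta$ as a convex combination $\eta = \sum_{k=1}^\infty 2^{-k}\,\eta_k$, where each $\eta_k$ is a symmetric probability measure supported on a carefully chosen finite symmetric set $S_k = S_k^{-1}\subset A$, so that the resulting $\eta$ is automatically symmetric. Non-degeneracy is cheap: since $A$ is countable, fix an enumeration $A=\{a_1,a_2,\dots\}$ and insist that $a_k^{\pm1}\in S_k$, so $\operatorname{supp}\eta$ generates $A$. The real content is transience with respect to $B$, i.e. $\mathbf{G}_{P_\eta}(B,B)<\infty$ for the induced walk on $B\backslash A$. The key geometric input is that $[A:B]=\infty$, so the (right) coset space $B\backslash A$ is an infinite set carrying a transitive right $A$-action; the induced chain $P_\eta$ is a genuinely infinite Markov chain, and we want to force it to be transient by making $\eta$ spread out fast enough along a ray in this coset space.

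First I would pick, inside $B\backslash A$, a sequence of distinct cosets $Bx_0=B, Bx_1, Bx_2,\dots$ (possible since the space is infinite) and choose group elements $g_k$ with $B x_{k-1} g_k = B x_k$ and, for symmetry's sake, also include $g_k^{-1}$; more carefully, I want the $g_k$ to act on cosets in a way that mimics a transient birth–death-type chain. Concretely, I would let $\eta_k$ be the uniform measure on the symmetric set $\{g_k^{\pm 1}, a_k^{\pm1}, \dots\}$ and choose the weights $c_k$ attached to $\eta_k$ in $\eta=\sum_k c_k\eta_k$ decaying like $c_k \asymp 2^{-k}$ while the $g_k$ move the base coset $B$ a "distance" $\asymp k$ away in the coset graph (e.g. $Bx_k$ lies at graph distance $k$ from $B$ in the Schreier graph of $(A,S)$ with $S=\bigcup_k S_k$). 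Then a one-step application of $\eta$ from $B$ has a probability $\gtrsim c_k \asymp 2^{-k}$ of jumping to a coset at distance $\asymp k$, from which the walk has to travel back through $\asymp k$ intermediate cosets to return. A Nash–Williams / flow argument (or a direct comparison with a transient weighted tree) then shows $\mathbf{G}_{P_\eta}(B,B)<\infty$. An alternative, and perhaps cleaner, route: realize the induced chain on a subset of cosets as dominating a transient random walk on a homogeneous tree or on $\mathbb{Z}^3$ via the comparison/Dirichlet-form technique, using that we have complete freedom in choosing $\eta$.

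The main obstacle is that we have essentially no structural control over the pair $(A,B)$ beyond $[A:B]=\infty$: $A$ need not be finitely generated (indeed in the intended application $A=\mathrm{H}^b$ is not), $B$ could be complicated, and the right-multiplication action on $B\backslash A$ need not have any transitivity properties beyond the obvious one. So the delicate point is to build the sequence $(g_k)$ so that (i) the cosets $Bx_k$ really are pairwise distinct and "escape to infinity," and (ii) the induced transition kernel, restricted to the reachable part, is comparable to something manifestly transient, despite possible collisions $Bx_i g_j = B x_k$ for unrelated indices. I would handle (i) by an inductive construction: having chosen $Bx_0,\dots,Bx_{k-1}$, use $[A:B]=\infty$ to pick $Bx_k$ outside the finite set of cosets already "touched" by $\eta_1,\dots,\eta_{k-1}$ and their bounded powers, so that the generator $g_k$ only creates new edges far out. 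For (ii), rather than fighting collisions I would phrase transience via an explicit finite-energy flow from $B$ to infinity along the ray $(Bx_k)$, where along the $k$-th segment the flow has value $\asymp c_k$ and the edge conductances are $\asymp c_k$ as well, giving energy $\sum_k c_k^2/c_k = \sum_k c_k<\infty$; the extra edges from the $a_k^{\pm1}$ part only add conductance and hence cannot destroy finiteness of the flow energy. By the standard criterion (finite-energy flow to infinity $\Rightarrow$ transience, see e.g. the random-walks-on-graphs background in \cite{WoessBook}), this yields $\mathbf{G}_{P_\eta}(B,B)<\infty$, completing the proof.
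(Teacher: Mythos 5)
Your high-level plan (hand-build a symmetric measure adapted to $B\backslash A$ and certify transience by exhibiting a finite-energy flow) is reasonable in spirit, and the peripheral points are fine: non-degeneracy is indeed cheap, the induced kernel $P_\eta$ is symmetric and hence reversible with respect to counting measure, and by Rayleigh monotonicity the uncontrolled ``extra'' edges can only help. But the core transience argument has a genuine gap: the energy computation $\sum_k c_k^2/c_k=\sum_k c_k<\infty$ is not the energy of any admissible flow. If the controlled edges form a single ray $B=Bx_0,Bx_1,Bx_2,\dots$ with conductances $c(Bx_{k-1},Bx_k)\asymp c_k\asymp 2^{-k}$, then Kirchhoff's node law forces any unit flow from $B$ to infinity supported on these edges to have constant magnitude $1$ through every edge of the ray, so its energy is $\sum_k 1/c_k=\sum_k 2^k=\infty$; assigning flow value $c_k$ to the $k$-th edge means flow is being destroyed at each node. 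To obtain a finite-energy flow you need either exponentially many essentially disjoint escape routes (a tree-like, ``cube independent'' family of translates --- exactly the structure the paper works hard to build in the Grigorchuk setting, and which is not available for an arbitrary pair $(A,B)$ of infinite index), or long-range jumps of calibrated size available at \emph{every} vertex along the escape route (as for an $\alpha$-stable walk on $\mathbb{Z}$ with $\alpha<1$); your construction provides neither, since the action of $g_k$ at cosets other than $Bx_{k-1}$ is completely uncontrolled.

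The paper avoids all of this geometry with a soft spectral argument: start from any symmetric non-degenerate $\mu$ with $\mu(id)=\tfrac12$; since $B\backslash A$ is infinite and $P_\mu$ is irreducible, there is no nonzero $P_\mu$-invariant vector in $\ell^2(B\backslash A)$, so the spectral measure $N(s)=\left\langle E_s^{I-P_\mu}\mathbf{1}_B,\mathbf{1}_B\right\rangle$ has no atom at $0$ and $P_\mu^n(B,B)\to0$; then a discrete subordination $\eta=\sum_n c_n\mu^{(n)}$ by a Bernstein function $\psi$ chosen so that $N\circ\psi^{-1}(s)\le cs^{1+\epsilon}$ near $0$ makes $\sum_n P_\eta^{2n}(B,B)=\int_0^1 s^{-1}\,dN\circ\psi^{-1}(s)$ finite. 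If you want to salvage your route you would have to establish the branching (or homogeneous long-range) structure for an arbitrary pair $(A,B)$, which is a serious combinatorial problem in its own right; the subordination argument sidesteps it entirely.
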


Apply Lemma \ref{subordinate-1} to the group ${\rm H}^{b}$ and its
subgroup ${\rm H}^{b}\cap{\rm St}_{G}(1^{\infty})$. Since the orbit
$1^{\infty}\cdot{\rm H}^{b}$ is infinite, there is a symmetric measure
$\eta$ on ${\rm H}^{b}$ such that the induced random walk $P_{\eta}$
on $1^{\infty}\cdot{\rm H}^{b}$ is transient. Let ${\bf u}_{S}$
be uniform on the generating set $S=\{a,b,c,d\}$ and take $\mu=\frac{1}{2}({\bf u}_{S}+\eta)$.
Since obviously the Dirichlet forms of $P_{\mu}$ and $P_{\eta}$
satisfy $\mathcal{E}_{P_{\mu}}\ge\frac{1}{2}\mathcal{E}_{P_{\eta}}$,
by comparison principle (see for example \cite[Corollary 2.14]{WoessBook}),
the induced random walk $P_{\mu}$ on $1^{\infty}\cdot G$ is transient
as well. Since $\eta$ is supported on the subgroup ${\rm H}^{b}$
with only $b$-germs and $\nu$ is of finite support, as explained
in Remark \ref{small}, transience of $P_{\mu}$ verifies the condition
(\ref{eq:summable}) in Proposition \ref{transience}. We conclude
that $(G,\mu)$ has non-trivial Poisson boundary. 

\begin{proof}[Proof of Lemma \ref{subordinate-1}]

First take a symmetric measure $\mu$ on $A$ such that $\mu(id)=\frac{1}{2}$
and the support of $\mu$ generates the group $A$. The induced random
walk $P_{\mu}$ on the cosets $B\setminus A$ is irreducible since
${\rm supp}\mu$ generates $A$. Since $B\setminus A$ is infinite,
$P_{\mu}$ admits no non-zero invariant vector in $\ell^{2}(B\setminus A)$,
therefore 
\[
P_{\mu}^{n}(B,B)\searrow0\ \mbox{as }n\to\infty.
\]
Indeed, denote by $E_{s}^{I-P_{\mu}}$ the spectral resolution of
$I-P_{\mu},$
\[
I-P_{\mu}=\int_{0}^{1}sdE_{s}^{I-P_{\mu}}.
\]
Write $N(s)=\left\langle E_{s}^{I-P_{\mu}}\mathbf{1}_{B},\mathbf{1}_{B}\right\rangle $Then
$P_{\mu}^{n}(B,B)=\int_{0}^{1}(1-s)^{n}dN(s)$. Since $(I-P_{\mu})f=0$
has no solution in $\ell^{2}(B\setminus A)$, we have that $N(0)=0$.
Thus $P_{\mu}^{n}(B,B)\searrow0\ \mbox{as }n\to\infty$. 

Next we show that if $P_{\mu}^{n}(B,B)\to0$ as $n\to\infty$, then
some convex linear combinations of convolution powers of $\mu$, that
is measures of the form
\[
\eta=\sum_{n=1}^{\infty}c_{n}\mu^{(n)},\ \sum_{n=1}^{\infty}c_{n}=1,\ c_{n}\ge0,
\]
induce transient random walks on $B\setminus A$. Following \cite{BSC},
we call $\eta$ a discrete subordination of $\mu$. Let $\psi$ be
a smooth strictly increasing function on $(0,\infty)$ such that $\psi(0)=0$,
$\psi(1)=1$. Suppose that we have expansion $\psi(1-s)=1-\sum_{n=1}^{\infty}c_{n}s^{n}$
for $|s|\le1$ with coefficients $c_{n}\ge0$. Then one can take 
\[
\mu_{\psi}:=I-\psi(I-\mu)=\sum_{n=1}^{\infty}c_{n}\mu^{(n)}.
\]
The class of functions $\psi$ that are suitable for this operation
is the Bernstein functions, see the book \cite{SSVbook}.

The induced random walk transition operator $P_{\mu}$ is a self-adjoint
operator acting on $\ell^{2}(B\setminus A)$. Recall that $E_{s}^{I-P_{\mu}}$
denotes the spectral resolution of $I-P_{\mu},$then we have 
\[
I-P_{\mu_{\psi}}=\psi\left(I-P_{\mu}\right)=\int_{0}^{1}\psi(s)dE_{s}^{I-P_{\mu}}.
\]
It follows that 
\[
P_{\mu_{\psi}}^{2n}(B,B)=\int_{0}^{1}(1-\psi(s))^{2n}d\left\langle E_{s}^{I-P_{\mu}}\mathbf{1}_{B},\mathbf{1}_{B}\right\rangle .
\]
Since there is no non-zero $P_{\mu}$-invariant vector in $\ell^{2}(B\setminus A)$,
$N(s)\to0$ as $s\to0+$. Since
\[
\sum_{n=0}^{\infty}P_{\mu_{\psi}}^{2n}(B,B)=\sum_{n=0}^{\infty}\int_{0}^{1}(1-\psi(s))^{2n}dN(s)=\int_{0}^{1}s^{-1}dN\circ\psi^{-1}(s),
\]
to make the Green function finite, it suffices to take some $\psi$
such that $N\circ\psi^{-1}(s)\le cs^{1+\epsilon}$ for some $\epsilon>0$
near $0$. For example one can take the measure $\nu$ on $(0,\infty)$
such that 
\[
\nu((x,\infty))=N(1/x)^{\frac{1}{1+\epsilon}},\ x\ge1,
\]
and $\psi$ to be the Bernstein function with representing measure
$\nu$, $\psi(s)=\int_{(0,\infty)}(1-e^{-st})\nu(dt)$. Note that
the coefficients $c_{n}$ in the expansion $\psi(1-s)=1-\sum_{n=1}^{\infty}c_{n}s^{n}$
are positive. We conclude that for this choice the $P_{\mu_{\psi}}$-random
walk on $B\setminus A$ is transient.

\end{proof}

The measure $\eta$ in Lemma \ref{subordinate-1} with transient induced
random walk on the orbit of $o$ is obtained from discrete subordination
of some non-degenerate measure on ${\rm H}^{b}$. We wish to point
out that in general, the measure $\eta$ does not necessarily have
finite entropy. In Section \ref{sec:construction}, we develop a direct
method to construct measures with transient induced random walks such
that the resulting measures have finite entropy and explicit tail
decay bounds. Unlike the general Lemma \ref{subordinate-1}, this
construction heavily relies on the structure of the groups under consideration.
This will be crucial in applications to growth estimates. 

\section{Further construction of measures with transient induced random walk
\label{sec:construction}}

\subsection{Measures constructed from cube independent elements }

In this subsection we describe a construction of measures with transient
induced random walks. It is particularly useful in groups acting by
homeomorphisms possessing a rich collection of rigid stabilizers.
Transience of the induced random walk is deduced from $\ell^{2}$-isoperimetric
inequalities. 

Let $P$ be a transition kernel on a countable graph $X$. We assume
that $P$ is reversible with respect to $\pi$, that is $\pi(x)P(x,y)=\pi(y)P(y,x)$.
The Dirichlet form of $P$ is defined by 
\[
\mathcal{E}_{P}(f)=\frac{1}{2}\sum_{x,y\in X}(f(x)-f(y))^{2}P(x,y)\pi(x).
\]
Denote by $\left\Vert f\right\Vert _{\ell^{2}(\pi)}$ the $\ell^{2}$-norm
of $f$ with respect to the measure $\pi$. Given a finite subset
$\Omega\subset X$, denote by $\lambda_{1}(\Omega)$ the smallest
Dirichlet eigenvalue:
\[
\lambda_{1}(\Omega)=\inf\left\{ \mathcal{E}_{P}(f):\ {\rm supp}f\subseteq\Omega,\ \left\Vert f\right\Vert _{\ell^{2}(\pi)}=1\right\} .
\]
An inequality of the form $\lambda_{1}(\Omega)\ge\Lambda(\pi(\Omega))$
is referred to as an $\ell^{2}$-isoperimetric inequality. It is also
known as a Faber-Krahn inequality. 

The following isoperimetric test for transience is from \cite[Theorem 6.12]{grigoryanAG}.
It is a consequence of the connection between $\ell^{2}$-isoperimetric
inequalities and on-diagonal heat kernel upper bound, see Coulhon
\cite[Proposition V.1]{coulhon} or Grigor'yan \cite{grigoryan}.

\begin{prop}[Isoperimetric test for transience \cite{grigoryanAG}]\label{test}

Suppose $v_{0}=\inf_{x\in X}\pi(x)>0$. Assume that 
\[
\lambda_{1}(\Omega)\ge\Lambda(\pi(\Omega))
\]
holds for all finite non-empty subset $\Omega\subset X$, where $\Lambda$
is a continuous positive decreasing function on $[v_{0},\infty)$.
If 
\[
\int_{v_{0}}^{\infty}\frac{ds}{s^{2}\Lambda(s)}<\infty,
\]
then the Markov chain $P$ on $X$ is transient. 

\end{prop}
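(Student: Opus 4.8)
The plan is to deduce transience from the connection between $\ell^2$-isoperimetric inequalities (Faber--Krahn inequalities) and on-diagonal heat kernel upper bounds, following Coulhon and Grigor'yan. First I would recall that for a reversible Markov kernel $P$ on $(X,\pi)$, the semigroup $P^t$ (or the discrete iterates $P^n$) acts as a self-adjoint contraction on $\ell^2(\pi)$, and that a Faber--Krahn inequality $\lambda_1(\Omega)\ge\Lambda(\pi(\Omega))$ for all finite $\Omega$ is equivalent, up to the usual constants, to a Nash-type inequality, hence to an on-diagonal decay bound of the form $P^{2n}(x,x)\le 1/V(n)$ where $V$ is obtained by solving the ODE $V'(t)=\Lambda(V(t))V(t)$, or more precisely $t=\int_{v_0}^{V(t)}\frac{ds}{s\Lambda(s)}$; see \cite[Proposition V.1]{coulhon} and \cite{grigoryan}. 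The point $v_0=\inf_x\pi(x)>0$ guarantees that the function $f=\mathbf 1_{\{x\}}/\sqrt{\pi(x)}$ is a legitimate normalised test function supported on a set of mass $\pi(x)\ge v_0$, so that the Faber--Krahn profile is only ever evaluated on $[v_0,\infty)$, consistent with the hypothesis.

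The second step is to convert the on-diagonal bound into a statement about the Green function. Since $P$ is reversible, $\mathbf G_P(x,x)=\sum_{n=0}^\infty P^n(x,x)$, and by the spectral theorem $P^n(x,x)=\int_{[-1,1]}\lambda^n\, d\mu_x(\lambda)$ for a positive spectral measure $\mu_x$, so $P^n(x,x)$ is eventually monotone along even times and $\sum_n P^n(x,x)<\infty$ iff $\sum_n P^{2n}(x,x)<\infty$. Thus it suffices to show $\sum_{n}1/V(n)<\infty$, equivalently (comparing the sum to an integral, using that $V$ is increasing) $\int^\infty \frac{dt}{V(t)}<\infty$. The final step is the change of variables: with $t$ and $s=V(t)$ related by $dt=\frac{ds}{s\Lambda(s)}$, one gets
\[
\int^\infty\frac{dt}{V(t)}=\int^\infty\frac{ds}{s^2\Lambda(s)},
\]
so the hypothesis $\int_{v_0}^\infty\frac{ds}{s^2\Lambda(s)}<\infty$ is exactly what makes the Green function at a point finite, and transience follows (finiteness at one point implies finiteness everywhere on a connected state space, or one simply notes $P$ is transient by definition once $\mathbf G_P(x,x)<\infty$).

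I expect the main obstacle to be purely bookkeeping rather than conceptual: making the passage from the Faber--Krahn profile $\Lambda$ to the volume growth function $V$ clean enough that the integral change of variables is rigorous, and in particular handling the constants and the fact that the heat-kernel/Nash machinery is usually phrased for continuous-time chains (Markov semigroups $e^{-t(I-P)}$) whereas here $P$ is a discrete-time kernel. The standard fix is to compare $P^{2n}(x,x)$ with the continuous-time kernel $p_t(x,x)=e^{-t}\sum_k \frac{t^k}{k!}P^k(x,x)$, which differ by at most multiplicative constants for the purpose of summability/integrability, so that the transience criterion transfers. Since the statement cites \cite[Theorem 6.12]{grigoryanAG} directly, in the write-up I would simply invoke that theorem, after verifying the two hypotheses ($v_0>0$ and the Faber--Krahn inequality with the stated $\Lambda$), and note that the displayed integral condition is precisely the integral test appearing there; the only thing to check is that our normalisation of $\lambda_1(\Omega)$ and of the Dirichlet form matches the one in \cite{grigoryanAG}, which it does up to the harmless factor of $2$ already present in the definition of $\mathcal E_P$.
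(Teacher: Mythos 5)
Your proposal is correct and follows essentially the same route as the paper, which states this proposition by invoking \cite[Theorem 6.12]{grigoryanAG} and notes exactly the chain you sketch: the Faber--Krahn inequality yields an on-diagonal heat kernel bound (Coulhon, Grigor'yan), and the change of variables $t=\int_{v_0}^{V(t)}\frac{ds}{s\Lambda(s)}$ turns the integral hypothesis into summability of the Green function. Your handling of the discrete/continuous-time comparison and of even times is sound (in fact the two Green functions coincide, since $\int_0^\infty e^{-t}t^k/k!\,dt=1$), so nothing further is needed.
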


The best possible choice of the function $\Lambda$ is defined as
the $\ell^{2}$-\emph{isoperimetric profile} $\Lambda_{P}$ of $P$,
\[
\Lambda_{P}(v):=\inf\left\{ \lambda_{1}(\Omega):\ \Omega\subset X\mbox{ and }\pi(\Omega)\le v\right\} .
\]
A useful way to obtain lower bounds on $\Lambda_{P}$ is through $\ell^{1}$-isoperimetry.
By Cheeger's inequality (see \cite[Theorem 3.1]{LS}) 
\begin{equation}
\lambda_{1}(\Omega)\ge\frac{1}{2}\left(\frac{\left|\partial_{P}\Omega\right|}{\pi(\Omega)}\right)^{2},\label{eq:cheeger}
\end{equation}
where $\left|\partial_{P}\Omega\right|=\sum_{x\in\Omega,y\in\Omega^{c}}\pi(x)P(x,y)$
is the size of the boundary of $\Omega$ with respect to $(P,\pi)$. 

In what follows we will consider $P_{\mu}$ on $X$ induced by a symmetric
probability measure $\mu$ on $G$, $G\curvearrowright X$. In this
case $P_{\mu}$ is symmetric and reversible with respect to $\pi\equiv1$. 

\begin{lem}\label{CSC}

Let $G$ be a countable group acting on $X$, $o\in X$ be a point
such that the orbit $o\cdot G$ is infinite. Suppose there is a sequence
of finite subsets $F_{n}\subseteq G$ with $|F_{n}|\nearrow\infty$
as $n\to\infty$ and a constant $c_{0}>0$ such that 
\[
\inf_{x\in o\cdot G}\left|x\cdot F_{n}\right|\ge c_{0}|F_{n}|.
\]
Let $(\lambda_{n})$ be a sequence of positive numbers such that $\sum_{n=1}^{\infty}\lambda_{n}=1$
and 
\[
\sum_{n=1}^{\infty}\frac{1}{\lambda_{n}\left(|F_{n}|-|F_{n-1}|\right)}<\infty.
\]
Let 
\[
\mu=\sum_{n=1}^{\infty}\frac{\lambda_{n}}{2}\left(\mathbf{u}_{F_{n}}+\mathbf{u}_{F_{n}^{-1}}\right),
\]
where $\mathbf{u}_{F}$ denotes the uniform measure on the set $F$.
Then the induced $P_{\mu}$-random walk on $o\cdot G$ is transient.

\end{lem}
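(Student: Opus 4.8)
The plan is to deduce transience of $P_\mu$ on $o\cdot G$ from the isoperimetric test for transience (Proposition \ref{test}), by establishing an $\ell^2$-isoperimetric (Faber--Krahn) inequality for $P_\mu$ with a profile $\Lambda$ satisfying $\int^\infty ds/(s^2\Lambda(s))<\infty$. Since $\mu$ is symmetric, $P_\mu$ is reversible with respect to $\pi\equiv 1$ on the countable set $o\cdot G$, so $v_0=\inf_x\pi(x)=1>0$ and Proposition \ref{test} applies once the profile is in hand. The key mechanism is that each symmetrized uniform measure $\frac12(\mathbf u_{F_n}+\mathbf u_{F_n^{-1}})$ contributes, via its induced transition kernel, a term to the Dirichlet form that sees ``spreading'' of any set $\Omega$ inside $o\cdot G$, and the hypothesis $\inf_{x}|x\cdot F_n|\ge c_0|F_n|$ guarantees each orbit point has many $F_n$-translates.

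First I would fix a finite nonempty $\Omega\subset o\cdot G$ and estimate $\lambda_1(\Omega)$ from below. For a single $n$, write $Q_n$ for the transition kernel induced by $\frac12(\mathbf u_{F_n}+\mathbf u_{F_n^{-1}})$; because $\mu\ge \frac{\lambda_n}{2}(\mathbf u_{F_n}+\mathbf u_{F_n^{-1}})$ termwise, the Dirichlet form satisfies $\mathcal E_{P_\mu}\ge \lambda_n\,\mathcal E_{Q_n}$ for each $n$. Now I bound $\lambda_1^{Q_n}(\Omega)$ from below. The cleanest route is via Cheeger's inequality \eqref{eq:cheeger}: I claim that for $\pi(\Omega)=|\Omega|$ not too large relative to $|F_n|$, a definite proportion of the $Q_n$-mass emanating from $\Omega$ lands outside $\Omega$. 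Concretely, consider a point $x\in\Omega$; it has at least $c_0|F_n|$ distinct images $x\cdot g$, $g\in F_n$ (or $F_n^{-1}$), each receiving $Q_n$-mass at least $\tfrac{1}{2|F_n|}$ from $x$. Summing $\sum_{x\in\Omega}|\{g\in F_n: x\cdot g\notin\Omega\}|\cdot\frac{1}{2|F_n|}$ and using that each point of $o\cdot G$ can be written as $x\cdot g$ for at most one $g$ (no: in general several, but one can instead count incidences and use $|x\cdot F_n|\ge c_0|F_n|$ together with $\sum_{y\in\Omega}|\{x\in\Omega: y\in x\cdot F_n\}|\le |F_n|\,|\Omega|$), one gets $|\partial_{Q_n}\Omega|\ge \frac{1}{2}\big(c_0|F_n|-|\Omega|\big)_+\cdot\frac{|\Omega|}{|F_n|}\cdot\frac1{|\Omega|}$-type bound; choosing $n=n(\Omega)$ to be the least index with $|F_n|\ge \frac{2}{c_0}|\Omega|$ makes the right side $\gtrsim |\Omega|$, hence $\lambda_1^{Q_n}(\Omega)\gtrsim 1$ by Cheeger, so $\lambda_1(\Omega)\ge \lambda_{n(\Omega)}/C$. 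This gives a profile $\Lambda(v)=c\,\lambda_{n(v)}$ where $n(v)=\min\{n:|F_n|\ge \tfrac2{c_0}v\}$.

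Next I would verify the integral test $\int_{1}^\infty \frac{dv}{v^2\Lambda(v)}<\infty$. Breaking the integral over the ranges $v\in[|F_{n-1}|c_0/2,|F_n|c_0/2)$ on which $n(v)=n$ and $\Lambda(v)\asymp \lambda_n$, each piece contributes $\asymp \frac{1}{\lambda_n}\big(\tfrac{1}{|F_{n-1}|}-\tfrac1{|F_n|}\big)\asymp \frac{|F_n|-|F_{n-1}|}{\lambda_n|F_{n-1}||F_n|}\le \frac{C}{\lambda_n(|F_n|-|F_{n-1}|)}$ after noting $|F_{n-1}||F_n|\ge (|F_n|-|F_{n-1}|)^2$ is false in general --- so instead I would bound the piece more carefully by $\frac{1}{\lambda_n}\cdot\frac{1}{|F_{n-1}|}$ and observe that the hypothesis $\sum \frac{1}{\lambda_n(|F_n|-|F_{n-1}|)}<\infty$ together with $|F_n|\nearrow\infty$ forces a matching summability; a convenient rigorous version is to use a slightly coarser profile, namely to choose on the $v$-block for index $n$ the value $\Lambda(v)\asymp \lambda_n(|F_n|-|F_{n-1}|)/|F_n|$, whose integral block is exactly $\asymp \frac{1}{\lambda_n(|F_n|-|F_{n-1}|)}$, and to justify that $P_\mu$ still satisfies $\lambda_1(\Omega)\ge$ this (smaller) quantity by inspecting the Dirichlet form on $\Omega$ across the two consecutive scales $F_{n-1},F_n$. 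Summing over $n$ then gives finiteness of the integral, and Proposition \ref{test} yields transience.

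The main obstacle I expect is the combinatorial bookkeeping in the lower bound on $\lambda_1(\Omega)$: one must turn the orbit-spreading hypothesis $|x\cdot F_n|\ge c_0|F_n|$ into a genuine isoperimetric statement $|\partial_{P_\mu}\Omega|\gtrsim$ (right quantity) uniformly over $\Omega$, controlling the multiplicity with which a point $y$ is hit as $x\cdot g$ over $x\in\Omega, g\in F_n$, and then selecting the scale $n$ as a function of $|\Omega|$ so that the bound is non-degenerate exactly when $|\Omega|\asymp|F_n|$. A secondary subtlety is that the two-sided set $F_n\cup F_n^{-1}$ is what acts symmetrically, so one should phrase the spreading estimate for whichever of $x\cdot F_n$ or $x\cdot F_n^{-1}$ is large --- but since $|x\cdot F_n^{-1}|=|x\cdot F_n^{-1}|$ and the hypothesis is stated for $F_n$, and transience only needs a lower bound on the Dirichlet form, using $Q_n\ge \frac1{2}\,(\text{kernel of }\mathbf u_{F_n})$ suffices. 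Once the scale-selection and multiplicity counting are pinned down, the rest is the routine Cheeger-then-integrate argument sketched above.
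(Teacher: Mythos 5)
Your strategy is the same as the paper's: compare Dirichlet forms via $\mathcal{E}_{P_\mu}\ge\lambda_n\mathcal{E}_{P_n}$, where $P_n$ is induced by $\frac12(\mathbf{u}_{F_n}+\mathbf{u}_{F_n^{-1}})$, convert the hypothesis $\inf_x|x\cdot F_n|\ge c_0|F_n|$ into an expansion bound at scale $n$, apply Cheeger's inequality (\ref{eq:cheeger}), and feed the resulting Faber--Krahn profile into Proposition \ref{test}. On the Cheeger step, your incidence bookkeeping is unnecessary and garbled; the clean count is per point: for fixed $x\in\Omega$ the set $x\cdot F_n$ contains at least $c_0|F_n|$ distinct points, of which at most $|\Omega|$ lie in $\Omega$, so at least $c_0|F_n|-|\Omega|$ elements $g\in F_n$ move $x$ out of $\Omega$. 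Summing over $x\in\Omega$ gives $|\partial_{P_n}\Omega|\ge\frac{|\Omega|}{2|F_n|}\left(c_0|F_n|-|\Omega|\right)\ge\frac{c_0}{4}|\Omega|$ whenever $|\Omega|\le\frac{c_0}{2}|F_n|$, hence $\Lambda_{P_\mu}(v)\ge\frac12\left(\frac{c_0}{4}\right)^2\lambda_n$ for $v\le\frac{c_0}{2}|F_n|$, exactly as in the paper.

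The genuine problem is your last step. With your ``coarser'' profile $\Lambda(v)\asymp\lambda_n(|F_n|-|F_{n-1}|)/|F_n|$ on the block $\left(\frac{c_0}{2}|F_{n-1}|,\frac{c_0}{2}|F_n|\right]$, the block integral is $\asymp\frac{|F_n|}{\lambda_n(|F_n|-|F_{n-1}|)}\left(\frac{1}{|F_{n-1}|}-\frac{1}{|F_n|}\right)=\frac{1}{\lambda_n|F_{n-1}|}$, not $\frac{1}{\lambda_n(|F_n|-|F_{n-1}|)}$ as you assert; and, more fundamentally, replacing an already established Faber--Krahn bound by a smaller function can only increase $\int ds/(s^2\Lambda(s))$, so no ``coarser profile'' device can ever help (its validity is automatic and needs no inspection of two consecutive scales). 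So as written your proposal does not close the final summation. The paper's proof simply integrates with $\Lambda\ge c\lambda_n$ on the block, obtaining $\frac{C}{\lambda_n}\left(\frac{1}{|F_{n-1}|}-\frac{1}{|F_n|}\right)=\frac{C(|F_n|-|F_{n-1}|)}{\lambda_n|F_{n-1}||F_n|}$, and bounds this by $\frac{C}{\lambda_n(|F_n|-|F_{n-1}|)}$; the comparison you rightly flagged, $(|F_n|-|F_{n-1}|)^2\le C|F_{n-1}||F_n|$, is immediate whenever consecutive sizes have bounded ratio (in particular when $|F_n|\le2|F_{n-1}|$), which is the situation in all of the paper's applications (e.g.\ $|F_n|=2^n$). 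If you insist on the bare hypotheses of the statement, what the block integration honestly yields is the condition $\sum_n\frac{1}{\lambda_n|F_{n-1}|}<\infty$; either record that (it coincides with the stated hypothesis up to constants under bounded ratios) or add the bounded-ratio remark --- but do not route the argument through a weakened profile.
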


\begin{proof}

Denote by $P_{n}$ the transition kernel on $o\cdot G$ induced by
the measure $\frac{1}{2}\left(\mathbf{u}_{F_{n}}+\mathbf{u}_{F_{n}^{-1}}\right)$.
Under the assumption that $\inf_{x\in o\cdot H}\left|x\cdot F_{n}\right|\ge c_{0}|F_{n}|$,
we have that for any set $U\subset o\cdot G$ with $|U|\le\frac{c_{0}}{2}|F_{n}|$,
\[
\frac{|\partial_{P_{n}}U|}{|U|}\ge\frac{c_{0}}{4}.
\]
Indeed, the size of the boundary is given by
\[
|\partial_{P_{n}}U|=\sum_{x\in U,y\in U^{c}}P_{n}(x,y)=\sum_{x\in U}\frac{1}{|F_{n}|}\sum_{g\in F_{n}}\frac{1}{2}\left(\mathbf{1}_{U^{c}}(x\cdot g)+\mathbf{1}_{U^{c}}(x\cdot g^{-1})\right).
\]
The assumption that $\inf_{x\in o\cdot H}\left|\{x\cdot F_{n}\}\right|\ge c_{0}|F_{n}|$
implies that for any $x$,
\[
\sum_{g\in F_{n}}\mathbf{1}_{U^{c}}(x\cdot g)=|x\cdot F_{n}|-\sum_{g\in F_{n}}\mathbf{1}_{U}(x\cdot g)\ge c_{0}|F_{n}|-|U|.
\]
It follows that for $|U|\le\frac{c_{0}}{2}|F_{n}|$, 
\[
\frac{|\partial_{P_{n}}U|}{|U|}\ge\frac{1}{2}\frac{c_{0}|F_{n}|-|U|}{|F_{0}|}\ge\frac{c_{0}}{4}.
\]
We mention that the argument above is well-known, it is a step in
the proof of the Coulhon-Saloff-Coste isoperimetric inequality \cite{csc}. 

By Cheeger's inequality (\ref{eq:cheeger}), the $\ell^{2}$-isoperimetric
profile of $P_{n}$ satisfies
\[
\Lambda_{P_{n}}(v)\ge\frac{1}{2}\left(\frac{c_{0}}{4}\right)^{2}\mbox{ for all }v\le\frac{c_{0}}{2}|F_{n}|.
\]
Now the random walk induced by $\mu$ is the convex linear combination
$P_{\mu}=\sum_{n=1}^{\infty}\lambda_{n}P_{n}$. By definition it is
clear that $\mathcal{E}_{P_{\mu}}\ge\lambda_{n}\mathcal{E}_{P_{n}}$.
Therefore we have a piecewise lower bound on $\Lambda_{P_{\mu}}$:
\[
\Lambda_{P_{\mu}}(v)\ge\frac{1}{2}\left(\frac{c_{0}}{4}\right)^{2}\lambda_{n}\ \ \mbox{for }v\in\left(\frac{c_{0}}{2}|F_{n-1}|,\frac{c_{0}}{2}|F_{n}|\right],n\in\mathbb{N}.
\]
Plug the estimate into the isoperimetric test for transience, we have
\[
\int_{1}^{\infty}\frac{ds}{s^{2}\Lambda_{P_{\mu}}(s)}\le C\sum_{n=1}^{\infty}\frac{1}{\lambda_{n}(|F_{n}|-|F_{n-1}|)}.
\]
If the summation on the right hand side of the inequality is finite,
then by Proposition \ref{test}, the induced random walk $P_{\mu}$
is transient.

\end{proof}

\begin{exa}

Suppose there is a sequence of $F_{n}$ satisfying the assumption
of Lemma \ref{CSC} with volume $|F_{n}|\sim\theta^{n}$. Then one
can choose the sequence $(\lambda_{n})$ to be $\lambda_{n}=C_{\epsilon}\theta^{-n}n^{1+\epsilon}$
for any $\epsilon>0$. The resulting convex linear combination $\mu$
as in the statement of the lemma induces transient random walk on
the orbit of $o$. 

\end{exa}

The condition in Lemma \ref{CSC} forces $F_{n}\cap{\rm St}_{G}(x)$
to be small for all $x$ in the orbit. Such sets are not common to
observe, especially in the situation of totally non-free actions.
In applications to Grigorchuk groups, the subsets $F_{n}$ are built
from a sequence of elements which satisfy an injective property which
we refer to as \textquotedbl cube independence\textquotedbl{} defined
below. We have mentioned the definition of cube independence property
for ${\bf k}=1$ in the Introduction, now we formulate a more general
definition. 

For a sequence of elements $g_{1},g_{2},\ldots\in G$, we say the
sequence satisfies \emph{the cube independence property} on $x_{0}\cdot G$
with parameters $\mathbf{k}=\left(k_{n}\right){}_{n=1}^{\infty}$,
$k_{n}\in\mathbb{N}$, if for any $n\ge1$ and $x\in x_{0}\cdot G$,
the map 
\begin{align*}
\{0,1,\ldots,k_{1}\}\times\ldots\times\{0,1,\ldots,k_{n}\} & \to x_{0}\cdot G\\
(\varepsilon_{1},\ldots\varepsilon_{n}) & \mapsto x\cdot g_{n}^{\varepsilon_{n}}\ldots g_{1}^{\varepsilon_{1}}
\end{align*}
is injective. When the parameters $k_{n}$ are constant $1$, we omit
reference to $\mathbf{k}$ and say the sequence has the cube independence
property.

Any other ordering of the elements $g_{1},\ldots,g_{n}$ in the definition
would serve the same purpose, but we choose to formulate it this way
because in examples we consider it is easier to verify inductively.
By definition it is clear that cube independence property is inherited
by subsequences.

\begin{prop}\label{indep_tran}

Suppose $g_{1},g_{2},\ldots\in G$ is a sequence of elements satisfying
the cube independence property on the orbit $x_{0}\cdot G$ with $\mathbf{k}=(k_{n})$,
$k_{n}\in\mathbb{N}$. Let 
\[
F_{n}=\left\{ g_{n}^{\varepsilon_{n}}\ldots g_{1}^{\varepsilon_{1}}:\ \varepsilon_{j}\in\{0,1,\ldots,k_{j}\},1\le j\le n\right\} .
\]
For any $\epsilon>0$, let
\[
\mu=\sum_{n}\frac{C_{\epsilon}n^{1+\epsilon}}{2(1+k_{1})\ldots(1+k_{n})}\left(\mathbf{u}_{F_{n}}+\mathbf{u}_{F_{n}^{-1}}\right),
\]
where $C_{\epsilon}$ is the normalizing constant such that $\mu$
has total mass $1$. Then $\mu$ is a symmetric probability measure
on $G$ of finite entropy such that the induced $P_{\mu}$-random
walk on $x_{0}\cdot G$ is transient.

\end{prop}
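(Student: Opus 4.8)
The plan is to verify that the sequence $(g_n)$ with parameters $\mathbf{k}=(k_n)$, together with the sets $F_n$, satisfies the hypotheses of Lemma \ref{CSC}, and then to check the convergence of the relevant series so that both the probabilistic normalization and the transience conclusion go through. The main work is in establishing the $\ell^1$-type lower bound $\inf_{x\in x_0\cdot G}|x\cdot F_n|\ge c_0|F_n|$ with a constant $c_0$ \emph{independent of} $n$, because Lemma \ref{CSC} is stated in exactly this form. The cube independence property is tailor-made for this: by definition, for each fixed $x\in x_0\cdot G$ the map $(\varepsilon_1,\dots,\varepsilon_n)\mapsto x\cdot g_n^{\varepsilon_n}\cdots g_1^{\varepsilon_1}$ is injective on $\prod_{j=1}^n\{0,1,\dots,k_j\}$, hence $|x\cdot F_n|=\prod_{j=1}^n(1+k_j)=|F_n|$. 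So in fact the stronger statement $|x\cdot F_n|=|F_n|$ holds for every $x$ and every $n$, and we may take $c_0=1$. This also shows $|F_n|=\prod_{j=1}^n(1+k_j)\nearrow\infty$, so $|F_n|$ is strictly increasing and the hypothesis $|F_n|\nearrow\infty$ of Lemma \ref{CSC} is met; moreover $|F_n|-|F_{n-1}| = |F_{n-1}|\bigl((1+k_n)-1\bigr)=|F_{n-1}|\,k_n$, but it is cleaner to note $|F_n|-|F_{n-1}| \ge \tfrac12|F_n|$ when $k_n\ge1$, or more simply $|F_n|-|F_{n-1}| = |F_n|\,(1 - \tfrac{1}{1+k_n}) \ge \tfrac12 |F_n|$ since $k_n\ge 1$.

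Next I would set $\lambda_n = \dfrac{C_\epsilon n^{1+\epsilon}}{(1+k_1)\cdots(1+k_n)} = \dfrac{C_\epsilon n^{1+\epsilon}}{|F_n|}$, where $C_\epsilon$ is chosen so that $\sum_n \lambda_n\bigl(\tfrac12(1+k_1)^{-1}\cdots\bigr)$... — more precisely, the measure as written is $\mu=\sum_n \tfrac{\lambda_n}{2}(\mathbf{u}_{F_n}+\mathbf{u}_{F_n^{-1}})$ with $\sum_n\lambda_n=1$, so $C_\epsilon = \bigl(\sum_n n^{1+\epsilon}/|F_n|\bigr)^{-1}$. This sum converges because $|F_n|=\prod_{j=1}^n(1+k_j)\ge 2^n$ (each $k_j\ge1$), so $n^{1+\epsilon}/|F_n|\le n^{1+\epsilon}2^{-n}$, which is summable; hence $C_\epsilon$ is a well-defined positive constant and $\mu$ is a genuine probability measure, manifestly symmetric. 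Then I would verify the Lemma \ref{CSC} summability condition: using $|F_n|-|F_{n-1}|\ge\tfrac12|F_n|$ and $\lambda_n = C_\epsilon n^{1+\epsilon}/|F_n|$,
\[
\sum_{n=1}^\infty \frac{1}{\lambda_n(|F_n|-|F_{n-1}|)} \le \sum_{n=1}^\infty \frac{2|F_n|}{C_\epsilon n^{1+\epsilon}|F_n|} = \frac{2}{C_\epsilon}\sum_{n=1}^\infty \frac{1}{n^{1+\epsilon}} < \infty.
\]
With $c_0=1$ and this bound, Lemma \ref{CSC} applies directly and gives transience of the induced $P_\mu$-random walk on $x_0\cdot G$.

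Finally I would check that $\mu$ has finite entropy, which is the remaining clause of the statement. Since $\mu$ is a countable convex combination of uniform measures $\mathbf{u}_{F_n},\mathbf{u}_{F_n^{-1}}$ on finite sets of size $|F_n|$, one has the standard bound $H(\mu)\le \sum_n \lambda_n \log|F_n| + \sum_n -\lambda_n\log(\lambda_n/2)$ (grouping by which $F_n^{\pm1}$ the mass sits in, exactly the argument used in the proof of Corollary \ref{trivialmoment}). The first sum is $\sum_n \lambda_n\log|F_n| = C_\epsilon\sum_n \frac{n^{1+\epsilon}}{|F_n|}\log|F_n|$, finite since $\log|F_n|\le n\log 2 + \sum\log(1+k_j)$ grows at most polynomially-times-linearly while $1/|F_n|$ decays geometrically — or one just needs $|F_n|\ge 2^n$, giving $n^{1+\epsilon}2^{-n}\log|F_n|$ summable. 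The second sum converges by the elementary inequality $-a\log a\le 2e^{-1}\sqrt{a}$ together with $\lambda_n\le C_\epsilon n^{1+\epsilon}2^{-n}$, so $\sum_n\sqrt{\lambda_n}<\infty$ (again mirroring Corollary \ref{trivialmoment}). Hence $H(\mu)<\infty$. The one point that needs care, and is the crux of the whole argument, is the uniform-in-$n$ nature of the isoperimetric constant; but as noted this is immediate from cube independence since it forces $|x\cdot F_n|=|F_n|$ exactly. I do not expect any genuine obstacle beyond bookkeeping once that observation is in hand.
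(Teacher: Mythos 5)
Your proposal is correct and follows essentially the same route as the paper's proof: cube independence gives $|x\cdot F_n|=|F_n|=\prod_{j=1}^n(1+k_j)$ so that Lemma \ref{CSC} applies with $c_0=1$, the choice $\lambda_n=C_\epsilon n^{1+\epsilon}/|F_n|$ makes the transience series comparable to $\sum n^{-1-\epsilon}$, and the entropy is controlled via $|F_n|\ge 2^n$. Your treatment of the entropy is in fact slightly more careful than the paper's, which records only the term $\sum_n\lambda_n\log|F_n|$ and omits the (also convergent) contribution $\sum_n-\lambda_n\log\lambda_n$ from the mixing distribution.
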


\begin{proof}

By definition of cube independence property, we have for all $x\in x_{0}\cdot G$,
$|x\cdot F_{n}|=|F_{n}|=\prod_{j=1}^{n}(1+k_{j})$. Choose $\lambda_{n}=C_{\epsilon}n^{1+\epsilon}\prod_{j=1}^{n}(1+k_{j})^{-1}$,
then
\[
\sum_{n=1}^{\infty}\frac{1}{\lambda_{n}\left(|F_{n}|-|F_{n-1}|\right)}\le2\sum_{n=1}^{\infty}\frac{1}{\lambda_{n}|F_{n}|}=\frac{2}{C_{\epsilon}}\sum_{n=1}^{\infty}n^{-1-\epsilon}<\infty.
\]
Therefore by Lemma \ref{CSC}, the convex combination $\mu=\sum_{n=1}^{\infty}\frac{\lambda_{n}}{2}\left(\mathbf{u}_{F_{n}}+\mathbf{u}_{F_{n}^{-1}}\right)$
induces transient random walk on the orbit $o\cdot G$. The entropy
of $\mu$ is bounded by 
\begin{align*}
H_{\mu}(1) & \le\sum_{n=1}^{\infty}\lambda_{n}\log|F_{n}|\\
 & =C\sum_{n=1}^{\infty}\frac{n^{1+\epsilon}}{\prod_{j=1}^{n}(1+k_{j})}\log\left(\prod_{j=1}^{n}(1+k_{j})\right)<\infty,
\end{align*}
where the series is summable because $\prod_{j=1}^{n}(1+k_{j})\ge2^{n}$. 

\end{proof}

\begin{rem}

As mentioned in the Introduction, we refer to the set $F_{n}$ as
an $n$-quasi-cubic set and the measure ${\bf u}_{F_{n}}$ an $n$-quasi-cubic
measure. By definition bounds on the length of elements $g_{1},g_{2},\ldots$
give explicit estimates of the truncated moments and tail decay of
$\mu$. More precisely, if $\sum_{j=1}^{n}(1+k_{j})|g_{j}|\le\ell_{n}$,
then $\max\{|g|,\ g\in F_{n}\}\le\ell_{n}$ and for the measure $\mu$
in Corollary \ref{indep_tran},
\[
\mu\left(B(e,\ell_{n})^{c}\right)\le\frac{C'_{\epsilon}n^{1+\epsilon}}{(1+k_{1})\ldots(1+k_{n})}.
\]

\end{rem}

One way to produce a cube independent sequence is to select suitable
elements from level stabilizers.

\begin{lem}\label{elem_ind}

Let $G<{\rm Aut}(\mathsf{T}_{d})$ and suppose that $g_{n}\in G$
satisfies that $g_{n}\in{\rm St}_{G}(\mathsf{L}_{n})$ and the root
permutation of the section $(g_{n})_{v}$ has no fixed point for all
$v\in\mathsf{L}_{n}$. Then the sequence $(g_{n})_{n\ge0}$ has the
cube independence property on $1^{\infty}\cdot G$. 

\end{lem}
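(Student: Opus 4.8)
The plan is to prove the contrapositive in the form of a decoding statement: for \emph{any} ray $x\in\partial\mathsf{T}_{d}$ (in particular any $x\in 1^{\infty}\cdot G$) and any $n\ge 1$, the tuple $(\varepsilon_{1},\ldots,\varepsilon_{n})\in\{0,1\}^{n}$ is uniquely determined by the endpoint $w:=x\cdot g_{n}^{\varepsilon_{n}}\cdots g_{1}^{\varepsilon_{1}}$. This is exactly injectivity of the map appearing in the definition of the cube independence property with parameters $\mathbf{k}=1$.

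First I would record the elementary fact that ${\rm St}_{G}(\mathsf{L}_{j})\subseteq{\rm St}_{G}(\mathsf{L}_{i})$ whenever $i\le j$: an automorphism fixing every vertex of level $j$ fixes the geodesic from the root to each such vertex, hence fixes every vertex of level $i$. Consequently each $g_{j}$ fixes the length-$j$ prefix of every ray, so for every $m$ the partial product $g_{n}^{\varepsilon_{n}}\cdots g_{m+1}^{\varepsilon_{m+1}}$ fixes the length-$(m+1)$ prefix of $x$; writing $w^{(m+1)}:=x\cdot g_{n}^{\varepsilon_{n}}\cdots g_{m+1}^{\varepsilon_{m+1}}$ (with $w^{(n+1)}:=x$), the ray $w^{(m+1)}$ agrees with $x$ in its first $m+1$ coordinates, and in particular its $(m+1)$-st coordinate equals $x_{m+1}$.

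Then I would reconstruct the bits in the order $\varepsilon_{1},\varepsilon_{2},\ldots,\varepsilon_{n}$. Assume $\varepsilon_{1},\ldots,\varepsilon_{m-1}$ have already been recovered from $w$ and $x$; then $w^{(m)}:=w\cdot g_{1}^{-\varepsilon_{1}}g_{2}^{-\varepsilon_{2}}\cdots g_{m-1}^{-\varepsilon_{m-1}}$ is known, and by the composition law for the right action it equals $x\cdot g_{n}^{\varepsilon_{n}}\cdots g_{m}^{\varepsilon_{m}}=w^{(m+1)}\cdot g_{m}^{\varepsilon_{m}}$. Let $v:=x_{1}\cdots x_{m}\in\mathsf{L}_{m}$, which is the common length-$m$ prefix of $x$ and $w^{(m+1)}$ and is fixed by $g_{m}$; then $g_{m}$ acts on the subtree rooted at $v$ via its section $(g_{m})_{v}$, whose root permutation $\pi_{v}$ has no fixed point by hypothesis. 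Since the $(m+1)$-st coordinate of $w^{(m+1)}$ is $x_{m+1}$, the $(m+1)$-st coordinate of $w^{(m)}=w^{(m+1)}\cdot g_{m}^{\varepsilon_{m}}$ equals $x_{m+1}$ when $\varepsilon_{m}=0$ and equals $\pi_{v}(x_{m+1})\neq x_{m+1}$ when $\varepsilon_{m}=1$. Hence $\varepsilon_{m}$ is determined by whether the $(m+1)$-st coordinate of $w^{(m)}$ equals $x_{m+1}$, which completes the induction and the proof.

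The argument needs no input beyond the two hypotheses (level-$n$ fixing and derangement of the root permutations of the sections), so the only point requiring care is the bookkeeping of the innermost-first composition order for the right action and making sure the no-fixed-point condition on $g_{m}$ is invoked precisely at level $m+1$, after the higher-indexed factors have been peeled off; I expect this index bookkeeping to be the main and essentially only subtlety.
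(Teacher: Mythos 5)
Your proof is correct and follows essentially the same route as the paper's: both determine $\varepsilon_{1}$ from the second digit (using that $g_{2},\ldots,g_{n}$ stabilize level $2$ and that the root permutation of $(g_{1})_{x_{1}}$ is fixed-point-free) and then recurse, your version merely making the peeling-off bookkeeping explicit. No gaps.
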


\begin{proof}

Given $x\in1^{\infty}\cdot G$, we need to show injectivity of the
map
\begin{align*}
\{0,1\}^{n} & \to1^{\infty}\cdot G\\
(\varepsilon_{1},\ldots\varepsilon_{n}) & \mapsto x\cdot g_{n}^{\varepsilon_{n}}\ldots g_{1}^{\varepsilon_{1}}.
\end{align*}
Since $\left\langle g_{2},\ldots,g_{n}\right\rangle $ is in the stabilizer
of level $2$, we have that the first two digits of $y=x\cdot g_{n}^{\varepsilon_{n}}\ldots g_{1}^{\varepsilon_{1}}$
is $x_{1}x_{2}\cdot g_{1}^{\epsilon}$. Combined with the assumption
on the root permutation of $(g_{1})_{v}$, $v\in\mathsf{L}_{1}$,
the second digit of $y$ is different from $x_{2}$ if and only if
$\varepsilon_{1}=1$. Thus $x\cdot g_{n}^{\varepsilon_{n}}\ldots g_{1}^{\varepsilon_{1}}=x\cdot g_{n}^{\varepsilon'_{n}}\ldots g_{1}^{\varepsilon'_{1}}$
implies $\varepsilon_{1}=\varepsilon_{1}'$. The same argument recursively
shows that $\varepsilon_{j}=\varepsilon_{j}'$ for all $1\le j\le n$. 

\end{proof}

\subsection{Critical constant of recurrence\label{subsec:critical}}

In this subsection we consider an explicit sequence of cube independent
elements on the first Grigorchuk group $G$, obtained by applying
certain substitutions. As an application of Proposition \ref{indep_tran},
we evaluate the critical constant for recurrence of $(G,{\rm St}_{G}(1^{\infty}))$. 

The notion of critical constant for recurrence of $(G,H)$, where
$H$ is a subgroup of $G$, is introduced in \cite{Erschler05}. For
a group $G$ equipped with a length function $l$ and a subgroup $H<G$
of infinite index, the \emph{critical constant for recurrence} $c_{{\rm rt}}(G,H)$
with respect to $l$, is defined as $\sup\beta$, where the $\sup$
taken is over all $\beta\ge0$ such that there exists a symmetric
probability measure $\mu$ on $G$ with transient induced random walk
on $H\setminus G$ and $\mu$ has finite $\beta$-moment with respect
to $l$ on $G$:
\[
\sum_{g\in G}l(g)^{\beta}\mu(g)<\infty.
\]
When $G$ is finitely generated and $l$ is the word distance on a
Cayley graph of $G$, we omit reference to $l$. 

For the rest of this subsection, $G$ denotes the first Grigorchuk
group $G_{012}$ and we use the usual notation for $G$ as in (\ref{eq:un}).
By \cite[Theorem 2]{Erschler05}, for $H$ the stabilizer ${\rm St}_{G}(1^{\infty})$,
the critical constant $c_{{\rm rt}}(G,{\rm St}_{G}(1^{\infty}))<1$.
We show that indeed $c_{{\rm rt}}\left(G,{\rm St}_{G}(1^{\infty})\right)$
is equal to the growth exponent of the permutational wreath product
$(\mathbb{Z}/2\mathbb{Z})\wr_{\mathcal{S}}G$, $\mathcal{S}=1^{\infty}\cdot G$. 

\begin{thm}\label{critical}

Let $G$ be the first Grigorchuk group. Then 

\[
c_{{\rm rt}}\left(G,{\rm St}_{G}(1^{\infty})\right)=\alpha_{0}=\frac{\log2}{\log\lambda_{0}},
\]
where $\lambda_{0}$ is the positive root of the polynomial $X^{3}-X^{2}-2X-4$,
$\alpha_{0}\approx0.7674$. 

\end{thm}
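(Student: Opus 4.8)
The plan is to establish the two inequalities $c_{{\rm rt}}(G,{\rm St}_{G}(1^{\infty}))\ge\alpha_{0}$ and $c_{{\rm rt}}(G,{\rm St}_{G}(1^{\infty}))\le\alpha_{0}$; throughout I use that $g\mapsto 1^{\infty}\cdot g$ identifies the coset space ${\rm St}_{G}(1^{\infty})\backslash G$ with the orbit $\mathcal{S}=1^{\infty}\cdot G$ and carries the induced kernel on cosets to $P_{\mu}$, so transience of $\mu$ with respect to ${\rm St}_{G}(1^{\infty})$ is the same as transience of the induced $P_{\mu}$-random walk on $\mathcal{S}$.

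For the lower bound I would, for each $\epsilon>0$, build a symmetric measure with transient induced walk and finite $\beta$-moment for every $\beta<\alpha_{0}$, by feeding an explicit cube independent sequence into Proposition \ref{indep_tran}. Take $g_{n}=\zeta^{n}(ac)$. By formula (\ref{eq:subformula}) one has $g_{n}\in{\rm St}_{G}(\mathsf{L}_{n})$ and the section of $g_{n}$ at every vertex of level $n$ equals $ac$ or $ca$, hence has non-trivial root permutation, so Lemma \ref{elem_ind} gives the cube independence property on $\mathcal{S}$. The same formula shows that the number of letters of $\zeta^{n}(ac)$, expanded on the alphabet $\{ab,ac,ad\}$, is governed by the $n$-th power of the $3\times 3$ incidence matrix of $\zeta$, whose characteristic polynomial is $-(X^{3}-X^{2}-2X-4)$; since $\lambda_{0}$ is its Perron eigenvalue, $|g_{n}|\le C\lambda_{0}^{\,n}$. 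Proposition \ref{indep_tran} with $\mathbf{k}\equiv 1$ then provides a symmetric probability measure $\mu$ of finite entropy such that the induced $P_{\mu}$-random walk on $\mathcal{S}$ is transient and
\[
\mu\bigl(\{g:|g|\ge C\lambda_{0}^{\,n}\}\bigr)\ \le\ \frac{C'_{\epsilon}\,n^{1+\epsilon}}{2^{n}} .
\]
(If $\mu$ is required non-degenerate, pass to $\tfrac12(\mathbf{u}_{S}+\mu)$: transience survives by the comparison of Dirichlet forms and the moments are unchanged.) Since $\lambda_{0}^{\,\alpha_{0}}=2$, for every $\beta<\alpha_{0}$,
\[
\sum_{g}|g|^{\beta}\mu(g)\ \lesssim\ \sum_{n}\bigl(C\lambda_{0}^{\,n}\bigr)^{\beta}\frac{n^{1+\epsilon}}{2^{n}}\ =\ C''\sum_{n}n^{1+\epsilon}\Bigl(\tfrac{\lambda_{0}^{\,\beta}}{2}\Bigr)^{n}<\infty ,
\]
so $c_{{\rm rt}}(G,{\rm St}_{G}(1^{\infty}))\ge\beta$ for all $\beta<\alpha_{0}$, giving $c_{{\rm rt}}(G,{\rm St}_{G}(1^{\infty}))\ge\alpha_{0}$.

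The upper bound refines the inequality $c_{{\rm rt}}<1$ of \cite[Theorem 2]{Erschler05}; it suffices to show that any symmetric measure $\mu$ on $G$ with finite $\alpha_{0}$-moment induces a recurrent walk on $\mathcal{S}$ (a finite $\beta$-moment, $\beta>\alpha_{0}$, implies a finite $\alpha_{0}$-moment since $|g|^{\alpha_{0}}\le|g|^{\beta}$ off the identity). As $\mathcal{S}$ is a ray $o=v_{0},v_{1},v_{2},\dots$, I would apply the Nash--Williams criterion (see \cite[Chapter I]{WoessBook}) to the cutsets $\Pi_{k}$ consisting of the $P_{\mu}$-edges crossing position $2^{k}$, each edge assigned to the smallest scale it crosses so that the $\Pi_{k}$ are pairwise disjoint. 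Writing $C(m)=\sum_{g}\mu(g)N_{m}(g)$ for the conductance across position $m$, with $N_{m}(g)=\#\{x\in\mathcal{S}:x\text{ and }x\cdot g\text{ lie on opposite sides of }m\}$, the key estimate is the uniform-in-$m$ bound $N_{m}(g)\le 2\Delta(|g|)$, where $\Delta$ denotes the maximal inverted orbit growth of $G$: along a geodesic word $g=s_{1}\cdots s_{\ell}$ the $\mathcal{S}$-path $x,xs_{1},\dots,xs_{1}\cdots s_{\ell}$ changes position by at most one at each step, so if $x$ and $x\cdot g$ are separated by $m$ it visits $v_{m}$ or $v_{m+1}$, forcing $x\in\{v_{m},v_{m+1}\}\cdot(s_{1}\cdots s_{i})^{-1}$ for some $i\le\ell$; the set of such $x$ is contained in the inverted orbit of $v_{m}$ (or $v_{m+1}$) under a word of length $\ell$, of cardinality at most $\Delta(\ell)$. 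By the inverted orbit growth computation for $G$ (from \cite{BE1}, consistent with the volume upper bounds of \cite{Bartholdi98,MuchnikPak}), $\Delta(\ell)\le C\ell^{\alpha_{0}}$, hence $C(m)\le 2C\sum_{g}|g|^{\alpha_{0}}\mu(g)<\infty$ uniformly in $m$. Then each $\Pi_{k}$ has total conductance bounded by a constant, its effective resistance is bounded below by a positive constant, the sum of these resistances over $k$ diverges, and Nash--Williams yields recurrence. Thus no symmetric $\mu$ with finite $\beta$-moment, $\beta>\alpha_{0}$, induces a transient walk, i.e. $c_{{\rm rt}}(G,{\rm St}_{G}(1^{\infty}))\le\alpha_{0}$.

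The step I expect to be the main obstacle is the uniform-in-$m$ estimate $N_{m}(g)\lesssim|g|^{\alpha_{0}}$: bounding the inverted orbit at the basepoint $o$ by $\Delta(|g|)$ is close to a definition, but obtaining it at every position $m$ along $\mathcal{S}$, and verifying that the maximal inverted orbit growth of $G$ is genuinely uniform over starting points of $\mathcal{S}$ and is $O(\ell^{\alpha_{0}})$, uses the self-similar structure of $\mathcal{S}$ (each block of length $\asymp 2^{j}$ being a copy of the level-$j$ Schreier graph, on which the sections of $g$ act) together with the norm-contraction property (\ref{eq:contraction}). The remaining ingredients --- the cube independence and length bound for $\zeta^{n}(ac)$, the bookkeeping making the Nash--Williams cutsets disjoint, and the elementary moment series --- are routine given the substitution calculus of Subsection \ref{subsec:Grigorchuk-def} and the machinery of Section \ref{sec:construction}.
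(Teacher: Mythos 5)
Your lower bound is essentially the paper's own argument: the cube independent sequence $\zeta^{n}(ac)$, Proposition \ref{indep_tran}, the length bound $|\zeta^{n}(ac)|\le C\lambda_{0}^{n}$ coming from the substitution matrix, and the resulting tail/moment computation; this half is correct as written. Your upper bound, however, takes a genuinely different route from the paper, and that is where the problems are. The paper gets the upper bound almost for free by contradiction through the permutational wreath product: if some measure with finite $\alpha_{0}$-moment induced a transient walk on the orbit of $1^{\infty}$, then mixing it with the lamp generator at $o$ in $W=(\mathbb{Z}/2\mathbb{Z})\wr_{\mathcal{S}}G$ would give a finite-entropy measure with finite $\alpha_{0}$-moment and non-trivial Poisson boundary by \cite[Proposition 3.3]{BE3}, contradicting Corollary \ref{trivialmoment} since $v_{W,T}(n)\lesssim\exp(n^{\alpha_{0}})$ by \cite[Lemma 5.1]{BE1}; note that this only uses growth/inverted-orbit information at the single basepoint $o$, and it even rules out non-symmetric transient measures.

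Your electrical-network route has two genuine gaps. First, the Nash--Williams step is invalid as stated: once each edge is assigned to the smallest dyadic scale it crosses, the set $\Pi_{k}$ contains only edges $(x,y)$ with $2^{k-1}<x\le 2^{k}<y$, and a jump from some $x\le 2^{k-1}$ to some $y>2^{k}$ misses $\Pi_{k}$ entirely, so $\Pi_{k}$ is no longer a cutset and the criterion does not apply. This part is repairable (apply Cauchy--Schwarz to the full cut at every position $m\le M$, use that an edge of length $L$ crosses at most $\min(L,M)$ of these cuts, divide by $M$ and let $M\to\infty$ with dominated convergence against the energy of a putative finite-energy unit flow; uniformly bounded cut conductances then do force recurrence), but that argument must replace the one you wrote. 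Second, and more seriously, the key estimate $N_{m}(g)\le 2\Delta(|g|)\lesssim|g|^{\alpha_{0}}$ uniformly in $m$ requires the inverted orbit growth to be $O(\ell^{\alpha_{0}})$ uniformly over all basepoints $v_{m}\in\mathcal{S}$, whereas \cite{BE1} gives this only at the distinguished basepoint $o=1^{\infty}$ (via the growth of $W$, whose generating set carries a lamp only at $o$). Since $v_{m}=o\cdot h$ with $|h|=d_{\mathcal{S}}(o,v_{m})$ arbitrarily large, the inverted orbit of $v_{m}$ under a word $u$ is not contained in any inverted orbit of $o$ under a word of comparable length, so there is no formal reduction; one would have to redo the contraction-type recursion for arbitrary basepoints. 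You flag this yourself as the main obstacle, but flagging it does not close it: as written the inequality $c_{\rm rt}\le\alpha_{0}$, and hence the theorem, is not established by your argument.
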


The proof of Theorem \ref{critical} consists of two parts. To show
the lower bound, we construct an explicit measure with transient induced
random walks on the orbit of $1^{\infty}$ by applying Proposition
\ref{indep_tran}. 

Recall the substitutions $\zeta$ of $\{ab,ac,ad\}^{\ast}$, which
is used to produce words with asymptotic maximal inverted orbit growth
in \cite[Proposition 4.7]{BE1}, 

\[
\zeta:ab\mapsto abadac,\ ac\mapsto abab,\ ad\mapsto acac.
\]
By direct calculation $\zeta(ab)=(b,aba)\varepsilon$, $\zeta(ac)=(ca,ac)$,
$\zeta(ad)=(da,ad)$. 

\begin{fact}\label{zeta}

Let $w$ be a word in the alphabet $\{ab,ac,ad\}$ such that it has
\emph{even} number of $a$'s. Then $\zeta^{n}(w)$ is in the $n$-th
level stabilizer, and the section at the vertex $v=x_{1}\ldots x_{n}$
is given by 
\[
\left(\zeta^{n}(w)\right)_{v}=\begin{cases}
w & \mbox{if }x_{1}+\ldots+x_{n}\equiv n\mod2\\
awa & \mbox{if }x_{1}+\ldots+x_{n}\equiv n-1\mod2.
\end{cases}
\]

\end{fact}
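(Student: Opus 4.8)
Fact \ref{zeta} asks us to compute the sections of $\zeta^n(w)$ at level-$n$ vertices, given that $w$ has an even number of $a$'s. The plan is to proceed by induction on $n$, using the one-step wreath recursion formula (\ref{eq:subformula}) as the engine.

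First I would record the base case and set up the induction. Since $w$ has an even number of $a$'s, formula (\ref{eq:subformula}) gives $\psi(\zeta(w)) = (awa, w)$; in particular $\zeta(w)$ fixes the two level-$1$ vertices, and its section at vertex $x_1$ is $w$ if $x_1 = 1$ and $awa$ if $x_1 = 0$. This matches the claimed formula for $n=1$, reading $x_1 + \ldots + x_n \equiv n \pmod 2$ as $x_1 \equiv 1$. (One should double-check the indexing convention: the ray $1^\infty$ is fixed, so the section along the fixed ray is $w$, and each time a digit is $0$ we pick up a conjugation by $a$.)

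For the inductive step, suppose the statement holds for $n$ and all admissible $w$. To compute $\zeta^{n+1}(w) = \zeta^n(\zeta(w))$, I would first apply one step of $\zeta$ to $w$. The key point is that $\zeta(w)$, viewed as a word in $\{ab, ac, ad\}$, again has an even number of $a$'s: indeed $\zeta$ sends $ab \mapsto abadac$ (three $a$'s, odd) but also each generator letter, and one checks from (\ref{eq:subzeta}) that the parity of the number of $a$'s in $\zeta(u)$ equals that of $u$ for each single letter $u \in \{ab,ac,ad\}$ — wait, $ab \mapsto abadac$ has $3$ a's while $ab$ has $1$, both odd; $ac \mapsto abab$ has $2$, $ac$ has $1$ — so parity is \emph{not} preserved letterwise. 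The cleaner route: by (\ref{eq:subformula}), since $\zeta(w)$ has an even number of $a$'s iff $w$ does (this is the content of the displayed case distinction being consistent — actually the honest statement is that $\zeta(w)=(awa,w)$ precisely when $\zeta(w)$ has evenly many $a$'s, and one shows this happens exactly when $w$ has evenly many $a$'s by a direct parity count on the substitution rules). So I would verify once and for all: the map "number of $a$'s mod $2$" is a homomorphism $\{ab,ac,ad\}^* \to \mathbb{Z}/2$, and under $\zeta$ it pulls back to itself (check the three generators: $ab$: $1 \mapsto 3 \equiv 1$; $ac$: $1 \mapsto 2 \equiv 0$... this fails). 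The correct observation is subtler and I would instead argue directly: $w$ has evenly many $a$'s $\iff$ $w$ represents an element of the level-$1$ stabilizer $\iff$ $\zeta(w) \in \mathrm{St}_G(\mathsf{L}_1)$, and from $\psi(\zeta(w)) = (awa, w)$ the section $w$ at vertex $1$ again has evenly many $a$'s while the section $awa$ at vertex $0$ has evenly many $a$'s as well (two extra $a$'s). Hence both level-$1$ sections of $\zeta(w)$ are admissible words with evenly many $a$'s.

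Now I apply the inductive hypothesis to $\zeta^n$ acting on each of these two level-$1$ sections. A level-$(n+1)$ vertex is $v = x_1 v'$ with $|v'| = n$. If $x_1 = 1$, the section of $\zeta^{n+1}(w)$ at $v$ equals the section of $\zeta^n(w)$ at $v'$, which by induction is $w$ if $x_2 + \ldots + x_{n+1} \equiv n \pmod 2$ and $awa$ otherwise; since $x_1 = 1$, the condition $x_2 + \ldots + x_{n+1} \equiv n$ is the same as $x_1 + \ldots + x_{n+1} \equiv n+1$, as desired. If $x_1 = 0$, the section at $v$ equals the section of $\zeta^n(awa)$ at $v'$; I would either extend the inductive hypothesis to cover $awa$ in place of $w$ (the formula becomes $awa$ resp.\ $a(awa)a = w$ since $a^2 = 1$, swapping the two cases), or equivalently note $\zeta^n(awa) = a^{(n)} \zeta^n(w) a^{(n)}$ for a suitable conjugating element and track the extra swap. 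Either way the parity shift caused by $x_1 = 0$ flips the case distinction exactly in step with the formula. Assembling the two cases completes the induction.

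The main obstacle is the parity bookkeeping: being careful that (\ref{eq:subformula}) applies at \emph{every} stage of the iteration, i.e.\ that admissibility ("word in $\{ab,ac,ad\}$ with evenly many $a$'s", equivalently "lies in $\mathrm{St}_G(\mathsf{L}_1)$") is preserved by $\zeta$, and correctly tracking how the conjugation by $a$ at each $0$-digit interacts with the recursion. It is worth stating the induction with $w$ allowed to range over all admissible words (not a fixed $w$) so that the two level-$1$ sections $w$ and $awa$ are both covered by the inductive hypothesis; this is the cleanest way to close the loop, and once set up this way the argument is a routine check.
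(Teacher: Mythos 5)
Your overall strategy, induction on $n$ driven by the one-step recursion (\ref{eq:subformula}), is the same as the paper's, but the step on which everything hinges is closed with a false claim. The case distinction in (\ref{eq:subformula}) is governed by the parity of the number of $a$'s in $\zeta(w)$, and for the substitution (\ref{eq:subzeta}) this equals the parity of the number of occurrences of the \emph{letter} $ab$ in $w$ (the images of $ab,ac,ad$ contain $3,2,2$ letters $a$), not the parity of the number of $a$'s in $w$. Consequently the equivalence you assert, ``$w$ has evenly many $a$'s $\Leftrightarrow$ $\zeta(w)\in{\rm St}_{G}(\mathsf{L}_{1})$'', fails in both directions: $\zeta(ac)=abab$ stabilizes the first level although $ac$ has a single $a$, while $w=abac$ has two $a$'s but $\zeta(w)=abadacabab$ has five, hence acts nontrivially at the root. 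So after correctly noticing that the naive letterwise parity count breaks down, you repaired it with an incorrect statement instead of identifying the quantity that actually propagates: the parity of the number of $ab$'s, which is preserved by $\zeta$ (its images contain $1,2,0$ occurrences of $ab$) and which is exactly what licenses the formula $\zeta(u)=(aua,u)$ at every stage of the iteration. (The printed hypothesis ``even number of $a$'s'' is itself loose on this point; it is the $ab$-parity that matters, and all words to which the Fact is applied later, such as $\zeta(ac)=abab$ or $(ac)^{4}$, satisfy it.)

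There is also a structural difference that creates a second gap. You decompose from the bottom, $\zeta^{n+1}(w)=\zeta^{n}(\zeta(w))$, and then apply the inductive hypothesis to the two level-one sections $w$ and $awa$ of $\zeta(w)$. But $awa$ is not a word in the alphabet $\{ab,ac,ad\}$, so the hypothesis does not cover it, and your fallback $\zeta^{n}(awa)=a^{(n)}\zeta^{n}(w)a^{(n)}$ is left unproved; moreover the assertion that the section of $\zeta^{n+1}(w)$ at $1v'$ equals the section of $\zeta^{n}(w)$ at $v'$ is not something your decomposition hands you directly -- it is precisely the top-down identity $\zeta^{n+1}(w)=\zeta(\zeta^{n}(w))=\left(a\zeta^{n}(w)a,\zeta^{n}(w)\right)$. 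The paper's proof peels off the \emph{outermost} substitution instead: once the relevant parity is known to propagate, $\zeta^{n}(w)=\zeta\left(\zeta^{n-1}(w)\right)=\left(a\zeta^{n-1}(w)a,\zeta^{n-1}(w)\right)$, the induction hypothesis is only ever invoked for the honest word $\zeta^{n-1}(w)$, and the section at $x_{1}\ldots x_{n}$ is $w$ conjugated by $a$ once per digit $0$, giving the stated parity rule. With the corrected invariant that induction closes in two lines; in your version both the parity step and the treatment of $awa$ remain genuine gaps.
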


\begin{proof}

The claim can be verified by induction on $n$. For $w$ with even
number of $a$'s, $\zeta(w)=(awa,w).$ Note that if $w$ has even
number of $a$'s then so does $\zeta(w)$. Applied to $\zeta^{n-1}(w)$,
we have $\zeta^{n}(w)=(a\zeta^{n-1}(w)a,\zeta^{n-1}(w))$. Then the
induction hypothesis on $\zeta^{n-1}(w)$ implies the statement on
$\zeta^{n}(w)$. 

\end{proof}

By Fact \ref{zeta} and Lemma \ref{elem_ind}, we have that $\left(\zeta^{n}(ac)\right)_{n=1}^{\infty}$
form a cube independent sequence. With considerations of germs in
mind (although not relevant in this subsection), we prefer to take
the following sequence.

\begin{exa}\label{g_nexa}

As mentioned in the Introduction, take the following sequence of elements
$(g_{n})_{n=1}^{\infty}$ in $G$: 
\begin{equation}
g_{n}=\begin{cases}
\zeta^{n}(ac) & \mbox{if }n\equiv0,1\mod3\\
\zeta^{n}(ad) & \mbox{if }n\equiv2\mod3.
\end{cases}\label{eq:g_n}
\end{equation}
Then by Fact \ref{zeta} and Lemma \ref{elem_ind}, $(g_{n})$ has
the cube independence property. 

\end{exa}

With this sequence of cube independent elements, take the $n$-quasi-cubic
sets
\[
F_{n}=\left\{ g_{n}^{\varepsilon_{n}}\ldots g_{1}^{\varepsilon_{1}}:\ \varepsilon_{j}\in\{0,1\},1\le j\le n\right\} .
\]
and the convex combination 
\begin{equation}
\eta_{0}=\sum_{n=1}^{\infty}\frac{C_{\epsilon}n^{1+\epsilon}}{2^{n}}\left(\mathbf{u}_{F_{n}}+\mathbf{u}_{F_{n}^{-1}}\right)\label{eq:eta0}
\end{equation}
where $C_{\epsilon}>0$ is the normalization constant such that $\eta_{0}$
is a probability measure. Then by Proposition \ref{indep_tran}, the
$\eta_{0}$-induced random walk on the orbit of $1^{\infty}$ is transient. 

By definition the length of $g_{n}$ can be estimated by eigenvalues
of the matrix associated with the substitution $\zeta$. Record the
number of occurrences of $ab$, $ac$, $ad$ in a word $w$ as a column
vector ${\bf l}(w)$, then by definition of the substitution $\zeta$,
we have ${\bf l}(\zeta(w))=M{\bf l}(\omega)$ where the matrix $M$
is 
\[
M=\left[\begin{array}{ccc}
1 & 2 & 0\\
1 & 0 & 2\\
1 & 0 & 0
\end{array}\right].
\]
It follows that 
\begin{align*}
|\zeta^{n}(ac)| & =\left(\begin{array}{ccc}
2 & 2 & 2\end{array}\right)M^{n}\left(\begin{array}{c}
0\\
1\\
0
\end{array}\right),\\
|\zeta^{n}(ad)| & =\left(\begin{array}{ccc}
2 & 2 & 2\end{array}\right)M^{n}\left(\begin{array}{c}
0\\
0\\
1
\end{array}\right).
\end{align*}
Therefore 
\[
|g_{n}|\le C\lambda_{0}^{n},
\]
where $\lambda_{0}$ is the spectral radius of $M$, that is the positive
root of the characteristic polynomial $X^{3}-X^{2}-2X-4$. 

\begin{proof}[Proof of lower bound in Theorem \ref{critical}]

Consider the measure $\eta_{0}$ defined in (\ref{eq:eta0}) which
induces a transient random walk on the orbit of $1^{\infty}$ by Corollary
\ref{indep_tran}. Since 
\[
\max\{|g|:g\in F_{n}\}\le\sum_{j=1}^{n}|g_{j}|\le C\sum_{j=1}^{n}\lambda_{0}^{n}=C'\lambda_{0}^{n},
\]
it follows that 
\[
\eta_{0}\left(g:\ |g|\ge C'\lambda_{0}^{n}\right)\le\frac{Cn^{1+\epsilon}}{2^{n}}.
\]
This tail estimate implies that for any $\beta<\frac{\log2}{\log\lambda_{0}}$,
$\mu$ has finite $\beta$-moment. Since the $\eta_{0}$-induced random
walk on the orbit of $1^{\infty}$ is transient, it implies 
\[
c_{{\rm rt}}\left(G,{\rm St}_{G}(1^{\infty})\right)\ge\alpha_{0}=\frac{\log2}{\log\lambda_{0}}.
\]

\end{proof}

The upper bound $c_{{\rm rt}}\left(G,{\rm St}_{G}(1^{\infty})\right)\le\alpha_{0}$
is a consequence of the volume growth estimate of $W=(\mathbb{Z}/2\mathbb{Z})\wr_{\mathcal{S}}G$
in \cite{BE1}. 

\begin{proof}[Proof of upper bound in Theorem \ref{critical}]

We show a slightly stronger statement: for any non-degenerate probability
measure $\mu$ on $G$ with finite $\alpha_{0}$-moment, the induced
random walk on the orbit of $1^{\infty}$ is recurrent.

Suppose the claim is not true, let $\mu$ be a probability measure
on $G$ with finite $\alpha_{0}$-moment and transient induced random
walk on the orbit of $1^{\infty}$. On $W$, let $\nu$ be the uniform
measure on the lamp group at $o=1^{\infty}$, that is $\nu$ is uniform
on $\{id,\delta_{o}^{1}\}.$ Since the random walk induced by $\mu$
on the orbit $1^{\infty}\cdot G$ is transient, the measure $\zeta=\frac{1}{2}(\mu+\nu)$
has non-trivial Poisson boundary by \cite[Proposition 3.3]{BE3}.
On the other hand the volume growth functions satisfy
\[
v_{G,S}(r)\le v_{W,T}(r)\lesssim\exp(r^{\alpha_{0}})
\]
by \cite[Lemma 5.1]{BE1}. Since $\zeta$ has finite $\alpha_{0}$-moment,
Corollary \ref{trivialmoment} implies $(W,\zeta)$ has trivial Poisson
boundary, which is a contradiction.

\end{proof}

Note that in the definition of the critical constant $c_{{\rm rt}}(G,H)$
we restrict to symmetric random walks on $G$. A priori the critical
constant might become strictly larger if one includes non-symmetric
random walks. For example, for $G=\mathbb{Z}$ and $H=\{0\}$, $c_{{\rm rt}}(\mathbb{Z},\{0\})=1$
while biased random walk of finite range, for example $\mu(1)=1+p$
and $\mu(-1)=1-p$, $p\in(0,1/2)$, is transient. However the upper
bound proved above shows that the critical constant of $(G,{\rm St}_{G}(1^{\infty}))$
remains $\alpha_{0}$ if we take sup of moment over all measures with
transient induced random walk on the orbit of $1^{\infty}$. 

\begin{rem}

The measure $\eta_{0}$ with transient induced random walk defined
in (\ref{eq:eta0}) can be used to show volume lower estimates for
certain extensions of the first Grigorchuk group. For example, consider
\textquotedbl double Grigorchuk groups\textquotedbl{} as in \cite{Erschler05}
which are defined as follows. Take an $\omega\in\{{\bf 0},{\bf 1},{\bf 2}\}^{\infty}$
such that $\omega$ is not a shift of $({\bf 012})^{\infty}$ and
not eventually constant. Take the directed automorphism $g=b_{\omega}$
as defined in Subsection \ref{subsec:Grigorchuk-def}, that is $b_{\omega}$
is a generator of the group $G_{\omega}$. Let $\Gamma$ be the group
generated by $S'=\{a,b,c,d,g\}$. Since $\Gamma$ has generators from
two strings, it is called a double Grigorchuk group. For example,
for the group $G(m)$ in \cite[Corollary 2]{Erschler05}, the corresponding
string is $\omega=({\bf (012})^{m}{\bf 22})^{\infty}$. Using the
measure $\eta_{0}$ one can show an improvement of \cite[Corollary 2]{Erschler05}.
At $o=1^{\infty}$, the isotropy group of the groupoid of germs of
$\Gamma$ is strictly larger than that of $G$. Take $\mathcal{H}=\mathcal{G}$
and apply Proposition \ref{transience} to the measure $\mu=\frac{1}{2}\left({\bf u}_{S'}+\eta_{0}\right)$,
where $\eta_{0}$ is the measure on $G$ defined in (\ref{eq:eta0}),
then we have that $\mu$ has non-trivial Poisson boundary. It follows
by Corollary \ref{tailgrowth} that for any $\epsilon>0$, there exists
$c>0$ such that for all $n>1$,
\[
v_{\Gamma,S'}(n)\ge\exp\left(cn^{\alpha_{0}}/\log^{1+\epsilon}n\right).
\]

We remark that there is a gap in the proof of \cite[Proposition 4.1]{Erschler05},
in which in order to guarantee that the measure $\nu$ considered
has finite entropy, one needs to strengthen the assumption in the
statement to $v_{\Gamma_{2},T_{2}}(n)\le\exp\left(Cn^{\gamma}\right)$
for \emph{all} $n$, for some $\gamma<1$, $C>0$ (assuming for infinitely
many $n$ is not sufficient). 

\end{rem}

\section{More examples of measures with restricted germs on the first Grigorchuk
group \label{sec: growth-first}}

In this section we exhibit more examples of symmetric measures on
the first Grigorchuk group with non-trivial Poisson boundary of the
form $\frac{1}{2}({\bf u}_{S}+\eta)$, where $\eta$ is supported
on the subgroup ${\rm H}^{b}$ which consists of elements with only
trivial or $b$-germs. Compared to Section \ref{sec:existence}, the
improvement is that the measures have finite entropy and explicit
tail estimates. As a consequence we derive volume lower bounds for
$G_{012}$ from these measures, which can already be better than previously
known estimates, see Corollary \ref{Hbound}. 

Throughout this section denote by $G$ the first Grigorchuk group
and $S$ the standard generating set $\{a,b,c,d\}$. Recall the subgroup
${\rm H}^{b}$ which consists of elements of $G$ that have only trivial
or $b$-germs defined in (\ref{eq: Hb}). We have the following example
of a sequence of elements in ${\rm H}^{b}$ which satisfies the cube
independence property.

\begin{exa}\label{Hfirst}

Take the sequence of elements $(g_{n})$ satisfying the cube independence
property defined in (\ref{eq:g_n}). From the definition, we have
that for $n\equiv1,2\mod3$, $g_{n}\in{\rm H}^{b}$; while for $n\equiv0\mod3$,
$g_{n}$ has $c$-germs. Filter out the elements with $c$-germs and
keep the subsequence $g_{1},g_{2},g_{4},g_{5},g_{7},g_{8}\ldots$
with $b$-germs. For convenience of notation, relabel the elements
as $\hat{h}_{2k-1}=g_{3k-2},$ $\hat{h}_{2k}=g_{3k-1}$, $k\in\mathbb{N}$. 

\end{exa}

With the sequence $(\hat{h}_{n})$ in Example \ref{Hfirst}, take
the corresponding quasi-cubic sets
\[
\hat{H}_{n}=\left\{ \hat{h}_{n}^{\varepsilon_{n}}\ldots\hat{h}_{1}^{\varepsilon_{1}}:\ \varepsilon_{j}\in\{0,1\},1\le j\le n\right\} .
\]
For any $\epsilon>0$, take $c_{n}=\frac{C_{\epsilon}n^{1+\epsilon}}{2^{n}}$
and 
\[
\eta_{1}=\sum_{n=1}^{\infty}\frac{c_{n}}{2}\left(\mathbf{u}_{\hat{H}_{n}}+\mathbf{u}_{\hat{H}_{n}^{-1}}\right).
\]
Then by Proposition \ref{indep_tran}, the induced random walk $P_{\eta_{1}}$
on $1^{\infty}\cdot{\rm H}^{b}$ is transient. Let ${\bf u}_{S}$
be uniform on the generating set $S=\{a,b,c,d\}$ and $\mu_{1}=\frac{1}{2}({\bf u}_{S}+\eta_{1})$.
Then by comparison principle \cite[Corollary 2.14]{WoessBook}, the
random walk $P_{\mu_{1}}$ on $1^{\infty}\cdot G$ is transient. Since
$\eta_{1}$ is supported on the subgroup ${\rm H}^{b}$ and ${\bf u}_{S}$
has finite support, by Proposition \ref{transience} and Remark \ref{small},
$(G,\mu_{1})$ has non-trivial Poisson boundary. 

Since $\eta_{1}$ has finite entropy and ${\bf u}_{S}$ is of finite
support, $\mu_{1}$ also has finite entropy. We now estimate its tail
decay. From the length estimate of the elements $g_{n}$, we have
\[
\max\{|h|:\ h\in\hat{H}_{n}\}\le\sum_{j=1}^{n}\left|\hat{h}_{j}\right|\le C'\lambda_{0}^{\frac{3}{2}n},
\]
where $\lambda_{0}$ is the spectral radius of $M$, the positive
root of the characteristic polynomial $X^{3}-X^{2}-2X-4$. Therefore
as explained in the remark after Corollary \ref{indep_tran},
\[
\mu_{1}\left(B\left(e,C'\lambda_{0}^{\frac{3}{2}n}\right)^{c}\right)\le\frac{Cn^{1+\epsilon}}{2^{n}}.
\]
In other words,
\[
\mu_{1}\left(B\left(e,r\right)^{c}\right)\le Cr^{-\frac{2}{3\log_{2}\lambda_{0}}}\log^{1+\epsilon}r.
\]
By Corollary \ref{tailgrowth}, we conclude that
\[
v_{G}(r)\ge\exp\left(cr^{\frac{2}{3\log_{2}\lambda_{0}}}/\log^{1+\epsilon}r\right).
\]
The exponent in the bound is $\frac{2}{3\log_{2}\lambda_{0}}=\frac{2}{3}\alpha_{0}\approx0.5116$.
This estimate is non-trivial in the sense that the exponent is strictly
larger than $1/2$, however it is worse than the lower bound $\exp(r^{0.5153})$
proved in Bartholdi \cite{Bartholdi01}. 

It is clear that in the construction of $\eta_{1}$, skipping every
third element in the cube independent sequence $(g_{n})$ leads to
a significant loss. We can improve the construction by choosing another
cube independent sequence that is more adapted to ${\rm H}^{b}$.
Note the following property of ${\rm H}^{b}$.

\begin{lem}\label{Horbit}

The subgroup ${\rm H}^{b}$ defined as in (\ref{eq: Hb}) is locally
finite. The orbit $1^{\infty}\cdot{\rm H}^{b}$ is 
\[
1^{\infty}\cdot{\rm H}^{b}=\{x=x_{1}x_{2}\ldots\in\partial\mathsf{T}:\ x\mbox{ is cofinal with }1^{\infty},\ x_{3i+1}=1\mbox{ for all }i\in\mathbb{N}\}.
\]

\end{lem}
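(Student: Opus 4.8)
The plan is to prove the two assertions—local finiteness of ${\rm H}^{b}$, and that $1^{\infty}\cdot{\rm H}^{b}$ equals $\mathcal{S}:=\{x\in\partial\mathsf{T}:x\text{ is cofinal with }1^{\infty}\text{ and }x_{3i+1}=1\text{ for all }i\ge1\}$—separately, in both cases building on Fact \ref{bsection}. For local finiteness I would first record two elementary consequences of $b=(a,c),c=(a,d),d=(1,b)$: that $\mathcal{D}:=\langle a,b\rangle$ is a finite (dihedral) subgroup, since $(ab)^{16}=e$ by iterating the wreath recursion three times; and that the level-$3$ sections of $e,a$ are trivial while those of $b$ lie in $\{e,b\}$, so that the property ``all level-$3k$ sections in $\{e,a,b\}$'' passes from $3k$ to $3(k+1)$. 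Then, given $g_{1},\dots,g_{m}\in{\rm H}^{b}$, Fact \ref{bsection} yields a common $k$ with all level-$3k$ sections of every $g_{j}$ in $\{e,a,b\}\subseteq\mathcal{D}$; the product rule $(gh)_{v}=g_{v}h_{v\cdot g}$ extends this to all of $\Gamma:=\langle g_{1},\dots,g_{m}\rangle$, and since $\Gamma\cap{\rm St}_{G}(\mathsf{L}_{3k})$ has finite index in $\Gamma$ and embeds via $h\mapsto(h_{v})_{v\in\mathsf{L}_{3k}}$ into the finite group $\mathcal{D}^{\mathsf{L}_{3k}}$, the group $\Gamma$ is finite.

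For the orbit, cofinality is immediate: $a$ changes only the first coordinate of a ray, while $b,c,d$ fix it and restrict to automorphisms of the same kind on each level-$1$ subtree, so $G$ preserves the cofinality class of $1^{\infty}$. To prove $1^{\infty}\cdot{\rm H}^{b}\subseteq\mathcal{S}$, I would propagate Fact \ref{bsection} downward to see that for $g\in{\rm H}^{b}$ and $n$ large the level-$n$ sections of $g$ lie in $\{e,a,b\},\{e,a,c\},\{e,a,d\}$ according as $n\equiv0,1,2\pmod3$; since $g_{1^{3k}}=a$ forces $g_{1^{3(k+1)}}=e$, for $k$ large $g_{1^{3k}}\in\{e,b\}$, both of which fix $1^{\infty}$, so $1^{\infty}\cdot g=w\,1^{\infty}$ with $w=1^{3k}\cdot g\in\mathsf{L}_{3k}$ and all coordinates from position $3k+1$ on equal to $1$. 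As coordinate $m$ of $1^{3k}\cdot g$ is $0$ exactly when $g_{1^{m-1}}$ has nontrivial root permutation, the inclusion reduces to the claim that for $g\in{\rm H}^{b}$ and $j\ge1$ the section $g_{1^{3j}}$ has trivial root permutation.

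I expect this claim to be the main obstacle. Since $g_{1^{3j}}\in{\rm H}^{b}$ (descending $3j\equiv0$ levels preserves germ types and $G_{\mathfrak{s}^{3j}\omega}=G$), it asserts that no element of ${\rm H}^{b}$ has a section with nontrivial root permutation at a vertex $1^{3j}$, $j\ge1$—in particular $\iota(a,1^{3})\notin G$. I would deduce it from the branch structure of $G$ over $K=\langle[a,b]\rangle^{G}$: the explicit description of $\psi^{3}({\rm St}_{G}(\mathsf{L}_{3}))$ as a finite-index subgroup of $G^{8}$ ties the section at $1^{3}$ to the sections at the sibling level-$3$ vertices, forcing it into ${\rm St}_{G}(\mathsf{L}_{1})$ once all germs of $g$ along $1^{\infty}$-cofinal rays are trivial or $b$; equivalently, a section with nontrivial root permutation at $1^{3}$ would produce a $c$- or $d$-germ at some ray below $1^{2}$. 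Granting the claim and feeding $g_{1^{3}}\in{\rm H}^{b}$ back in (an induction on $k$), every required coordinate of $w$ equals $1$, so $1^{\infty}\cdot g\in\mathcal{S}$.

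For $\mathcal{S}\subseteq1^{\infty}\cdot{\rm H}^{b}$ I would induct on the largest coordinate $j$ where $x\in\mathcal{S}$ and $1^{\infty}$ differ (so $j\in\{1,2,3\}$, or $j\ge4$ with $j\equiv0,2\pmod3$), using explicit elements of ${\rm H}^{b}$ to reduce $j$: $a$, $aba=(c,a)$ and $(ab)^{3}a$ send $1^{\infty}$ to $01^{\infty}$, $101^{\infty}$, $1101^{\infty}$ respectively, and—since $\iota(\,\cdot\,,1^{3k})$ carries $K\cap{\rm H}^{b}$ into ${\rm H}^{b}$ (again because descending $3k\equiv0$ levels preserves germ types)—the elements $\iota(baba,1^{3k})$ (with $baba=b[a,b]b\in K$) and $\iota([a,b],1^{3k})$ send $1^{\infty}$ to $1^{3k+1}01^{\infty}$ and $1^{3k+1}001^{\infty}$, the latter as recalled in Section \ref{sec:existence}. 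Conjugating these shallow and deep moves by suitable finitary automorphisms and composing within the single orbit realizes an arbitrary $x\in\mathcal{S}$; the step requiring care is once more branch-theoretic—producing, for every admissible vertex $v$, an element of ${\rm H}^{b}$ that flips the first coordinate of the tail below $v$.
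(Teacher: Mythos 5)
Your local-finiteness argument and your treatment of the inclusion $\mathcal{S}\subseteq1^{\infty}\cdot{\rm H}^{b}$ run along essentially the same lines as the paper: Fact \ref{bsection} plus finiteness of $\left\langle a,b\right\rangle$ for the first part, and the elements $\iota\left([a,b]^{\pm j},1^{3k}\right)$ of the rigid stabilizers for the second. Your reduction of the remaining inclusion is also correct: since coordinate $m$ of $1^{\infty}\cdot g$ is $0$ exactly when $g_{1^{m-1}}$ has nontrivial root permutation, everything comes down to showing that for $g\in{\rm H}^{b}$ and $j\ge1$ the section $g_{1^{3j}}$ has trivial root permutation.

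But that claim is precisely where your proof stops. You call it "the main obstacle" and then offer only the assertion that the description of $\psi^{3}\left({\rm St}_{G}(\mathsf{L}_{3})\right)$ inside $G^{8}$ "forces" the section at $1^{3}$ into ${\rm St}_{G}(\mathsf{L}_{1})$ -- which is a restatement of what must be proved, not an argument (and note $g$ need not even lie in ${\rm St}_{G}(\mathsf{L}_{3})$, so that subgroup's structure does not directly apply). The paper closes this gap with a concrete two-step argument. First, the projection $G\to\left\langle b,c,d\right\rangle\cong(\mathbb{Z}/2\mathbb{Z})^{2}$ detects failure of membership in ${\rm H}^{b}$: if $\bar{g}\notin\left\langle b\right\rangle$ then $g\notin{\rm H}^{b}$, proved by induction on word length using $\bar{g}=\sum_{v\in\mathsf{L}_{3}}\bar{g}_{v}$. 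Second, and this is the combinatorial heart of the lemma, for a word $w$ representing $h\in{\rm H}^{b}$, if the level-$3$ section $w_{v_{1}v_{2}v_{3}}$ has an odd number of $a$'s (equivalently, nontrivial root permutation), then the sibling section $w_{v_{1}v_{2}\check{v}_{3}}$ projects outside $\left\langle b\right\rangle$; by the first step that sibling section is not in ${\rm H}^{b}$, contradicting $h\in{\rm H}^{b}$. An induction on $|h|$ (passing to $h_{111}$) then yields the claim at every level $3j$. Without this parity computation, or an equally explicit substitute, the hard inclusion $1^{\infty}\cdot{\rm H}^{b}\subseteq\mathcal{S}$ is not established. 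A similar, milder vagueness affects your last step for the reverse inclusion: "conjugating by suitable finitary automorphisms" does not in general keep you inside $G$, let alone ${\rm H}^{b}$; the correct mechanism is again the rigid stabilizers $\iota(K,v)$ with $|v|\equiv0\bmod3$ so that the germs remain in $\left\langle b\right\rangle$.
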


\begin{proof}

We first show ${\rm H}^{b}$ is locally finite. Let $h_{1},\ldots,h_{k}$
be a finite collection of elements in ${\rm H}^{b}$. By Fact \ref{bsection},
there exists a level $3n$, $n\in\mathbb{N}$ such that for every
$h_{j}$, its sections on level $3n$ are in the set $\{id,a,b\}$.
It follows that $\left\langle h_{1},\ldots,h_{k}\right\rangle $ is
a subgroup of $\left\langle a,b\right\rangle \wr_{\mathsf{L}_{3n}}{\rm Aut}(\mathsf{T}^{3n})$,
where $\mathsf{T}^{3n}$ is the finite rooted binary of $3n$ levels.
Since $\left\langle a,b\right\rangle $ is finite, we conclude that
$\left\langle h_{1},\ldots,h_{k}\right\rangle $ is finite. 

Recall that for any $n\in\mathbb{N}$, $[a,b]$ is in the rigid stabilizer
of $1^{n}$. When $n\equiv0\mod3$, the element $\iota([b,a]:1^{n})$
has $b$-germs. Consider the collection $\iota([b,a]:1^{n})$ and
$\iota([b,a]^{2}:1^{n})$, $n=0,3,\ldots$. Since $1^{\infty}\cdot baba=101^{\infty}$
and $1^{\infty}\cdot babababa=1101^{\infty}$, it follows that the
set $x=x_{1}x_{2}\ldots\in\partial\mathsf{T}$ where $x$ is cofinal
with $1^{\infty}$ and $x_{3i+1}=1$ for all $i\in\mathbb{N}$ is
contained in the orbit $1^{\infty}\cdot{\rm H}^{b}$.

Consider the projection of $G$ to the abelian group $\left\langle b,c,d\right\rangle =\mathbb{Z}/2\mathbb{Z}\times\mathbb{Z}/2\mathbb{Z}$.
Note that if the the projection of $g$ is not in the subgroup $\left\langle b\right\rangle $,
then $g\notin{\rm H}^{b}$. Indeed under the wreath recursion to level
$3$, $g=(g_{v})_{v\in\mathsf{L}_{3}}\tau$, the projections satisfy
$\bar{g}=\sum_{v\in\mathsf{L}_{3}}\bar{g}_{v}$. Thus $\bar{g}\notin\left\langle b\right\rangle $
implies at least one of the sections $g_{v}$ satisfies that $\bar{g}_{v}\notin\left\langle b\right\rangle $.
Therefore the claim follows from induction on length of $g$. 

Next we show that for any $h\in{\rm H}^{b}$, $x=1^{\infty}\cdot h$
satisfies $x_{3i+1}=1$ for all $i\in\mathbb{N}$. This can be seen
by induction on word length of $h$. For $|h|=1$, $h=a$ or $h=b$,
the claim is true. Suppose the statement is true for $|h|<n$. For
$h$ of length $|h|=n$, take a word $w$ of length $n$ representing
$h$ and perform the wreath recursion to level $3$. We claim that
if $h\in{\rm H}^{b}$ then all sections $w_{v}$, $v\in\mathsf{L}_{3}$,
have even number of $a$'s. As a consequence of the claim, the $4$-th
digit of $1^{\infty}\cdot h$ is $1$. To verify the claim, if $w_{v_{1}v_{2}v_{3}}$
has odd number of $a$'s, then by the definitions of the generators
we have that the projection of the section at its sibling $w_{v_{1}v_{2}\check{v}_{3}}$
to $\left\langle b,c,d\right\rangle $ is not contained in $\left\langle b\right\rangle $.
This contradicts with the assumption that $h\in{\rm H}^{b}$. Apply
the induction hypothesis to $h_{111}$, we conclude that $x=1^{\infty}\cdot h$
satisfies $x_{3i+1}=1$ for all $i\in\mathbb{N}$. 

\end{proof}

Observe that since the subgroup ${\rm H}^{b}$ acts trivially on levels
$3k+1$, at the corresponding levels we can use the substitution $\sigma$
instead of $\zeta$, where $\sigma$ is the Lysionok substitution
on $\{a,b,c,d\}^{\ast}$ given by
\[
\sigma(a)=aca,\ \sigma(b)=d,\ \sigma(c)=b,\ \sigma(d)=c.
\]
When applied to a word in $\{ab,ac,ad\}^{\ast}$, $\sigma$ always
doubles its length while typically the substitution $\zeta$ results
in a multiplicative factor larger than $2$.

Define a sequence of words $(h_{n})_{n=1}^{\infty}$ by
\begin{equation}
h_{2k-1}=\left(\zeta^{2}\circ\sigma\right)^{k-1}\zeta(ac)\ \mbox{and }h_{2k}=(\zeta^{2}\circ\sigma)^{k-1}\circ\zeta^{2}(ad).\label{eq:h2k}
\end{equation}
Note that $\zeta(ac)=abab$ and $\zeta^{2}(ad)=abababab$, thus $h_{2k}=h_{2k-1}^{2}$. 

\begin{lem}

Let the sequence $(h_{n})_{n=1}^{\infty}$ be defined as in (\ref{eq:h2k}).
Then $h_{n}\in{\rm H}^{b}$ for all $n\in\mathbb{N}$ and the sequence
$(h_{n})_{n=1}^{\infty}$ satisfies the cube independence property
on the orbit $1^{\infty}\cdot{\rm H}^{b}$. 

\end{lem}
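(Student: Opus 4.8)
The plan is to prove the two assertions separately: first that each $h_n$ lies in ${\rm H}^b$, and then that the sequence $(h_n)$ has the cube independence property on $1^\infty \cdot {\rm H}^b$.

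\textbf{Membership in ${\rm H}^b$.} The key observation is that both building blocks $\zeta$ and $\sigma$ are compatible with ${\rm H}^b$ in the appropriate sense. By Fact \ref{bsection}, $g \in {\rm H}^b$ iff its sections at level $3k$ lie in $\{id, a, b\}$ for some $k$. I would argue that $\zeta^2 \circ \sigma$ maps words whose letters lie in $\{ab, ac, ad\}$ and which represent elements of ${\rm H}^b$ to words of the same kind, thereby going down three levels at a time while preserving the "only $b$-germs" property. Concretely: $\sigma$ shifts the string $\omega = ({\bf 012})^\infty$ by one position (it is essentially the Lysionok substitution corresponding to the identification (\ref{eq:identification})), so one step of $\sigma$ followed by two steps of $\zeta$ realizes the passage from $G_{\mathfrak{s}^3 \omega}$ to $G_\omega$. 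Since $\zeta(ac) = abab$ and $\zeta^2(ad) = abababab$ are visibly in $\langle a, b\rangle$, hence in ${\rm H}^b$, and since applying $\zeta^2 \circ \sigma$ keeps us inside $\langle a,b\rangle$-type data at levels divisible by $3$ (one checks $\zeta^2 \circ \sigma$ sends $ab \mapsto$ a word in $\{ab\}$, using $\sigma(ab) = acad$ wait — rather $\sigma(b)=d$ so $\sigma(ab)=acad$; then $\zeta^2$ of that is a word whose level-$6$ sections are in $\{id,a,b\}$), we conclude $h_n \in {\rm H}^b$ by induction on $k$. The cleanest formulation is: prove by induction that $(\zeta^2 \circ \sigma)^{k-1}$ applied to an element of ${\rm H}^b$ represented in $\{ab,ac,ad\}^*$ again lands in ${\rm H}^b$, using Fact \ref{zeta} to track sections and the fact that $\sigma$ intertwines the level-$3$ wreath recursion on $G_{012}$ with the identity on the relevant quotients.

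\textbf{Cube independence.} For this I would invoke a variant of Lemma \ref{elem_ind} adapted to the orbit $1^\infty \cdot {\rm H}^b$, which by Lemma \ref{Horbit} consists of rays cofinal with $1^\infty$ with forced $1$'s at positions $3i+1$. The point is that $h_{2k-1} = (\zeta^2 \circ \sigma)^{k-1}\zeta(ac)$ is an element of the level-$(3k-2)$ stabilizer (count: $\zeta$ contributes one level, each $\zeta^2 \circ \sigma$ contributes three), and similarly $h_{2k} = (\zeta^2\circ\sigma)^{k-1}\zeta^2(ad)$ is in the level-$(3k-1)$ stabilizer; moreover at the "next available" level of the orbit — the first coordinate where the orbit is free to move — the section of $h_n$ has a root permutation equal to $a$ (no fixed point). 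Then the same recursive digit-tracking argument as in the proof of Lemma \ref{elem_ind} applies: given $x \in 1^\infty \cdot {\rm H}^b$, reading off the digit of $x \cdot h_n^{\varepsilon_n}\cdots h_1^{\varepsilon_1}$ at the depth where $h_1$ first acts recovers $\varepsilon_1$, then one peels off $h_1$ and proceeds inductively. The mild subtlety compared to Lemma \ref{elem_ind} is that the relevant "active" levels are spaced by $3$ rather than by $1$ and the stabilizer depths of $h_{2k-1}$ and $h_{2k}$ differ by one — so I need to check that $h_{2k-1}$ and $h_{2k}$ act at genuinely different depths or are distinguished in some other coordinate. Since $h_{2k} = h_{2k-1}^2$, this requires a small additional check: $h_{2k-1}$ has order... actually $[a,b]$ has infinite order but its image here — one checks directly that $\zeta(ac) = abab$ has the property that $x \cdot (abab)$ and $x \cdot (abab)^2 = x \cdot (abab)^2$ differ, using that $abab$ acts as a translation-like map on the relevant line, so the pair $(h_{2k-1}, h_{2k}) = (h_{2k-1}, h_{2k-1}^2)$ still contributes injectively because the three values $id, h_{2k-1}, h_{2k-1}^2$ of $h_{2k-1}^{\varepsilon}$ with $\varepsilon \in \{0,1,2\}$... but here $\varepsilon \in \{0,1\}$ only, so $h_{2k-1}^{\varepsilon_{2k-1}} h_{2k-1}^{2\varepsilon_{2k}}$ ranges over $id, h_{2k-1}, h_{2k-1}^2, h_{2k-1}^3$ as $(\varepsilon_{2k-1},\varepsilon_{2k})$ ranges over $\{0,1\}^2$, and these four are distinct on the orbit iff $h_{2k-1}$ acts with no low-order periodicity, which follows from Fact \ref{zeta} describing its sections as $abab$ or $a(abab)a$ at the appropriate level.

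\textbf{Main obstacle.} I expect the real work to be in the bookkeeping for the first part — verifying that $\zeta^2 \circ \sigma$ genuinely preserves ${\rm H}^b$ and correctly maps $\{ab,ac,ad\}^*$ to itself while respecting the $\mathfrak{s}^3$-shift of the defining string $\omega = ({\bf 012})^\infty$. One must be careful that $\sigma$ as defined on $\{a,b,c,d\}^*$ does send the subalphabet $\{ab, ac, ad\}$ into $\{ab,ac,ad\}^*$ (e.g. $\sigma(ad) = aca\,c = acac$? — one needs $\sigma(a)=aca$, $\sigma(d)=c$ so $\sigma(ad)=acac$, which is $(ac)^2$, good) and that the composition with $\zeta^2$ lands back in the word model where Fact \ref{zeta} applies. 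The cube-independence part is essentially a routine adaptation of Lemma \ref{elem_ind} once the level-stabilizer memberships and the non-trivial-root-permutation conditions are pinned down, with the only genuine addition being the $h_{2k}=h_{2k-1}^2$ relation, which I'd handle by a direct computation of the action of $abab$ (and its conjugates/substituted images) on the line $1^\infty \cdot {\rm H}^b$.
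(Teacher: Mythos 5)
There is a genuine gap in your membership argument. The inductive claim you propose --- that $\zeta^{2}\circ\sigma$ sends any element of ${\rm H}^{b}$ represented by a word in $\{ab,ac,ad\}^{\ast}$ back into ${\rm H}^{b}$ --- is false. The problem is the ``left'' component created by $\sigma$: on generators $\sigma(a)=aca=(d,a)$, $\sigma(b)=d=(id,b)$, $\sigma(c)=b=(a,c)$, $\sigma(d)=c=(a,d)$, so for a word $w$ one has $\sigma(w)=(\pi(w),w)$ where $\pi(w)$ is an element of the dihedral group $\left\langle a,d\right\rangle $, and in general $\pi(w)\neq id$. For example $ab\in{\rm H}^{b}$ (its level-$3$ sections are in $\{id,a,b\}$, so Fact \ref{bsection} applies), yet $\sigma(ab)=acad=(d,ab)$, so $\zeta^{2}(\sigma(ab))$ carries a section $d$ at a vertex of level $3$; by the identification (\ref{eq:identification}) a section $d$ at a level $\equiv0\bmod3$ is a $d$-germ, so $\zeta^{2}(\sigma(ab))\notin{\rm H}^{b}$. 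Thus no general preservation statement of the kind you invoke can carry the induction; what is needed, and what the paper proves, is the \emph{specific} fact that for the words actually occurring in (\ref{eq:h2k}) the left component vanishes: $\sigma(abab)=(id,abab)$, and since every element of $\left\langle a,d\right\rangle $ has order dividing $4$, one gets $\sigma\left(\left(\zeta^{2}\circ\sigma\right)^{j}\zeta(ac)\right)=\left(id,\left(\zeta^{2}\circ\sigma\right)^{j}\zeta(ac)\right)$ for all $j$. Combined with Fact \ref{zeta} this yields the explicit description on which everything rests: $h_{2k-1}$ lies in the stabilizer of level $3k-2$ with sections in $\{ac,ca\}$ exactly at the vertices of $1^{3k-2}\cdot{\rm H}^{b}$ and trivial sections elsewhere, and $h_{2k}=h_{2k-1}^{2}$ lies in the stabilizer of level $3k-1$ with sections in $\{ad,da\}$ at orbit vertices. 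Membership in ${\rm H}^{b}$ then follows because $c$ at level $\equiv1\bmod3$ and $d$ at level $\equiv2\bmod3$ are $b$-type germs under (\ref{eq:identification}).

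Your cube-independence discussion is closer to the mark but also takes an unnecessary detour. Once the section description above is available, the argument of Lemma \ref{elem_ind} applies verbatim on the orbit: $h_{2k-1}$ acts at level $3k-1$ and $h_{2k}$ at level $3k$ with fixed-point-free root permutations of the sections at the relevant orbit vertices, so the digits at successive levels recover $\varepsilon_{1},\varepsilon_{2},\ldots$ one at a time. In particular the relation $h_{2k}=h_{2k-1}^{2}$ causes no difficulty and there is no need to analyze distinctness of the powers $id,h_{2k-1},h_{2k-1}^{2},h_{2k-1}^{3}$ on the orbit (your description of the products arising in the cube map is in any case not the right bookkeeping): the square simply sits one level deeper in the stabilizer filtration, which is all the digit-reading argument uses. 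The real missing ingredient in your proposal is therefore the computation of the trivial left part of $\sigma$ on these particular words, which is simultaneously what makes the membership claim true and what pins down the levels and fixed-point-free sections needed for cube independence.
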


\begin{proof}

Note that $\sigma(abab)=(id,abab)$. By definition of $\sigma$, we
have that for any word $w$, $\sigma(w)=(w_{1},w)$ where $w_{1}$
is in $\left\langle a,d\right\rangle $. Since any element $\gamma$
in the finite dihedral group $\left\langle a,d|a^{2}=d^{2}=1,(ad)^{4}=1\right\rangle $
satisfies $\gamma^{4}=1$, it follows that if $\sigma(w^{4})=(id,w^{4})$
for any word $w$. Since $\sigma(abab)=(ac)^{4}$, we have $\sigma\circ\left(\zeta^{2}\circ\sigma\right)^{j}\zeta(ac)=(id,\left(\zeta^{2}\circ\sigma\right)^{j}\zeta(ac))$
for all $j\ge1$. Combined with Fact \ref{zeta}, $h_{2k-1}=\left(\zeta^{2}\circ\sigma\right)^{k-1}\zeta(ac)$
is in the level stabilizer of the level $3k-2$. Moreover in the level
$3k-2$, at a vertex $v\in1^{3k-2}\cdot{\rm H}^{b}$, the section
belongs to $\{ac,ca\}$; otherwise the section is trivial. Since $h_{2k}=h_{2k-1}^{2}$,
$h_{2k}$ is in the level stabilizer of the level $3k-1$ and at a
vertex $v\in1^{3k-1}\cdot{\rm H}^{b}$, the section belongs to $\{ad,da\}$;
otherwise the section is trivial. From the description of the sections
we conclude that $h_{n}\in{\rm H}^{b}$ and by Lemma \ref{elem_ind}
they form a cube independent sequence. 

\end{proof}

With this cube independent sequence we perform the same procedure
as before. Take the quasi-cubic sets
\[
H_{n}=\left\{ h_{n}^{\varepsilon_{n}}\ldots h_{1}^{\varepsilon_{1}}:\ \varepsilon_{j}\in\{0,1\},1\le j\le n\right\} ,
\]
and form the convex combination 
\[
\eta_{2}=\sum_{n=1}^{\infty}\frac{c_{n}}{2}\left(\mathbf{u}_{H_{n}}+\mathbf{u}_{H_{n}^{-1}}\right)\mbox{ with }c_{n}=\frac{C_{\epsilon}n^{1+\epsilon}}{2^{n}}.
\]
Let $\mu_{2}=\frac{1}{2}({\bf u}_{S}+\eta_{2})$. By Proposition \ref{indep_tran}
and Proposition \ref{transience}, we have that $(G,\mu_{2})$ has
non-trivial Poisson boundary. 

The length of the elements $(h_{n})$ can be estimated using the substitution
matrices. We obtain the following:

\begin{cor}\label{Hbound}

For any $\epsilon>0$, there exists a non-degenerate symmetric probability
measure $\mu$ on $G=G_{012}$ of the form $\mu=\frac{1}{2}({\bf u}_{S}+\eta)$
of finite entropy and nontrivial Poisson boundary, where $\eta$ is
supported on the subgroup ${\rm H}^{b}$ and the tail decay of $\mu$
satisfies
\[
\mu\left(B(e,r)^{c}\right)\lesssim r^{-\frac{2}{\log_{2}\lambda_{1}}}\log^{1+\epsilon}r.
\]
The number $\lambda_{1}$ is the largest real root of the polynomial
$X^{3}-15X^{2}+44X-32$, $\frac{2}{\log_{2}\mathcal{\lambda}_{1}}\approx0.5700$.
As a consequence,
\[
v_{G}(n)\gtrsim\exp\left(n^{\frac{2}{\log_{2}\lambda_{1}}}/\log^{1+\epsilon}n\right).
\]

\end{cor}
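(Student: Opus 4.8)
The plan is to follow exactly the pattern already used to produce $\eta_1$ and $\mu_1$, but with the more efficient sequence $(h_n)$ from $\eqref{eq:h2k}$, so that the only genuinely new work is the length estimate for the $h_n$. The construction of $\mu = \frac12({\bf u}_S + \eta_2)$ with $\eta_2 = \sum_n \frac{c_n}{2}(\mathbf u_{H_n} + \mathbf u_{H_n^{-1}})$ and $c_n = C_\epsilon n^{1+\epsilon}2^{-n}$ has, by the preceding lemma, the cube independence property on $1^\infty\cdot{\rm H}^b$; hence by Proposition \ref{indep_tran} the induced walk $P_{\eta_2}$ is transient on that orbit and $\eta_2$ has finite entropy. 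Then $\mu_2 = \frac12({\bf u}_S+\eta_2)$ still has finite entropy (adding a finitely supported measure changes entropy by a bounded amount), and by the comparison principle $\mathcal E_{P_{\mu_2}}\ge \frac12\mathcal E_{P_{\eta_2}}$ the walk $P_{\mu_2}$ is transient on $1^\infty\cdot G$; since $\eta_2$ is supported on ${\rm H}^b$ (elements with only trivial or $b$-germs) and ${\bf u}_S$ has finite support, Remark \ref{small} and Proposition \ref{transience} give that $(G,\mu_2)$ has non-trivial Poisson boundary.

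Next I would carry out the tail estimate. As in the remark after Proposition \ref{indep_tran}, $\max\{|h|:h\in H_n\}\le\sum_{j=1}^n|h_j|$ and $\mu_2(B(e,\ell_n)^c)\le C_\epsilon' n^{1+\epsilon}2^{-n}$ where $\ell_n=\sum_{j=1}^n|h_j|$, so it suffices to bound $|h_j|$. Record the occurrence-count column vector ${\bf l}(w)$ of $ab,ac,ad$ in a word $w$; as recalled in the excerpt, $\zeta$ acts by the matrix $M=\begin{psmallmatrix}1&2&0\\1&0&2\\1&0&0\end{psmallmatrix}$, and one computes analogously the matrix $N$ of the Lysionok substitution $\sigma$ on length vectors of words in $\{ab,ac,ad\}^\ast$ (since $\sigma(a)=aca$, $\sigma(b)=d$, $\sigma(c)=b$, $\sigma(d)=c$, the word $ab\mapsto acad$, etc.; here I note $\sigma$ doubles the $a$-count). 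The composite $\zeta^2\circ\sigma$ then acts by $M^2 N$, and $|h_{2k-1}|\asymp \|(M^2N)^{k-1}M\,{\bf e}_{ac}\|$, $|h_{2k}|=2|h_{2k-1}|$. Hence $|h_j|\le C\lambda_1^{j/2}$ where $\lambda_1$ is the spectral radius of $M^2N$; summing, $\ell_n\le C'\lambda_1^{n/2}$, so $\mu_2(B(e,r)^c)\lesssim r^{-2/\log_2\lambda_1}\log^{1+\epsilon}r$.

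The routine but essential computation is to verify that $\lambda_1$, the Perron root of $M^2N$, is the largest real root of $X^3-15X^2+44X-32$; this is a finite matrix-and-characteristic-polynomial calculation (one checks $\det(XI-M^2N)=X^3-15X^2+44X-32$), and numerically $2/\log_2\lambda_1\approx 0.5700$. I expect this step — pinning down $N$ correctly and simplifying the characteristic polynomial of $M^2N$ — to be the main place where care is needed, since the Lysionok substitution mixes the generator $a$ into $\sigma(a)=aca$ in a way that makes the length bookkeeping on the alphabet $\{ab,ac,ad\}$ slightly delicate. Finally, applying Corollary \ref{tailgrowth} with $\alpha = 2/\log_2\lambda_1$ and $\ell(r)=\log^{-1-\epsilon}r$ (a slowly varying function with $\ell(x)\sim\ell(x^b)$) yields $v_G(n)\gtrsim\exp\bigl(n^{2/\log_2\lambda_1}/\log^{1+\epsilon}n\bigr)$, completing the proof.
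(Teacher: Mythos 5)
Your proposal is correct and follows essentially the same route as the paper: the non-triviality of the boundary is obtained exactly as for $\mu_1$ (cube independence, Proposition \ref{indep_tran}, Remark \ref{small} with Proposition \ref{transience}), and the tail estimate reduces to the spectral radius of $M^2A$ where $A$ is the substitution matrix of $\sigma$ on $\{ab,ac,ad\}^\ast$ (your $\sigma(ab)=acad$, $\sigma(ac)=acab$, $\sigma(ad)=acac$ gives the same matrix $A$ and the same characteristic polynomial $X^3-15X^2+44X-32$ as in the paper). The only computation you flagged as delicate is indeed the one the paper carries out explicitly, and it confirms your answer.
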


\begin{proof}

For a word $w$ in $\{ab,ac,ad\}^{\ast}$, recall that ${\bf l}(\omega)$
records the number of occurrences of $ab$, $ac$, $ad$ as a column
vector. Then ${\bf l}(\sigma(w))=A{\bf l}(\omega)$ where the matrix
is
\[
A=\left[\begin{array}{ccc}
0 & 1 & 0\\
1 & 1 & 2\\
1 & 0 & 0
\end{array}\right].
\]
Recall the matrix $M$ that ${\bf l}(\zeta(w))=M{\bf l}(\omega)$.
Then 
\[
\left|\left(\zeta^{2}\circ\sigma\right)^{k}(abab)\right|=\left[\begin{array}{ccc}
2 & 2 & 2\end{array}\right]\left(M^{2}A\right)^{k}\left[\begin{array}{c}
2\\
0\\
0
\end{array}\right].
\]
Let $\lambda_{1}$ be the largest real eigenvalue of the matrix 
\[
M^{2}A=\left[\begin{array}{ccc}
6 & 5 & 4\\
2 & 5 & 4\\
2 & 3 & 4
\end{array}\right].
\]
In other words $\lambda_{1}$ is the largest real root of the characteristic
polynomial $X^{3}-15X^{2}+44X-32$. Numerically $\lambda_{1}\approx11.3809$.
It follows that 
\[
|h_{n}|\le C\lambda_{1}^{n/2}.
\]
Therefore from definition of $\eta_{2}$ we have that the tail of
$\mu_{2}=\frac{1}{2}({\bf u}_{S}+\eta_{2})$ satisfies
\[
\mu_{2}\left(B\left(e,\lambda_{1}^{n/2}\right)^{c}\right)\lesssim\frac{n^{1+\epsilon}}{2^{n}}.
\]
The statement follows.

\end{proof}

One cannot have a measure $\mu$ with the following property: $\mu=\frac{1}{2}(\nu+\eta)$,
where $\eta$ is supported on the subgroup ${\rm H}^{b}$ with smaller
germ group $\left\langle b\right\rangle $ and $\nu$ is of finite
support, $(G,\mu)$ has non-trivial Poisson boundary and $\eta$ has
finite $\alpha$-moment for $\alpha$ close to the critical constant
$c_{{\rm rt}}(G,{\rm St}_{G}(1^{\infty}))$. The main drawback is
that the action of any element in ${\rm H}^{b}$ fixes the digits
$x_{3k+1}$, $k\in\mathbb{N}$ as explained in Lemma \ref{Horbit}.
The orbit of $1^{\infty}$ under ${\rm H}^{b}$ is much smaller than
its orbit under $G$. Indeed by norm contracting calculations one
can bound from above the growth of $(\mathbb{Z}/2\mathbb{Z})\wr_{1^{\infty}\cdot{\rm H}^{b}}{\rm H}^{b}$
with respect to the induced word distance and show that a measure
$\eta$ on ${\rm H}^{b}$ with transient induced random walk on the
orbit of $o$ cannot have tail decay close to $r^{-\alpha_{0}}$. 

\section{Main construction of measures of non-trivial Poisson boundary on
Grigorchuk groups\label{sec:Main}}

In this section we prove our main result on existence of measures
on $G_{\omega}$ with non-trivial Poisson boundary and near optimal
tail decay, where $\omega$ satisfies the following assumption. 

\begin{notation}\label{D}

We say the string $\omega$ satisfies Assumption $({\rm Fr}(D))$,
where $D\in\mathbb{N}$, if for every integer $k\ge0$, the string
$\omega_{kD}\ldots\omega_{kD+D-1}$ contains at least one of the following
two substring: $\left\{ {\bf 201},{\bf 211}\right\} $. \textquotedbl Fr\textquotedbl{}
in the notation stands for frequency. Given a string $\omega$ that
satisfies Assumption $({\rm Fr}(D))$, define $(m_{k})_{k=0}^{\infty}$
and $I_{\omega}$ as follows:
\begin{itemize}
\item for each $k$ let $m_{k}$ be the smallest number in $\{0,\ldots,D-3\}$
such that 
\[
\omega_{kD+m_{k}}\omega_{kD+m_{k}+1}\omega_{kD+m_{k}+2}\in\left\{ {\bf 201},{\bf 211}\right\} ;
\]
 
\item let $I_{\omega}$ be the set $I_{\omega}:=\left\{ kD+m_{k}+3:\ k\ge0\right\} $. 
\end{itemize}
\end{notation}

Note that by definition $D\ge3$. The value of $D$ is not important,
as long as it is finite. Let $\pi$ be a permutation of letters $\{{\bf 0},{\bf 1},{\bf 2}\}$
and $\pi(\omega)$ be the string $(\pi(\omega))_{i}=\pi(\omega_{i})$.
Then by definition of Grigorchuk groups it is clear that $G_{\omega}$
is isomorphic to $G_{\pi(\omega)}$. We call $\omega\mapsto\pi(\omega)$
a renaming of letters. We may also take a finite shift of $\omega$
if the resulting string is more convenient. 

\begin{exa}

The following examples of $\omega$ satisfies Assumption $({\rm Fr}(D))$
with some finite $D$ depending on the sequence:
\begin{itemize}
\item Up to a renaming of letters $\{{\bf 0},{\bf 1},{\bf 2}\}$, every
periodic sequence $\omega$ which contains all three letters.
\item A sequence of the form $({\bf 012})^{i_{1}}{\bf 1}^{j_{1}}\ldots({\bf 012})^{i_{k}}{\bf 1}^{j_{k}}\ldots$
with all $i_{k},j_{k}$ uniformly bounded.
\item A word obtained by concatenating powers of ${\bf 201}$ and ${\bf 211}$. 
\end{itemize}
\end{exa}

\subsection{Distance bounds on the Schreier graph}

In this subsection we review some elementary facts about distances
on the orbital Schreier graph of $1^{\infty}$ under action of $G_{\omega}$. 

Let $o=1^{\infty}\in\partial\mathsf{T}$ and denote by $\mathcal{S}_{\omega}$
its \emph{Schreier graph} under the action of $G_{\omega}$: the vertex
set of $\mathcal{S}_{\omega}$ is $o\cdot G_{\omega}$ and two vertices
$x,y\in o\cdot G_{\omega}$ are connected by an edge labelled with
$s\in\{a,b_{\omega},c_{\omega},d_{\omega}\}$ if $y=x\cdot s$. The
Schreier graph $\mathcal{S}_{\omega}$ can be constructed by applying
global substitution rules, see for example \cite[Section 7]{Grigorchuk11}. 

Let $d_{\mathcal{S}_{\omega}}$ denote the graph distance on the Schreier
graph $\mathcal{S}_{\omega}$. Because the unlabelled Schreier graph
of $o$ does not depend on the sequence $\omega$, it follows that
the graph distance also doesn't depend on $\omega$. For this reason
we can omit reference to $\omega$ and write $d_{\mathcal{S}}$ for
the graph distance. It is convenient to read the distance $d_{\mathcal{S}}$
from the Gray code enumeration of the orbit $o\cdot G_{\omega}$.
Explicitly, for $x=x_{1}x_{2}\ldots\in\partial\mathsf{T}$, flip all
digits of $x$ to the ray $\check{x}_{1}\check{x}_{2}\ldots$ where
$\check{x}_{i}=1-x_{i}$. The Grey code of $x$ is $\bar{x}=\bar{x}_{1}\bar{x}_{2}\ldots$
where 
\[
\bar{x}_{i}=\check{x}_{1}+\ldots+\check{x}_{i}\mod2.
\]
Note that for $x$ cofinal with $1^{\infty}$, its Gray code $\bar{x}$
has only finitely many $1$'s . We regard such an $\bar{x}$ as an
element in $\{0\}\cup\mathbb{N}$ represented by a binary string.
Then on the Schreier graph of $o$, 
\[
d_{\mathcal{S}}(x,y)=\left|\bar{x}-\bar{y}\right|.
\]
The distance to $1^{\infty}$ is particularly easy to read because
the Gray code of the point $1^{\infty}$ is constant $0$. 

\begin{fact}

Let $x\in\partial\mathsf{T}$ be a point that is cofinal with $1^{\infty}$.
Let $n(x)=\max\{k:\ x_{k}=0\}$. Then
\[
2^{n(x)-1}\le d_{\mathcal{S}}(x,1^{\infty})\le2^{n(x)}-1.
\]

\end{fact}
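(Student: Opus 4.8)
The plan is to read both inequalities directly off the Gray code description of the Schreier graph distance that was just established, namely $d_{\mathcal S}(x,1^\infty)=|\bar x|$, where $\bar x$ is the Gray code of $x$ viewed as a nonnegative integer written in binary. So the proof reduces to a combinatorial estimate on the integer $\bar x$ in terms of $n(x)=\max\{k:\ x_k=0\}$.

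First I would record the key structural fact about the Gray code map on rays cofinal with $1^\infty$: if $n(x)=\max\{k: x_k=0\}$, then $x_k=1$ for all $k>n(x)$, hence $\check x_k=0$ for $k>n(x)$, so the partial sums $\bar x_k=\check x_1+\dots+\check x_k \bmod 2$ are eventually constant for $k\ge n(x)$, and moreover $\bar x_{n(x)}$ has the same parity as the constant tail. Since $x_{n(x)}=0$ we have $\check x_{n(x)}=1$, so $\bar x_{n(x)}\ne \bar x_{n(x)-1}$; in particular the digit position $n(x)$ of the binary string $\bar x$ is the highest position that can be nonzero, and I should check it is actually nonzero (equivalently, that $\check x_1+\dots+\check x_{n(x)}$ is odd) — this is where the definition of $n(x)$ as the \emph{last} zero is used, and it guarantees $\bar x$ has a $1$ in position $n(x)$ and zeros in all higher positions.

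From this the lower bound is immediate: $\bar x$, as a binary integer, has its leading $1$ in position $n(x)$ (counting positions from $1$), so $\bar x\ge 2^{n(x)-1}$, giving $d_{\mathcal S}(x,1^\infty)\ge 2^{n(x)-1}$. For the upper bound, $\bar x$ is an integer whose binary expansion uses only positions $1,\dots,n(x)$, hence $\bar x\le 2^{n(x)}-1$, giving $d_{\mathcal S}(x,1^\infty)\le 2^{n(x)}-1$. Together with $d_{\mathcal S}(x,1^\infty)=|\bar x-\overline{1^\infty}|=|\bar x-0|=\bar x$, this yields the claimed two-sided bound.

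I expect no serious obstacle here; the only point requiring a little care is the parity bookkeeping that shows position $n(x)$ of $\bar x$ is genuinely a $1$ (not merely an upper bound on the support), since that is what underlies the lower bound rather than just the trivial bound $d_{\mathcal S}(x,1^\infty)\ge 1$. An alternative, if one prefers to avoid the parity argument, is an induction on $n(x)$ using the recursive (substitutional) structure of the Schreier graph: removing the last coordinate sends the length-$n(x)$ problem to a shorter one and doubles distances, matching the powers of $2$; but the Gray code computation above is cleaner and self-contained given the Fact already proved, so that is the route I would take.
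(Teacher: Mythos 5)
Your overall strategy is the paper's: both read the two-sided bound off the identity $d_{\mathcal S}(x,1^\infty)=\bar x$ by locating the leading binary digit of $\bar x$ at position $n(x)$. But the step you flag as ``the only point requiring a little care'' is false as you state it. You claim that the definition of $n(x)$ as the \emph{last} zero of $x$ guarantees $\check x_1+\dots+\check x_{n(x)}$ is odd. It does not: that sum is the total number of zeros of $x$, whose parity is independent of where the last zero sits. For $x=001^\infty$ the last zero is at position $2$ but the sum is $2$ (even), so the prefix-sum recipe gives $\bar x=1000\ldots$, whose leading $1$ is at position $1$, not $n(x)=2$; that would yield $d=1$ instead of the true distance $2$. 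Worse, when the number of zeros of $x$ is odd (e.g.\ $x=101^\infty$) the prefix sums are eventually constant equal to $1$, so $\bar x$ has infinitely many $1$'s and is not an element of $\{0\}\cup\mathbb N$ at all; your assertion that position $n(x)$ is ``the highest position that can be nonzero'' already fails there, since every position beyond $n(x)$ carries the same value as position $n(x)$.

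The root of the trouble is that the prefix-sum formula is the wrong direction of the Gray-code correspondence (a misprint one should catch, precisely because it fails to produce finite binary strings on half the orbit). The position of $x$ on the half-line is the \emph{inverse} Gray code of $\check x$, i.e.\ the suffix sums $\bar x_i=\sum_{j\ge i}\check x_j\bmod 2$, as one checks on the first vertices of the Schreier graph: $1^\infty,01^\infty,001^\infty,101^\infty,1001^\infty\mapsto 0,1,2,3,4$. With that formula the Fact needs no parity bookkeeping at all: for $i>n(x)$ every summand vanishes, so $\bar x_i=0$, while $\bar x_{n(x)}=\check x_{n(x)}=1$. Hence the leading $1$ of $\bar x$ sits exactly at position $n(x)$ --- this is the paper's one-line proof, ``$\bar x$ is of the form $u1000\ldots$ with $u$ of length $n(x)-1$'' --- and the bounds $2^{n(x)-1}\le\bar x\le 2^{n(x)}-1$ follow. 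So the intended argument is sound, but your write-up patches the defective formula with a parity claim that is simply not true.
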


\begin{proof}

We have $d_{\mathcal{S}}(x,1^{\infty})=|\bar{x}|$. Since the maximum
index of digit $0$ in $x$ is $n(x)$, the Grey code of $x$ is of
the form $u1000\ldots$ where $u$ is a prefix of length $n(x)-1$. 

\end{proof}

For general points a similar upper bound holds.

\begin{fact}

Let $x,y$ be two points cofinal with $1^{\infty}$. Denote by $\mathfrak{s}$
the shift on strings. Then 
\[
d_{\mathcal{S}}(x,y)\le2^{n}d_{\mathcal{S}}(\mathfrak{s}^{n}x,\mathfrak{s}^{n}y)+2^{n}-1.
\]

\end{fact}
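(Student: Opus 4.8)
The plan is to reduce the distance bound on $\mathcal{S}$ to the Gray code arithmetic already recorded just above: we have $d_{\mathcal{S}}(x,y)=|\bar{x}-\bar{y}|$, where $\bar{x},\bar{y}$ are the Gray codes viewed as nonnegative integers written in binary. So the inequality to prove is a purely arithmetic statement about how the Gray code of a ray relates to the Gray code of its shift.

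First I would examine how $\bar{x}$ decomposes according to the first $n$ digits of $x$. Write $x=x_1\ldots x_n\, x'$ with $x'=\mathfrak{s}^n x$, and similarly $y=y_1\ldots y_n\, y'$. From the defining formula $\bar{x}_i=\check{x}_1+\ldots+\check{x}_i \bmod 2$, the first $n$ Gray digits $\bar{x}_1\ldots\bar{x}_n$ depend only on $x_1\ldots x_n$, and the tail $\bar{x}_{n+1}\bar{x}_{n+2}\ldots$ equals the Gray code of $\mathfrak{s}^n x$ \emph{possibly globally flipped}, the flip being determined by the parity $p_x:=\check{x}_1+\ldots+\check{x}_n \bmod 2$. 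Concretely, as an integer $\bar{x}=2^n\,a_x + r_x$, where $a_x\in\{0,1,\ldots,2^n-1\}$ is the integer with binary digits $\bar{x}_1\ldots\bar{x}_n$, and $r_x$ is either $\overline{\mathfrak{s}^n x}$ (if $p_x=0$) or its "bitwise complement" contribution, which in integer terms is $2^n-1-\overline{\mathfrak{s}^n x}$ when $p_x=1$ — here I would be careful that since $\overline{\mathfrak{s}^n x}$ has only finitely many ones, complementing all the (infinitely many) tail bits is not literally an integer operation, but the correct bookkeeping is that when $p_x=1$ the contribution of changing $\mathfrak{s}^n x$ to another cofinal ray by an amount $\delta$ changes $r_x$ by $-\delta$. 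The clean way to phrase it: $r_x = \epsilon_x + (-1)^{p_x}\,\overline{\mathfrak{s}^n x}$ for a constant $\epsilon_x$ depending only on $x_1\ldots x_n$, with $0\le r_x\le 2^n-1$.

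Then I would compute $\bar{x}-\bar{y} = 2^n(a_x-a_y) + (r_x-r_y)$, so $|\bar{x}-\bar{y}| \le 2^n|a_x-a_y| + |r_x-r_y| \le 2^n|a_x-a_y| + (2^n-1)$ using $r_x,r_y\in[0,2^n-1]$. It remains to show $|a_x-a_y|\le d_{\mathcal{S}}(\mathfrak{s}^n x,\mathfrak{s}^n y)$. This is where I expect the one genuinely substantive point: $a_x$ and $a_y$ are determined by the first $n$ coordinates of $x$ and $y$, while the right-hand side involves the tails, so a priori there is no reason for this to hold in general — and indeed it is false in full generality. The resolution must use that $x$ and $y$ are cofinal, hence agree past some level $N$; if $N\le n$ then the first $n$ coordinates of $x$ and $y$ already differ only by what the tails force, and more importantly the Gray code structure ties the "carry" parity of the first $n$ digits to the tail difference. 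I would make this precise by the following observation: the map sending a ray $z$ to the pair $(a_z,\ \overline{\mathfrak{s}^n z}\ \text{together with the sign }(-1)^{p_z})$ is exactly the base-$2^n$ "digit split" of the single integer $\bar{z}$, and since $\bar{z}\mapsto \bar{z}$ is order-preserving and $d_{\mathcal{S}}$ is the integer metric, monotonicity of the high-order digit under the integer metric gives $|a_x-a_y|\le \lceil |\bar{x}-\bar{y}|/2^n\rceil$; but that reintroduces $|\bar{x}-\bar{y}|$, so instead I would argue directly that the most economical route is: $|\bar x - \bar y|\le 2^n|a_x-a_y|+2^n-1$ is what we want to compare against $2^n d_{\mathcal S}(\mathfrak s^n x,\mathfrak s^n y)+2^n-1$, so it suffices to show $|a_x-a_y|\le d_{\mathcal S}(\mathfrak s^n x,\mathfrak s^n y)$, and this I would establish by induction on $n$ using the one-step wreath structure of the Schreier graph (the level-$n$ subtree structure under $G_\omega$), namely that moving by a generator of $G_\omega$ changes $\mathfrak s^n$-image and the level-$n$ label in a controlled way: the "finitary part" $a_z$ changes by at most the number of times the induced walk on $\mathfrak s^n(\mathcal S)$ moves.

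The main obstacle, then, is pinning down the inequality $|a_x - a_y|\le d_{\mathcal S}(\mathfrak s^n x,\mathfrak s^n y)$ cleanly. I would handle it by the self-similar substitution description of $\mathcal{S}$ (referenced as \cite{Grigorchuk11} in the paper): the Schreier graph of $o$ at "scale $2^n$" retracts onto the Schreier graph of $\mathfrak{s}^n o \cong o$, with each vertex of the coarse graph blown up to a segment of length $2^n$; the retraction is $1$-Lipschitz when one records the coarse coordinate as $a_x$ (equivalently, the Gray code truncation), so any path of length $d_{\mathcal S}(\mathfrak s^n x,\mathfrak s^n y)$ in the coarse graph lifts/projects to show $|a_x-a_y|$ is at most that length. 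Alternatively — and this is probably the shortest rigorous route — I would just verify the inequality for $n=1$ directly from the recursion $\psi_0$ (one step of the Gray code: $\bar{z} = 2\,(\text{truncation}) + (\pm\overline{\mathfrak{s} z})$ with the sign governed by $\check z_1$, and note the truncation differs by at most $1$ precisely when $d_{\mathcal S}(\mathfrak s z_x,\mathfrak s z_y)\ge 1$ or the first digits differ) and then iterate, since $\mathfrak{s}^n = \mathfrak{s}\circ\mathfrak{s}^{n-1}$ and $2^n = 2\cdot 2^{n-1}$, getting $d_{\mathcal S}(x,y)\le 2\,d_{\mathcal S}(\mathfrak s x,\mathfrak s y)+1 \le 2(2^{n-1}d_{\mathcal S}(\mathfrak s^n x,\mathfrak s^n y)+2^{n-1}-1)+1 = 2^n d_{\mathcal S}(\mathfrak s^n x,\mathfrak s^n y)+2^n-1$, which is exactly the claim. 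So the whole proof is: establish the $n=1$ case by inspecting the wreath recursion / Gray code one-step relation, then induct.
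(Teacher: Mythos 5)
Your overall strategy --- read the Gray codes as integers, split $\bar x$ into the contribution of the first $n$ digits and the contribution of the tail, and apply the triangle inequality --- is exactly the paper's, but your execution inverts the significance of the digits, and that inversion is what manufactures the ``genuinely substantive point'' you then cannot close. The integer reading of $\bar x$ is forced by the preceding Fact: there the Gray code of $x$ is $u1000\ldots$ with $|u|=n(x)-1$ and the conclusion is $2^{n(x)-1}\le d_{\mathcal S}(x,1^{\infty})\le 2^{n(x)}-1$, so the digit $\bar x_i$ carries weight $2^{i-1}$, i.e.\ $\bar x_1$ is the \emph{least} significant bit. Consequently $\bar x=a_x+2^{n}\,\overline{\mathfrak s^{n}x}$ with $a_x\in\{0,\dots,2^{n}-1\}$ determined by $x_1\dots x_n$, and (under the normalization of the Gray code that makes it a genuine nonnegative integer, which is what the paper uses when it writes $\bar x=u\,\overline{\mathfrak s^{n}x}$ as an exact concatenation) the tail of $\bar x$ past position $n$ is literally $\overline{\mathfrak s^{n}x}$, with no global flip --- so the worry about ``complementing infinitely many bits'' also disappears. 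The proof is then two lines: $|\bar x-\bar y|\le 2^{n}\left|\overline{\mathfrak s^{n}x}-\overline{\mathfrak s^{n}y}\right|+|a_x-a_y|\le 2^{n}d_{\mathcal S}(\mathfrak s^{n}x,\mathfrak s^{n}y)+2^{n}-1$, which is the paper's argument verbatim.

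Because you instead write $\bar x=2^{n}a_x+r_x$ with the first $n$ Gray digits in the high-order position (note this is already inconsistent: your $r_x$ is supposed to lie in $[0,2^{n}-1]$ yet equal $\epsilon_x\pm\overline{\mathfrak s^{n}x}$, and $\overline{\mathfrak s^{n}x}$ is unbounded), you are forced into the claim $|a_x-a_y|\le d_{\mathcal S}(\mathfrak s^{n}x,\mathfrak s^{n}y)$, which you rightly suspect is false: it fails already for $x=01^{\infty}$, $y=1^{\infty}$, $n=1$, where the left side is $1$ and the right side is $0$; cofinality does not rescue it, and the slack $2^{n}-1$ in the statement exists precisely to absorb this low-order discrepancy. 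None of your fallbacks repairs this. The ``coarse retraction'' argument is a restatement of the same false inequality, and while the induction scheme $d_{\mathcal S}(x,y)\le 2\,d_{\mathcal S}(\mathfrak s x,\mathfrak s y)+1$ plus iteration is arithmetically sound, its base case is never actually proved: the one-step relation you offer, $\bar z=2(\text{truncation})+(\pm\overline{\mathfrak s z})$, cannot be right (the factor $2$ must sit on the tail, not on the truncation, and there is no sign). Once corrected to $\bar z=\bar z_1+2\,\overline{\mathfrak s z}$ the base case is immediate and your induction closes the argument, but as the proposal stands the key step is missing.
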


\begin{proof}

Write $x$ and $y$ in Gray codes. The Grey code of $x$ is $\overline{x}=u\overline{s^{n}x}$,
where $u$ is some prefix of length $n$, similarly $\bar{y}=v\overline{\mathfrak{s}^{n}y}$,
where $v$ is some prefix of length $n$. It follows that 
\begin{align*}
d_{\mathcal{S}}(x,y) & =|\bar{x}-\bar{y}|\\
 & \le\left|u\overline{\mathfrak{s}^{n}x}-u\overline{\mathfrak{s}^{n}y}\right|+\left|u\overline{\mathfrak{s}^{n}y}-v\overline{\mathfrak{s}^{n}y}\right|\\
 & \le2^{n}d_{\mathcal{S}}(\mathfrak{s}^{n}x,\mathfrak{s}^{n}y)+2^{n}-1.
\end{align*}

\end{proof}

As a consequence we have the following estimate of displacement. Recall
that $\left|\cdot\right|_{G_{\omega}}$ denotes the word length in
the group $G_{\omega}$ equipped with generating set $\left\{ a,b_{\omega},c_{\omega},d_{\omega}\right\} $. 

\begin{lem}\label{translate}

Let $g\in G_{\omega}$ and $g=(g_{v})_{v\in\mathsf{L}_{n}}\tau$ be
its wreath recursion to level $n$. Then for $x=x_{1}x_{2}\ldots\in\partial\mathsf{T}$,
\[
d_{\mathcal{S}}\left(x,x\cdot g\right)\le2^{n}\left(\left|g_{x_{1}\ldots x_{n}}\right|_{G_{s^{n}\omega}}+1\right).
\]

\end{lem}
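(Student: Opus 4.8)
The plan is to use the two preceding facts about the Schreier graph, together with the observation that the action of a level-$n$ automorphism on a ray $x$ can change only the first $n$ digits together with the tail $\mathfrak{s}^n x$, on which it acts by the section $g_{x_1\ldots x_n}$. Concretely, write $v = x_1\ldots x_n$ and let $y = x\cdot g$. Since $g = (g_w)_{w\in\mathsf{L}_n}\tau$, we have $\mathfrak{s}^n y = (\mathfrak{s}^n x)\cdot g_v$, where $g_v \in G_{\mathfrak{s}^n\omega}$. So $x$ and $y$ agree up to replacing the length-$n$ prefix and replacing the tail by its image under $g_v$.

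First I would apply the second Fact (the one bounding $d_{\mathcal{S}}(x,y)$ by $2^n d_{\mathcal{S}}(\mathfrak{s}^n x, \mathfrak{s}^n y) + 2^n - 1$) with this choice of $x$ and $y=x\cdot g$. This requires that $x$ and $y$ both be cofinal with $1^\infty$, which holds because $G_\omega$ preserves cofinality classes (equivalently, if $x\notin o\cdot G_\omega$ the orbital Schreier graph statement is vacuous, and if $x\in o\cdot G_\omega$ then so is $x\cdot g$). This gives
\[
d_{\mathcal{S}}(x, x\cdot g) \le 2^n d_{\mathcal{S}}\bigl(\mathfrak{s}^n x, \mathfrak{s}^n(x\cdot g)\bigr) + 2^n - 1 = 2^n d_{\mathcal{S}}\bigl(\mathfrak{s}^n x, (\mathfrak{s}^n x)\cdot g_v\bigr) + 2^n - 1.
\]
Then I would bound the inner Schreier distance by the word length: moving from a point $z$ to $z\cdot s$ for a single generator $s$ changes the Schreier-graph position by at most $1$ (the Schreier graph has edges labelled by generators), so by the triangle inequality $d_{\mathcal{S}}(z, z\cdot g_v) \le |g_v|_{G_{\mathfrak{s}^n\omega}}$ for any $z$, applied with $z = \mathfrak{s}^n x$. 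Combining, $d_{\mathcal{S}}(x,x\cdot g) \le 2^n |g_v|_{G_{\mathfrak{s}^n\omega}} + 2^n - 1 \le 2^n(|g_v|_{G_{\mathfrak{s}^n\omega}} + 1)$, which is the claim.

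I do not expect a serious obstacle here; the only point requiring a little care is making sure the second Fact is applicable, i.e. that both $x$ and $x\cdot g$ lie in the orbit $o\cdot G_\omega$ on which $d_{\mathcal{S}}$ is being measured (and that $\mathfrak{s}^n x$, $(\mathfrak{s}^n x)\cdot g_v$ lie in the orbit of $1^\infty$ under $G_{\mathfrak{s}^n\omega}$, which follows from the wreath recursion structure since $g_v$ is the section of a tree automorphism and the relevant rays are cofinal with $1^\infty$). If $x$ is not cofinal with $1^\infty$ the inequality is between distances in a graph that does not contain $x$, so the statement is understood to be about $x$ in the orbit of $o$; this is the implicit convention throughout the section and needs no separate argument.
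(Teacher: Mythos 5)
Your proof is correct and is essentially identical to the paper's: both decompose $x=vx'$ with $x\cdot g=(v\cdot\tau)(x'\cdot g_v)$, apply the fact $d_{\mathcal{S}}(x,y)\le 2^n d_{\mathcal{S}}(\mathfrak{s}^n x,\mathfrak{s}^n y)+2^n-1$, and bound $d_{\mathcal{S}}(x',x'\cdot g_v)\le |g_v|$ by the triangle inequality in the Schreier graph. The extra remarks on cofinality are fine but not something the paper spells out.
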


\begin{proof}

Write $x=vx'$, where $v$ is the prefix $v=x_{1}\ldots x_{n}$ and
$x'=\mathfrak{s}^{n}x$. Then $x\cdot g=(v\cdot\tau)x'\cdot g_{v}$.
Therefore 
\[
d_{\mathcal{S}}(x,x\cdot g)\le2^{n}d_{\mathcal{S}}(x',x'\cdot g_{v})+2^{n}-1\le2^{n}(|g_{v}|+1).
\]

\end{proof}

\subsection{Construction of the measures \label{subsec:elements}}

Throughout the rest of this section we assume that $\omega$ satisfies
Assumption $({\rm Fr}(D))$. The sketch for the first Grigorchuk group
in the Introduction and explicit descriptions in Example \ref{firstmain}
may help to understand the definitions. To avoid possible confusion
about indexing, keep in mind that the string $\omega\in\{{\bf 0},{\bf 1},{\bf 2}\}^{\infty}$
starts with $\omega_{0}$, $\omega=\omega_{0}\omega_{1}\ldots$ while
tree vertices are recorded as $v=v_{1}v_{2}\ldots$. 

First take the sequence of words obtained by substitutions:
\begin{equation}
g_{n}=\begin{cases}
\zeta_{\omega_{0}}\circ\ldots\zeta_{\omega_{n-1}}(ab_{\mathfrak{s}^{n}\omega}) & \mbox{ if }\omega_{n-1}\neq{\bf 2},\\
\zeta_{\omega_{0}}\circ\ldots\zeta_{\omega_{n-1}}(ac_{\mathfrak{s}^{n}\omega}) & \mbox{ if }\omega_{n-1}={\bf 2}.
\end{cases}\label{eq:gngeneral}
\end{equation}
The sequence $(g_{n})$ has the cube independence property by Lemma
\ref{elem_ind}. From its definition in (\ref{eq:gngeneral}), $g_{n}$
has $b$-germs if $\omega_{n-1}\neq{\bf 2}$ and $c$-germs if $\omega_{n-1}={\bf 2}$. 

Our goal is to construct a symmetric measure on $G_{\omega}$ such
that along the corresponding random walk trajectory $(W_{n})$, the
$\left\langle b\right\rangle $-coset of the germ $\Phi_{W_{n}}(1^{\infty})$
stabilizes. To this end we introduce modifications to these $g_{n}$
with $c$-germs.

The modifications are performed by taking conjugations of generators
$c_{\mathfrak{s}^{j}\omega}$. Recall the notation that $\iota\left(h,v\right)$
denotes the group element in the rigid stabilizer of $v$ and acts
as $h$ in the subtree rooted at $v$. Note that the element $\gamma_{\mathfrak{s}^{n}\omega}$,
where $\gamma$ is a letter in $\{b,c,d\}$, in general is not in
the rigid stabilizer of a level $n$ vertex. However, we can find
commutators of the form $\left[\gamma_{\mathfrak{s}^{n}\omega},a\right]$
in the rigid stabilizers. The following fact estimates length of such
elements and will be used repeatedly.

\begin{figure} \includegraphics[scale=1.2]{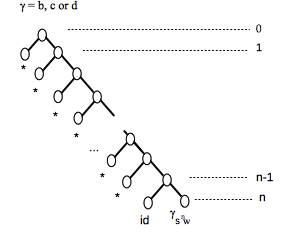} \caption{The letter $\gamma$ is  in $\{b,c,d\}$ such that $\omega_{n-1}(\gamma)=id$. The symbol $\ast$ means the section is either $a$ or $id$.} 
\end{figure}

\begin{figure} \includegraphics[scale=1]{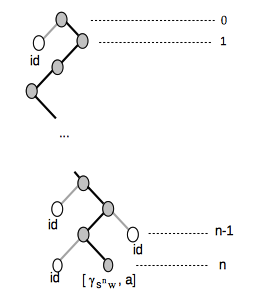} \caption{ The element $\left[\gamma_{\mathfrak{s}^{n}\omega},a\right]$ is in the level $n$ rigid stabilizer. By definition, the element $\iota\left(\left[\gamma_{\mathfrak{s}^{n}\omega},a\right],v\right)$ only has nontrivial section at $v$.}\label{fig:rigidportrait} 
\end{figure}

\begin{fact}\label{rigid}

For $n\in\mathbb{N}$, let $\gamma$ be the letter in $\{b,c,d\}$
be such that $\omega_{n-1}(\gamma)=id$. Then for any vertex $v\in\mathsf{L}_{n}$,
$\left[\gamma{}_{\mathfrak{s}^{n}\omega},a\right]$ is in the rigid
stabilizer ${\rm Rist}_{G_{\omega}}(v)$ and 
\[
\left|\iota\left(\left[\gamma_{\mathfrak{s}^{n}\omega},a\right],v\right)\right|\le2^{n+2}.
\]

\end{fact}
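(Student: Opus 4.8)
I would prove Fact~\ref{rigid} in two steps: first establish membership in the rigid stabilizer ${\rm Rist}_{G_\omega}(v)$ by exhibiting $[\gamma_{\mathfrak{s}^n\omega},a]$ explicitly as a product of sections supported at level-$n$ vertices, and then bound its word length by a recursion on $n$.

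\emph{Membership.} Recall that $\gamma_{\mathfrak{s}^n\omega}$ is directed along the ray $1^\infty$ with wreath recursion $\gamma_{\mathfrak{s}^n\omega} = (\omega_n(\gamma), \gamma_{\mathfrak{s}^{n+1}\omega})$, and that we have chosen $\gamma$ so that $\omega_{n-1}(\gamma) = id$. Writing the one-step recursion at level $0$ for $\mathfrak{s}^{n-1}\omega$ gives $\gamma_{\mathfrak{s}^{n-1}\omega} = (\omega_{n-1}(\gamma), \gamma_{\mathfrak{s}^{n}\omega}) = (id, \gamma_{\mathfrak{s}^{n}\omega})$. Since $a = (id,id)\varepsilon$ swaps the two subtrees, a direct wreath-recursion computation yields $[\gamma_{\mathfrak{s}^{n-1}\omega}, a] = (\gamma_{\mathfrak{s}^n\omega}, \gamma_{\mathfrak{s}^n\omega}^{-1})$ (or its inverse, depending on the commutator convention), so $[\gamma_{\mathfrak{s}^{n-1}\omega},a]$ lies in ${\rm St}_{G_{\mathfrak{s}^{n-1}\omega}}(\mathsf{L}_1)$ and its section at the vertex $1$ equals $\gamma_{\mathfrak{s}^n\omega}^{\pm1}$ while its section at $0$ equals $\gamma_{\mathfrak{s}^n\omega}^{\mp1}$. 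Iterating this one level further (noting $\gamma_{\mathfrak{s}^n\omega}$ again decomposes as a directed element), one pushes the two commutators down and shows $[\gamma_{\mathfrak{s}^n\omega},a]$, viewed as an element of $G_\omega$ via the full substitution chain, has trivial section at every level-$n$ vertex except possibly one, hence lies in ${\rm Rist}_{G_\omega}(v)$; then $\iota([\gamma_{\mathfrak{s}^n\omega},a],v)$ is well-defined as a group element by the branching property of $G_\omega$ over $K = \langle[a,b]\rangle^{G_\omega}$ already invoked earlier in the excerpt.

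\emph{Length bound.} For the estimate $|\iota([\gamma_{\mathfrak{s}^n\omega},a],v)| \le 2^{n+2}$, I would argue by induction on $n$ using the contraction-type formula (\ref{eq:subformula}) together with Fact~\ref{zeta}-style bookkeeping: the element $\iota([\gamma_{\mathfrak{s}^n\omega},a],v)$ is obtained from the length-$\le 4$ word $[\gamma,a]$ (a word of length $4$ in $\{a,b,c,d\}$, namely $\gamma a \gamma a$) by applying $n$ successive substitutions $\zeta_{\omega_0}\circ\cdots\circ\zeta_{\omega_{n-1}}$-type operations, but — crucially — restricted to a single vertex $v$, which introduces no extra multiplicative blow-up beyond the tree structure: each level of descent at most doubles the length of the relevant section while keeping everything supported along a single branch, giving $4 \cdot 2^n = 2^{n+2}$. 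Concretely, if $h = \iota([\gamma_{\mathfrak{s}^{n}\omega},a],v)$ with $v = v_1 v'$, then $h$ is conjugate (by a bounded-length element realizing the root permutation) to an element whose section at $v_1$ is $\iota([\gamma_{\mathfrak{s}^{n}\omega},a],v')$ in $G_{\mathfrak{s}\omega}$, of length $\le 2^{n+1}$ by induction, and the passage back up one level at most doubles this and adds a bounded constant absorbed into the factor $2^{n+2}$; the base case $n = 0$ reads $|[\gamma,a]| \le 4 = 2^{0+2}$.

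\emph{Main obstacle.} The delicate point is the length bound rather than the membership claim: one must check that conjugating $c_{\mathfrak{s}^j\omega}$ (i.e.\ forming the commutator with $a$ and embedding via $\iota$) really does not incur a multiplicative factor larger than $2$ per level — a priori the substitutions $\zeta_i$ can expand lengths by a factor of $3$, as the authors themselves note. The resolution is that $\iota([\gamma_{\mathfrak{s}^n\omega},a],v)$ is not obtained by applying $\zeta$ to a fixed generator word (which would give the $\approx\lambda_0^n$ growth), but rather lives in a rigid stabilizer, so its natural expression uses the substitution $\sigma$ (which always exactly doubles) or a direct level-by-level embedding; one should verify that $[\gamma_{\mathfrak{s}^{n}\omega},a]$ as an abstract group element equals a word of the form $\sigma^n(\text{bounded word})$ or admits a comparably efficient representative, so that each level genuinely costs only a factor $2$. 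Getting this bookkeeping exactly right — and matching the constant to $2^{n+2}$ rather than $2^{n+O(1)}$ — is where the care is needed; everything else is a routine wreath-recursion computation.
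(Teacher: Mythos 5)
Your instinct that the embedded copy must come from doubling (Lysionok-type) substitutions rather than from the $\zeta$'s is the right one, but both steps you actually sketch have genuine gaps. For membership, your one-level computation does not do what you claim: with $\omega_{n-1}(\gamma)=id$ one has $\gamma_{\mathfrak{s}^{n-1}\omega}=(id,\gamma_{\mathfrak{s}^{n}\omega})$, hence $[\gamma_{\mathfrak{s}^{n-1}\omega},a]=(\gamma_{\mathfrak{s}^{n}\omega},\gamma_{\mathfrak{s}^{n}\omega})$ (all these letters are involutions), an element with the \emph{same nontrivial} section at both level-one vertices. It lies in no rigid stabilizer, and iterating commutators with $a$ keeps spreading generators (not commutators) over both subtrees, so ``pushing the two commutators down'' never localizes the support at a single level-$n$ vertex. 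The correct one-level lift of $[a,\gamma_{\mathfrak{s}^{n}\omega}]$ is not a commutator of the higher-level letter with $a$, but the image of the same word under a substitution $a\mapsto aya$ with $y\in\{b,c,d\}$ chosen so that the evaluated word has one-step recursion $(id,[a,\gamma_{\mathfrak{s}^{n}\omega}])$; this is where the hypothesis on $\omega_{n-1}$ and the structure of $\omega$ really enter. Your fallback, regular branching over $K=\langle[a,b]\rangle^{G}$, is specific to the first Grigorchuk group and not available for the general $\omega$ this Fact is used for, and it would give no length estimate in any case. Likewise, your opening description (apply the $\zeta_{\omega_i}$'s ``restricted to a single vertex'') is incoherent: by Fact \ref{zeta} the $\zeta$-images have nontrivial sections at \emph{every} level-$n$ vertex and expand length by a factor close to $\lambda_0$, not $2$.

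For the length bound you explicitly defer the key verification (``one should verify that $[\gamma_{\mathfrak{s}^{n}\omega},a]$ \ldots{} equals $\sigma^{n}(\mbox{bounded word})$ or admits a comparably efficient representative''), but that verification \emph{is} the proof, and a single substitution $\sigma$ exists only for $({\bf 012})^{\infty}$. The paper instead builds level-dependent substitutions $\sigma_{j}:a\mapsto ay_{j-1}a$, where $y_{j-1}=\gamma_{j-1}$ if $\omega_{j-2}\neq\omega_{j-1}$ and $\omega_{j-2}(y_{j-1})=a$ if $\omega_{j-2}=\omega_{j-1}$; it checks that $\sigma_{j}$ maps the normal closure $K_{j}$ of $[a,\gamma_{j}]$ in the free product $\mathbf{F}$ into $K_{j-1}$ while realizing the one-step recursion ${\bf w}\mapsto(id,{\bf w})$ (so the localization property survives all $n$ levels, which your sketch never addresses beyond the top level), and it steers the support to the prescribed vertex $v$ by conjugating with $a$ exactly at the levels where the corresponding digit of $v$ is $0$. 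After free reduction (e.g.\ $a(abac)^{2}a=(baca)^{2}$) each step exactly doubles the word, giving length $2^{j+2}$ after $j$ steps with no accumulated additive constants, whence the stated bound $2^{n+2}$. None of this level-by-level choice of $y_{j}$, the $K_{j}$ bookkeeping, or the steering by the digits of $v$ appears in your argument, so as written it establishes neither membership in ${\rm Rist}_{G_{\omega}}(v)$ for general $\omega$ nor the bound $2^{n+2}$.
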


\begin{proof}

For each $n\in\mathbb{N}$, let $\gamma_{n}\in\{b,c,d\}$ be the letter
that $\omega_{n-1}(\gamma_{n})=id$. 

For each $n\ge2$, fix a choice of letter $y_{n-1}\in\{b,c,d\}$ satisfying
such that $\omega_{n-2}(y_{n-1})=id$ (that is, $y_{n-1}=\gamma_{n-1}$)
if $\omega_{n-2}\neq\omega_{n-1}$; and $\omega_{n-2}(y_{n-1})=a$
if $\omega_{n-2}=\omega_{n-1}$. For $n=1$, fix a choice of letter
$y_{0}$ such that $\omega_{0}(y_{0})=a$. Consider the Lysionok type
substitution $\sigma_{n}$ on $\{a,b,c,d\}^{\ast}$ given by 
\[
a\mapsto ay_{n-1}a,\ b\mapsto b,\ c\mapsto c,\ d\mapsto d.
\]
By definition of the substitution $\sigma_{n}$, we have that in one
step wreath recursion, $\sigma_{n}\left(\left[a,\gamma_{n}\right]\right)=\left(id,\left[a,\gamma_{n}\right]\right)$.

Recall that $\mathbf{F}$ is the free product $\left(\mathbb{Z}/2\mathbb{Z}\right)\ast\left(\mathbb{Z}/2\mathbb{Z\times\mathbb{Z}}/2\mathbb{Z}\right)=\left\langle a\right\rangle \ast\left(\left\langle b\right\rangle \times\left\langle c\right\rangle \right)$.
Denote by $K_{n}$ the normal closure of $[a,\gamma_{n}]$ in $\mathbf{F}$
and write $g^{h}$ for the conjugation $h^{-1}gh$. By direct calculation,
in the first case where $\omega_{n-2}\neq\omega_{n-1}$, we have $y_{n-1}=\gamma_{n-1}$
and $\sigma_{n}\left([a,\gamma_{n}]\right)=a\gamma_{n-1}a\gamma_{n}a\gamma_{n-1}a\gamma_{n}=[a,\gamma_{n-1}][\gamma_{n-1},a]^{\gamma_{n}}$;
while in the second case where $\omega_{n-2}=\omega_{n-1}$, we have
$\gamma_{n-1}=\gamma_{n}$ and $\sigma_{n}\left([a,\gamma_{n}]\right)=ay_{n-1}a\gamma_{n}ay_{n-1}a\gamma_{n}=\left[a,\gamma_{n}\right]^{y_{n-1}a}[a,\gamma_{n}]$.
It follows that for a word ${\bf w}\in K_{n}$, the image $\sigma_{n}({\bf w})$
is in $K_{n-1}$ and moreover, in one step wreath recursion, we have
$\sigma_{n}\left({\bf w}\right)=\left(id,{\bf w}\right)$.

Let $v=v_{1}v_{2}\ldots v_{n}$ be the address of $v$. Apply the
substitutions recursively: start with ${\bf w}_{0}=\gamma_{n}a\gamma_{n}a$,
where $\gamma_{n}$ is the letter in $\{b,c,d\}$ be such that $\omega_{n-1}(\gamma_{n})=id$
as in the statement. For $0\le j\le n-1$, set
\[
{\bf w}_{j+1}=\begin{cases}
\sigma_{n-j}({\bf w}_{j}) & \mbox{if }v_{n-j}=1,\\
a\sigma_{n-j}({\bf w}_{j})a & \mbox{if }v_{n-j}=0.
\end{cases}
\]
It is understood that reduction in $\mathbf{F}$, for instance $a^{2}=1$,
is always performed to words. For example $a(abac)^{2}a=\left(baca\right)^{2}$.
Then it is clear that $|w_{j}|=2^{2+j}$. 

We now verify that the word ${\bf w}_{n}$ obtained by substitutions
as above evaluates to the desired element $\iota\left(\left[\gamma_{\mathfrak{s}^{n}\omega},a\right],v\right)$
in the group $G_{\omega}$. We start with ${\bf w}_{0}=[\gamma_{n},a]\in K_{n}$.
Then by the property of the substitutions above, for $0\le j\le n-1$,
under one step wreath recursion we have 
\[
{\bf w}_{j+1}=\begin{cases}
(id,{\bf w}_{j}) & \mbox{if }v_{n-j}=1,\\
({\bf w}_{j},id) & \mbox{if }v_{n-j}=0.
\end{cases}
\]
Therefore perform the recursion to ${\bf w}_{n}$ down $n$ levels,
we have that under the projection $\pi_{\omega}:\mathbf{F}\to G_{\omega}$
given by $a\mapsto a$, $b\mapsto b_{\omega}$, $c\mapsto c_{\omega}$
and $d\mapsto d_{\omega}$, it evaluates to $\iota\left(\left[\left(\gamma_{n}\right)_{\mathfrak{s}^{n}\omega},a\right],v\right)$. 

\end{proof}

For $k,n\in\mathbb{N}$ both divisible by $D$, define the set $\mathsf{W}_{k}^{n}$
of vertices of depth $k$ such that $u_{i}=1$ except for those $i$
with $n+i\in I_{\omega}$,
\begin{equation}
\mathsf{W}_{k}^{n}:=\left\{ u\in\mathsf{L}_{k}:\ u=u_{1}\ldots u_{k},\ u_{i}=1\mbox{ if }n+i\notin I_{\omega}\right\} .\label{eq:Wk}
\end{equation}
Recall that the set $I_{\omega}$ is defined in Notation \ref{D}.
The cardinality of the set $\mathsf{W}_{k}^{n}$ is $2^{k/D}$. 

Given an integer $j$, denote by $\bar{j}$ the residue of $j$ mod
$D$, $\bar{j}\in\{0,\ldots,D-1\}$. We now define the set $\mathsf{V}_{k}^{j}$
which will be the index set of a collection of conjugates of the element
$c_{s^{j}\omega}$ in $G_{j}=G_{\mathfrak{s}^{j}\omega}$. Let $k$
be an integer divisible by $D$, write $\ell(k,j)=\frac{1}{D}\left(j+D-\bar{j}+k\right)$
and define $\mathsf{V}_{k}^{j}$ the collection of vertices 
\begin{equation}
\mathsf{V}_{k}^{j}:=\left\{ 1^{D-\bar{j}}u1^{m_{\ell(j,k)}+2}0:\ u\in\mathsf{W}_{k}^{j+D-\bar{j}}\right\} ,\label{eq:Vk}
\end{equation}
where $m_{\ell}$ is defined in Notation \ref{D}, $m_{\ell}\in\{0,\ldots,D-3\}$.
In words, $\mathsf{V}_{k}^{j}$ is obtained from the set $\mathsf{W}_{k}^{j+D-\bar{j}}$
by appending some $1$'s as prefix to match with $D$-blocks and adding
$1^{m_{\ell(j,k)}+2}0$ at the end. It is important that any $v\in\mathsf{V}_{k}^{j}$
ends with digit $0$. The cardinality of the set $V_{k}^{j}$ is $2^{k/D}$,
the same as the cardinality of $\mathsf{W}_{k}^{j+D-\bar{j}}$. 

Given a vertex $v=v_{1}\ldots v_{k'}\in\mathsf{V}_{k}^{j}$, where
$k'=D-\bar{j}+k+m_{\ell(j,k)}+3$ denotes the length of $v$, take
the following sequence of elements in $G_{j}=G_{\mathfrak{s}^{j}\omega}$:
\begin{equation}
h_{i}^{v}:=\begin{cases}
id & \mbox{if }v_{i}=1\\
\iota\left(\left[b_{\mathfrak{s}^{j+i-2}\omega},a\right],v_{1}\ldots v_{i-2}\right) & \mbox{if }v_{i}=0.
\end{cases}\label{eq:hv}
\end{equation}
By definition of $\mathsf{V}_{k}^{j}$ in (\ref{eq:Vk}), if $v_{i}=0$
then the string $\omega_{j+i-3}\omega_{j+i-2}\omega_{j+i-1}$ is either
${\bf 201}$ or ${\bf 211}$. Since the digit $\omega_{j+i-3}$ in
this case must be ${\bf 2}$, by Fact \ref{rigid}, $\left[b_{\mathfrak{s}^{j+i-2}\omega},a\right]$
is in the rigid stabilizer of the vertex $v_{1}\ldots v_{i-2}$ in
$G_{j}$. The letter following $\omega_{j+i-3}$ is different from
${\bf 2}$, therefore $\left[b_{\mathfrak{s}^{j+i-2}\omega},a\right]$
acts as $b_{\mathfrak{s}^{j+i-1}\omega}a$ on the right subtree. Since
$h_{i}^{v}$ is in the rigid stabilizer of $v_{1}\ldots v_{i-2}$,
it is easy to verify recursively that by definition of the elements
$h_{i}^{v}$ in (\ref{eq:hv}), we have
\[
1^{\infty}\cdot h_{1}^{v}h_{2}^{v}\ldots h_{k'}^{v}=v1^{\infty}.
\]

\begin{figure} \includegraphics[scale=0.5]{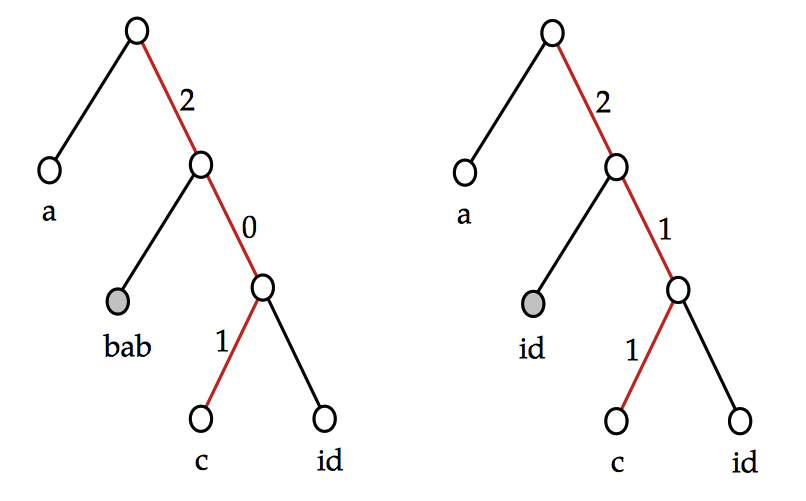} \caption{  A conjugated element for $v=110$, the only label $c$ is at $v$ (the corresponding ray is red). The picture shows a turn over $\bf{201}$ on the left and a turn over $\bf{211}$ on the right, in general notation.}\label{fig:201} \end{figure}

For each $v\in\mathsf{V}_{k}^{j}$, take the conjugation of $c_{\mathfrak{s}^{j}\omega}$
in $G_{j}$ 
\begin{equation}
\mathfrak{c}_{j}^{v}:=\left(h_{1}^{v}h_{2}^{v}\ldots h_{k'}^{v}\right)^{-1}c_{\mathfrak{s}^{j}\omega}\left(h_{1}^{v}h_{2}^{v}\ldots h_{k'}^{v}\right).\label{eq:ckv}
\end{equation}
We have the following description of the element $\mathfrak{c}_{j}^{v}$.
The portrait of an element along a ray segment is explained in Subsection
\ref{subsec:portrait}.

\begin{lem}\label{ckgerm}

Let $v\in\mathsf{V}_{k}^{j}$, $D|k$ and write $k'=|v|$ the length
of $v$. The vertex $v$ is fixed by $\mathfrak{c}_{j}^{v}$ and along
the ray segment $v$ the only nontrivial sections of $\mathfrak{c}_{j}^{v}$
are at 
\[
\{v_{1}\ldots v_{i}0:\ \omega_{j+i}\neq{\bf 1},\ 0\le i\le k'-1\}.
\]
Among these, for $i\le k'-2$ and $\omega_{j+i}\neq{\bf 1}$, if $v_{i+2}=0$
then the section at $v_{1}\ldots v_{i}0$ is $b_{\mathfrak{s}^{j+i+1}\omega}ab_{\mathfrak{s}^{j+i+1}\omega}$,
otherwise the section is $a$. For $i=k'-1$, at level $k'$, the
only non-trivial section is $c_{\mathfrak{s}^{j+k'}\omega}$ at $v$. 

\end{lem}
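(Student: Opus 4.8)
Write $g^{v}:=h_{1}^{v}h_{2}^{v}\cdots h_{k'}^{v}$, so that $\mathfrak{c}_{j}^{v}=(g^{v})^{-1}c_{\mathfrak{s}^{j}\omega}g^{v}$. The plan is to unwind this conjugation one block $h_{i}^{v}$ at a time, from the shallowest to the deepest, tracking how the portrait of $c_{\mathfrak{s}^{j}\omega}$ along $1^{\infty}$ turns into the asserted portrait along $v$.

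The first step is to record the portrait of the generator itself: from the wreath recursion $c_{\mathfrak{s}^{m}\omega}=(\omega_{m}(c),c_{\mathfrak{s}^{m+1}\omega})$ one reads off by induction that $c_{\mathfrak{s}^{j}\omega}$ is directed along $1^{\infty}$ with spine section $c_{\mathfrak{s}^{j+m}\omega}$ at $1^{m}$ and section $\omega_{j+m}(c)$ at $1^{m}0$, the latter being $a$ exactly when $\omega_{j+m}\neq\mathbf{1}$ and trivial otherwise. The second step is to make explicit the effect of one block. For $v_{i}=0$, the definition (\ref{eq:Vk}) of $\mathsf{V}_{k}^{j}$ forces $\omega_{j+i-3}\omega_{j+i-2}\omega_{j+i-1}\in\{\mathbf{201},\mathbf{211}\}$, hence $v_{i-2}=v_{i-1}=1$ and $v$ has no two consecutive zeros; moreover, as recorded after (\ref{eq:hv}) and by Fact \ref{rigid}, $h_{i}^{v}$ lies in the rigid stabilizer of $v_{1}\ldots v_{i-2}$ and acts on that subtree as $[b_{\mathfrak{s}^{j+i-2}\omega},a]$, and a one-step wreath recursion (using $\omega_{j+i-2}(b)=a$) gives $[b_{\mathfrak{s}^{j+i-2}\omega},a]=\bigl(a\,b_{\mathfrak{s}^{j+i-1}\omega},\ b_{\mathfrak{s}^{j+i-1}\omega}\,a\bigr)$. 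I will also use repeatedly that $\{id,b_{\omega},c_{\omega},d_{\omega}\}$ is a Klein four-group, so that $\gamma\mapsto b_{\mathfrak{s}^{m}\omega}\,\gamma\,b_{\mathfrak{s}^{m}\omega}$ is the identity on each of $b,c,d$.

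The third and main step is an induction on the levels of $v$: conjugate $c_{\mathfrak{s}^{j}\omega}$ successively by $h_{1}^{v},\ldots,h_{k'}^{v}$, maintaining that once the blocks supported above the current level have been applied, the element has the portrait of the statement down to that level and a clean spine section $c_{\mathfrak{s}^{j+\ell}\omega}$ at $v_{1}\ldots v_{\ell}$. Along a run of $1$'s the portrait propagates unchanged by the wreath recursion for $c$. At a zero $v_{i}=0$ the block $h_{i}^{v}$ conjugates the current section $c_{\mathfrak{s}^{j+i-2}\omega}=(\omega_{j+i-2}(c),c_{\mathfrak{s}^{j+i-1}\omega})$ at $v_{1}\ldots v_{i-2}$ by $[b_{\mathfrak{s}^{j+i-2}\omega},a]$: using the commutator above and the Klein four-group relation, the new section at $v_{1}\ldots v_{i-2}0$ is $b_{\mathfrak{s}^{j+i-1}\omega}\,a\,b_{\mathfrak{s}^{j+i-1}\omega}$ if $\omega_{j+i-2}=\mathbf{0}$ and trivial if $\omega_{j+i-2}=\mathbf{1}$, while the section at $v_{1}\ldots v_{i-1}$ becomes $a\,b_{\mathfrak{s}^{j+i-1}\omega}\,c_{\mathfrak{s}^{j+i-1}\omega}\,b_{\mathfrak{s}^{j+i-1}\omega}\,a$; recursing once more (using $v_{i-1}=1$, $\omega_{j+i-1}=\mathbf{1}$ and again the Klein four-group relation) leaves a trivial section at the sibling of $v_{1}\ldots v_{i}$ and the clean section $c_{\mathfrak{s}^{j+i}\omega}$ at $v_{1}\ldots v_{i}$. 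Thus the only lasting effect of $h_{i}^{v}$ on the portrait above level $i$ is a promotion of the section at $v_{1}\ldots v_{i-2}0$; writing $m=i-2$, the final section at $v_{1}\ldots v_{m}0$ is $b_{\mathfrak{s}^{j+m+1}\omega}\,a\,b_{\mathfrak{s}^{j+m+1}\omega}$ precisely when the block $h_{m+2}^{v}$ is nontrivial, i.e.\ precisely when $v_{m+2}=0$, and it is $a$ otherwise whenever it is nontrivial, i.e.\ whenever $\omega_{j+m}\neq\mathbf{1}$ --- exactly the dichotomy in the statement (here one uses that $v_{m+2}=0$ itself forces $\omega_{j+m}\in\{\mathbf{0},\mathbf{1}\}$, so there is no clash of cases). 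Iterating to the last zero $v_{k'}=0$ deposits $c_{\mathfrak{s}^{j+k'}\omega}$ at $v$ and a trivial section at its sibling, and $v\cdot\mathfrak{c}_{j}^{v}=v$ follows from $1^{\infty}\cdot g^{v}=v1^{\infty}$ noted before the statement.

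The point I expect to require the most care is the locality bookkeeping inside this induction: I would need to verify that conjugating by a deep block $h_{i}^{v}$ touches neither the sections already settled on shallower levels --- the promotion sites $v_{1}\ldots v_{i'-2}0$ with $i'<i$ lie off the subtree $h_{i}^{v}$ acts on, since $v$ passes through $v_{1}\ldots v_{i'-2}1$ --- nor anything strictly below level $i$, and that the spine never accumulates a genuine conjugation; this last point is exactly where the Klein four-group relation $b_{\omega}c_{\omega}=c_{\omega}b_{\omega}$ is essential, collapsing each stray $b\cdot c\cdot b$ back to $c$. Once these are in place the induction is routine, the remaining work being only the careful tracking of the shifts $\mathfrak{s}^{\,\cdot}\omega$ indexing the sections.
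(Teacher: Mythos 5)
Your proposal is correct and takes essentially the same route as the paper's own proof: the paper likewise observes that each nontrivial $h_{i}^{v}$ acts only in the subtree rooted at $v_{1}\ldots v_{i-2}$, where it conjugates the clean spine section by $\left[b_{\mathfrak{s}^{j+i-2}\omega},a\right]$, and obtains the portrait by applying these conjugations one by one, each producing a ``turn''. The only difference is that the paper delegates the local effect of one such conjugation to Figure \ref{fig:201}, whereas you carry out the one-step wreath-recursion computation (and the Klein four-group cancellation on the spine) explicitly; your index bookkeeping and case analysis agree with the statement.
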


\begin{proof}

For those indices $i$ such that $v_{i}=0$, conjugation by $h_{i}^{v}$
is nontrivial only in the subtree rooted at $v_{1}\ldots v_{i-2}$,
where the section is conjugated by $\left[b_{\mathfrak{s}^{j+i-2}\omega},a\right]$.
Effect of conjugation by $\left[b_{\mathfrak{s}^{j+i-2}\omega},a\right]$
is drawn explicitly in Figure \ref{fig:201} with subscripts omitted.
The portrait of $\mathfrak{c}_{j}^{v}$ is obtained by applying these
conjugations one by one (where every non-trivial conjugation corresponding
to a $0$ in $v$ results in a turn illustrated in the pictures). 

\end{proof}

\begin{rem}

The reason we need ${\bf 201}$ or ${\bf 211}$ in Assumption $({\rm Fr}(D))$
to perform these conjugations is the following: the digit ${\bf 2}$
is needed for $[b,a]$ to be in the rigid stabilizer at the corresponding
level, the second digit different from ${\bf 2}$ implies $[b,a]$
acts as $ab,ba$ on the next level, and the last digit ${\bf 1}$
implies that the sections of $c$ that gets swapped are $c$ and $id$.
As a consequence, $\mathfrak{c}_{j}^{v}$ fixes the ray $1^{\infty}$
and vertices of the form $1^{m}0$. It is possible that the arguments
can work through under the weaker assumption that one can find ${\bf 20}$
or ${\bf 21}$ in every $D$-block, but it simplifies calculations
to assume the third digit is ${\bf 1}$. 

\end{rem}

\begin{figure} 
\includegraphics[scale=0.8]{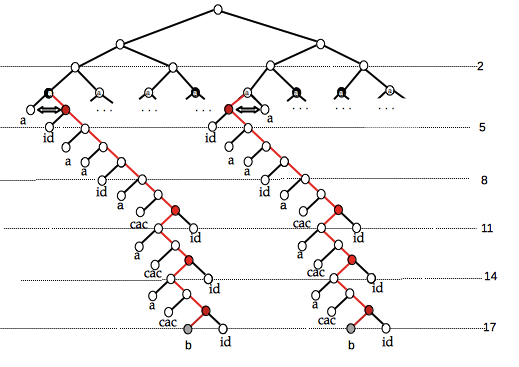} 
\caption{Portrait of the element $\tilde{g}_j^v$ with $j=3$ and $v=11(111)(110)(110)(110)$ in the first Grigorchuk group, with usual notation. Because of space limitations the picture does not show the full portrait: in places marked as ... there are two types of portraits: the portrait in the subtree at a black  vertex is the same as at $000$; and in the subtree at a white  vertex, the portrait is the same as at $100$.}
\label{largeportrait} 
\end{figure}

Next we apply the substitutions $\zeta_{x}$ to $a\mathfrak{c}_{j}^{v}$,
which is an element in $G_{j}$, to obtain an element in $G_{\omega}$.
Given $j$ and a vertex $v\in\mathsf{V}_{k}^{j}$ where $D|k$, define
\begin{equation}
\tilde{g}_{j}^{v}:=\zeta_{\omega_{0}}\circ\ldots\circ\zeta_{\omega_{j-1}}\left(a\mathfrak{c}_{j}^{v}\right).\label{eq:gjv}
\end{equation}
Figure \ref{largeportrait} illustrates the portrait of an element
of this type in the first Grigorchuk group. element $\tilde{g}_{j}^{v}$,
with $j=3$ and $v=11111110110110$, in the first Grigorchuk group
$G_{012}$. 

\begin{lem}\label{lemgj}

The element $\tilde{g}_{j}^{v}$ is well defined (the substitutions
$\zeta_{\omega_{i}}$'s can be applied). It is in the $j$-th level
stabilizer and when $\omega_{j-1}\neq{\bf 1}$, the sections of $\tilde{g}_{j}^{v}$
at vertices of $\mathsf{L}_{j}$ are in $\left\{ a\mathfrak{c}_{j}^{v},\mathfrak{c}_{j}^{v}a\right\} $
. 

\end{lem}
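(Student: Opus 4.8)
The plan is to push the problem down to the level of words over the alphabet $\{ab_{\mathfrak{s}^{j}\omega},ac_{\mathfrak{s}^{j}\omega},ad_{\mathfrak{s}^{j}\omega}\}$, where the behaviour of the substitutions is governed by (\ref{eq:subformula}) and (\ref{eq:subzetai}). For well-definedness, recall from the discussion after (\ref{eq:subformula}) that once $a\mathfrak{c}_{j}^{v}\in G_{\mathfrak{s}^{j}\omega}$ is represented by a positive word over $\{ab_{\mathfrak{s}^{j}\omega},ac_{\mathfrak{s}^{j}\omega},ad_{\mathfrak{s}^{j}\omega}\}$, the substitution $\zeta_{\omega_{j-1}}$ may be applied, its value is again a word over the corresponding level $(j-1)$ alphabet, and iterating, $\zeta_{\omega_{0}}\circ\cdots\circ\zeta_{\omega_{j-1}}$ is well defined on it with image in $G_{\omega}$. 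So I would first produce such a word: writing $\mathfrak{c}_{j}^{v}=h^{-1}c_{\mathfrak{s}^{j}\omega}h$ with $h=h_{1}^{v}\cdots h_{k'}^{v}$ as in (\ref{eq:ckv})--(\ref{eq:hv}), one unwinds the recursive substitution construction in the proof of Fact \ref{rigid} to represent each nontrivial factor $h_{i}^{v}=\iota([b_{\mathfrak{s}^{j+i-2}\omega},a],v_{1}\ldots v_{i-2})$ by an alternating word in the generators in which $a$ occurs an even number of times; from this one rewrites $a\mathfrak{c}_{j}^{v}=a h^{-1}c_{\mathfrak{s}^{j}\omega}h=(ah^{-1}a)(ac_{\mathfrak{s}^{j}\omega})(h)$, a positive word over $\{ab_{\mathfrak{s}^{j}\omega},ac_{\mathfrak{s}^{j}\omega},ad_{\mathfrak{s}^{j}\omega}\}$ (the flanking $a$'s around $c_{\mathfrak{s}^{j}\omega}$ together with those coming from $h^{\pm1}$ restore the alternation). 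This gives the first assertion.

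For the remaining assertions I would track the syllable types of this word. For a word $w$ over $\{ab,ac,ad\}$ let $n_{ab}(w),n_{ac}(w),n_{ad}(w)$ count the syllables of each type. From (\ref{eq:subzetai}) one reads: (a) for each $i\in\{{\bf 0},{\bf 1},{\bf 2}\}$ the substitution $\zeta_{i}$ sends the syllable $a\gamma$ to a word with an odd number of $a$'s precisely when $\gamma$ is the letter annihilated by $i$ (so $\zeta_{\bf 0}(ad),\zeta_{\bf 1}(ac),\zeta_{\bf 2}(ab)$ carry three $a$'s, the rest two), hence the number of $a$'s in $\zeta_{i}(w)$ is $\equiv n_{a\gamma_{0}}(w)\pmod 2$, where $\gamma_{0}$ is that annihilated letter; and (b) if $w$ has an even number of syllables of every type then so does $\zeta_{i}(w)$, since $\zeta_{i}(w)$ is a concatenation of $\zeta_{i}$-images of syllables occurring with even multiplicities --- in particular $\zeta_{i}(w)$ then has an even number of $a$'s. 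Now each $h_{i}^{v}$ contributes an even number of syllables of every type: this follows by induction along the substitutions in Fact \ref{rigid}, the seed $\gamma a\gamma a$ having two syllables of one type and every step ($\sigma_{n}$ or $a\sigma_{n}(\cdot)a$) doubling the syllable counts. Since the central $c_{\mathfrak{s}^{j}\omega}$ contributes exactly one $ac_{\mathfrak{s}^{j}\omega}$-syllable, the word for $a\mathfrak{c}_{j}^{v}$ has $n_{ac_{\mathfrak{s}^{j}\omega}}$ odd and $n_{ab_{\mathfrak{s}^{j}\omega}},n_{ad_{\mathfrak{s}^{j}\omega}}$ even.

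Assume now $\omega_{j-1}\neq{\bf 1}$, so the letter annihilated by $\omega_{j-1}$ is $b$ or $d$. By (a) the word $u_{j-1}:=\zeta_{\omega_{j-1}}(a\mathfrak{c}_{j}^{v})$ has an even number of $a$'s, and a direct check with (\ref{eq:subzetai}) shows it has an even number of syllables of every type. Setting $u_{i}:=\zeta_{\omega_{i}}(u_{i+1})$ for $i=j-2,\ldots,0$, so that $u_{0}=\tilde{g}_{j}^{v}$, fact (b) gives that every $u_{i}$, $0\le i\le j-1$, has an even number of $a$'s; hence (\ref{eq:subformula}) yields $\psi_{i}(u_{i})=(a\,u_{i+1}\,a,\ u_{i+1})$ with trivial root permutation for all such $i$. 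Iterating the wreath recursion down $j$ levels --- the telescoping already used in the proof of Fact \ref{zeta} --- then shows $\tilde{g}_{j}^{v}\in{\rm St}_{G_{\omega}}(\mathsf{L}_{j})$, and that the section of $\tilde{g}_{j}^{v}$ at a vertex $x_{1}\ldots x_{j}\in\mathsf{L}_{j}$ equals $a\mathfrak{c}_{j}^{v}$ when $x_{1}+\cdots+x_{j}\equiv j\pmod2$ and $a(a\mathfrak{c}_{j}^{v})a=\mathfrak{c}_{j}^{v}a$ otherwise; in particular all level $j$ sections lie in $\{a\mathfrak{c}_{j}^{v},\mathfrak{c}_{j}^{v}a\}$.

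I expect the main obstacle to be the bookkeeping in the first two paragraphs: exhibiting the explicit positive word for $a\mathfrak{c}_{j}^{v}$ over the level $j$ alphabet --- which requires carefully controlling the first and last letters of the words produced by the recursive substitution of Fact \ref{rigid} (whether a piece begins with $a$ or with a letter from $\{b,c,d\}$), so that the factors of $ah^{-1}c_{\mathfrak{s}^{j}\omega}h$ fit together over $\{ab_{\mathfrak{s}^{j}\omega},ac_{\mathfrak{s}^{j}\omega},ad_{\mathfrak{s}^{j}\omega}\}$ --- and then carrying the parities of the syllable counts through this construction. The point where the hypothesis $\omega_{j-1}\neq{\bf 1}$ enters is precisely step (a): it guarantees that $\zeta_{\omega_{j-1}}$ annihilates a letter other than $c$, so that the odd syllable count $n_{ac_{\mathfrak{s}^{j}\omega}}$ of $a\mathfrak{c}_{j}^{v}$ does not spoil the parity of the number of $a$'s; everything after this is routine.
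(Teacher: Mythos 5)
Your proposal is correct and follows essentially the same route as the paper: well-definedness via the word representations of the $h_i^v$ coming from Fact \ref{rigid}, and the stabilizer/section claim via formula (\ref{eq:subformula}) applied level by level. The only difference is one of detail — the paper compresses the second claim into ``follows from (\ref{eq:subformula})'', whereas you make explicit the parity bookkeeping (the syllable counts of $a\mathfrak{c}_j^v$ being $(0,1,0)$ mod $2$, so that $\omega_{j-1}\neq\mathbf{1}$ forces an even number of $a$'s after the first substitution and all syllable counts even thereafter), which is exactly the mechanism the authors have in mind and correctly locates where the hypothesis $\omega_{j-1}\neq\mathbf{1}$ enters.
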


\begin{proof}

In $G_{j}$, each non-trivial $h_{i}^{v}$ is defined to be $\iota\left(\left[b_{\mathfrak{s}^{j+i-2}\omega},a\right],v_{1}\ldots v_{i-2}\right)$.
As explained in the proof of Fact \ref{rigid}, the element $h_{i}^{v}$
can be represented as a reduced word in $\left\{ ab,ac,ad\right\} ^{\ast}$,
under the projection $\pi_{\mathfrak{s}^{j}\omega}:\mathbf{F}\to G_{\mathfrak{s}^{j}\omega}$.
It follows that after applying the conjugations to $c$, the element
$\mathfrak{c}_{j}^{v}$ can be represented either by a reduced word
of the form $wa$, where $w\in\{ab,ac,ad\}^{\ast}$, or by $\gamma w$
where $\gamma\in\{b,c,d\}$ and $w\in\{ab,ac,ad\}^{\ast}$. In first
case $a\mathfrak{c}_{j}^{v}$ can be represented by a word in $\{ba,ca,da\}^{\ast}$,
in the second case it can be represented by a word in $\{ab,ac,ad\}^{\ast}$.
In either case the substitutions $\zeta_{\omega_{i}}$ can be applied
to $a\mathfrak{c}_{j}^{v}$. The second claim follows from (\ref{eq:subformula}). 

\end{proof}

We now summarize what we have defined so far. For any integer $j$
(indicating the level) and integer $k$ divisible by $D$ (parametrizing
the length of the segment where conjugations are performed), we have
defined an index set $V_{k}^{j}$ as in (\ref{eq:Vk}). For each string
$v\in V_{k}^{j}$, we take the conjugated element $\mathfrak{c}_{j}^{v}$
in $G_{\mathfrak{s}^{j}\omega}$ as defined in (\ref{eq:ckv}) and
use the substitutions $\zeta_{x}$ to obtain an element $\tilde{g}_{j}^{v}$
in $G_{\omega}$ as defined in (\ref{eq:gjv}). Keep in mind the picture
that the portrait of $\mathfrak{c}_{j}^{v}$ is directed along $v$,
see Figure \ref{fig:portaitck} and at each vertex in $\mathsf{L}_{j}$,
the section of $\tilde{g}_{j}^{v}$ is either $a\mathfrak{c}_{j}^{v}$
or $\mathfrak{c}_{j}^{v}a$, see Figure \ref{largeportrait}.

Let $(k_{n})$ be a sequence of increasing positive integers divisible
by $D$ to be determined later, $k_{n}\ll n$. In what follows $n$
is always an integer divisible by $D$. For each $j\in\{1,\ldots,n\}$,
take the following sets $\mathfrak{F}_{j,n}$ of group elements in
$G_{\omega}$, 
\begin{itemize}
\item For an index $j$ such that $\omega_{j-1}={\bf 2}$ and $n-k_{n}<j\le n$,
define $\mathfrak{F}_{j,n}$ to be the following set of elements $\tilde{g}_{j}^{v}=\zeta_{\omega_{0}}\circ\ldots\circ\zeta_{\omega_{j-1}}\left(a\mathfrak{c}_{j}^{v}\right)$,
\begin{equation}
\mathfrak{F}_{j,n}:=\left\{ \tilde{g}_{j}^{v}|\ v\in\mathsf{V}_{2k_{n}}^{j}\mbox{ and }\left|1^{\infty}\wedge v\right|\ge n-j+D\right\} ,\label{eq:Fjn}
\end{equation}
where $u\wedge v$ denotes the longest common prefix of two rays $u$
and $v$. The set $\mathfrak{F}_{j,n}$ is indexed by the subset of
$\mathsf{V}_{2k_{n}}^{j}$ which consists of vertices with prefix
$1^{n-j+D}$. The cardinality of $\mathfrak{F}_{j,n}$ is $2^{\frac{2k_{n}-(n-j+\bar{j})}{D}}$
in this case. 
\item Otherwise, that is, for indices $j\in\{1,\ldots,n\}$ other than those
with $\omega_{j-1}={\bf 2}\mbox{ and }n-k_{n}<j\le n$, keep the single
element $g_{j}$ and set $\mathfrak{F}_{j,n}=\{g_{j}\}$, where $g_{j}$
is defined in (\ref{eq:gngeneral}). 
\end{itemize}
Informally, for those indices $j$ within $k_{n}$ distance to $n$
with \textquotedbl bad\textquotedbl{} germs, we replace $g_{j}$ by
a set $\mathfrak{F}_{j,n}$ of conjugated elements parametrized by
strings; while for those $g_{j}$'s with \textquotedbl good\textquotedbl{}
germs, we keep them as they are.

We proceed to construct a measure $\upsilon_{n}$ on $G_{\omega}$
which will replace the $n$-quasi-cubic measure. A random element
with distribution $\upsilon_{n}$ can be obtained as follows. Take
independent random variables $\{\epsilon_{i},\gamma_{i}|\ 1\le i\le n\}$
where for each $i$, $\epsilon_{i}$ is uniform on $\{0,1\}$ and
$\gamma_{i}$ is uniform on the set $\mathfrak{F}_{i,n}$. Then the
group element $\gamma_{n}^{\epsilon_{n}}\ldots\gamma_{1}^{\epsilon_{1}}$
has distribution $\upsilon_{n}$. We refer to $\upsilon_{n}$ as a
\emph{uniformised $n$-quasi-cubic measure}, it depends on the parameters
$(k_{n})$. The measure $\upsilon_{n}$ is similar to an $n$-quasi-cubic
measure considered in earlier sections, but with an extra layer of
randomness in the choice of the cube independent sequence $\gamma_{1},\ldots\gamma_{n}$. 

More formally, the distribution $\upsilon_{n}$ is defined as follows.
Take the direct product
\begin{equation}
\mathfrak{F}_{n}=\prod_{i=1}^{n}\mathfrak{F}_{i,n}\label{eq:ucube}
\end{equation}
and the product of the hypercube $\{0,1\}^{n}$ with $\mathfrak{F}_{n}$,
\begin{equation}
\varLambda_{n}=\{0,1\}^{n}\times\mathfrak{F}_{n}.\label{eq:lambdan}
\end{equation}
Define $\theta_{n}:\varLambda_{n}\to G_{\omega}$ to be the map 
\[
\theta_{n}\left(\left(\boldsymbol{\epsilon},\boldsymbol{\gamma}\right)\right)=\gamma_{n}^{\epsilon_{n}}\ldots\gamma_{1}^{\epsilon_{1}},
\]
where $\boldsymbol{\epsilon}=\left(\epsilon_{1},\ldots,\epsilon_{n}\right)\in\{0,1\}^{n}$
and $\boldsymbol{\gamma}=\left(\gamma_{1},\ldots,\gamma_{n}\right)$,
$\gamma_{i}\in\mathfrak{F}_{i,n}$. Take the measure $\upsilon_{n}$
on $G_{\omega}$ to be the push-forward of the uniform measure $\mathbf{u}_{\varLambda_{n}}$
under $\theta_{n}$, that is 
\begin{equation}
\upsilon_{n}(g)=\frac{\left|\{\left(\boldsymbol{\epsilon},\boldsymbol{\gamma}\right)\in\varLambda_{n}:\theta_{n}\left(\left(\boldsymbol{\epsilon},\boldsymbol{\gamma}\right)\right)=g\}\right|}{|\varLambda_{n}|}.\label{eq:zetan}
\end{equation}

For the purpose of symmetrization, set the measure $\check{\upsilon}_{n}$
to be
\[
\check{\upsilon}_{n}(g)=\upsilon_{n}\left(g^{-1}\right).
\]

Finally, for $\beta\in(0,1)$, take the convex combination of the
measures 
\begin{equation}
\mu_{\beta}=\frac{1}{2}\mathbf{u}_{S}+\frac{1}{2}\sum_{n\in\mathbb{N},D|n}C_{\beta}2^{-n\beta}\left(\upsilon_{n}+\check{\upsilon}_{n}\right),\label{eq:eta3}
\end{equation}
where ${\bf u}_{S}$ is the uniform measure on the generating set
$\{a,b_{\omega},c_{\omega},d_{\omega}\}$, $C_{\beta}>0$ is the normalization
constant such that $\mu_{\beta}$ is a probability measure. Note that
although suppressed in the notations, the measure $\mu_{\beta}$ depends
on the sequence $(k_{n})$. 

\begin{exa}\label{firstmain}

We explain the definitions on the first Grigorchuk group $G=G_{{\bf 012}}$,
in the usual notation as in (\ref{eq:un}). The correspondence between
two systems of notations (usual and general) on $G$ is explained
in (\ref{eq:identification}). 

The defining string of $G$ is $({\bf 012})^{\infty}={\bf 01}({\bf 201})^{\infty}$,
with a shift of two digits it satisfies Assumption ${\rm Fr}(D)$,
$D=3$. In Notation \ref{D}, for the shifted string $\omega=({\bf 201})^{\infty}$,
we have $m_{k}=0$ and $I_{\omega}=3\mathbb{N}$. 

On $G$ the sequence $(g_{n})$ defined in formula (\ref{eq:gngeneral})
is

\[
g_{j}=\begin{cases}
\zeta^{j}(ac) & \mbox{if }j\equiv0,1\mod3,\\
\zeta^{j}(ad) & \mbox{if }j\equiv2\mod3,
\end{cases}
\]
in the usual notation for the first Grigorchuk group. This sequence
has appeared in Subsection \ref{subsec:critical} and Section \ref{sec: growth-first}.
Among them those $g_{j}$ with $j\equiv0\mod3$ have $c$-germs, for
which we will perform the conjugations.

In what follows $k$ is divisible by $3$ and $n\equiv2\mod3$ (where
$2$ comes from the shift of two digits from $({\bf 012})^{\infty}$
to $({\bf 201})^{\infty}$). The set $\mathsf{W}_{k}^{n}$ defined
in (\ref{eq:Wk}) is 
\[
\mathsf{W}_{k}^{n}=\left\{ u:\ |u|=k\mbox{ and }u_{3j+1}=u_{3j+2}=1\mbox{ for all }j\ge0\right\} .
\]
In other words, $\mathsf{W}_{k}^{n}$ consists of vertices of length
$k$ that are concatenations of segments $111$ and $110$. For $j\equiv0\mod3$,
the set $\mathsf{V}_{k}^{j}$ defined in (\ref{eq:Vk}) is
\[
\mathsf{V}_{k}^{j}=\left\{ 11u110:\ u\in\mathsf{W}_{k}^{j+2}\right\} ,
\]
that is vertices of the form $11\underbrace{(11\ast)\ldots(11\ast)}_{k\ {\rm digits}}110$,
where at $\ast$ the digit can be $0$ or $1$. The cardinality of
the set $\mathsf{V}_{k}^{j}$ is $2^{k/3}$. 

We remark that the set $\mathsf{V}_{k}^{j}$ is contained in the ${\rm H}^{b}$-orbit
of the vertex $1^{2+k+3}$, where ${\rm H}^{b}$ is the subgroup of
$G$ which consists of elements with only $\left\langle b\right\rangle $-germs,
see Figure \ref{orbitHb}.

\begin{figure} \includegraphics[scale=0.6]{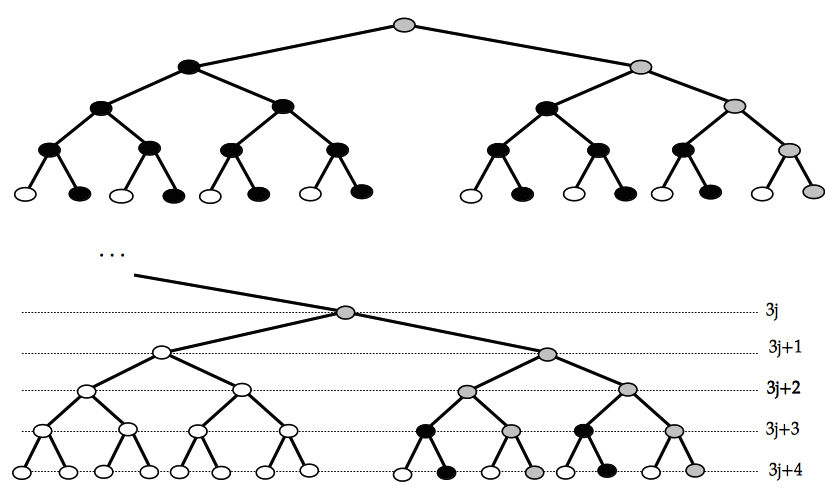} \caption{  Colors grey and black mark vertices that are in the orbit of the rightmost ray under $\rm{H}^b$. These are vertices of the form $(\ast)(\ast\ast1)(\ast\ast1)\dots$. The subsets of grey vertices on levels $\equiv 2\mod 3$  show vertices that are in the indexing set for conjugaction in the definition of $\mathfrak{c}^v$. They are of the form $(11)(11\ast)(11\ast)\dots$.}\label{orbitHb} \end{figure}

Figure \ref{fig:big} draws the portrait of a conjugated element $\mathfrak{c}^{v}=\mathfrak{c}_{j}^{v}$
indexed by $v\in\mathsf{V}_{k}^{j}$ as defined in (\ref{eq:ckv}).
Note that since $({\bf 012})^{\infty}$ is of period $3$, for all
$j\equiv0\mod3$, $\mathfrak{c}_{j}^{v}$ is the same element, thus
we can omit reference to $j$. 

For $n\equiv2\mod3$, take a parameter $k_{n}$ divisible by $3$.
In the sequence $(g_{j})_{j=1}^{n}$, for these $j\le n-k_{n}$ or
$j\equiv1,2\mod3$, let $\mathfrak{F}_{j,n}=\{g_{j}\}$; for these
$j$ such that $j\equiv0\mod3$ and $n-k_{n}<j\le n$, replace $g_{j}$
by the set $\mathfrak{F}_{j,n}$ defined in (\ref{eq:Fjn}), which
is
\[
\mathfrak{F}_{j,n}=\left\{ \zeta^{j}(a\mathfrak{c}_{j}^{v}):\ v=11\underbrace{(11\ast)\ldots(11\ast)}_{2k_{n}\ {\rm digits}}110\mbox{ and }v\mbox{ has prefix }1^{n-j+3}\right\} .
\]
The uniformised $n$-quasi-cubic measure $\upsilon_{n}$ can be described
as follows. For $1\le j\le n$, independently choose each $\gamma_{j}$
uniformly from the set $\mathfrak{F}_{j,n}$, and choose an independent
random variable $(\epsilon_{1},\ldots,\epsilon_{n})$ uniformly from
$\{0,1\}^{n}$, then the random group element $\gamma_{n}^{\epsilon_{n}}\ldots\gamma_{1}^{\epsilon_{n}}$
has distribution $\upsilon_{n}$.

Finally the measure $\mu_{\beta}$ defined with parameter sequence
$(k_{n})$ in (\ref{eq:eta3}) is a convex combination of ${\bf u}_{S}$,
$S=\{a,b,c,d\}$ and the uniformised $n$-quasi-cubic measures $\upsilon_{n}$
and their inverses $\check{\upsilon}_{n}$, over all $n\equiv2\mod3$. 

\end{exa}

\subsection{Non-triviality of the Poisson boundary }

The goal of this subsection is to show that with some appropriate
choices of $\beta$ and $A$, the measure $\mu_{\beta}$ defined in
(\ref{eq:eta3}) with $k_{n}=A\left\lfloor \log_{2}n\right\rfloor $
has non-trivial Poisson boundary. 

Recall that as explained in Example \ref{ggerms}, the isotropy group
of the groupoid of germs of $G_{\omega}\curvearrowright\partial\mathsf{T}$
at $1^{\infty}$ is isomorphic to $\mathbb{Z}/2\mathbb{Z}\times\mathbb{Z}/2\mathbb{Z}=\left\langle b\right\rangle \times\left\langle c\right\rangle $.
An element $g\in G_{\omega}$ has only $\left\langle b\right\rangle $-germs
if and only if there is a level $n$ such that all sections $g_{v}$
are in $\left\{ id,a,b_{\mathfrak{s}^{n}\omega}\right\} $, $v\in\mathsf{L}_{n}$.
Let $H=\left\langle b\right\rangle <\left(\mathcal{G}_{\omega}\right)_{o}$
and $\mathcal{H}^{b}$ be the set of germs that are either trivial
or $b$ as in (\ref{eq:H_v}). To apply Proposition \ref{transience},
we need to verify that the summation
\[
\sum_{x\in o\cdot G}{\bf G}_{P_{\mu_{\beta}}}(o,x)\mu_{\beta}\left(\left\{ g\in G:\ (g,x)\notin\mathcal{H}^{b}\right\} \right)
\]
is finite, where $P_{\mu_{\beta}}$ is the transition kernel induced
by $\mu_{\beta}$ on the orbit $o\cdot G$. To this end we show separately
an upper estimate on the Green function and a bound on the $\mu_{\beta}$-weights
in the summation.

\begin{prop}\label{green}

Let $\beta\in(1-\frac{1}{D},1)$ and $k_{n}=A\left\lfloor \log_{2}n\right\rfloor $.
For any $\epsilon>0$, there exists a constant $C=C(\beta,D,\epsilon)>0$
such that for any $x,y\in1^{\infty}\cdot G$, the Green function satisfies
\[
\mathbf{G}_{P_{\mu_{\beta}}}(x,y)\le C\left(\frac{d_{\mathcal{S}}(x,y)}{\log_{2}^{2A}d_{\mathcal{S}}(x,y)}\right)^{\beta-1}(\log_{2}d_{\mathcal{S}}(x,y))^{-\frac{1-\beta}{1+\beta}\left(\frac{2A}{D}-\epsilon\right)}.
\]

\end{prop}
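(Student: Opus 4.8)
The plan is to obtain the Green function bound by first establishing a heat-kernel (transition probability) upper bound for the induced Markov chain $P_{\mu_\beta}$ on the Schreier graph $\mathcal{S}$ and then integrating it in time. The measure $\mu_\beta$ is a convex combination $\frac12\mathbf{u}_S + \frac12\sum_{n} C_\beta 2^{-n\beta}(\upsilon_n + \check\upsilon_n)$, so on the Schreier graph the induced kernel is a mixture of: a nearest-neighbour part coming from $\mathbf{u}_S$, and long-range ``jump'' parts coming from the uniformised quasi-cubic measures $\upsilon_n$. The first step is to understand the jump profile: by the cube-independence property of the modified sequences and the displacement estimate of Lemma \ref{translate}, an element drawn from $\upsilon_n$ moves a point $x\in o\cdot G$ by a distance on the order of $2^{k_n}(2/\eta_0)^n \asymp 2^{k_n}\lambda_0^n$ (for $G_{012}$; in general the relevant eigenvalue of the substitution matrix), and moreover the cube-independence ensures that the induced kernel spreads the mass over roughly $|F_n|\asymp 2^n/(\text{correction})$ distinct points, so that $P_{\upsilon_n}(x,\cdot)$ is comparable to a uniform distribution on a ball-shell of that radius around $x$. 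Summing over $n$ with weights $2^{-n\beta}$, and recording that $k_n = A\lfloor\log_2 n\rfloor$ contributes a $\log$-power factor $\log^{2A}$ to each scale, one gets that $P_{\mu_\beta}(x,y)$ behaves, up to logarithmic corrections, like a stable-type jump kernel $\asymp d_{\mathcal{S}}(x,y)^{-1-\alpha}$ on the graph $\mathcal{S}$, where the index $\alpha$ is determined by $\beta$ via the exponential growth of word length versus the $2^{-n\beta}$ weights — concretely $\alpha$ is of the form $\beta\cdot(\text{something})/(1-\ldots)$, and the logarithmic corrections enter with the powers visible in the statement.

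The second step is to apply the machinery of Barlow--Grigor'yan--Kumagai \cite{BGK} (Meyer's construction plus the Davies method) to this jump kernel on $\mathcal{S}$. The Schreier graph of $o$ under the action is, up to bounded distortion, a copy of $\mathbb{Z}_{\geq 0}$ with the Gray-code metric, so it has polynomial volume growth of dimension $1$ (volume of a ball of radius $r$ is $\asymp r$). For a jump process with jump density comparable to $r^{-1-\alpha}$ on a space with volume exponent $1$, \cite{BGK} gives the heat kernel estimate $p_t(x,y) \asymp t^{-1/\alpha} \wedge t\, d(x,y)^{-1-\alpha}$ in the relevant regime; with the logarithmic corrections this becomes a bound of the form $p_t(x,y)\le C t^{-1/\alpha}$ combined with an off-diagonal decay, where $\alpha$ and the $\log$-powers are tracked through the construction. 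The third step is then purely to integrate: $\mathbf{G}_{P_{\mu_\beta}}(x,y)=\sum_{t\geq 0}p_t(x,y)\le \sum_{t}\big(t^{-1/\alpha}\wedge t\,d_{\mathcal{S}}(x,y)^{-1-\alpha}\big)$, and splitting the sum at $t\asymp d_{\mathcal{S}}(x,y)^{\alpha}$ yields $\mathbf{G}_{P_{\mu_\beta}}(x,y)\lesssim d_{\mathcal{S}}(x,y)^{1-\alpha}$ (when $\alpha<1$, which is exactly the regime $\beta\in(1-\tfrac1D,1)$), again with the logarithmic factors carried along; bookkeeping the exponents and matching with the definition $k_n=A\lfloor\log_2 n\rfloor$ produces the stated bound $\mathbf{G}_{P_{\mu_\beta}}(x,y)\le C\big(d_{\mathcal{S}}(x,y)/\log_2^{2A}d_{\mathcal{S}}(x,y)\big)^{\beta-1}(\log_2 d_{\mathcal{S}}(x,y))^{-\frac{1-\beta}{1+\beta}(\frac{2A}{D}-\epsilon)}$.

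The main obstacle, as the authors themselves flag in the sketch, is making the jump kernel regular enough for \cite{BGK} to apply: the naive modification (single conjugate $\mathfrak{c}_{k_n}$) gives a transition kernel that is too singular, which is precisely why the extra layer of randomness (the uniformised quasi-cubic measures, averaging over $v\in\mathsf{V}_k^j$) was introduced. So the technical heart of the argument is to verify that the averaged kernel $P_{\upsilon_n}$, at each scale, genuinely spreads its mass over a full ball-shell — i.e., that the random choice of conjugating vertices $v$ makes the images $x\cdot\tilde g_n^v$ roughly equidistributed at scale $2^{k_n}\lambda_0^n$ — and that the superposition over scales $n$ glues into a kernel satisfying the upper and lower jump-density bounds (with the right $\log$-corrections) required as hypotheses in \cite{BGK}. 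A secondary difficulty is the careful tracking of the logarithmic factors: $k_n=A\log_2 n$ feeds into the radius at scale $n$, hence into the relation between the graph distance $d_{\mathcal{S}}$ and the scale index $n$, and one must invert this relation (distance $\asymp 2^{k_n}\lambda_0^n$ so $n\asymp \log_{\lambda_0}(d/\log^{2A}d)$) to express everything in terms of $d_{\mathcal{S}}(x,y)$ and recover the exponent $-\frac{1-\beta}{1+\beta}(\frac{2A}{D}-\epsilon)$ on the $\log$, where the $\epsilon$ absorbs lower-order slowly-varying errors and the $\frac{1}{1+\beta}$ comes from the exponent $1/\alpha$ in the integration step.
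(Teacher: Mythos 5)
Your overall route (heat-kernel upper bound via the Meyer construction and Davies method of \cite{BGK}, then summation over $t$ with a split at a time scale of order $\phi_1(d_{\mathcal{S}}(x,y))$, producing the $\frac{1}{1+\beta}$ in the log-exponent) is the same as the paper's, but two steps as you describe them would not go through. First, you derive the on-diagonal decay from a presumed stable-like comparability of the induced kernel: you want $P_{\upsilon_n}(x,\cdot)$ to be ``roughly equidistributed'' on a shell, and you list \emph{lower} jump-density bounds among the hypotheses to be verified for \cite{BGK}. This is exactly what is unavailable here: because of the conjugations, $P_{\mu_\beta}(x,\cdot)$ depends on the location $x$ and is \emph{not} comparable to any function of $d_{\mathcal{S}}(x,\cdot)$, and the paper's estimates (Proposition \ref{eta3kernel}) are deliberately one-sided ``worst-case'' upper bounds on the tail and the truncated second moment. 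The missing idea in your plan is that the on-diagonal bound $\sup_{x,y}P^t_{\mu_\beta}(x,y)\le Ct^{-1/\beta}$ is obtained separately, from the $\ell^2$-isoperimetric profile via cube independence (Lemma \ref{unique} and the Cheeger argument of Lemma \ref{CSC}, giving Proposition \ref{eta3diag}), with no reference to the distance structure of the jumps; the Davies-method upper bound of Proposition \ref{davies} then needs only this plus the one-sided kernel bounds, no lower bounds and no volume regularity. As stated, your verification step (equidistribution at each scale) is both unnecessary and very likely false, and without the isoperimetric input the Davies method has no on-diagonal hypothesis to start from.

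Second, your scale--distance bookkeeping conflates the word metric on $G$ with the Schreier-graph metric. The displacement on the orbit produced by an element of $\mathfrak{F}_{j,n}$ is controlled by Lemmas \ref{translate} and \ref{modifi}: it is of order $2^{j+2k_n}$ (base $2$, coming from the level-$j$ stabilizer structure), not of order $\lambda_0^{n}2^{k_n}$; the quantity $\lambda_0^n$ is the word length of $g_n$ in $G$ and enters only in the tail estimates of Section \ref{sec:maingrowth}, not in Proposition \ref{green}. Consequently your inversion $n\asymp\log_{\lambda_0}\bigl(d/\log^{2A}d\bigr)$ is wrong; with it the exponent of $d_{\mathcal{S}}(x,y)$ in the final bound would not come out as $\beta-1$. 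The correct relation is $d_{\mathcal{S}}\asymp 2^{\,n+2k_n}$ with $k_n=A\lfloor\log_2 n\rfloor$, i.e.\ $n\approx\log_2 d_{\mathcal{S}}-2A\log_2\log_2 d_{\mathcal{S}}$, which is what produces the factor $\bigl(d_{\mathcal{S}}/\log_2^{2A}d_{\mathcal{S}}\bigr)^{\beta-1}$ together with the residual power $(\log_2 d_{\mathcal{S}})^{-\frac{1-\beta}{1+\beta}(\frac{2A}{D}-\epsilon)}$ after the time-splitting.
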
 

Proposition \ref{green} is proved in Subsection \ref{subsec:greenupper},
it is a consequence of the off-diagonal upper bound for transition
probabilities in \cite{BGK}. For the $\mu_{\beta}$-weights in the
summation, we have the following.

\begin{prop}\label{zetasum}

Let $\upsilon_{n}$ be defined as in (\ref{eq:zetan}) and $o=1^{\infty}$.
Let $f:\mathbb{R}_{+}\to\mathbb{R}_{+}$ be a non-increasing function
such that there exists constants $D'\in(D,\infty)$, 
\[
\frac{f\left(2x\right)}{f(x)}\ge2^{-\frac{1}{D'}}\mbox{ for all }x\ge1.
\]
Then there exists a constant $C=C(D')<\infty$ such that 

\[
\sum_{x\in o\cdot G}f\left(d_{\mathcal{S}}(o,x)\right)\upsilon_{n}\left(\left\{ g\in G:\ (g,x)\notin\mathcal{H}^{b}\right\} \right)\le C2^{n}f\left(2^{n+2k_{n}}\right).
\]
The same inequality holds with $\upsilon_{n}$ replaced by $\check{\upsilon}_{n}$. 

\end{prop}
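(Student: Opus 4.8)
The plan is to fix $n$ (divisible by $D$) and analyze the measure $\upsilon_n$, which is the push-forward of the uniform measure on $\varLambda_n = \{0,1\}^n \times \mathfrak{F}_n$ under $\theta_n(\boldsymbol\epsilon,\boldsymbol\gamma) = \gamma_n^{\epsilon_n}\cdots\gamma_1^{\epsilon_1}$. The left-hand side can be rewritten as
\[
\sum_{(\boldsymbol\epsilon,\boldsymbol\gamma)\in\varLambda_n} \frac{1}{|\varLambda_n|}\, f\bigl(d_{\mathcal S}(o,\, o\cdot\theta_n(\boldsymbol\epsilon,\boldsymbol\gamma))\bigr)\, \mathbf{1}\{(\theta_n(\boldsymbol\epsilon,\boldsymbol\gamma),o)\notin\mathcal H^b\},
\]
so the first step is to understand, for which $(\boldsymbol\epsilon,\boldsymbol\gamma)$ the resulting group element has a non-$\langle b\rangle$-germ at $o=1^\infty$, and where $o$ is sent. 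By construction, the only elements among $\gamma_1,\dots,\gamma_n$ that can carry a $c$-germ are those $\gamma_j = \tilde g_j^v$ with $\omega_{j-1}={\bf 2}$ and $n-k_n<j\le n$, since all other $g_j$ have $b$-germs (by the discussion after \eqref{eq:gngeneral}) and $b$-germs form a sub-groupoid closed under composition. Moreover, by Lemma \ref{ckgerm} and Lemma \ref{lemgj}, the element $\tilde g_j^v$ has a $c$-germ only at the $2^j$ locations in $\mathsf L_j$ times the ray $v1^\infty$ inside each subtree; by the definition \eqref{eq:Fjn} of $\mathfrak F_{j,n}$, these $v$ all have prefix $1^{n-j+D}$, so the $c$-germ locations of $\tilde g_j^v$ (viewed from the root) lie at distance comparable to $2^{n}\cdot 2^{|v|}\asymp 2^{n+2k_n}$ from $o$ via Lemma \ref{translate} and the Schreier-distance bounds of the previous subsection.

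The second step is the key geometric point: I claim that whenever $\theta_n(\boldsymbol\epsilon,\boldsymbol\gamma)$ has a non-$\langle b\rangle$-germ at $o$, the point $o\cdot\theta_n(\boldsymbol\epsilon,\boldsymbol\gamma)$ is at Schreier distance $\gtrsim 2^{n+2k_n}$ from $o$, OR more precisely, one should stratify by the \emph{largest} index $j$ such that $\epsilon_j=1$ and $\gamma_j$ contributes a bad germ along the trajectory. Because the $g_j$ with $j$ close to $n$ move $o$ a Schreier-distance of order $2^j$ (again via Lemma \ref{translate}, since $g_j = \zeta^{j}(\cdot)$ lies in ${\rm St}(\mathsf L_j)$ with nucleus-bounded sections), the composition $\gamma_n^{\epsilon_n}\cdots\gamma_1^{\epsilon_1}$ when it has a bad germ at $o$ — which forces one of the $\tilde g_j^v$ factors with $j>n-k_n$ to be ``active'' in a way that does not cancel — lands $o$ at distance at least of order $2^{n}$, in fact at distance $\asymp 2^{n+2k_n}$ because the bad germ only appears after traversing the conjugating segment of length $\asymp k_n$ which contributes the $2^{2k_n}$ factor. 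Then, using monotonicity of $f$ and the doubling-type hypothesis $f(2x)/f(x)\ge 2^{-1/D'}$ (which gives $f(x)\le f(Cx)\cdot C^{1/D'}$ for fixed $C$), each term is bounded by a constant times $f(2^{n+2k_n})$, and the total $\upsilon_n$-mass of the bad event is at most $1$; summing over the at most order-$2^n$ many distinct values of $o\cdot\theta_n(\cdot)$ appearing (the orbit is traversed by at most $2^n$ steps since $|\{0,1\}^n|=2^n$) yields the bound $C 2^n f(2^{n+2k_n})$.

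The third step is to make ``bad germ at $o$ forces distance $\asymp 2^{n+2k_n}$'' precise. Here one partitions $\{1,\dots,n\}$ into the good indices $G_n$ (where $\mathfrak F_{j,n}=\{g_j\}$, all $b$-germs) and bad indices $B_n=\{j:\omega_{j-1}={\bf 2},\,n-k_n<j\le n\}$; one checks that if $\epsilon_j=0$ for all $j\in B_n$, then $\theta_n(\boldsymbol\epsilon,\boldsymbol\gamma)$ is a product of $b$-germ elements and hence has a $b$-germ at $o$, contradiction; so some $j\in B_n$ has $\epsilon_j=1$. For the largest such $j$, the suffix $\gamma_n^{\epsilon_n}\cdots\gamma_{j+1}^{\epsilon_{j+1}}$ is a product of $b$-germ elements lying in ${\rm St}(\mathsf L_{j})$ (all have index $>j$ hence are in deeper level stabilizers since $g_i\in{\rm St}(\mathsf L_i)$ and $\tilde g_i^v\in{\rm St}(\mathsf L_i)$), so it does not affect the first $j$ digits; then $\tilde g_j^v$ applied to (the shift of) $o$ moves along the ray $v$, of length $\ge n-j+D+(|v|-(n-j+D))$, and crucially through the conjugated-$c$ structure the image ray $o\cdot\tilde g_j^v\cdots$ differs from $1^\infty$ in a digit at depth $\ge |v| \asymp k_n$ past level $j$, giving Schreier distance $\ge 2^{j}\cdot 2^{|v|-\text{const}}\gtrsim 2^{n+2k_n}$ after accounting that $j>n-k_n$ and $|v|\ge 2k_n$. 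I expect \textbf{this third step — pinning down that a surviving bad germ necessarily pushes the orbit point out to distance comparable to $2^{n+2k_n}$, rather than it being cancelled or the trajectory returning close to $o$ — to be the main obstacle}, since it requires carefully tracking how the $\tilde g_j^v$ factors act on the ray $1^\infty$ and ensuring no accidental near-return; the resolution uses the explicit portrait descriptions in Lemma \ref{ckgerm} (the conjugation only creates ``turns'' over ${\bf 201}/{\bf 211}$ blocks and fixes all vertices $1^m0$) together with the fact that the bad-germ event is detected at a definite deep level, so the Gray-code distance is forced to be large. The identical argument applies verbatim to $\check\upsilon_n$ since it is the inverse push-forward and $\mathcal H^b$ is closed under inverse.
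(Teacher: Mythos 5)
Your opening reformulation of the left-hand side is not the quantity in the statement, and this derails the whole argument. The sum $\sum_{x\in o\cdot G}f\left(d_{\mathcal S}(o,x)\right)\upsilon_n\left(\left\{ g:(g,x)\notin\mathcal H^b\right\}\right)$ runs over \emph{all} points $x$ of the orbit: for each group element $g$ in the support of $\upsilon_n$ one must sum $f(d_{\mathcal S}(o,x))$ over the entire set of bad locations of $g$ (the points $x$ at which $g$ has a non-$\left\langle b\right\rangle$ germ), a set of cardinality of order $2^j$ for each active factor $\gamma_j$ carrying a $c$-germ. You have replaced this by the single quantity $f\left(d_{\mathcal S}(o,o\cdot g)\right)\mathbf{1}\{(g,o)\notin\mathcal H^b\}$, i.e.\ the germ at the one point $o$ weighted by the displacement of $o$. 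Everything downstream (the claim that a bad germ at $o$ forces $o\cdot g$ far out, the count of at most $2^n$ trajectory points) addresses this different and much smaller quantity. The correct route is to decompose the indicator $\mathbf{1}\{(\boldsymbol{\gamma}^{\boldsymbol{\epsilon}},x)\notin\mathcal H^b\}$ via the groupoid multiplication rule into contributions of the individual factors $\gamma_j$ evaluated at translated points, and then, for each $j$, to sum $f$ over the $2^j$ bad locations of $\gamma_j$ translated by the remaining factors $\gamma_{j+1}^{-\epsilon_{j+1}}\ldots\gamma_n^{-\epsilon_n}$ (and the mirror version for $\check\upsilon_n$).

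Second, your claim that the only factors carrying $c$-germs are the modified $\tilde g_j^v$ with $n-k_n<j\le n$ is false. The unmodified $g_j$ with $\omega_{j-1}={\bf 2}$ and $j\le n-k_n$ are kept as singletons $\mathfrak F_{j,n}=\{g_j\}$ and still have $c$-germs at $2^j$ points lying at distance only about $2^j\ll2^{n+2k_n}$ from $o$. For these one cannot argue that the bad locations are pushed far away; the needed mechanism is that the subsequent factors distribute the first $n$ digits of a translated bad point uniformly over $\{0,1\}^n$, so that the expected value of $f$ at such a point is at most $\sum_{i}f(2^{n-i})2^{-i}\lesssim f(2^n)$, giving a total contribution $2^{n-k_n}f(2^n)\le C2^nf(2^{n+2k_n})$ after invoking $f(2x)\ge2^{-1/D'}f(x)$ with $D'>D$. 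This range of $j$, and the averaging that handles it, is entirely absent from your proposal, and without it the claimed bound does not follow. For the range $n-k_n<j\le n$ your geometric intuition (the conjugation places the bad locations at distance about $2^{j+2k_n}$, and a preserved $0$-digit at depth beyond $n+D$ prevents the translated points from returning near $o$) is in the right spirit, but it must be applied to the bad locations of $\gamma_j$ and their images under the remaining factors, with a quantitative tail bound on the exceptional event where the translation moves a point by more than half its distance to $o$ -- not to the single orbit point $o\cdot g$.
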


Proposition \ref{zetasum} is proved in Subsection \ref{subsec:weights}.
With these two estimates we prove that for some appropriate choice
of parameters, along the random walk trajectory the $\left\langle b\right\rangle $-coset
of the germ at $1^{\infty}$ stabilizes. 

\begin{thm}\label{stabilization}

Let $\omega$ be a string satisfying Assumption $({\rm Fr}(D))$.
Then for $\beta\in\left(1-\frac{1}{D},1\right)$ and integer $A>\frac{D(1+\beta)}{2(1-\beta)}$
divisible by $D$, the measure $\mu_{\beta}$ on $G_{\omega}$ defined
as in (\ref{eq:eta3}) with $k_{n}=A\left\lfloor \log_{2}n\right\rfloor $
has non-trivial Poisson boundary.

\end{thm}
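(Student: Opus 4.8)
The plan is to verify the hypothesis of Proposition \ref{transience} with $H_o = \langle b\rangle$ and $\mathcal{H}=\mathcal{H}^b$, namely that
\[
\sum_{x\in o\cdot G_\omega}\mathbf{G}_{P_{\mu_\beta}}(o,x)\,\mu_\beta\bigl(\{g: (g,x)\notin\mathcal{H}^b\}\bigr)<\infty,
\]
and then invoke that proposition to conclude non-triviality of the Poisson boundary. First I would record that $\mu_\beta$ is non-degenerate and symmetric: it charges $S=\{a,b_\omega,c_\omega,d_\omega\}$ via $\tfrac12\mathbf{u}_S$, and it is symmetric because each term $\upsilon_n+\check\upsilon_n$ is symmetric. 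Also $\hat{\mathcal{G}}_o=\mathcal{G}_o=\langle b\rangle\times\langle c\rangle$ as recalled from Example \ref{ggerms}, since $\omega$ contains all three letters infinitely often (a consequence of Assumption $({\rm Fr}(D))$), so the structural hypotheses of Proposition \ref{transience} hold.

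\textbf{Main estimate.} The heart of the argument is to split the sum $\sum_x \mathbf{G}_{P_{\mu_\beta}}(o,x)\,\mu_\beta(\cdots)$ according to the defining decomposition $\mu_\beta=\tfrac12\mathbf{u}_S+\tfrac12\sum_{D\mid n}C_\beta 2^{-n\beta}(\upsilon_n+\check\upsilon_n)$. The $\mathbf{u}_S$-part contributes nothing to the ``bad germ'' weight: the generators $a,b_\omega$ have trivial or $\langle b\rangle$-germs at every point, and $c_\omega,d_\omega$ produce a non-$\langle b\rangle$ germ only at the single point $o=1^\infty$ (where $\mathbf{G}_{P_{\mu_\beta}}(o,o)<\infty$ by transience, which follows from Proposition \ref{indep_tran} applied to the cube-independent sequence together with the comparison principle for Dirichlet forms). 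So the real work is in the sum $\sum_{D\mid n}C_\beta 2^{-n\beta}\sum_x \mathbf{G}_{P_{\mu_\beta}}(o,x)\,\upsilon_n(\{g:(g,x)\notin\mathcal{H}^b\})$ (and the same with $\check\upsilon_n$). Here I would apply Proposition \ref{green} to bound $\mathbf{G}_{P_{\mu_\beta}}(o,x)$ by a function $f$ of $d_{\mathcal S}(o,x)$ of the form $f(r)=C\bigl(r/\log_2^{2A}r\bigr)^{\beta-1}(\log_2 r)^{-\frac{1-\beta}{1+\beta}(2A/D-\epsilon)}$, which is (asymptotically) non-increasing and, crucially, satisfies the doubling-type condition $f(2x)/f(x)\ge 2^{-1/D'}$ for some $D'\in(D,\infty)$ provided $\beta>1-\tfrac1D$ — this is where the hypothesis $\beta\in(1-\tfrac1D,1)$ is used, since then $1-\beta<\tfrac1D$ and the dominant factor $r^{\beta-1}=r^{-(1-\beta)}$ has exponent $-(1-\beta)>-\tfrac1D$. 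Then Proposition \ref{zetasum} gives
\[
\sum_{x\in o\cdot G}f(d_{\mathcal S}(o,x))\,\upsilon_n(\{g:(g,x)\notin\mathcal{H}^b\})\le C 2^n f\bigl(2^{n+2k_n}\bigr),
\]
and the same bound for $\check\upsilon_n$.

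\textbf{Summing over $n$.} It then remains to check that $\sum_{D\mid n}C_\beta 2^{-n\beta}\cdot C 2^n f(2^{n+2k_n})<\infty$. Plugging $r=2^{n+2k_n}$ with $k_n=A\lfloor\log_2 n\rfloor$ into $f$: the power part contributes $\bigl(2^{n+2k_n}\bigr)^{\beta-1}\approx 2^{(n+2A\log_2 n)(\beta-1)}= 2^{n(\beta-1)} n^{2A(\beta-1)}$, the $\log_2^{2A}r$ correction in the denominator of $f$ is raised to the $-(\beta-1)=1-\beta>0$ power and contributes a factor $(\log_2 r)^{2A(1-\beta)}\approx n^{2A(1-\beta)}$ which exactly cancels the $n^{2A(\beta-1)}$, and the remaining logarithmic factor of $f$ gives $(\log_2 r)^{-\frac{1-\beta}{1+\beta}(2A/D-\epsilon)}\approx n^{-\frac{1-\beta}{1+\beta}(2A/D-\epsilon)}$. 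Combining with the prefactors $2^{-n\beta}\cdot 2^n=2^{n(1-\beta)}$, the exponential factors $2^{n(1-\beta)}\cdot 2^{n(\beta-1)}=1$ cancel, and the whole term is of order $n^{-\frac{1-\beta}{1+\beta}(2A/D-\epsilon)}$ up to the bounded logarithmic corrections that cancelled. The series $\sum_n n^{-\frac{1-\beta}{1+\beta}(2A/D-\epsilon)}$ converges precisely when $\frac{1-\beta}{1+\beta}\cdot\frac{2A}{D}>1$, i.e. $A>\frac{D(1+\beta)}{2(1-\beta)}$, which is exactly the hypothesis of the theorem (choosing $\epsilon$ small enough). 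Therefore the summability condition \eqref{eq:summable} holds, and Proposition \ref{transience} yields that the Poisson boundary of $(G_\omega,\mu_\beta)$ is non-trivial.

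\textbf{Main obstacle.} The genuinely delicate steps are the two inputs Propositions \ref{green} and \ref{zetasum}, whose proofs are deferred; granting those, the remaining work is the bookkeeping above plus verifying that $f$ meets the hypotheses of Proposition \ref{zetasum} — in particular the doubling inequality, which forces $\beta>1-\tfrac1D$ — and checking that the $\log$-power corrections built into $f$ are engineered to cancel the polynomial-in-$n$ blow-up coming from $2^{2k_n}=n^{2A}$ in the argument of $f$. I expect the bookkeeping of these logarithmic factors to be the fiddliest part of the present proof, while the substantive difficulty lies in Proposition \ref{green} (the off-diagonal heat kernel bound via \cite{BGK}).
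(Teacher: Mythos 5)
Your proposal is correct and follows essentially the same route as the paper: verify the summability condition of Proposition \ref{transience} for $\mathcal{H}^b$ by taking $f(s)=C\bigl(s/\log_2^{2A}s\bigr)^{\beta-1}(\log_2 s)^{-\frac{1-\beta}{1+\beta}(2A/D-\epsilon)}$ from Proposition \ref{green}, checking it meets the hypothesis of Proposition \ref{zetasum} (the paper takes $D'=\frac{1}{1-\beta}>D$, exactly your use of $\beta>1-\tfrac1D$), and summing $2^{-n\beta}\cdot 2^n f(2^{n+2k_n})$ with $k_n=A\lfloor\log_2 n\rfloor$ to get $\sum_n n^{-\frac{1-\beta}{1+\beta}(2A/D-\epsilon)}<\infty$ under $A>\frac{D(1+\beta)}{2(1-\beta)}$. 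The only small caveat is your side remark on the $\mathbf{u}_S$ term (which the paper leaves implicit): transience of $P_{\mu_\beta}$ at $o$ should be justified by Proposition \ref{eta3diag} (or the Lemma \ref{unique}/Cheeger argument behind it) rather than by Proposition \ref{indep_tran} plus Dirichlet-form comparison, since $\mu_\beta$ is built from the uniformised quasi-cubic measures and does not obviously dominate a measure of the form treated there.
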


\begin{proof}[Proof of Theorem \ref{stabilization} given Prop. \ref{green} and \ref{zetasum}]

We show that the summation
\[
\sum_{x\in o\cdot G}{\bf G}_{P_{\mu_{\beta}}}(o,x)\mu_{\beta}\left(\left\{ g\in G:\ (g,x)\notin\mathcal{H}^{b}\right\} \right)
\]
is finite, then non-triviality of the Poisson boundary $\left(G_{\omega},\mu_{\beta}\right)$
follows from Proposition \ref{transience}. 

Let $\epsilon>0$ be a small constant such that $\frac{1-\beta}{1+\beta}\left(\frac{2A}{D}-\epsilon\right)>1$.
By Proposition \ref{green}, there is a constant $C<\infty$ only
depending on $\beta,D,\epsilon$ such that 
\[
\mathbf{G}_{P_{\mu_{\beta}}}(x,y)\le C\left(\frac{d_{\mathcal{S}}(x,y)}{\log_{2}^{2A}d_{\mathcal{S}}(x,y)}\right)^{\beta-1}(\log_{2}d_{\mathcal{S}}(x,y))^{-\frac{1-\beta}{1+\beta}\left(\frac{2A}{D}-\epsilon\right)}.
\]
Let $f(s)=C\left(\frac{s}{\log_{2}^{2A}s}\right)^{\beta-1}\left(\log_{2}s\right)^{-\frac{1-\beta}{1+\beta}\left(\frac{2A}{D}-\epsilon\right)}.$
Then $f$ satisfies the assumption of Proposition \ref{zetasum} with
$D'=\frac{1}{1-\beta}>D$. By Proposition \ref{zetasum}, 
\begin{align*}
 & \sum_{x\in o\cdot G}{\bf G}_{P_{\mu_{\beta}}}(o,x)\frac{\upsilon_{n}+\check{\upsilon}_{n}}{2}\left(\left\{ g\in G:\ (g,x)\notin\mathcal{H}^{b}\right\} \right)\\
\le & \sum_{x\in o\cdot G}f(d_{\mathcal{S}}(o,x))\frac{\upsilon_{n}+\check{\upsilon}_{n}}{2}\left(\left\{ g\in G:\ (g,x)\notin\mathcal{H}^{b}\right\} \right)\\
\le & C'2^{n}f\left(2^{n+2k_{n}}\right),
\end{align*}
where $C'$ is a constant depending only on $\beta$. Summing up this
estimate to $\mu_{\beta}$, 
\begin{align*}
 & \sum_{x\in o\cdot G}{\bf G}_{P_{\mu_{\beta}}}(o,x)\mu_{\beta}\left(\left\{ g\in G:\ (g,x)\notin\mathcal{H}^{b}\right\} \right)\\
\le & C_{\beta}\sum_{D|n}\sum_{x\in o\cdot G}f(d_{\mathcal{S}}(o,x))2^{-n\beta}\frac{\upsilon_{n}+\check{\upsilon}_{n}}{2}\left(\left\{ g\in G:\ (g,x)\notin\mathcal{H}^{b}\right\} \right)\\
\le & C_{\beta}C'\sum_{D|n}2^{-n\beta}2^{n}f\left(2^{n+2k_{n}}\right)\\
\le & C_{\beta}C'C\sum_{D|n}2^{n(1-\beta)}\left(\frac{2^{n+2k_{n}}}{(n+2k_{n})^{2A}}\right)^{\beta-1}\left(n+2k_{n}\right)^{-\frac{1-\beta}{1+\beta}\left(\frac{2A}{D}-\epsilon\right)}\\
\le & C_{\beta}C'C\sum_{n=1}^{\infty}n^{-\frac{1-\beta}{1+\beta}\left(\frac{2A}{D}-\epsilon\right)}.
\end{align*}
In the last step we plugged in $k_{n}=A\left\lfloor \log_{2}n\right\rfloor $.
Since the constant $A$ is assumed to satisfy $\frac{1-\beta}{1+\beta}\left(\frac{2A}{D}-\epsilon\right)>1$,
it follows that the summation is finite. 

\end{proof}

The rest of the section is devoted to the proofs of Proposition \ref{green}
(in Subsection \ref{subsec:greenupper}) and Proposition \ref{zetasum}
(in Subsection \ref{subsec:weights}), with some preparation given
in Subsection \ref{subsec:prep}. 

\subsection{Estimates on the induced transition kernel\label{subsec:prep}}

In this subsection we give some bounds on the transition kernel $P_{\mu_{\beta}}$,
which will be used to derive Green function upper bounds. We use definitions
and notations from Subsection \ref{subsec:elements}.

By the definition of the set $\mathfrak{F}_{j,n}$ in (\ref{eq:Fjn})
we have the following:

\begin{fact}\label{same}

Let $j$ be an index such that $\omega_{j-1}={\bf 2}$ and $n-k_{n}<j\le n$.
Then on the finite level $n+D+1$, the action of any element $\tilde{g}_{j}^{v}\in\mathfrak{F}_{j,n}$
is the same as the element $g_{j}$. 

\end{fact}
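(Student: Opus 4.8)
\textbf{Proof plan for Fact \ref{same}.}

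The plan is to compare the two elements level by level, using the fact that $\tilde{g}_{j}^{v}$ is obtained from $g_{j}$ by conjugating the underlying generator $c_{\mathfrak{s}^{j}\omega}$ by the product $h_{1}^{v}\cdots h_{k'}^{v}$ and then applying the same chain of substitutions $\zeta_{\omega_{0}}\circ\cdots\circ\zeta_{\omega_{j-1}}$ to $a c_{\mathfrak{s}^j\omega}$ as to $a\mathfrak c_j^v$. First I would reduce to a statement one level below: since both $g_{j}=\zeta_{\omega_{0}}\circ\cdots\circ\zeta_{\omega_{j-1}}(ac_{\mathfrak{s}^{j}\omega})$ and $\tilde g_j^v=\zeta_{\omega_0}\circ\cdots\circ\zeta_{\omega_{j-1}}(a\mathfrak c_j^v)$ lie in ${\rm St}_{G_\omega}(\mathsf L_j)$ by Lemma \ref{lemgj} and Fact \ref{zeta}, and the finitary permutation $\tau_j$ is identical for both (it is determined by the substitution formula (\ref{eq:subformula}), which depends only on parities, not on $\mathfrak c_j^v$ versus $c_{\mathfrak s^j\omega}$), the action on level $n+D+1$ is governed by the sections at each $v'\in\mathsf L_j$, which are $ac_{\mathfrak s^j\omega}$ or $c_{\mathfrak s^j\omega}a$ for $g_j$ and $a\mathfrak c_j^v$ or $\mathfrak c_j^v a$ for $\tilde g_j^v$, in matching positions. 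So it suffices to show that $\mathfrak c_j^v$ and $c_{\mathfrak s^j\omega}$ act the same way on level $n+D+1-j$ of the subtree rooted at each such $v'$.

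Next I would invoke the portrait description in Lemma \ref{ckgerm}: $\mathfrak c_j^v$ differs from $c_{\mathfrak s^j\omega}$ only through the ``turns'' introduced by the non-trivial $h_i^v$'s, which occur exactly at the coordinates $i$ where $v_i=0$. By the definition of $\mathsf V_{2k_n}^j$ in (\ref{eq:Vk}) together with the restriction $\left|1^\infty\wedge v\right|\ge n-j+D$ in (\ref{eq:Fjn}), every $v$ indexing an element of $\mathfrak F_{j,n}$ has prefix $1^{n-j+D}$, i.e. $v_1=\cdots=v_{n-j+D}=1$. Hence all the conjugations $h_i^v$ with $v_i=0$ occur at depth $i>n-j+D$ inside the subtree rooted at $v'$, so they only affect sections of $\mathfrak c_j^v$ at levels deeper than $n-j+D$ in that subtree — that is, at levels deeper than $n+D$ overall. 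Above that cut, by Lemma \ref{ckgerm} the portrait of $\mathfrak c_j^v$ coincides with the portrait of $c_{\mathfrak s^j\omega}$ (both are directed along $1^\infty$ with the standard $c$-sections). Therefore the two elements induce the same permutation of $\mathsf L_{n+D}\supseteq$ the relevant level, and in particular agree on level $n+D+1$.

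The main obstacle I anticipate is bookkeeping the index shift cleanly: one must be careful that the $D$-block offset $D-\bar j$ prepended in the definition of $\mathsf V_{2k_n}^j$, the prefix-length requirement $\left|1^\infty\wedge v\right|\ge n-j+D$, and the choice of comparison level $n+D+1$ all line up so that the first coordinate of $v$ at which a $0$ can appear is strictly below the comparison level inside every section subtree. Once the inequality ``first $0$ in $v$ is at depth $>n-j+D$'' is verified from (\ref{eq:Vk}) and (\ref{eq:Fjn}), and one notes that the substitution $\zeta_{\omega_0}\circ\cdots\circ\zeta_{\omega_{j-1}}$ expands depth uniformly and identically for the two inputs so the cut at depth $n-j+D$ in $G_{\mathfrak s^j\omega}$ maps to the cut at level $n+D$ in $G_\omega$, the statement follows; the remaining verification is the routine portrait comparison already packaged in Lemma \ref{ckgerm}.
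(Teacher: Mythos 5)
Your proposal is correct and follows essentially the same route as the paper's (very terse) proof: both arguments observe that the prefix condition $\left|1^{\infty}\wedge v\right|\ge n-j+D$ forces the first deviation between the portraits of $\tilde{g}_{j}^{v}$ and $g_{j}$ to occur below level $n+D+1$, so the two elements act identically there. The only caveat is a harmless off-by-one in your depth bookkeeping (the first differing section sits at depth $n-j+D$ in the section subtree, not strictly deeper), but since that section is $b_{\cdot}ab_{\cdot}$ versus $a$, both with root permutation $\varepsilon$, the actions still agree one level further down and the conclusion stands.
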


\begin{proof}

This is because in the definition of $\mathfrak{F}_{j,n}$, the ray
segment $v$ is required to satisfy $|v\wedge1^{\infty}|\ge n-(j-\bar{j})+D$.
The first level that sees the difference between the portraits of
$\tilde{g}_{j}^{v}$ and $g_{j}$ is $\ge n+D+2$. 

\end{proof}

Because of the previous fact, the following uniqueness property is
inherited from $(g_{n})$. 

\begin{lem}\label{unique}

Let $x\in\mathsf{L}_{n+D+1}$. Suppose $\left(\epsilon_{i},\gamma_{i}\right)_{i=1}^{n},\left(\tilde{\epsilon}_{i},\tilde{\gamma}_{i}\right)_{i=1}^{n}\in\varLambda_{n}$
are such that 
\[
x\cdot\gamma_{n}^{\epsilon_{n}}\ldots\gamma_{1}^{\epsilon_{1}}=x\cdot\tilde{\gamma}_{n}^{\tilde{\epsilon}_{n}}\ldots\tilde{\gamma}_{1}^{\tilde{\epsilon}_{1}}.
\]
Then $\epsilon_{i}=\tilde{\epsilon}_{i}$ for all $1\le i\le n$.

\end{lem}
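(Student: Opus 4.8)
\textbf{Proof proposal for Lemma \ref{unique}.} The plan is to reduce the statement to the cube independence property of the unmodified sequence $(g_n)$ from (\ref{eq:gngeneral}), using that all the replacements $g_j\mapsto\tilde g_j^v$ are invisible on level $n+D+1$. First I would record that, for every index $i$, each element $\gamma_i\in\mathfrak{F}_{i,n}$ induces on the finite tree $\mathsf{T}^{n+D+1}$ exactly the same automorphism as $g_i$: when $\mathfrak{F}_{i,n}=\{g_i\}$ this is trivial, and in the remaining cases (those $i$ with $\omega_{i-1}={\bf 2}$ and $n-k_n<i\le n$) it is precisely Fact \ref{same}. Consequently $\tau_{n+D+1}(\gamma_i)=\tau_{n+D+1}(g_i)$ for all $i$. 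Since the action of any group element on a vertex of level $n+D+1$ depends only on its image under $\tau_{n+D+1}$, it follows that for $x\in\mathsf{L}_{n+D+1}$,
\[
x\cdot\gamma_n^{\epsilon_n}\ldots\gamma_1^{\epsilon_1}=x\cdot g_n^{\epsilon_n}\ldots g_1^{\epsilon_1},\qquad x\cdot\tilde{\gamma}_n^{\tilde{\epsilon}_n}\ldots\tilde{\gamma}_1^{\tilde{\epsilon}_1}=x\cdot g_n^{\tilde{\epsilon}_n}\ldots g_1^{\tilde{\epsilon}_1},
\]
so the hypothesis of the lemma becomes $x\cdot g_n^{\epsilon_n}\ldots g_1^{\epsilon_1}=x\cdot g_n^{\tilde{\epsilon}_n}\ldots g_1^{\tilde{\epsilon}_1}$.

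Next I would invoke the cube independence of $(g_n)$, more precisely the digit-peeling mechanism in the proof of Lemma \ref{elem_ind}. Recall that $g_i\in{\rm St}_{G_\omega}(\mathsf{L}_i)$ and that the root permutation of each section $(g_i)_v$, $v\in\mathsf{L}_i$, is the nontrivial transposition. Applying $g_1^{\epsilon_1}$ first and using that $g_2,\ldots,g_n$ fix level $2$, the $2$nd digit of $x\cdot g_n^{\epsilon_n}\ldots g_1^{\epsilon_1}$ differs from $x_2$ exactly when $\epsilon_1=1$; hence $\epsilon_1=\tilde{\epsilon}_1$. Replacing $x$ by $x\cdot g_1^{\epsilon_1}$ and iterating, the $(i+1)$-st digit of $x\cdot g_n^{\epsilon_n}\ldots g_1^{\epsilon_1}$ determines $\epsilon_i$ once $\epsilon_1,\ldots,\epsilon_{i-1}$ are known, for each $i=1,\ldots,n$. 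All the digits inspected lie at levels $\le n+1$, and $n+1\le n+D+1$ since $D\ge 3$, so this inductive read-off applies verbatim to the vertex $x\in\mathsf{L}_{n+D+1}$ (equivalently, one may append $1^\infty$ to $x$: the level-$(n+D+1)$ prefix of $\hat x\cdot g$ equals $x\cdot g$ for any automorphism $g$, so nothing is lost). This yields $\epsilon_i=\tilde{\epsilon}_i$ for all $1\le i\le n$, as desired.

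I do not anticipate a genuine obstacle here: the only point requiring a little care is the mismatch between the cube independence property as stated in Lemma \ref{elem_ind} (phrased for points of the orbit $1^\infty\cdot G_\omega\subset\partial\mathsf{T}$) and the present setting (where $x$ is a vertex at a finite level), but this is resolved by the observation that the exponent-recovery uses only digits at levels $\le n+1$. The substance of the lemma is entirely contained in Fact \ref{same} together with the already-established cube independence of $(g_n)$.
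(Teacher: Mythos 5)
Your proposal is correct and follows the same route as the paper, which states Lemma \ref{unique} as an immediate consequence of Fact \ref{same}: the modified elements act on $\mathsf{L}_{n+D+1}$ exactly as the $g_j$'s, so the uniqueness is inherited from the cube independence of $(g_n)$ via the digit-peeling argument of Lemma \ref{elem_ind}. Your extra remark that only digits at levels $\le n+1\le n+D+1$ are inspected is exactly the point that makes the finite-level version go through.
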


We derive a displacement estimate from Lemma \ref{translate} and
description of elements in $\mathfrak{F}_{j,n}$ from Lemma \ref{ckgerm}
and Lemma \ref{lemgj}.

\begin{lem}\label{modifi}

Let $j$ be an index such that $\omega_{j-1}={\bf 2}$ and $n-k_{n}<j\le n$.
Let $\tilde{g}_{j}^{v}\in\mathfrak{F}_{j,n}$, then for any $x\in1^{\infty}\cdot G$,
\[
d_{\mathcal{S}}\left(x,x\cdot\tilde{g}_{j}^{v}\right)\le\begin{cases}
2^{j+|\mathfrak{s}^{j+1}x\wedge\mathfrak{s}v|+4} & \mbox{if }|\mathfrak{s}^{j+1}x\wedge\mathfrak{s}v|\ge n-j+D,\\
2^{j+2} & \mbox{if }|\mathfrak{s}^{j+1}x\wedge\mathfrak{s}v|<n-j+D.
\end{cases}
\]

\end{lem}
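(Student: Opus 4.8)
The statement is a displacement bound for the modified element $\tilde{g}_{j}^{v}$ acting on a point $x$ cofinal with $1^{\infty}$, and the natural route is to apply Lemma \ref{translate} after carrying out the wreath recursion of $\tilde{g}_{j}^{v}$ to a suitably chosen level. Recall from Lemma \ref{lemgj} that $\tilde{g}_{j}^{v}=\zeta_{\omega_{0}}\circ\ldots\circ\zeta_{\omega_{j-1}}\left(a\mathfrak{c}_{j}^{v}\right)$ lies in the $j$-th level stabilizer and that its section at each vertex of $\mathsf{L}_{j}$ is either $a\mathfrak{c}_{j}^{v}$ or $\mathfrak{c}_{j}^{v}a$. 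So the plan is: first recurse $j$ levels to reduce to the section $a\mathfrak{c}_{j}^{v}$ (or $\mathfrak{c}_{j}^{v}a$) acting on $\mathfrak{s}^{j}x$, and then recurse further inside the subtree governed by $\mathfrak{c}_{j}^{v}$ to reach a level where the relevant section becomes short (either trivial or a single generator), at which point Lemma \ref{translate} gives the bound with a factor $2^{(\text{total level})}\cdot(1+O(1))$.

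\textbf{Key steps.} First I would use Lemma \ref{lemgj}: since $\tilde{g}_{j}^{v}$ fixes all of $\mathsf{L}_{j}$, apply $\psi^{j}$ and note that the section at the vertex $x_{1}\ldots x_{j}$ is $a\mathfrak{c}_{j}^{v}$ or $\mathfrak{c}_{j}^{v}a$; conjugating $\mathfrak{c}_{j}^{v}$ by the fixed element $a$ costs only bounded extra length, so it suffices to bound $d_{\mathcal{S}}\bigl(\mathfrak{s}^{j}x,\ \mathfrak{s}^{j}x\cdot\mathfrak{c}_{j}^{v}\bigr)$ and then multiply by $2^{j}$ and adjust additive constants. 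Second, I would invoke the portrait description of $\mathfrak{c}_{j}^{v}$ from Lemma \ref{ckgerm}: $\mathfrak{c}_{j}^{v}$ fixes the ray segment $v$, and along $v$ its nontrivial sections (at the siblings of the $v$-ray) are short words of the form $b_{\mathfrak{s}^{\cdot}\omega}ab_{\mathfrak{s}^{\cdot}\omega}$ or $a$, while at the very bottom (level $k'=|v|$) the section at $v$ itself is $c_{\mathfrak{s}^{j+k'}\omega}$. Now the dichotomy in the statement corresponds exactly to whether $\mathfrak{s}^{j+1}x$ (i.e.\ the suffix of $x$ after the first $j+1$ digits, matching $\mathfrak{s}v$ which is $v$ with its leading digit stripped) follows $\mathfrak{s}v$ far enough to escape into the deep $\mathfrak{c}$-part. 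If $|\mathfrak{s}^{j+1}x\wedge\mathfrak{s}v|<n-j+D$, then $\mathfrak{s}^{j}x$ branches off the $v$-ray already at a shallow vertex, where by Lemma \ref{ckgerm} the section of $\mathfrak{c}_{j}^{v}$ is either trivial or one of $\{a,\, b_{\cdot}ab_{\cdot}\}$ — a word of bounded length — so Lemma \ref{translate} with $n$ replaced by that shallow level (which is $\le$ a small constant, absorbed into the additive $2^{j+2}$) gives $d_{\mathcal{S}}(x,x\cdot\tilde g_j^v)\le 2^{j+2}$. If instead $|\mathfrak{s}^{j+1}x\wedge\mathfrak{s}v|\ge n-j+D$, then $\mathfrak{s}^{j}x$ agrees with $v$ down to level $L:=1+|\mathfrak{s}^{j+1}x\wedge\mathfrak{s}v|$ (within the subtree rooted at $x_{1}\ldots x_{j}$), and beyond that its section under $\mathfrak{c}_{j}^{v}$ is again a bounded-length word (at most $c_{\cdot}$ if we are at the bottom, or $b_{\cdot}ab_{\cdot}$, or $a$, or trivial), so Lemma \ref{translate} applied at level $j+L$ gives $d_{\mathcal{S}}(x,x\cdot\tilde g_j^v)\le 2^{j+L}\cdot(|\text{short section}|+1)\le 2^{j+L+O(1)}$, which is $2^{j+|\mathfrak{s}^{j+1}x\wedge\mathfrak{s}v|+4}$ after bookkeeping the constants.

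\textbf{Main obstacle.} The routine part is the wreath recursion and length bookkeeping; the part that requires care is tracking exactly which of the several possible short sections occurs (trivial, $a$, $b_{\mathfrak{s}^{\cdot}\omega}ab_{\mathfrak{s}^{\cdot}\omega}$, or $c_{\mathfrak{s}^{\cdot}\omega}$) at the vertex where $\mathfrak{s}^{j}x$ first leaves the $v$-ray, and verifying that in every case its word length is at most a fixed constant so that the additive exponent in the bound ($+4$ in the first case, and the clean $2^{j+2}$ in the second) is correct. In particular one must be careful that the wreath recursion of $\tilde g_j^v$ down $j$ levels followed by the recursion of $\mathfrak c_j^v$ down $L$ more levels indeed lands at level $j+L$ with the correct prefix $x_1\dots x_{j+L}$ of $x$, and that the case split on $|\mathfrak{s}^{j+1}x\wedge\mathfrak{s}v|$ versus $n-j+D$ matches the level at which the portrait of $\mathfrak c_j^v$ ceases to be directed (namely, when $v$ itself runs out, which happens at length $k'=D-\bar j+2k_n+m_{\ell}+3$, comfortably past $n-j+D$ for the $v\in\mathfrak F_{j,n}$, whose prefix is forced to be $1^{n-j+D}$). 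Keeping the indices straight — $\mathfrak{s}^{j}$ versus $\mathfrak{s}^{j+1}$, $v$ versus $\mathfrak{s}v$, and the shift of the defining string inside the $\zeta$-substitutions — is where the real attention is needed.
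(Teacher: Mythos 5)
Your first case follows the paper's route: reduce by Lemma \ref{lemgj} to the section $a\mathfrak{c}_{j}^{v}$ or $\mathfrak{c}_{j}^{v}a$ at level $j$, use the portrait of $\mathfrak{c}_{j}^{v}$ from Lemma \ref{ckgerm} to see that the section at the vertex where $\mathfrak{s}^{j}x$ leaves the $v$-ray is a word of bounded length, and apply Lemma \ref{translate} at that depth to get $2^{j+|\mathfrak{s}^{j+1}x\wedge\mathfrak{s}v|+4}$. That part is fine.

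The second case has a genuine gap. You assert that when $|\mathfrak{s}^{j+1}x\wedge\mathfrak{s}v|<n-j+D$ the branching level is ``$\le$ a small constant, absorbed into the additive $2^{j+2}$.'' It is not: the branching depth inside the subtree rooted at $x_{1}\ldots x_{j}$ can be anywhere up to $n-j+D$, which for $j$ near $n-k_{n}$ is of order $k_{n}+D=A\lfloor\log_{2}n\rfloor+D$ and hence unbounded. Applying Lemma \ref{translate} at level $j+L$ inevitably produces the factor $2^{j+L}$, so your route only yields $d_{\mathcal{S}}(x,x\cdot\tilde{g}_{j}^{v})\le 2^{j+L+O(1)}\le 2^{n+D+O(1)}$, not $2^{j+2}$; and this weaker bound would break the dichotomy used in Lemma \ref{modifiedtail}, where one needs that any displacement exceeding $2^{j+2}$ forces a long overlap with $v$. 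The observation you are missing is the one the paper uses: every $v$ indexing $\mathfrak{F}_{j,n}$ has prefix $1^{n-j+D}$ (up to the $\bar{j}$ adjustment), and on that all-ones prefix the conjugators $h_{i}^{v}$ are trivial, so the portrait of $\mathfrak{c}_{j}^{v}$ there coincides with that of the unmodified $c_{\mathfrak{s}^{j}\omega}$. Consequently, if $\mathfrak{s}^{j+1}x$ separates from $\mathfrak{s}v$ before depth $n-j+D$ it also separates from $1^{\infty}$ there, and $x\cdot\tilde{g}_{j}^{v}=x\cdot g_{j}$ exactly. The bound $2^{j+2}$ then comes from applying Lemma \ref{translate} to $g_{j}$ at level $j$, whose sections there are words of length $2$ — not from applying it to $\tilde{g}_{j}^{v}$ at the deep branching level.
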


\begin{proof}

We show the second bound first. Since by definitions, $v$ has prefix
$1^{n-j+\bar{j}+D}$, $|\mathfrak{s}^{j+1}x\wedge\mathfrak{s}v|<n-j+D$
implies that $|\mathfrak{s}^{j+1}x\wedge1^{\infty}|<n-j+D$. In this
case the action of $\tilde{g}_{j}^{v}$ on $x$ is the same as $g_{j}$,
that is $x\cdot\tilde{g}_{j}^{v}=x\cdot g_{j}$. Therefore 
\[
d_{\mathcal{S}}\left(x,x\cdot\tilde{g}_{j}^{v}\right)=d_{\mathcal{S}}(x,x\cdot g_{j})\le2^{j+2}.
\]
Now suppose $|\mathfrak{s}^{j+1}x\wedge\mathfrak{s}v|\ge n-j+D$.
By definition of $\tilde{g}_{j}^{v}$ and Lemma \ref{lemgj}, depending
parity of $\sum_{i=1}^{j}x_{i}$, we need to consider $(\mathfrak{s}^{j}x)\cdot a\mathfrak{c}_{j}^{v}$
or $(\mathfrak{s}^{j}x)\cdot\mathfrak{c}_{j}^{v}a$. By description
of $\mathfrak{c}_{j}^{v}$ in Lemma \ref{ckgerm} and the distance
bound in Lemma \ref{translate}, we have that for any ray $y$, 
\[
d_{\mathcal{S}}(y,y\cdot\mathfrak{c}_{j}^{v})\le2^{|y\wedge v|+4}.
\]
Since
\[
d_{\mathcal{S}}\left(x,x\cdot\tilde{g}_{j}^{v}\right)\le2^{j}\max\left\{ d_{\mathcal{S}}(\mathfrak{s}^{j}x,\left(\mathfrak{s}^{j}x\right)\cdot a\mathfrak{c}_{j}^{v}),d_{\mathcal{S}}(\mathfrak{s}^{j}x,\left(\mathfrak{s}^{j}x\right)\cdot\mathfrak{c}_{j}^{v}a)\right\} ,
\]
the claim follows. 

\end{proof}

Lemma \ref{modifi} implies that

\[
\max_{\gamma\in\mathfrak{F}_{j,n}}d_{S}(x,x\cdot\gamma)\le2^{j+2k_{n}+2D+4}.
\]
By the triangle inequality, for any $x$ and $\boldsymbol{\epsilon}\in\{0,1\}^{n}$,
\[
\max_{\boldsymbol{\gamma}\in\mathfrak{F}_{n}}d_{S}(x,x\cdot\gamma_{n}^{\epsilon_{n}}\ldots\gamma_{1}^{\epsilon_{1}})\le2^{n+2k_{n}+2D+5}.
\]
The purpose of introducing the conjugations $\mathfrak{c}_{j}^{v}$
and randomizing over $v$ in the construction of the measure $\upsilon_{n}$
is to average so that the tail decays of measures is lighter than
the maximum indicated above. 

\begin{lem}\label{modifiedtail}

We have the following upper bounds:
\begin{description}
\item [{(i)}] Let $j$ be a level such that $n-k_{n}\le j\le n$ and $\omega_{j-1}={\bf 2}$.
For any $\ell$ such that $n\le\ell\le j+2k_{n}+2D+4$ and $x\in1^{\infty}\cdot G$,
\[
{\bf u}_{\mathfrak{F}_{j,n}}\left(\left\{ \gamma:\ d_{\mathcal{S}}(x,x\cdot\gamma)\ge2^{\ell}\right\} \right)\le2^{-\frac{1}{D}\left(\ell-n\right)+2}.
\]
\item [{(ii)}] For any $\left(\epsilon_{1},\ldots,\epsilon_{n}\right)\in\{0,1\}^{n}$,
$n\le\ell\le n+2k_{n}+2D+5$ and $x\in1^{\infty}\cdot G$,
\[
{\bf u}_{\mathfrak{F}_{n}}\left(\left\{ \boldsymbol{\gamma}:\ d_{\mathcal{S}}\left(x,x\cdot\gamma_{n}^{\epsilon_{n}}\ldots\gamma_{1}^{\epsilon_{1}}\right)\ge k_{n}2^{\ell}\right\} \right)\le8k_{n}2^{-\frac{1}{D}(\ell-n)}.
\]
\end{description}
\end{lem}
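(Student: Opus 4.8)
\textbf{Proof plan for Lemma \ref{modifiedtail}.}

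The plan is to prove (i) first by a direct counting argument on the index set $\mathsf{V}_{2k_n}^j$, and then deduce (ii) from (i) by a union bound over the coordinates $j$ and a triangle-inequality splitting of the displacement. For part (i), fix $j$ with $n-k_n\le j\le n$ and $\omega_{j-1}=\mathbf 2$, fix $x\in 1^\infty\cdot G$, and fix a threshold $2^\ell$ with $n\le\ell\le j+2k_n+2D+4$. By Lemma \ref{modifi}, for $\tilde g_j^v\in\mathfrak F_{j,n}$ we have $d_{\mathcal S}(x,x\cdot\tilde g_j^v)\le 2^{j+|\mathfrak s^{j+1}x\wedge\mathfrak s v|+4}$ whenever $|\mathfrak s^{j+1}x\wedge\mathfrak s v|\ge n-j+D$, and the displacement is at most $2^{j+2}<2^\ell$ otherwise (using $\ell\ge n\ge j$). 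Hence the event $\{d_{\mathcal S}(x,x\cdot\tilde g_j^v)\ge 2^\ell\}$ forces $|\mathfrak s^{j+1}x\wedge\mathfrak s v|\ge \ell-j-4$. The point is that $\mathsf F_{j,n}$ is indexed by vertices $v\in\mathsf V_{2k_n}^j$ whose first $n-j+\bar j+D$ digits are prescribed ($v$ has prefix $1^{n-j+\bar j+D}$ by construction in (\ref{eq:Fjn})), and whose remaining ``free'' digits occur only at positions $i$ with $j+i\in I_\omega$, i.e.\ at most one free digit per block of $D$ consecutive positions. Therefore the number of $v$ for which $\mathfrak s v$ agrees with the fixed ray $\mathfrak s^{j+1}x$ in the first $\ell-j-4$ positions is at most $|\mathfrak F_{j,n}|\cdot 2^{-\lfloor (\ell-j-4-(n-j+\bar j+D))/D\rfloor}$; dividing by $|\mathfrak F_{j,n}|$ and absorbing the $\bar j+D$ and $+4$ shifts and the floor into the additive constant $+2$ in the exponent gives the claimed bound $2^{-\frac1D(\ell-n)+2}$. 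I expect the main care here is bookkeeping the constants: tracking $\bar j\in\{0,\dots,D-1\}$, the prefix length $n-j+\bar j+D$, the $+4$ from Lemma \ref{modifi}, and the floor in the block-count, and checking they all fit inside the stated ``$+2$''.

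For part (ii), write $g=\gamma_n^{\epsilon_n}\cdots\gamma_1^{\epsilon_1}$ with $\gamma_i\in\mathfrak F_{i,n}$ chosen independently and uniformly. Only those indices $j$ with $\omega_{j-1}=\mathbf 2$ and $n-k_n<j\le n$ contribute a nontrivial set $\mathfrak F_{j,n}$; for all other $j$ we have $\mathfrak F_{j,n}=\{g_j\}$, and the displacement $d_{\mathcal S}(x,x\cdot g_j)\le 2^{j+2}\le 2^{n+2}$ is deterministically controlled. By the triangle inequality along the factorization of $g$, if $d_{\mathcal S}(x,x\cdot g)\ge k_n 2^\ell$ then at least one of the (at most $n$, but really at most $k_n$) steps moves its running endpoint by at least $2^\ell$: more precisely, writing $y_i$ for the point reached after applying the first $i$ factors $\gamma_i^{\epsilon_i}\cdots\gamma_1^{\epsilon_1}$, we have $d_{\mathcal S}(x,x\cdot g)\le\sum_i d_{\mathcal S}(y_{i-1},y_i)$, and since there are at most $k_n$ non-fixed steps (those with $\omega_{i-1}=\mathbf 2$, $n-k_n<i\le n$) plus bounded deterministic steps, if the total exceeds $k_n 2^\ell$ then some non-fixed step exceeds $2^\ell$ (for $\ell\ge n$, the deterministic contributions sum to at most $n\cdot 2^{n+2}$, which one checks is dominated appropriately, or one simply increases the threshold bookkeeping). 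Then by the union bound over these at most $k_n$ indices $j$ and independence,
\[
{\bf u}_{\mathfrak F_n}\!\left(\{\boldsymbol\gamma:\ d_{\mathcal S}(x,x\cdot g)\ge k_n 2^\ell\}\right)\le \sum_{j}{\bf u}_{\mathfrak F_{j,n}}\!\left(\{\gamma:\ d_{\mathcal S}(y_{j-1}(\boldsymbol\gamma),y_{j-1}(\boldsymbol\gamma)\cdot\gamma)\ge 2^\ell\}\right).
\]
Here each running endpoint $y_{j-1}$ is itself a point of $1^\infty\cdot G$, so part (i) applies uniformly in that endpoint (the bound in (i) is uniform over $x\in 1^\infty\cdot G$), giving each summand $\le 2^{-\frac1D(\ell-n)+2}$; summing over at most $k_n$ indices $j$ yields $\le 4k_n 2^{-\frac1D(\ell-n)}$, and a factor of $2$ of slack absorbs the deterministic steps and any off-by-one in the counting, producing the stated $8k_n 2^{-\frac1D(\ell-n)}$.

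The main obstacle I anticipate is part (ii)'s reduction of a multi-step displacement bound to the single-step estimate of (i): one must argue that conditioning on $\gamma_1,\dots,\gamma_{j-1}$, the endpoint $y_{j-1}$ lies in $1^\infty\cdot G$ (immediate, since each $\gamma_i\in G$ and $1^\infty\cdot G$ is a single orbit) and is measurable with respect to the earlier coordinates, so that the conditional distribution of $\gamma_j$ is still uniform on $\mathfrak F_{j,n}$ and (i) applies with $x$ replaced by $y_{j-1}$; the uniformity of (i) over the base point is exactly what makes this work. A secondary, purely computational nuisance is ensuring that the ranges of $\ell$ stated in (i) and (ii) are compatible — in (ii) we need $n\le\ell\le n+2k_n+2D+5$, while invoking (i) at the point $y_{j-1}$ requires $\ell\le j+2k_n+2D+4$, which holds since $j>n-k_n$ gives $j+2k_n+2D+4 > n+k_n+2D+4 \ge n+2k_n+2D+5$ is \emph{not} automatic — so one should instead note that for $\ell$ beyond $j+2k_n+2D+4$ the single-step displacement is deterministically below $2^\ell$ by the remark following Lemma \ref{modifi}, making that summand zero; this case split needs to be written out but is routine.
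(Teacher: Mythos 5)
Your plan is correct and follows essentially the same route as the paper: part (i) by reducing, via Lemma \ref{modifi}, to counting the vertices $v$ whose overlap with $\mathfrak{s}^{j+1}x$ is at least $\ell-j-4$ (one free digit per $D$-block gives the $2^{-\frac{1}{D}(\ell-n)}$ decay), and part (ii) by splitting off the deterministic factors, a pigeonhole/union bound over the at most $k_{n}$ random indices, and applying (i) uniformly over the running base point exactly as the paper does with $\sup_{x}$. The only loose point is your estimate "$n\cdot2^{n+2}$" for the deterministic contribution; the paper instead bounds the factors with $i\le n-k_{n}$ by the geometric sum $2^{n-k_{n}+3}$ and absorbs it by lowering the threshold, which is where the constant $8$ (rather than $4$) comes from.
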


\begin{proof}

(i). By Lemma \ref{modifi}, for such $\ell$,
\[
{\bf u}_{\mathfrak{F}_{j,n}}\left(\left\{ \gamma:\ d_{S}(x,x\cdot\gamma)\ge2^{\ell}\right\} \right)\le{\bf u}_{\mathfrak{F}_{j,n}}\left(\left\{ \tilde{g}_{3j}^{v}:\ \left|\mathfrak{s}^{j+1}x\wedge\mathfrak{s}v\right|\ge\ell-j-4\right\} \right).
\]
Recall that $\mathfrak{F}_{j,n}$ is indexed by strings $v\in\mathsf{V}_{2k_{n}}^{j}$
and $\left|1^{\infty}\wedge v\right|\ge n-j+D$. The number of such
strings $v$ such that $\left|\mathfrak{s}^{j+1}x\wedge\mathfrak{s}v\right|\ge\ell-j-4$
is bounded from above by $2^{\frac{1}{D}(D-\bar{j}+2k_{n}-(\ell-j-3))}$,
because the first $\ell-j-3$ digits of $v$ is prescribed to agree
with $1\mathfrak{s}^{j+1}x$. By definition, the size of $\mathfrak{F}_{j,n}$
is $2^{\frac{1}{D}(2k_{n}-(n-j+\bar{j}))}$. Therefore, for the uniform
measure on $\mathfrak{F}_{j,n}$, we have that 
\[
{\bf u}_{\mathfrak{F}_{j,n}}\left(\left\{ \tilde{g}_{3j}^{v}:\ \left|\mathfrak{s}^{j+1}x\wedge\mathfrak{s}v\right|\ge\ell-j-4\right\} \right)\le\frac{2^{\frac{1}{D}(2k_{n}-(\ell-j+\bar{j}-3-D))}}{2^{\frac{1}{D}(2k_{n}-(n-j+\bar{j}))}}\le4\cdot2^{-\frac{1}{D}(\ell-n)}.
\]

(ii). Since for $i\le n-k_{n}$, $\mathfrak{F}_{i,n}=\{g_{i}\}$,
by triangle inequality we have for all $x$,
\[
d_{\mathcal{S}}\left(x,x\cdot\gamma_{n-k_{n}}^{\epsilon_{n-k_{n}}}\ldots\gamma_{1}^{\epsilon_{1}}\right)\le2^{n-k_{n}+3}.
\]
 For indices $i>n-k_{n}$,
\begin{align*}
 & {\bf u}_{\mathfrak{F}_{n}}\left(\left\{ \boldsymbol{\gamma}:\ d_{\mathcal{S}}\left(x,x\cdot\gamma_{n}^{\epsilon_{n}}\ldots\gamma_{n-k_{n}+1}^{\epsilon_{n-k_{n}+1}}\right)\ge k_{n}2^{\ell}\right\} \right)\\
\le & {\bf u}_{\mathfrak{F}_{n}}\left(\left\{ \boldsymbol{\gamma}:\ \exists j,\ n-k_{n}<j\le n\mbox{ and }d_{\mathcal{S}}\left(x\cdot\gamma_{n}^{\epsilon_{n}}\ldots\gamma_{j+1}^{\epsilon_{j+1}},x\cdot\gamma_{n}^{\epsilon_{n}}\ldots\gamma_{j}^{\epsilon_{j}}\right)\ge2^{\ell}\right\} \right)\\
\le & \sum_{j=n-k_{n}+1}^{n}\sup_{x\in1^{\infty}\cdot G}{\bf u}_{\mathfrak{F}_{j,n}}\left(\left\{ \gamma_{j}:\ d_{\mathcal{S}}(x,x\cdot\gamma_{j})\ge2^{\ell}\right\} \right)\\
\le & 4k_{n}2^{-\frac{1}{D}(\ell-n)}.
\end{align*}
Combining the two parts, we obtain 
\begin{align*}
 & {\bf u}_{\mathfrak{F}_{n}}\left(\left\{ \boldsymbol{\gamma}:\ d_{\mathcal{S}}\left(x,x\cdot\gamma_{n}^{\epsilon_{n}}\ldots\gamma_{1}^{\epsilon_{1}}\right)\ge k_{n}2^{\ell}\right\} \right)\\
\le & {\bf u}_{\mathfrak{F}_{n}}\left(\left\{ \boldsymbol{\gamma}:\ d_{\mathcal{S}}\left(x,x\cdot\gamma_{n}^{\epsilon_{n}}\ldots\gamma_{n-k_{n}}^{\epsilon_{n-k_{n}}}\right)\ge k_{n}2^{\ell}-2^{n-k_{n}+3}\right\} \right)\\
\le & 8k_{n}2^{-\frac{1}{D}(\ell-n)}.
\end{align*}

\end{proof}

Because of the modifications, the induced transition kernel $P_{\mu_{\beta}}$
on the Schreier graph $\mathcal{S}_{\omega}$ is not comparable to
a transition kernel expressed in terms of the distance function $d_{\mathcal{S}}(x,y)$.
Indeed the jump kernel $P_{\mu_{\beta}}(x,\cdot)$ depends on the
location $x$, rather than the distance $d_{\mathcal{S}}(x,\cdot)$.
The following upper bounds somewhat consider the worst scenario. 

\begin{prop}\label{eta3kernel}

Consider $\mu_{\beta}$ defined in (\ref{eq:eta3}) with $k_{n}=A\left\lfloor \log_{2}n\right\rfloor $,
where $1-\frac{1}{D}<\beta<1$ and $A$ is an integer divisible by
$D$. There exists constant $C=C(D,\beta)<\infty$ such that:
\begin{description}
\item [{(i)}] For any $x,y\in1^{\infty}\cdot G$ ,
\[
P_{\mu_{\beta}}(x,y)\le Cd_{\mathcal{S}}(x,y)^{-1-\beta}\left(\log_{2}d_{\mathcal{S}}(x,y)\right)^{2A(1-\frac{1}{D}+\beta)}\left(\log_{2}\log_{2}d_{\mathcal{S}}(x,y)\right)^{1+\frac{1}{D}}.
\]
\item [{(ii)}] For any $x\in1^{\infty}\cdot G$,
\begin{align*}
\sum_{y:d_{S}(x,y)\ge r}P_{\mu_{\beta}}(x,y) & \le Cr^{-\beta}\left(\log_{2}r\right)^{2A(\beta-\frac{1}{D})}\left(\log_{2}\log_{2}r\right)^{1+\frac{1}{D}}.\\
\sum_{y:d_{S}(x,y)\le r}d_{S}(x,y)^{2}P_{\mu_{\beta}}(x,y) & \le Cr^{2-\beta}\left(\log_{2}r\right)^{2A(\beta-\frac{1}{D})}\left(\log_{2}\log_{2}r\right)^{1+\frac{1}{D}}.
\end{align*}
\end{description}
\end{prop}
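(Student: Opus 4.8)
The plan is to unwind the definition of $\mu_\beta$ in (\ref{eq:eta3}) and bound the contribution of each uniformised quasi-cubic measure $\upsilon_n$ and $\check\upsilon_n$ separately, using the tail estimates on $\mathfrak{F}_n$ and $\mathfrak{F}_{j,n}$ recorded in Lemma \ref{modifiedtail}. For part (i), fix $x,y$ with $d_{\mathcal{S}}(x,y)=R$. Since $\upsilon_n$ is the push-forward of the uniform measure on $\varLambda_n=\{0,1\}^n\times\mathfrak{F}_n$ under $\theta_n$, we have $\upsilon_n(\{g:x\cdot g=y\})\le \mathbf{u}_{\varLambda_n}(\{(\boldsymbol\epsilon,\boldsymbol\gamma):d_{\mathcal S}(x,x\cdot\gamma_n^{\epsilon_n}\ldots\gamma_1^{\epsilon_1})=R\})$. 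First I would observe that by Lemma \ref{modifi} (or the displacement bound following it), the support of $\upsilon_n$ moves $x$ by at most $2^{n+2k_n+2D+5}$, so only those $n$ with $n+2k_n+2D+5\ge \log_2 R$ contribute; with $k_n=A\lfloor\log_2 n\rfloor$ this forces $n\ge \log_2 R - O(\log_2\log_2 R)$. For such $n$, I split according to $\boldsymbol\epsilon$: for each of the $2^n$ choices of $\boldsymbol\epsilon$, Lemma \ref{modifiedtail}(ii) with $2^\ell=R/k_n$ gives that the fraction of $\boldsymbol\gamma\in\mathfrak{F}_n$ landing at distance $\ge R$ is at most $8k_n 2^{-\frac1D(\log_2(R/k_n)-n)}=8k_n (k_n/R)^{1/D}\,2^{n/D}$. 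Combining with the $2^{-n\beta}$ weight and summing over the $O(\log_2\log_2 R)$ relevant values of $n$ (each contributing a factor $2^{-n\beta}\cdot 2^{n/D}\cdot(k_n/R)^{1/D}\cdot k_n$, which is maximized near $n\approx\log_2 R$ since $\beta>1-\tfrac1D$ makes $2^{-n(\beta-1/D)}$ decreasing) yields a bound of order $R^{-\beta}\,k_n^{1+1/D}\cdot 2^{n(1/D-\beta)}$ evaluated at $n\sim\log_2 R$, i.e. $R^{-\beta}(\log_2 R)^{?}(\log_2\log_2 R)^{1+1/D}$; tracking the powers of $\log_2 R$ that enter through the $\lfloor\log_2 n\rfloor$ in $k_n$ and through the width of the range of $n$ gives the exponent $2A(1-\tfrac1D+\beta)$ on $\log_2 R$ claimed in the statement. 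The $\check\upsilon_n$ contribution is handled identically since $\check\upsilon_n(g)=\upsilon_n(g^{-1})$ and distances are symmetric, and the $\tfrac12\mathbf{u}_S$ term only affects $y$ with $d_{\mathcal S}(x,y)\le 3$, hence is harmless.

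For part (ii), I would integrate the pointwise bound from (i) over the Schreier graph using the volume growth of $\mathcal{S}$: the number of vertices $y$ with $d_{\mathcal{S}}(x,y)\in[2^{k},2^{k+1})$ is at most $2^{k+1}$ (indeed the Schreier graph of $1^\infty$ is essentially a line, so balls have linear volume — this follows from the Gray-code description in Subsection \ref{subsec:portrait} and the Facts there). Thus $\sum_{y:d_{\mathcal S}(x,y)\ge r}P_{\mu_\beta}(x,y)\le \sum_{k\ge\log_2 r} 2^{k+1}\cdot C 2^{-k(1+\beta)}(\dots)$, a geometric sum in $k$ dominated by its first term, giving $r^{-\beta}$ times the logarithmic corrections; the same computation with an extra factor $2^{2k}$ for the second moment produces $r^{2-\beta}$ times the corrections. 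One subtlety here: the pointwise bound in (i) is not quite tight enough if applied naively, because at dyadic scale $2^k$ the relevant value of $n$ is $\approx k$, so I would instead carry the scale-dependent estimate — i.e. use the intermediate bound $\upsilon_n(\{g:d_{\mathcal S}(x,x\cdot g)\ge 2^k\})\lesssim k_n^{1+1/D}2^{-(k-n)/D}$ summed against $2^{-n\beta}$ over $n$ — and only then sum over $k$; this avoids losing a logarithmic factor. I should double-check that the exponent $2A(\beta-\tfrac1D)$ (rather than $2A(1-\tfrac1D+\beta)$ as in (i)) is correct for (ii): it arises because after summing over $k\ge\log_2 r$ the dominant $n$ is $n\approx\log_2 r$ and the factor $2^{n/D}$ from the $\frac1D$-power tail decay partially cancels against $2^{-n\beta}$, leaving only $k_n^{1+1/D}\approx(\log_2 r)^{?}$; the precise bookkeeping of these logarithmic powers is the fiddly part.

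The main obstacle I anticipate is exactly this bookkeeping of the polylogarithmic corrections — making sure the powers of $\log_2 d_{\mathcal S}(x,y)$ and $\log_2\log_2 d_{\mathcal S}(x,y)$ match the claimed exponents and do not blow up when one sums over the $\Theta(\log_2\log_2 d_{\mathcal S}(x,y))$ admissible values of $n$. This requires carefully tracking: (a) the $\lfloor\log_2 n\rfloor$ appearing in $k_n$, which contributes powers of $\log_2\log_2$; (b) the width of the interval of $n$ contributing at a given scale, which is $\Theta(k_n)=\Theta(A\log_2\log_2(\cdot))$ since the displacement can range over $[2^n,2^{n+2k_n+\mathrm{const}}]$; and (c) the normalization constant $C_\beta$, which is finite and $n$-independent. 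Because $\beta>1-\tfrac1D$ strictly, the geometric sums in both $n$ and $k$ converge and are controlled by their extreme terms, so no divergence occurs; the conjugation-and-randomization device of Subsection \ref{subsec:elements} is precisely what makes Lemma \ref{modifiedtail} available, which in turn is what upgrades the trivial bound "$\mu_\beta$ moves $x$ by up to $2^{n+2k_n}$" to the genuine power-law-with-log-corrections tail needed here. The final step is to record that the same estimates, being stated in terms of $d_{\mathcal S}$ alone with multiplicative constants depending only on $D$ and $\beta$, hold uniformly in $x\in 1^\infty\cdot G$, as asserted.
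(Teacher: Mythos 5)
There is a genuine gap in your argument for part (i). You bound the point mass $\upsilon_n(\{g: x\cdot g=y\})$ by the probability of the event $\{d_{\mathcal S}(x,x\cdot\gamma_n^{\epsilon_n}\ldots\gamma_1^{\epsilon_1})\ge R\}$ and then sum the tail fraction from Lemma \ref{modifiedtail}(ii) over all $2^n$ choices of $\boldsymbol\epsilon$ (each weighted $2^{-n}$, so the two factors cancel). What this computation actually produces is $\approx R^{-\beta}(\log_2 R)^{2A(\beta-\frac1D)}(\log_2\log_2 R)^{1+\frac1D}$ — which is precisely the \emph{tail} bound of part (ii), not the pointwise bound of part (i). It is weaker than the claimed $d_{\mathcal S}(x,y)^{-1-\beta}$ estimate by a full factor of $d_{\mathcal S}(x,y)$, which is unavoidable in your approach since roughly $R$ points lie within distance $R$ of $x$ and you have discarded the identity of the target $y$. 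Your remark that "tracking the powers of $\log_2 R$ gives the exponent $2A(1-\frac1D+\beta)$" does not repair this: the discrepancy is in the power of $R$, not in the polylogarithmic corrections.

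The missing ingredient is Lemma \ref{unique} (inherited from the cube independence property via Fact \ref{same}): for a \emph{fixed} endpoint $y$, at most one $\boldsymbol\epsilon\in\{0,1\}^n$ can satisfy $x\cdot\gamma_n^{\epsilon_n}\ldots\gamma_1^{\epsilon_1}=y$ for some $\boldsymbol\gamma\in\mathfrak{F}_n$. Hence $P_{\upsilon_n}(x,y)=2^{-n}\,\mathbf{u}_{\mathfrak{F}_n}(\{\boldsymbol\gamma:\ x\cdot\gamma_n^{\epsilon_n}\ldots\gamma_1^{\epsilon_1}=y\})$, and the extra factor $2^{-n}$ (with $n\ge n_0\approx \log_2 d_{\mathcal S}(x,y)-2k_{n_0}$, so $2^{-n_0}\approx d_{\mathcal S}(x,y)^{-1}(\log_2 d_{\mathcal S}(x,y))^{2A}$) is exactly what supplies both the additional power $d_{\mathcal S}(x,y)^{-1}$ and the larger logarithmic exponent $2A(1-\frac1D+\beta)=2A+2A(\beta-\frac1D)$ in (i) relative to (ii). Your outline for part (ii) is essentially sound and matches the paper once you use the scale-dependent tail bound directly (as you yourself suggest in your "subtlety" remark) rather than integrating the pointwise bound against the linear volume of the Schreier graph, which would otherwise overshoot the logarithmic correction.
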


\begin{proof}

(i). Given $x,y$, write $\ell(x,y)=\log_{2}d_{\mathcal{S}}(x,y)$
and let 
\[
n_{0}=\min\left\{ n:\ \ell(x,y)\le n+2k_{n}+2D+5\right\} .
\]
Then for $n<n_{0}$, $y\notin x\cdot\theta_{n}(\varLambda_{n})$.
For $n\ge n_{0}$, either $y\notin x\cdot\theta_{n}(\varLambda_{n})$,
in this case $\upsilon_{n}$ does not contribute to $P_{\mu_{\beta}}(x,y)$;
or by Lemma \ref{unique} there is a unique $\boldsymbol{\epsilon}\in\{0,1\}^{n}$
such that there exists $\boldsymbol{\gamma}\in\mathfrak{F}_{n}$ with
$x\cdot\gamma_{n}^{\epsilon_{n}}\ldots\gamma_{1}^{\epsilon_{1}}=y$.
In the latter case, we have that for $n\ge n_{0}$,
\begin{equation}
P_{\upsilon_{n}}(x,y)=\frac{1}{2^{n}}{\bf u}_{\mathfrak{F}_{n}}\left(\left\{ \boldsymbol{\gamma}:\ x\cdot\gamma_{n}^{\epsilon_{n}}\ldots\gamma_{1}^{\epsilon_{1}}=y\right\} \right)\le2^{-n}.\label{eq:2n}
\end{equation}
We now show another upper bound for these index $n$ such that $n_{0}\le n<\ell(x,y)-\log_{2}k_{n}$.
Note that 
\begin{align*}
P_{\upsilon_{n}}(x,y) & =\frac{1}{2^{n}}{\bf u}_{\mathfrak{F}_{n}}\left(\left\{ \boldsymbol{\gamma}:\ x\cdot\gamma_{n}^{\epsilon_{n}}\ldots\gamma_{1}^{\epsilon_{1}}=y\right\} \right)\\
 & \le\frac{1}{2^{n}}{\bf u}_{\mathfrak{F}_{n}}\left(\left\{ \boldsymbol{\gamma}:\ d_{\mathcal{S}}\left(x,x\cdot\gamma_{n}^{\epsilon_{n}}\ldots\gamma_{1}^{\epsilon_{1}}\right)\ge d_{\mathcal{S}}(x,y)\right\} \right).
\end{align*}
By part (ii) in Lemma \ref{modifiedtail}, with $\ell=\ell(x,y)-\log_{2}k_{n}$
so that $k_{n}2^{\ell}=d_{\mathcal{S}}(x,y)$, we have 
\[
{\bf u}_{\mathfrak{F}_{n}}\left(\left\{ \boldsymbol{\gamma}:\ d_{\mathcal{S}}\left(x,x\cdot\gamma_{n}^{\epsilon_{n}}\ldots\gamma_{1}^{\epsilon_{1}}\right)\ge d_{\mathcal{S}}(x,y)\right\} \right)\le8k_{n}2^{-\frac{1}{D}(\ell(x,y)-\log_{2}k_{n}-n)}.
\]
Therefore, for $n\in\left[n_{0},\ell(x,y)-\log_{2}k_{n}\right],$
we have
\begin{equation}
P_{\upsilon_{n}}(x,y)\le\frac{8}{2^{n}}k_{n}2^{-\frac{1}{D}\left(\ell(x,y)-n-\log_{2}k_{n}\right)}.\label{eq:2ncorrected}
\end{equation}
Note that for $n>\ell(x,y)-\log_{2}k_{n}$, the bound (\ref{eq:2ncorrected})
is still valid because of (\ref{eq:2n}). Since the induced transition
probabilities satisfy $P_{\check{\upsilon}_{n}}(x,y)=P_{\upsilon_{n}}(y,x),$
we have that $P_{\check{\upsilon}_{n}}(x,y)$ admits the same upper
bound in terms of $\ell(x,y)$. 

Split $P_{\mu_{\beta}}(x,y)$ into two parts: 
\begin{align*}
I & =\sum_{n\ge n_{0}}^{\ell(x,y)}\frac{a_{n}}{2}\left(P_{\upsilon_{n}}(x,y)+P_{\check{\upsilon}_{n}}(x,y)\right)\\
II & =\sum_{n>\ell(x,y)}\frac{a_{n}}{2}\left(P_{\upsilon_{n}}(x,y)+P_{\check{\upsilon}_{n}}(x,y)\right),
\end{align*}
where the coefficients are $a_{n}=C_{\beta}2^{-\beta n}$. For the
first part, the upper bound (\ref{eq:2ncorrected}) on $P_{\upsilon_{n}}(x,y)$
implies
\begin{align*}
I & \le\sum_{n\ge n_{0}}^{\ell(x,y)}\frac{8a_{n}}{2^{n}}k_{n}^{1+\frac{1}{D}}2^{-\frac{1}{D}\left(\ell(x,y)-n\right)}\\
 & =2^{-\frac{1}{D}\ell(x,y)+3}\sum_{n\ge n_{0}}^{\ell(x,y)}\frac{C_{\beta}}{2^{n(1-\frac{1}{D}+\beta)}}k_{n}^{1+\frac{1}{D}}\\
 & \le AC_{\beta}2^{-\frac{1}{D}\ell(x,y)+3}\sum_{n\ge n_{0}}^{\ell(x,y)}2^{-n\left(1-\frac{1}{D}+\beta\right)}(\log_{2}\ell(x,y))^{1+\frac{1}{D}}\\
 & \le C'2^{-\frac{1}{D}\ell(x,y)}2^{-n_{0}\left(1-\frac{1}{D}+\beta\right)}(\log_{2}\ell(x,y))^{1+\frac{1}{D}}.
\end{align*}
Since $n_{0}+2k_{n_{0}}\le\ell(x,y)\le n_{0}+2k_{n_{0}}+2D+5$, we
have 
\begin{align*}
I & \le C'2^{-\frac{1}{D}\ell(x,y)}2^{-(\ell(x,y)-2k_{n_{0}})\left(1-\frac{1}{D}+\beta\right)}(\log_{2}\ell(x,y))^{1+\frac{1}{D}}\\
 & \le C'2^{-\ell(x,y)(1+\beta)}\ell(x,y)^{2A(1-\frac{1}{D}+\beta)}(\log_{2}\ell(x,y))^{1+\frac{1}{D}}\\
 & =C'd_{\mathcal{S}}(x,y)^{-1-\beta}\left(\log_{2}d_{\mathcal{S}}(x,y)\right)^{2A(1-\frac{1}{D}+\beta)}\left(\log_{2}\log_{2}d_{\mathcal{S}}(x,y)\right)^{1+\frac{1}{D}}.
\end{align*}

For the second part, from the bound $P_{\upsilon_{n}}(x,y)\le2^{-n}$
we deduce 
\[
II\le C_{\beta}2^{-(1+\beta)\ell(x,y)}=C_{\beta}d_{\mathcal{S}}(x,y)^{-1-\beta}.
\]
Combine these two parts we obtain statement (i).

(ii). To bound the tail, note that 
\begin{align*}
\sum_{y:d_{S}(x,y)\ge r}P_{\upsilon_{n}}(x,y) & =\frac{1}{2^{n}}\sum_{\boldsymbol{\epsilon}\in\{0,1\}^{n}}{\bf u}_{\mathfrak{F}_{n}}\left(\left\{ \boldsymbol{\gamma}:\ d\left(x\cdot\gamma_{n}^{\epsilon_{n}}\ldots\gamma_{1}^{\epsilon_{1}},x\right)\ge r\right\} \right),\\
\sum_{y:d_{S}(x,y)\ge r}P_{\check{\upsilon}_{n}}(x,y) & =\frac{1}{2^{n}}\sum_{\boldsymbol{\epsilon}\in\{0,1\}^{n}}{\bf u}_{\mathfrak{F}_{n}}\left(\left\{ \boldsymbol{\gamma}:\ d\left(x\cdot\left(\gamma_{n}^{\epsilon_{n}}\ldots\gamma_{1}^{\epsilon_{1}}\right)^{-1},x\right)\ge r\right\} \right).
\end{align*}
Then it follows from Lemma \ref{modifi} that for $n\le\log_{2}r\le n+2k_{n}+2D+5$,
\begin{align*}
\frac{1}{2}\sum_{y:d_{S}(x,y)\ge r}\left(P_{\upsilon_{n}}+P_{\check{\upsilon}_{n}}\right)(x,y) & \le\sup_{x,\boldsymbol{\epsilon}}{\bf u}_{\mathfrak{F}_{n}}\left(\left\{ \boldsymbol{\gamma}:\ d\left(x\cdot\left(\gamma_{n}^{\epsilon_{n}}\ldots\gamma_{1}^{\epsilon_{1}}\right)^{-1},x\right)\ge r\right\} \right)\\
 & \le8k_{n}^{1+\frac{1}{D}}2^{-\frac{1}{D}(\log_{2}r-n)}.
\end{align*}
Let $n_{0}=\min\left\{ n:\ 2^{n+2k_{n}+2D+5}\ge r\right\} $. Then
\begin{align*}
\sum_{y:d_{S}(x,y)\ge r}P_{\mu_{\beta}}(x,y) & \le\sum_{n\ge n_{0}}^{\log_{2}r}a_{n}\sum_{y:d_{S}(x,y)\ge r}\frac{1}{2}\left(P_{\upsilon_{n}}+P_{\check{\upsilon}_{n}}\right)(x,y)+\sum_{n>\log_{2}r}a_{n}\\
 & \le\sum_{n\ge n_{0}}^{\log_{2}r}8a_{n}k_{n}^{1+\frac{1}{D}}2^{-\frac{1}{D}(\log_{2}r-n)}+\sum_{n>\log_{2}r}a_{n}\\
 & \le C'2^{-\beta n_{0}}k_{n_{0}}^{1+\frac{1}{D}}2^{-\frac{1}{D}(\log_{2}r-n_{0})}+C_{\beta}r^{-\beta},
\end{align*}
where in the last step the assumption that $\beta>1/D$ is used. By
definition of $n_{0}$, we have that 
\[
\sum_{y:d_{S}(x,y)\ge r}P_{\mu_{\beta}}(x,y)\le C'r^{-\beta}\left(\log_{2}r\right)^{2A\left(\beta-\frac{1}{D}\right)}\left(\log_{2}\log_{2}r\right)^{1+\frac{1}{D}}.
\]
Finally, for the truncated second moment, by the tail bound obtained
above, we obtain
\begin{align*}
\sum_{y:d_{S}(x,y)\le r}d_{S}(x,y)^{2}P_{\mu_{\beta}}(x,y) & \le\sum_{j=1}^{\log_{2}r}(2^{j})^{2}\sum_{y:d_{S}(x,y)\ge2^{j-1}}P_{\mu_{\beta}}(x,y)\\
 & \le Cr^{2-\beta}\left(\log_{2}r\right)^{2A\left(\beta-\frac{1}{D}\right)}\left(\log_{2}\log_{2}r\right)^{1+\frac{1}{D}}.
\end{align*}

\end{proof}

\subsection{Upper bounds on the Green function\label{subsec:greenupper}}

Throughout this subsection $\beta\in(1-\frac{1}{D},1)$ and $k_{n}=A\left\lfloor \log_{2}n\right\rfloor $,
and the measure $\mu_{\beta}$ is defined in (\ref{eq:eta3}).

The following on-diagonal upper bound does not depend on the choice
of $(k_{n})$ because by construction, on the finite level $\mathsf{L}_{n}$
the transition kernel induced by $\upsilon_{n}$ coincide with the
one induced by ${\bf u}_{F_{n}}$. The same argument as in Lemma \ref{CSC}
implies that the $\mu_{\beta}$-induced random walk on the orbit $1^{\infty}\cdot G_{\omega}$
admits the following on-diagonal upper bound.

\begin{prop}\label{eta3diag}

Let $P_{\mu_{\beta}}$ be the transition kernel on $1^{\infty}\cdot G_{\omega}$
induced by $\mu_{\beta}$. Then there exists a constant $C=C(\beta)<\infty$
such that for $t\in\mathbb{N}$,
\[
\sup_{x,y\in1^{\infty}\cdot G}P_{\mu_{\beta}}^{t}(x,y)\le\frac{C}{t^{1/\beta}}.
\]

\end{prop}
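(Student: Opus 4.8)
The plan is to deduce the on-diagonal bound $\sup_{x,y}P^t_{\mu_\beta}(x,y)\le C t^{-1/\beta}$ from a Faber--Krahn / $\ell^2$-isoperimetric inequality for $P_{\mu_\beta}$ on the Schreier graph $\mathcal S$, exactly as in the Nash/Coulhon machinery used implicitly in Lemma~\ref{CSC}. The key structural observation, as stated before Proposition~\ref{eta3diag}, is that on the finite level $\mathsf L_n$ the transition kernel induced by the uniformised quasi-cubic measure $\upsilon_n$ agrees with the one induced by the plain quasi-cubic measure ${\bf u}_{F_n}$ built from the cube-independent sequence $(g_n)$; so the conjugations $\mathfrak c^v_j$ and the extra randomisation do not affect the coarse isoperimetry. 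Concretely, the cube independence property gives $|x\cdot F_n| = |F_n| = 2^n$ for every $x\in 1^\infty\cdot G$, hence by the Coulhon--Saloff-Coste argument reproduced in the proof of Lemma~\ref{CSC}, for the transition kernel $P_n$ associated with $\tfrac12({\bf u}_{F_n}+{\bf u}_{F_n^{-1}})$ one has $|\partial_{P_n}U|/|U|\ge c_0/4$ for all $U$ with $|U|\le 2^{n-1}$; the same lower bound holds with $P_n$ replaced by the kernel induced by $\tfrac12(\upsilon_n+\check\upsilon_n)$, since on sets of size $\le 2^{n-1}$ the relevant boundary only sees the action up to level $n$.

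Next I would combine these level-$n$ estimates. Since $\mu_\beta = \tfrac12{\bf u}_S + \tfrac12\sum_{D\mid n} C_\beta 2^{-n\beta}(\upsilon_n+\check\upsilon_n)$, the Dirichlet form satisfies $\mathcal E_{P_{\mu_\beta}} \ge \tfrac12 C_\beta 2^{-n\beta}\,\mathcal E_{P_n'}$ for each $n$, where $P_n'$ is the kernel of $\tfrac12(\upsilon_n+\check\upsilon_n)$. Via Cheeger's inequality~(\ref{eq:cheeger}) this yields a piecewise lower bound
\[
\Lambda_{P_{\mu_\beta}}(v) \ge c\, 2^{-n\beta}\qquad\text{for } v\in(2^{n-1},2^n],
\]
i.e.\ $\Lambda_{P_{\mu_\beta}}(v)\gtrsim v^{-\beta}$ up to constants and a harmless shift. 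This is precisely a Faber--Krahn inequality with profile $\Lambda(v)\simeq v^{-\beta}$. By the standard equivalence between $\ell^2$-isoperimetric profiles and on-diagonal heat kernel decay (Coulhon~\cite[Proposition V.1]{coulhon}, Grigor'yan~\cite{grigoryan}, or Grigor'yan~\cite{grigoryanAG}; the same references cited for Proposition~\ref{test}), a profile $\Lambda(v)\simeq v^{-\beta}$ forces $\sup_{x,y}P^t_{\mu_\beta}(x,y)\le C t^{-1/\beta}$. One checks the integrability condition: with $\Lambda(v)=cv^{-\beta}$ the relevant ODE $dt = -\tfrac{dv}{v\Lambda(v)}$ integrates to $t\simeq v^{\beta}$, so $v\simeq t^{1/\beta}$ and $P^t(x,x)\lesssim 1/v = t^{-1/\beta}$; symmetry of $P_{\mu_\beta}$ and Cauchy--Schwarz then give the off-diagonal sup bound $P^{2t}(x,y)\le P^t(x,x)^{1/2}P^t(y,y)^{1/2}\lesssim t^{-1/\beta}$, and a one-step comparison handles odd times.

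The one genuinely delicate point is justifying that the isoperimetric lower bound $|\partial U|/|U|\gtrsim 2^{-n\beta}$ for $|U|\le 2^{n-1}$ survives for $P_{\mu_\beta}$ despite the modifications: one must verify that for a set $U$ of size at most $2^{n-1}$, the boundary $|\partial_{P_n'}U|$ (with $P_n'$ the uniformised kernel) is still $\ge \tfrac{c_0}{4}|U|$. This follows because the displacement estimates of Lemma~\ref{modifi} and the uniqueness Lemma~\ref{unique} show that, restricted to the action relevant for such small $U$, each element of the support of $\upsilon_n$ moves $x$ to a point determined injectively by $\boldsymbol\epsilon\in\{0,1\}^n$ as for $F_n$, so $|x\cdot(\mathrm{supp}\,\upsilon_n)|\ge 2^{n-1}\cdot(\text{const})$ in the sense needed for the Coulhon--Saloff-Coste counting; hence the same combinatorial inequality $\sum_{\boldsymbol\epsilon}{\bf 1}_{U^c}(x\cdot\gamma_n^{\epsilon_n}\cdots) \ge 2^{n}-|U|$ goes through after averaging over $\boldsymbol\gamma$. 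Modulo this bookkeeping, the proof is the standard Nash-type argument, and I would present it by invoking Lemma~\ref{CSC}'s method verbatim together with the on-diagonal half of \cite{grigoryanAG}. I expect the main obstacle to be stating cleanly why the level-$n$ comparison of $\upsilon_n$ with ${\bf u}_{F_n}$ is enough — that is, pinning down that only scales $\le 2^n$ matter for the profile at $v\le 2^n$, so the long-range conjugated jumps (which live at scale $2^{n+2k_n}$) do not hurt the lower bound on $\Lambda$.
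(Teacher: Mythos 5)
Your proposal is correct and follows essentially the same route as the paper: the paper likewise uses Lemma \ref{unique} to run the Coulhon--Saloff-Coste counting of Lemma \ref{CSC} for the kernel induced by $\frac{1}{2}(\upsilon_{n}+\check{\upsilon}_{n})$, obtains $|\partial_{P_{n}}U|/|U|\ge\frac{1}{2}$ for $|U|\le2^{n-1}$, and then combines Cheeger's inequality with the convex-combination structure of $\mu_{\beta}$ to get $\Lambda_{P_{\mu_{\beta}}}(v)\gtrsim v^{-\beta}$, concluding by the standard Faber--Krahn to on-diagonal implication from \cite{coulhon} and \cite{grigoryan}. The point you flag as delicate (why the long-range conjugated jumps do not spoil the profile) is resolved exactly as you suggest, via the injectivity of $\boldsymbol{\epsilon}\mapsto x\cdot\gamma_{n}^{\epsilon_{n}}\ldots\gamma_{1}^{\epsilon_{1}}$ for each fixed $\boldsymbol{\gamma}$.
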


\begin{proof}

Let $P_{n}$ be the transition kernel on the orbit of $o$ induced
by $\frac{1}{2}\left(\upsilon_{n}+\check{\upsilon}_{n}\right)$. As
in the proof of Lemma \ref{CSC}, from Lemma \ref{unique} we derive
that for a set $U\subset o\cdot G_{\omega}$ with volume $|U|\le2^{n-1}$,
\[
\frac{\left|\partial_{P_{n}}U\right|}{|U|}\ge\frac{1}{2}.
\]
Then by the Cheeger inequality (\ref{eq:cheeger}) and the fact that
$P_{\mu_{\beta}}$ is a convex combination of $P_{n}$'s, we have
that the $\ell^{2}$-isoperimetric profile of $P_{\mu_{\beta}}$ satisfies
\[
\Lambda_{P_{\mu_{\beta}}}(2^{n-1})\ge\frac{C_{\beta}}{2}2^{-n\beta},\ n\ge1.
\]
That is, $\Lambda_{P_{\mu_{\beta}}}(v)\ge c_{\beta}v^{-\beta}$ for
$v\ge1$. Such a lower bound for the $\ell^{2}$-isoperimetric profile
implies the stated upper bound, see \cite[Proposition II.1]{coulhon}
or \cite[Theorem 1.1]{grigoryan}.

\end{proof}

We will need an off-diagonal upper bound on $P_{\mu_{\beta}}^{t}$,
which can be deduced from the Meyer's construction and the Davies
method as in Barlow-Grigor'yan-Kumagai \cite{BGK}. Originally, the
Davies method \cite{davies1,davies2} was developed to derive Gaussian
off-diagonal upper bounds. It was extended to more general Markov
semigroups in \cite{CKS}. The Davies method has been successfully
applied to jump processes, see for example \cite{BGK,ChenKumagai,BBCK}
and references therein. 

Let $J(x,y)$ be a symmetric transition kernel on a countable set
$X$. For technical reasons, it is more convenient to consider continuous
time random walk. Let $P_{t}$ be the associated heat semigroup and
$p(t,x,y)$ its transition density. The following proposition follows
from the proof of heat kernel upper bound in \cite[Section 3]{BGK},
see also \cite[Section 4.4]{ChenKumagai}. It states that if we have
an on-diagonal upper bound and upper bounds on the tail of $J(x,\cdot)$
and growth of truncated second moment of $J(x,\cdot)$, then the Davies
method provides an off-diagonal upper bound on $p(t,x,y)$. Note that
a uniform volume condition $\mu(B_{\rho}(x,r))\asymp V(r)$ is not
required for such an upper bound. 

\begin{prop}[\cite{BGK}]\label{davies}

Let $J(x,y)$ be a symmetric transition kernel on a countable set
$X$ and let $\rho$ be a metric on $X$. Suppose 
\begin{description}
\item [{(i)}] There exists $0<C_{0}<\infty$ and $\beta>0$ such that for
all $t>0$, we have 
\[
\sup_{x\in X}p(t,x,x)\le\frac{C_{0}}{t^{1/\beta}}.
\]
\item [{(ii)}] There exists an increasing function $\phi:(0,\infty)\to(0,\infty)$
such that $\phi(2r)\le c_{\phi}\phi(r)$ for all $r>0$ and for all
$x\in X$, $r>0$, 
\begin{align*}
\sum_{y\in X}J(x,y)\mathbf{1}_{\{\rho(x,y)>r\}} & \le\frac{1}{\phi(r)},\\
\sum_{y\in X}\rho^{2}(x,y)J(x,y)\mathbf{1}_{\{\rho(x,y)\le r\}} & \le\frac{r^{2}}{\phi(r)}.
\end{align*}
\end{description}
Then there exists a constant $C=C\left(C_{0},c_{\phi},\beta\right)>0$
such that for any $x,y\in X$ and $t\le\phi(\rho(x,y))$,
\[
p(t,x,y)\le\frac{Ct}{\phi\left(\rho(x,y)\right)^{1+\frac{1}{\beta}}}+t\sup_{\rho(u,v)\ge\rho(x,y)}J(u,v).
\]

\end{prop}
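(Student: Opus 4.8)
The plan is to follow the Davies method applied to jump processes as in Barlow-Grigor'yan-Kumagai \cite{BGK}, so the target statement (Proposition \ref{davies}) is quoted essentially verbatim from the heat kernel upper bound argument there; what I would write here is a proof sketch that reduces our setting to their framework. The main ingredients are: (a) the on-diagonal bound (i), which is the input $p(t,x,x)\le C_0 t^{-1/\beta}$; (b) the Meyer decomposition of the jump kernel at scale $\rho(x,y)$; and (c) the weighted $L^2$ estimates (the Davies perturbation) for the truncated part. I would first recall Meyer's construction: fix $r=\rho(x,y)$ and split $J=J^{(r)}+J_{(r)}$ where $J^{(r)}(u,v)=J(u,v)\mathbf{1}_{\{\rho(u,v)\le r\}}$ is the small-jump part and $J_{(r)}$ the large-jump part. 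The large-jump part has total rate bounded by $\sup_{\rho(u,v)\ge r}J(u,v)$ times nothing in the ``both endpoints moved'' interpretation, but more carefully one uses that a path of the full process either makes no large jump up to time $t$ — contributing the heat kernel of the truncated process — or makes at least one, contributing the additive error $t\sup_{\rho(u,v)\ge r}J(u,v)$. This is exactly the origin of the second term in the conclusion.

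Next I would treat the truncated process with generator built from $J^{(r)}$. The key point is that assumption (ii) gives, for every $x$, a bound on the truncated second moment $\sum_v \rho^2(x,v)J^{(r)}(x,v)\le r^2/\phi(r)$ and on the tail $\sum_v J(x,v)\mathbf{1}_{\{\rho(x,v)>r\}}\le 1/\phi(r)$; these are precisely the hypotheses needed to run the Davies perturbation with the exponential weight $e^{\psi}$ where $\psi(u)=\lambda(\rho(u,x)\wedge r)$ (a bounded Lipschitz cutoff of the distance, bounded by $\lambda r$). One computes that the ``carr\'e du champ'' type quantity $\Lambda(\psi)^2=\|e^{-\psi}\Gamma(e^{\psi})\|_\infty\vee\|e^{\psi}\Gamma(e^{-\psi})\|_\infty$ associated to the truncated kernel is controlled by $e^{C\lambda r}\cdot\big(\text{truncated second moment}\big)\le e^{C\lambda r} r^2/\phi(r)$ after using $e^{s}-1\le s e^{|s|}$ on jumps of size $\le r$. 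Feeding this into the Davies inequality $p^{(r)}(t,x,y)\le C_0 t^{-1/\beta}\exp\big(-\psi(y)+\psi(x)+t\Lambda(\psi)^2\big)$ and optimizing over $\lambda$ (choosing $\lambda\asymp \tfrac1r\log\tfrac{r^2}{t\phi(r)^{-1}r^2}$, i.e. $\lambda r\asymp\log\tfrac{\phi(r)}{t}$, which is admissible precisely because $t\le\phi(\rho(x,y))$) yields $p^{(r)}(t,x,y)\le C t/\phi(r)^{1+1/\beta}$ — this is where the on-diagonal exponent $1/\beta$ and the doubling of $\phi$ combine. Then adding back the large-jump error term gives the claimed bound.

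The step I expect to be the main obstacle is the Davies perturbation estimate for the truncated kernel, specifically verifying that $\Lambda(\psi)^2\lesssim e^{C\lambda r} r^2/\phi(r)$ uniformly in $x$ using \emph{only} the pointwise second-moment bound in (ii) rather than a two-sided volume regularity hypothesis. The subtlety is that in \cite{BGK} the off-diagonal estimate is usually stated under a volume doubling condition $\mu(B(x,r))\asymp V(r)$; here we must check that doubling is never actually used in the portion of their argument we invoke — only the on-diagonal bound (i), the tail bound, and the truncated second-moment bound enter. I would point to \cite[Section 4.4]{ChenKumagai} where exactly this ``volume-free'' version is isolated, and to \cite[Section 3]{BGK} for the Meyer splitting; the optimization over $\lambda$ and the bookkeeping of the constant $C=C(C_0,c_\phi,\beta)$ are then routine. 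The only other thing to be careful about is that $\phi$ need not be continuous or invertible, so the optimization must be phrased as a choice of $\lambda$ (not as an inverse function of $\phi$), and the doubling $\phi(2r)\le c_\phi\phi(r)$ is what lets one absorb the resulting constants.
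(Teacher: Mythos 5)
Your overall route is the same as the paper's: Meyer's splitting of $J$ at a truncation radius, the on-diagonal bound transferred to the truncated semigroup (via the Dirichlet-form/Nash route and the tail bound $\sum_y J(x,y)\mathbf{1}_{\{\rho>R\}}\le 1/\phi(R)$), a Davies perturbation with a $\lambda$-Lipschitz weight whose carr\'e-du-champ is controlled by the truncated second moment, then optimization in $\lambda$ and addition of the large-jump error $t\sup J$. But there is a genuine quantitative gap in your optimization step. If you truncate at $R=r=\rho(x,y)$, the admissibility constraint is $t\Lambda(\psi)^2\lesssim \lambda^2 r^2 e^{2\lambda r}\,t/\phi(r)\lesssim 1$, which forces $\lambda r\le \tfrac12\log\bigl(\phi(r)/t\bigr)+O(\log\log)$; the gain in the exponential is then at most $e^{-\lambda r}\approx(t/\phi(r))^{1/2}$, so you obtain $p^{(r)}(t,x,y)\lesssim t^{-1/\beta}(t/\phi(r))^{c}$ with a universal $c<1$, \emph{not} the claimed $t/\phi(r)^{1+1/\beta}$ (note $1+1/\beta>2$ in the regime $\beta<1$ used later). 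Choosing the implied constant in your ``$\lambda r\asymp\log(\phi(r)/t)$'' equal to $1+1/\beta$ is not admissible: it makes $e^{2\lambda r}t/\phi(r)=(\phi(r)/t)^{1+2/\beta}$ blow up inside the Davies exponent.

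The fix — and this is exactly what the paper does — is to decouple the truncation radius from the distance: take $R=\theta\,\rho(x,y)$ with $\theta=\frac{\beta}{3(1+\beta)}$ and $\lambda=\frac{1}{3R}\log\bigl(\phi(R)/t\bigr)$. Then $72\lambda^2R^2e^{2\lambda R}t/\phi(R)$ stays bounded, while $\lambda\rho(x,y)=\frac{1+\beta}{\beta}\log\bigl(\phi(R)/t\bigr)$, so the exponential gain is $(t/\phi(R))^{1+1/\beta}$, which cancels the $t^{-1/\beta}$ from the on-diagonal bound and yields $p_R(t,x,y)\le Ct/\phi(R)^{1+1/\beta}$; the doubling hypothesis $\phi(2s)\le c_\phi\phi(s)$ is then precisely what converts $\phi(R)$ back into $\phi(\rho(x,y))$ (in your sketch doubling plays no essential role, which is a symptom of the missing fractional truncation). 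Your other concerns are handled as you expect: no volume regularity is needed, only (i), the tail bound, and the truncated second moment enter, and the transfer of the on-diagonal bound to the truncated kernel costs only a harmless factor $e^{4t/\phi(R)}$ via the Dirichlet form comparison before invoking the CKS-type Davies inequality.
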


Since this bound is crucial in the argument for non-triviality of
the Poisson boundary but the formulation of Proposition \ref{davies}
is not the same as in \cite{BGK}, we explain its proof for the reader's
convenience at the end of this subsection. To apply Proposition \ref{davies},
we verify the condition in (i) by Proposition \ref{eta3diag} and
conditions in (ii) by Proposition \ref{eta3kernel}. In this way we
obtain an off-diagonal upper bound for the transition probability
and the upper bound on the Green function stated in Proposition \ref{green}.
We remark that these bounds are not sharp, however it is sufficient
for our purposes. 

\begin{proof}[Proof of Proposition \ref{green}]

Let $J=P_{\mu_{\beta}}$ be the induced jumping kernel on the Schreier
graph and $p(t,x,y)$ be the heat kernel of the corresponding jumping
process in continuous time. By Proposition \ref{eta3diag}, the lower
bound on $\ell^{2}$-isoperimetric profile of $J$ implies
\[
\sup_{x,y\in X}p(t,x,y)\le\frac{C}{t^{1/\beta}}.
\]
Let 
\[
\phi(r)=r^{\beta}\left(\log_{2}r\right)^{-2A(\beta-\frac{1}{D})-\epsilon},
\]
then by Proposition \ref{eta3kernel}, we have that there exists a
constant $C=C(\beta,D,\epsilon)<\infty$ such that 
\begin{align*}
\sum_{y\in X}J(x,y)\mathbf{1}_{\{d_{\mathcal{S}}(x,y)>r\}} & \le\frac{C}{\phi(r)}.\\
\sum_{y\in X}\rho^{2}(x,y)J(x,y)\mathbf{1}_{\{d_{\mathcal{S}}(x,y)\le r\}} & \le\frac{Cr^{2}}{\phi(r)}.
\end{align*}
Plugging these bounds in Proposition \ref{davies}, we have that for
$t\le\phi(d_{\mathcal{S}}(x,y))$,
\[
p(t,x,y)\le\frac{Ct}{\phi\left(d_{\mathcal{S}}(x,y)\right)^{1+\frac{1}{\beta}}}+t\sup_{d_{\mathcal{S}}(u,v)\ge d_{\mathcal{S}}(x,y)}J(u,v).
\]
Note that
\[
\phi\left(d_{\mathcal{S}}(x,y)\right)^{-\left(1+\frac{1}{\beta}\right)}=\left(\frac{d_{\mathcal{S}}(x,y)}{\log_{2}^{2A}d_{\mathcal{S}}(x,y)}\right)^{-1-\beta}(\log_{2}d_{\mathcal{S}}(x,y))^{-\left(\frac{2A}{D}-\epsilon\right)\left(1+\frac{1}{\beta}\right)}.
\]
By Proposition \ref{eta3kernel}, there exists $C=C(\beta,\epsilon)>0$
\[
\sup_{d_{\mathcal{S}}(u,v)\ge d_{\mathcal{S}}(x,y)}J(u,v)\le C\left(\frac{d_{\mathcal{S}}(x,y)}{\log_{2}^{2A}d_{\mathcal{S}}(x,y)}\right)^{-1-\beta}(\log_{2}d_{\mathcal{S}}(x,y))^{-\left(\frac{2A}{D}-\epsilon\right)}.
\]
Combining these two parts, we obtain that for $t\le\phi(d_{\mathcal{S}}(x,y)),$
\[
p(t,x,y)\le Ct\left(\frac{d_{\mathcal{S}}(x,y)}{\log_{2}^{2A}d_{\mathcal{S}}(x,y)}\right)^{-1-\beta}(\log_{2}d_{\mathcal{S}}(x,y))^{-\left(\frac{2A}{D}-\epsilon\right)},
\]
where $C$ is a constant only depending on $\beta,D,\epsilon$. Compared
with the on-diagonal bound $p(t,x,y)\le Ct^{-1/\beta}$, note that
for $t$ in the interval $\phi_{1}(d_{\mathcal{S}}(x,y))\le t\le\phi(d_{\mathcal{S}}(x,y))$,
where 
\[
\phi_{1}(r)=\left(\frac{r}{\log_{2}^{2A}r}\right)^{\beta}(\log_{2}r)^{\left(\frac{2A}{D}-\epsilon\right)\frac{\beta}{1+\beta}},
\]
it is better to use the on-diagonal bound. We conclude that the heat
kernel satisfies that for all $t>0$,
\[
p(t,x,y)\le C\min\left\{ t^{-\frac{1}{\beta}},t\left(\frac{d_{\mathcal{S}}(x,y)}{\log_{2}^{2A}d_{\mathcal{S}}(x,y)}\right)^{-1-\beta}(\log_{2}d_{\mathcal{S}}(x,y))^{-\left(\frac{2A}{D}-\epsilon\right)}\right\} .
\]

It is elementary and well-known that when $J(x,x)\ge\alpha>0$, the
continuous time transition probability and discrete time transition
probabilities are comparable, because the former is a Poissonization
of the later, see for example \cite[Subsection 3.2]{delmotte}. By
\cite[Theorem 3.6]{delmotte}, the discrete time transition probability
$P_{\mu_{\beta}}^{t}(x,y)\le Cp(t,x,y)$ where $C>0$ is a constant
that only depends on $\mu_{\beta}(id)$. Therefore $P_{\mu_{\beta}}^{t}(x,y)$
admits the same upper bound as $p(t,x,y)$ with $C$ a larger constant.
To obtain the estimate on the Green function, sum up the transition
probability upper bound over $t\in\{0\}\cup\mathbb{N}$, 
\[
{\bf G}_{\mu_{\beta}}(x,y)=\sum_{t=0}^{\infty}P_{\mu_{\beta}}^{t}(x,y)=\sum_{t=0}^{\phi_{1}(d_{\mathcal{S}}(x,y))}P_{\mu_{\beta}}^{t}(x,y)+\sum_{t>\phi_{1}(d_{\mathcal{S}}(x,y))}P_{\mu_{\beta}}^{t}(x,y).
\]
For the first part, we have
\begin{align*}
\sum_{t=0}^{\phi_{1}(d_{\mathcal{S}}(x,y))}P_{\mu_{\beta}}^{t}(x,y) & \le C\sum_{t=0}^{\phi_{1}(d_{\mathcal{S}}(x,y))}t\left(\frac{d_{\mathcal{S}}(x,y)}{\log_{2}^{2A}d_{\mathcal{S}}(x,y)}\right)^{-1-\beta}(\log_{2}d_{\mathcal{S}}(x,y))^{-\left(\frac{2A}{D}-\epsilon\right)}\\
 & \le C\phi_{1}(d_{\mathcal{S}}(x,y))^{2}\left(\frac{d_{\mathcal{S}}(x,y)}{\log_{2}^{2A}d_{\mathcal{S}}(x,y)}\right)^{-1-\beta}(\log_{2}d_{\mathcal{S}}(x,y))^{-\left(\frac{2A}{D}-\epsilon\right)}\\
 & =C\left(\frac{d_{\mathcal{S}}(x,y)}{\log_{2}^{2A}d_{\mathcal{S}}(x,y)}\right)^{\beta-1}(\log_{2}d_{\mathcal{S}}(x,y))^{-\frac{1-\beta}{1+\beta}\left(\frac{2A}{D}-\epsilon\right)}.
\end{align*}
For the second part, 
\begin{align*}
\sum_{t>\phi_{1}(d_{\mathcal{S}}(x,y))}P_{\mu_{\beta}}^{t}(x,y) & \le\sum_{t>\phi_{1}(d_{\mathcal{S}}(x,y))}Ct^{-1/\beta}\\
 & \le\frac{C}{1-1/\beta}\phi_{1}(d_{\mathcal{S}}(x,y))^{1-\frac{1}{\beta}}\\
 & =\frac{C}{1-1/\beta}\left(\frac{d_{\mathcal{S}}(x,y)}{\log_{2}^{2A}d_{\mathcal{S}}(x,y)}\right)^{\beta-1}(\log_{2}d_{\mathcal{S}}(x,y))^{-\frac{1-\beta}{1+\beta}\left(\frac{2A}{D}-\epsilon\right)}.
\end{align*}
Combining these two parts, we obtain the stated upper bound on the
Green function. 

\end{proof}

\begin{proof}[Proof of Proposition \ref{davies} following \cite{BGK}]

First split the jumping kernel $J$ into two parts:
\[
J_{1}^{R}(x,y)=J(x,y)\mathbf{1}_{\{\rho(x,y)\le R\}}\ \mbox{ and \ }J_{2}^{R}(x,y)=J(x,y)\mathbf{1}_{\{\rho(x,y)>R\}}.
\]
Let $p_{R}(t,x,y)$ be transition density of the jumping process with
jump kernel $J_{1}^{R}$. Then as a consequence of the Meyer's construction,
see \cite[Lemma 3.1]{BGK}, we have
\begin{equation}
p(t,x,y)\le p_{R}(t,x,y)+t\left\Vert J_{2}^{R}\right\Vert _{\infty}.\label{eq:meyer}
\end{equation}

The Dirichlet forms satisfy
\[
\mathcal{E}_{J}(f,f)-\mathcal{E}_{J_{1}^{R}}(f,f)\le4\left\Vert f\right\Vert _{2}^{2}\sup_{x\in X}\left\{ \sum_{y\in X}J(x,y)\mathbf{1}_{\{\rho(x,y)>r\}}\right\} \le4\left\Vert f\right\Vert _{2}^{2}/\phi(R).
\]
It follows that 
\[
\sup_{x,y}p_{R}(t,x,y)\le e^{\frac{4t}{\phi(R)}}\sup_{x,y}p(t,x,y)\le Ce^{\frac{4t}{\phi(R)}}t^{-1/\beta}.
\]
By \cite{coulhon}, this bound turns into Nash inequality 
\[
\left\Vert f\right\Vert _{2}^{2(1+\beta)}\le C\left(\mathcal{E}_{J_{1}^{R}}(f,f)+\frac{4}{\phi(R)}\left\Vert f\right\Vert _{2}^{2}\right)\left\Vert f\right\Vert _{1}^{2\beta}
\]
Then by \cite[Theorem 3.25]{CKS}, for any function $\psi$ on $X$,
\begin{equation}
p_{R}(t,x,y)\le C_{2}t^{-\frac{1}{\beta}}\exp\left(\frac{4t}{\phi(R)}+72\Lambda_{R}(\psi)^{2}t-\psi(y)+\psi(x)\right),\label{eq:pR}
\end{equation}
where $C_{2}>0$ is a constant and the quantity $\Lambda_{R}(\psi)$
is defined as 
\begin{align*}
\Lambda_{R}(\psi) & ^{2}=\max\left\{ \left\Vert e^{-2\psi}\Gamma(e^{\psi},e^{\psi})\right\Vert _{\infty},\left\Vert e^{2\psi}\Gamma(e^{-\psi},e^{-\psi})\right\Vert _{\infty}\right\} ,\\
\Gamma_{R}(f,g) & (x)=\sum_{y\in X}(f(y)-f(x))(g(y)-g(x))J_{1}^{R}(x,y).
\end{align*}

Take a parameter $\lambda>0$ and for any two points $x_{0}\neq y_{0}$,
take the function 
\[
\psi(x)=\lambda\left(\rho(x,x_{0})-\rho(x,y_{0})\right)_{+}.
\]
Note that $\psi$ is $\lambda$-Lipschitz with respect to the metric
$\rho$. Then by the inequality $(e^{z}-1)^{2}\le z^{2}e^{2|z|}$,
$z\in\mathbb{R}$ and Assumption (ii), we have 
\begin{align*}
e^{-2\psi(x)}\Gamma(e^{\psi},e^{\psi})(x) & \le\lambda^{2}e^{2\lambda R}\sum_{y\in X}\rho^{2}(x,y)J(x,y)\mathbf{1}_{\{\rho(x,y)\le R\}}\\
 & \le\lambda^{2}e^{2\lambda R}R^{2}/\phi(R).
\end{align*}
The same calculation applies to $-\psi$, therefore
\[
\Lambda_{R}(\psi)^{2}\le\lambda^{2}e^{2\lambda R}R^{2}/\phi(R).
\]
The inequality (\ref{eq:pR}) evaluated at points $x_{0},y_{0}$ states
\begin{align*}
p_{R}(t,x_{0},y_{0}) & \le C_{2}t^{-1/\beta}\exp\left(\frac{4t}{\phi(R)}+72(\lambda R)^{2}e^{2\lambda R}t/\phi(R)-\lambda\rho(x_{0},y_{0})\right)\\
 & \le C_{2}t^{-1/\beta}\exp\left(\frac{4t}{\phi(R)}+72e^{3\lambda R}t/\phi(R)-\lambda\rho(x_{0},y_{0})\right).
\end{align*}
We now restrict to the case $t\le\phi(\rho(x_{0},y_{0}))$. Choose
$\lambda$ and $R$ to be 
\begin{align*}
R & =\frac{\beta}{3(1+\beta)}\rho(x_{0},y_{0}),\\
\lambda & =\frac{1}{3R}\log\left(\frac{\phi(R)}{t}\right).
\end{align*}
We conclude that for $t\le\phi(\rho(x,y))$, 
\[
p_{R}(t,x,y)\le\frac{Ct}{\phi(\rho(x,y))^{1+1/\beta}}.
\]
The statement is obtained by combining this bound with (\ref{eq:meyer}). 

\end{proof} 

\subsection{Proof of Proposition \ref{zetasum}\label{subsec:weights} }

In addition to the Green function estimates, we need to bound the
probability under $\mu_{\beta}$ that a point $x$ carries a germ
$(g,x)$ not in $\mathcal{H}^{b}$. We estimate contributions of each
$\upsilon_{n}$ and $\check{\upsilon}_{n}$ as in the statement of
Proposition \ref{zetasum}.

\begin{proof}[Proof of Proposition \ref{zetasum}]

To lighten notation, write $\mathcal{H}=\mathcal{H}^{b}$ for the
sub-groupoid of $\left\langle b\right\rangle $-germs, $o=1^{\infty}$
, $G=G_{\omega}$ and $d=d_{\mathcal{S}}$. Let $\left(\boldsymbol{\gamma},\boldsymbol{\epsilon}\right)$
be a random variable with uniform distribution on the set $\varLambda_{n}$
defined in (\ref{eq:lambdan}) and write $\boldsymbol{\gamma}^{\boldsymbol{\epsilon}}=\gamma_{n}^{\epsilon_{n}}\ldots\gamma_{1}^{\epsilon_{1}}$.
Then by definition of the measure $\upsilon_{n}$ in (\ref{eq:zetan}),
\[
\sum_{x\in o\cdot G}f\left(d(o,x)\right)\upsilon_{n}\left(\left\{ g\in G:\ (g,x)\notin\mathcal{H}\right\} \right)=\mathbb{E}\left(\sum_{x\in o\cdot G}f\left(d\left(o,x\right)\right)\mathbf{1}_{\left\{ \left(\boldsymbol{\gamma}^{\boldsymbol{\epsilon}},x\right)\notin\mathcal{H}\right\} }\right),
\]
where expectation is taken with respect to uniform distribution on
$\varLambda_{n}$. Recall the multiplication rule in the groupoid
of germs:
\[
\left(\boldsymbol{\gamma}^{\boldsymbol{\epsilon}},x\right)=\left(\gamma_{n}^{\epsilon_{n}},x\right)\left(\gamma_{n-1}^{\epsilon_{n-1}},x\cdot\gamma_{n}^{\epsilon_{n}}\right)\ldots\left(\gamma_{1}^{\epsilon_{1}},x\cdot\gamma_{n}^{\epsilon_{n}}\ldots\gamma_{2}^{\epsilon_{2}}\right).
\]
Since $\mathcal{H}$ is a sub-groupoid, it follows that the indicator
is bounded by
\[
\mathbf{1}_{\left\{ \left(\boldsymbol{\gamma}^{\boldsymbol{\epsilon}},x\right)\notin\mathcal{H}\right\} }\le\sum_{i=0}^{n-1}\mathbf{1}_{\left\{ \left(\gamma_{n-i}^{\epsilon_{n-i}},x\cdot\gamma_{n}^{\epsilon_{n}}\ldots\gamma_{n-i+1}^{\epsilon_{n-i+1}}\right)\notin\mathcal{H}\right\} }.
\]
Therefore 
\begin{align*}
 & \mathbb{E}\left(\sum_{x\in o\cdot G}f\left(d\left(o,x\right)\right)\mathbf{1}_{\left\{ \left(\boldsymbol{\gamma}^{\boldsymbol{\epsilon}},x\right)\notin\mathcal{H}\right\} }\right)\\
\le & \sum_{i=0}^{n-1}\mathbb{E}\left(\sum_{x\in o\cdot G}f\left(d\left(o,x\right)\right)\mathbf{1}_{\left\{ \left(\gamma_{n-i}^{\epsilon_{n-i}},x\cdot\gamma_{n}^{\epsilon_{n}}\ldots\gamma_{n-i+1}^{\epsilon_{n-i+1}}\right)\notin\mathcal{H}\right\} }\right)\\
= & \frac{1}{2}\sum_{i=0}^{n-1}\mathbb{E}\left(\sum_{x\in o\cdot G}f\left(d\left(o,x\cdot\left(\gamma_{n}^{\epsilon_{n}}\ldots\gamma_{n-i+1}^{\epsilon_{n-i+1}}\right)^{-1}\right)\right)\mathbf{1}_{\left\{ \left(\gamma_{n-i},x\right)\notin\mathcal{H}\right\} }\right)\\
= & \frac{1}{2}\sum_{j=1}^{n}\mathbb{E}\left(\sum_{x\in o\cdot G}f\left(d\left(o,x\cdot\gamma_{j+1}^{-\epsilon_{j+1}}\ldots\gamma_{n}^{-\epsilon_{n}}\right)\right)\mathbf{1}_{\left\{ \left(\gamma_{j},x\right)\notin\mathcal{H}\right\} }\right).
\end{align*}
Similarly, for $\check{\upsilon}_{n}$ we have 
\begin{align*}
 & \sum_{x\in o\cdot G}f\left(d(o,x)\right)\check{\upsilon}_{n}\left(\left\{ g\in G:\ (g,x)\notin\mathcal{H}_{x}\right\} \right)\\
= & \mathbb{E}\left(\sum_{x\in o\cdot G}f\left(d\left(o,x\right)\right)\mathbf{1}_{\left\{ \left(\gamma_{1}^{-\epsilon_{1}}\ldots\gamma_{n}^{-\epsilon_{n}},x\right)\notin\mathcal{H}\right\} }\right)\\
\le & \sum_{i=0}^{n-1}\mathbb{E}\left(\sum_{x\in o\cdot G}f\left(d\left(o,x\right)\right)\mathbf{1}_{\left\{ \left(\gamma_{i+1}^{-\epsilon_{i+1}},x\cdot\gamma_{1}^{-\epsilon_{1}}\ldots\gamma_{i}^{-\epsilon_{i}}\right)\notin\mathcal{H}\right\} }\right)\\
= & \frac{1}{2}\sum_{j=1}^{n}\mathbb{E}\left(\sum_{x\in o\cdot G}f\left(d\left(o,x\cdot\gamma_{j-1}^{\epsilon_{j-1}}\ldots\gamma_{1}^{\epsilon_{1}}\right)\right)\mathbf{1}_{\left\{ \left(\gamma_{j}^{-1},x\right)\notin\mathcal{H}\right\} }\right).
\end{align*}
Recall that by the definition of $\mathfrak{F}_{n}$ in (\ref{eq:ucube}),
only for those levels $j$ such that $\omega_{j-1}={\bf 2}$, an element
$\gamma_{j}\in\mathfrak{F}_{j,n}$ has germs not in $\mathcal{H}$.
Indeed for levels $j$ with $\omega_{j-1}\neq{\bf 2}$, $\mathfrak{F}_{j,n}$
consists of a single element $g_{j}$, which has germs in $\mathcal{H}$,
see the formula (\ref{eq:gngeneral}) for $g_{j}$ and the remark
following it. Therefore the summation is over $1\le j\le n$ with
$\omega_{j-1}={\bf 2}$. We split the sums into $j\le n-k_{n}$ and
$n-k_{n}<j\le n$. 

For $n-k_{n}<j\le n$ such that $\omega_{j-1}={\bf 2}$, by definition
of the set $\mathfrak{F}_{j,n}$ in (\ref{eq:Fjn}), an element $\gamma_{j}\in\mathfrak{F}_{j,n}$
is of the form 
\[
\gamma_{j}=\tilde{g}_{j}^{v}=\zeta_{\omega_{0}}\circ\ldots\circ\zeta_{\omega_{j-1}}\left(a\mathfrak{c}_{j}^{v}\right).
\]
where $v=1^{D-\bar{j}}u1^{m+1}0$, $u\in\mathsf{W}_{2k_{n}}^{j+D-\bar{j}}$,
$m=m_{\ell(j,2k_{n})}\in\{0,1,\ldots,D-3\}$ and $\left|1^{\infty}\wedge v\right|\ge n-j+D$.
It follows from Lemma \ref{ckgerm} that the collection of points
which carry germs not in $\mathcal{H}$ is exactly
\begin{align}
B(j,v): & =\left\{ x:\ \left(\tilde{g}_{j}^{v},x\right)\notin\mathcal{H}\right\} \nonumber \\
 & =\left\{ x:\ x=x_{1}\ldots x_{j+1}v_{2}\ldots v_{D-\bar{j}+2k_{n}}1^{m+2}01^{\infty}:\ j+1+\sum_{i=1}^{j+1}x_{i}\mbox{ is odd}\right\} .\label{eq:location}
\end{align}
In particular, the cardinality of this set is $2^{j}$ and on the
Schreier graph the distance from a point in this set to $1^{\infty}$
is comparable to $2^{j+2k_{n}}$. 

\begin{lem}\label{invertedist}

Let $n-k_{n}<j\le n$ be a level such that $\omega_{j-1}={\bf 2}$
and $\tilde{g}_{j}^{v}\in\mathfrak{F}_{j,n}$. Suppose $f:\mathbb{R}\to\mathbb{R}$
is a non-increasing function. Let $\left(\boldsymbol{\gamma},\boldsymbol{\epsilon}\right)$
be a random variable with uniform distribution on the set $\varLambda_{n}$
defined in (\ref{eq:lambdan}). 
\begin{description}
\item [{(i)}] Let $x$ be a point such that $\left(\tilde{g}_{j}^{v},x\right)\notin\mathcal{H}$,
then 
\[
\mathbb{E}\left(f\left(d\left(o,x\cdot\gamma_{j+1}^{-\epsilon_{j+1}}\ldots\gamma_{n}^{-\epsilon_{n}}\right)\right)\right)\le f\left(2^{j+2k_{n}}\right)+f\left(2^{n+D}\right)k_{n}^{1+\frac{2}{D}}2^{-\frac{1}{D}\left(j+2k_{n}-n\right)}.
\]
\item [{(ii)}] Let $x$ be a point such that $\left(\left(\tilde{g}_{j}^{v}\right)^{-1},x\right)\notin\mathcal{H}$,
then 
\[
\mathbb{E}\left(f\left(d\left(o,x\cdot\gamma_{j-1}^{\epsilon_{j-1}}\ldots\gamma_{1}^{\epsilon_{1}}\right)\right)\right)\le f\left(2^{j+2k_{n}}\right)+f\left(2^{n}\right)k_{n}^{1+\frac{2}{D}}2^{-\frac{1}{D}\left(j+2k_{n}-n\right)}.
\]
\end{description}
\end{lem}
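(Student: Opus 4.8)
The statement of Lemma \ref{invertedist} is a concentration estimate: we want to bound the expectation of $f$ evaluated at the Schreier-graph distance from $o$ to a point $x$ that has been translated by the random tail word $\gamma_{j+1}^{-\epsilon_{j+1}}\ldots\gamma_{n}^{-\epsilon_{n}}$ (resp. the random head word $\gamma_{j-1}^{\epsilon_{j-1}}\ldots\gamma_{1}^{\epsilon_{1}}$). The key point is that $x$ is one of the $2^j$ bad locations from (\ref{eq:location}), so $d(o,x)$ is already comparable to $2^{j+2k_n}$; the random translation moves $x$ by a controlled amount, and the displacement tail estimates of Lemma \ref{modifiedtail} say this displacement is typically no larger than $2^{j+2k_n}$ and only rarely as large as roughly $2^{n+2k_n}$. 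Since $f$ is non-increasing, $f(d(o,x'))$ is large only when $x'$ lands very close to $o$, which forces the random word to produce a displacement of size at least $d(o,x)\asymp 2^{j+2k_n}$; on that rare event we use the crude bound $f(0)$-type estimate $f(2^{n+D})$ (resp. $f(2^n)$), weighted by the probability of the event. Concretely I would write
\[
\mathbb{E}\bigl(f(d(o,x'))\bigr)\le f\bigl(2^{j+2k_n}\bigr)\;+\;f(2^{n+D})\,\mathbb{P}\bigl(d(o,x')< 2^{j+2k_n}\bigr),
\]
where $x'=x\cdot\gamma_{j+1}^{-\epsilon_{j+1}}\ldots\gamma_{n}^{-\epsilon_{n}}$: the first term handles the event that $x'$ stays at distance $\ge 2^{j+2k_n}$ from $o$ (monotonicity of $f$), and the second bounds the contribution of the complementary rare event by the maximal value $f(2^{n+D})$ of $f$ on the relevant range of distances, times the probability of that event.

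\textbf{Key steps.} First I would record that for the bad location $x$ in (\ref{eq:location}) we have $d(o,x)\ge c\,2^{j+2k_n}$, so that by the triangle inequality $d(o,x')< 2^{j+2k_n}$ can only happen if the random tail word $\gamma_{j+1}^{-\epsilon_{j+1}}\ldots\gamma_{n}^{-\epsilon_{n}}$ displaces $x$ by at least a comparable amount, i.e. $d(x,x')\ge c'2^{j+2k_n}$. Second, I would bound this displacement probability: the tail word only involves $\gamma_i$ with $i>j$, and for $i\le n-k_n$ these are the single deterministic elements $g_i$ contributing total displacement at most $2^{n-k_n+3}$ by the triangle inequality, while for $n-k_n< i\le n$ the elements $\gamma_i$ are uniform over $\mathfrak{F}_{i,n}$, so part (i) of Lemma \ref{modifiedtail} (summed over the at most $k_n$ relevant indices, exactly as in part (ii) of that lemma) gives $\mathbb{P}(d(x,x')\ge 2^\ell)\le C k_n 2^{-\frac1D(\ell-n)}$ for $\ell$ in the admissible range. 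Third, setting $2^\ell\asymp 2^{j+2k_n}$ turns this into $C k_n 2^{-\frac1D(j+2k_n-n)}$; folding in an extra factor $k_n^{2/D}$ to absorb the logarithmic slack coming from the $k_n 2^\ell$ normalization in Lemma \ref{modifiedtail}(ii) (recall $k_n=A\lfloor\log_2 n\rfloor$, so $\log_2 k_n$ is negligible and contributes only powers of $k_n$) yields the claimed bound $f(2^{n+D})k_n^{1+2/D}2^{-\frac1D(j+2k_n-n)}$. Finally, combining with the monotonicity term $f(2^{j+2k_n})$ finishes part (i). Part (ii) is the mirror image: the head word $\gamma_{j-1}^{\epsilon_{j-1}}\ldots\gamma_1^{\epsilon_1}$ involves only indices $i<j$, which (since $j>n-k_n$) are all $\le n$ and in fact all but at most $k_n$ of them are deterministic, so the same displacement estimate applies, except the crude value of $f$ on the rare event is now $f(2^n)$ rather than $f(2^{n+D})$ because the head word translates within a region of diameter $\lesssim 2^n$ around $x$ (the total word length built from $g_1,\ldots,g_{j-1}$ and the conjugated elements is controlled by $2^{n}$); here one uses that $x'$ cannot be pushed closer to $o$ than roughly $2^n$ unless a large displacement occurs.

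\textbf{Main obstacle.} The routine calculations (triangle inequalities, summing geometric tails over $k_n$ indices) are straightforward given Lemma \ref{modifiedtail}. The delicate point is the bookkeeping of the exponent: one must check that after substituting $2^\ell\asymp 2^{j+2k_n}$ into the bound of Lemma \ref{modifiedtail}, the exponent $\ell$ indeed falls inside the admissible window $n\le\ell\le n+2k_n+2D+5$ for which that lemma is valid — this requires $j+2k_n\ge n$, i.e. $j\ge n-2k_n$, which is slightly weaker than the hypothesis $j> n-k_n$ and hence holds, but the edge cases where $j+2k_n$ is only marginally above $n$ (so that $2^{-\frac1D(j+2k_n-n)}$ is of order $1$) need the bound to degrade gracefully to something still summable after multiplication by the Green function in the proof of Theorem \ref{stabilization}. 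The other subtle point is justifying that on the "good" event the distance $d(o,x')$ stays $\ge 2^{j+2k_n}$: this uses that the deterministic part of the random word (the $g_i$ with $i\le n-k_n$ in part (i), resp.\ $i<j$ in part (ii)) contributes a displacement that is smaller than $d(o,x)$ by a definite factor, so that monotonicity of $f$ can be invoked with the argument $2^{j+2k_n}$ up to absolute constants absorbed into the constant $C=C(D')$; I would make this precise by choosing the implied constants in (\ref{eq:location}) carefully and noting $n-k_n+3 < j+2k_n$ since $j>n-k_n$ and $k_n\ge 1$.
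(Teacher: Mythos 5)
Your overall decomposition is the same as the paper's: split according to whether the random word displaces $x$ by more or less than $\tfrac12 d(o,x)\asymp 2^{j+2k_n}$, use monotonicity of $f$ on the small-displacement event, and control the probability of the large-displacement event via Lemma \ref{modifiedtail}. The exponent bookkeeping you describe is also essentially what the paper does. However, there is a genuine gap at the one step that is the actual content of the lemma: the justification of the factor $f\left(2^{n+D}\right)$ (resp.\ $f\left(2^{n}\right)$) on the rare event. You describe it as ``the maximal value of $f$ on the relevant range of distances,'' but you never establish what that range is. Since $f$ is merely non-increasing (and in the application it blows up polynomially as its argument decreases), bounding $f\left(d\left(o,x'\right)\right)$ by $f\left(2^{n+D}\right)$ on the large-displacement event requires the \emph{deterministic} statement that $d\left(o,x\cdot\gamma_{j+1}^{-\epsilon_{j+1}}\ldots\gamma_{n}^{-\epsilon_{n}}\right)\ge 2^{n+D+1}$ for \emph{every} realization of $\left(\boldsymbol{\gamma},\boldsymbol{\epsilon}\right)$; the triangle inequality cannot give this, because the maximal displacement of the tail word is of order $2^{n+2k_n}$, far larger than $2^{n+D}$. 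The paper proves this lower bound by a structural digit-preservation argument: a bad location $x\in B(j,v)$ from (\ref{eq:location}) carries a digit $0$ at a position $n+D+q_0$ with $q_0\ge 2$, and one checks — using Fact \ref{same}, the form of the sections of the $\gamma_\ell$ at the length-$(n+D+1)$ prefix of $x$ (all lying in $\{id,b_{\mathfrak{s}^{n+D+1}\omega}\}\cup\{\mathfrak{c}^{u}_{\mathfrak{s}^{n+D+1}\omega}\}$), and the fact that every such section fixes the vertex $1^{q_0-2}0$ — that this $0$ survives the entire random translation, forcing $d(o,x')\ge 2^{n+D+1}$. This argument is absent from your proposal, and without the resulting factor $f\left(2^{n+D}\right)\approx 2^{-(n+D)(1-\beta)}$ the final summation in Theorem \ref{stabilization} would diverge, so the gap is consequential rather than cosmetic.

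A secondary error: your justification of $f\left(2^{n}\right)$ in part (ii), namely that ``the head word translates within a region of diameter $\lesssim 2^{n}$,'' is false. The head word contains the randomized elements $\gamma_i=\tilde{g}_i^{u}$ for $n-k_n<i<j$, whose displacement by Lemma \ref{modifi} can be as large as $2^{i+2k_n+2D+4}\gg 2^{n}$. The correct justification is again the digit-preservation argument applied to the points with $\left(\left(\tilde{g}_{j}^{v}\right)^{-1},x\right)\notin\mathcal{H}$, which the paper indicates is a repetition of the argument for (i).
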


\begin{proof}[Proof of Lemma \ref{invertedist}]

(i). Let $x$ be a point from the set $B(j,v)=\left\{ x:\ \left(\tilde{g}_{j}^{v},x\right)\notin\mathcal{H}\right\} $
as described in (\ref{eq:location}). Let $q_{0}$ be the position
of the first occurrence of $0$ in $\mathfrak{s}^{n+D}x$. Note that
by (\ref{eq:location}), $2\le q_{0}\le D-\bar{j}+2k_{n}+D$. 

We first claim that for all $m\in\{j+1,\ldots,n\}$, in $x\cdot\gamma_{j+1}^{-\epsilon_{j+1}}\ldots\gamma_{m}^{-\epsilon_{m}}$
the substring in positions $n+D+1$ to $n+D+q_{0}$ remains $1^{q_{0}-1}0$.
To see this, denote by $z$ the length $n+D+1$ prefix of $x$, $z=x_{1}\ldots x_{n+D+1}$,
and consider the section at $z$:
\begin{equation}
\left(\gamma_{j+1}^{-\epsilon_{j+1}}\ldots\gamma_{m}^{-\epsilon_{m}}\right)_{z}=\left(\gamma_{j+1}^{-\epsilon_{j+1}}\right)_{z}\left(\gamma_{j+2}^{-\epsilon_{j+2}}\right)_{z\cdot\gamma_{j+1}^{-\epsilon_{j+1}}}\ldots\left(\gamma_{m}^{-\epsilon_{m}}\right)_{z\cdot\gamma_{j+1}^{-\epsilon_{j+1}}\ldots\gamma_{m-1}^{-\epsilon_{m-1}}}.\label{eq:sectionpro}
\end{equation}
Note that by (\ref{eq:location}), $z$ has $1^{n+D-j}$ as suffix.
Recall that by Fact \ref{same} the action of $\gamma_{\ell}$ is
the same as $g_{\ell}$ on the finite level $\mathsf{L}_{n+D+1}$.
By definitions of the elements $g_{\ell}$ in (\ref{eq:gngeneral})
and Lemma \ref{translate} we have that $d(y,y\cdot g_{\ell})\le2^{\ell+2}$
for any $y\in\mathsf{L}_{n+D+1}$. It follows that 
\[
d\left(z,z\cdot\gamma_{j+1}^{-\epsilon_{j+1}}\ldots\gamma_{\ell}^{-\epsilon_{\ell}}\right)\le2^{j+3}+\ldots+2^{\ell+2}\le2^{\ell+3}-1.
\]
Therefore 
\begin{align*}
d\left(1^{n+D+1},z\cdot\gamma_{j+1}^{-\epsilon_{j+1}}\ldots\gamma_{\ell}^{-\epsilon_{\ell}}\right) & \le d\left(1^{n+D+1},z\right)+d\left(z,z\cdot\gamma_{j+1}^{-\epsilon_{j+1}}\ldots\gamma_{\ell}^{-\epsilon_{\ell}}\right)\\
 & \le2^{j+1}-1+2^{\ell+3}-1.
\end{align*}
It follows in particular since $D\ge3$, for $\ell\le n$, the $(n+D+1)$-th
digit of $z\cdot\gamma_{j+1}^{-\epsilon_{j+1}}\ldots\gamma_{\ell}^{-\epsilon_{\ell}}$
remains $1$. From the description of sections of $\gamma_{\ell}^{-1}$
we have that in the product (\ref{eq:sectionpro}) every factor belongs
to the set 
\[
\{id,b_{\mathfrak{s}^{n+D+1}\omega}\}\cup\left\{ \mathfrak{c}_{\mathfrak{s}^{n+D+1}\omega}^{u}\right\} ,
\]
where $\mathfrak{c}_{\mathfrak{s}^{n+D+1}\omega}^{u}$ is the section
at $1$ of $\mathfrak{c}_{\mathfrak{s}^{n+D}\omega}^{u}$, that is
$\mathfrak{c}_{\mathfrak{s}^{n+D}\omega}^{u}=(\omega_{n+D}(c),\mathfrak{c}_{\mathfrak{s}^{n+D+1}\omega}^{u})$,
$u\in\mathsf{W}_{k}^{n+D}$ for some $k$. Action of any element from
this set of possible sections fixes the vertex $1^{q_{0}-2}0$. The
claim follows. 

By the claim shown in the previous paragraph, the digit $0$ at the
$(n+D+q_{0})$-th position ensures that for any $(\boldsymbol{\gamma},\boldsymbol{\epsilon})$
and any point $x$ such that $\left(\tilde{g}_{j}^{v},x\right)\notin\mathcal{H},$
\[
d\left(o,x\cdot\gamma_{j+1}^{-\epsilon_{j+1}}\ldots\gamma_{n}^{-\epsilon_{n}}\right)\ge2^{n+D+1}.
\]
Denote by $A_{j}(x)$ the event
\[
A_{j}(x)=\left\{ (\mathbf{\boldsymbol{\gamma}},\boldsymbol{\epsilon}):\ d\left(x,x\cdot\gamma_{j+1}^{-\epsilon_{j+1}}\ldots\gamma_{n}^{-\epsilon_{n}}\right)\ge\frac{1}{2}d(o,x)\right\} .
\]
Since $f$ is assumed to be non-increasing, we have
\begin{align*}
 & \mathbb{E}\left(f\left(d\left(o,x\cdot\gamma_{j+1}^{-\epsilon_{j+1}}\ldots\gamma_{n}^{-\epsilon_{n}}\right)\right)\right)\\
= & \mathbb{E}\left(f\left(d\left(o,x\cdot\gamma_{j+1}^{-\epsilon_{j+1}}\ldots\gamma_{n}^{-\epsilon_{n}}\right)\right)\mathbf{1}_{A_{j}(x)^{c}}\right)+\mathbb{E}\left(f\left(d\left(o,x\cdot\gamma_{j+1}^{-\epsilon_{j+1}}\ldots\gamma_{n}^{-\epsilon_{n}}\right)\right)\mathbf{1}_{A_{j}(x)}\right)\\
\le & f\left(\frac{1}{2}d\left(o,x\right)\right)+f\left(2^{n+D}\right)\mathbb{P}\left(A_{j}(x)\right).
\end{align*}
Since $d(o,x)\ge2^{j+2k_{n}+2}$, we have
\begin{align*}
\mathbb{P}\left(A_{j}(x)\right) & \le\mathbb{P}\left(d\left(x,x\cdot\gamma_{j+1}^{-\epsilon_{j+1}}\ldots\gamma_{n}^{-\epsilon_{n}}\right)\ge2^{j+2k_{n}}\right)\\
 & \le\sum_{\ell=j+1}^{n}\mathbb{P}\left(d\left(x\cdot\gamma_{j+1}^{-\epsilon_{j+1}}\ldots\gamma_{\ell}^{-\epsilon_{\ell}},x\cdot\gamma_{j+1}^{-\epsilon_{j+1}}\ldots\gamma_{\ell+1}^{-\epsilon_{\ell+1}}\right)\ge\frac{1}{n-j}2^{j+2k_{n}}\right)\\
 & \le(n-j)2^{-\frac{1}{D}\left(j+2k_{n}-n-\log_{2}(n-j)-\log_{2}k_{n}\right)}.
\end{align*}
In the last step we applied Lemma \ref{modifiedtail}. We have proved
(i).

(ii). The proof of (ii) is similar, with $\gamma_{j+1}^{-\epsilon_{j+1}}\ldots\gamma_{m}^{-\epsilon_{m}}$
replaced by $\gamma_{m}^{\epsilon_{m}}\ldots\gamma_{1}^{\epsilon_{1}}$.
We omit the repetition. 

\end{proof}

We now return to the proof of Proposition \ref{zetasum}. For $1\le j\le n-k_{n}$
and $\omega_{j-1}={\bf 2}$, $\gamma_{j}=g_{j}$, we have
\[
B_{j}=\{x:\ (g_{j},x)\notin\mathcal{H}\}=\left\{ x:\ x_{1}\ldots x_{j+1}1^{\infty},\ j+1+\sum_{i=1}^{j+1}x_{i}\mbox{ is odd}\right\} .
\]
Since $\gamma_{j}=g_{j}$ in this case, we can write
\begin{align*}
 & \mathbb{E}\left(\sum_{x\in o\cdot G}f\left(d\left(o,x\cdot\gamma_{j+1}^{-\epsilon_{j+1}}\ldots\gamma_{n}^{-\epsilon_{n}}\right)\right)\mathbf{1}_{\left\{ \left(\gamma_{j},x\right)\notin\mathcal{H}\right\} }\right)\\
= & \sum_{x:(g_{j},x)\notin\mathcal{H}}\mathbb{E}\left(f\left(d\left(o,x\cdot\gamma_{j+1}^{-\epsilon_{j+1}}\ldots\gamma_{n}^{-\epsilon_{n}}\right)\right)\right).
\end{align*}
Note that if we choose ${\bf x}$ uniform from $B_{j}$ and $(\boldsymbol{\gamma},\boldsymbol{\epsilon})$
uniform from $\varLambda_{n}$, then in ${\bf x}\cdot\gamma_{j+1}^{-\epsilon_{j+1}}\ldots\gamma_{n}^{-\epsilon_{n}}$
the distribution of the first $n$ digits is uniform on $\{0,1\}^{n}$.
Therefore 
\begin{align*}
 & \sum_{x:(g_{j},x)\notin\mathcal{H}}\mathbb{E}\left(f\left(d\left(o,x\cdot\gamma_{j+1}^{-\epsilon_{j+1}}\ldots\gamma_{n}^{-\epsilon_{n}}\right)\right)\right)\\
\le & 2^{j}\mathbb{E}_{{\bf u}_{\mathsf{L}_{n}}}\left(f\left(d(1^{n},{\bf z})\right)\right)\le2^{j}\sum_{i=0}^{n}f(2^{n-i})2^{-i}.
\end{align*}
For $n-k_{n}<j\le n$, use (i) in Lemma \ref{invertedist}. Combining
these two parts, we obtain 
\begin{align*}
 & \sum_{j=1}^{n}\mathbb{E}\left(\sum_{x\in o\cdot G}f\left(d\left(o,x\cdot\gamma_{j+1}^{-\epsilon_{j+1}}\ldots\gamma_{n}^{-\epsilon_{n}}\right)\right)\mathbf{1}_{\left\{ \left(\gamma_{j},x\right)\notin\mathcal{H}\right\} }\right)\\
\le & \sum_{j=1}^{n-k_{n}}2^{j}\sum_{i=0}^{n}f(2^{n-i})2^{-i}+\sum_{j=n-k_{n}+1}^{n}2^{j}\left(f\left(2^{j+2k_{n}}\right)+f\left(2^{n}\right)k_{n}^{1+\frac{2}{D}}2^{-\frac{1}{D}\left(j+2k_{n}-n\right)}\right)\\
\le & C2^{n-k_{n}}f(2^{n})+C\sum_{j=n-k_{n}+1}^{n}2^{j}f\left(2^{j+2k_{n}}\right)\le C'2^{n}f\left(2^{n+2k_{n}}\right).
\end{align*}
In the last line we applied the assumption on $f$ that $f(2x)\ge2^{-1/D'}f(x)$
and $D'>D\ge3$. 

The calculation for $\check{\upsilon}_{n}$ is similar. For $1\le j\le n-k_{n}$,
use the fact that the first $(j-1)$-digits of $x\cdot\gamma_{1}^{\epsilon_{1}}\ldots\gamma_{j-1}^{\epsilon_{j-1}}$
is uniform, we have 
\begin{align*}
 & \mathbb{E}\left(\sum_{x\in o\cdot G}f\left(d\left(o,x\cdot\gamma_{1}^{\epsilon_{1}}\ldots\gamma_{j-1}^{-\epsilon_{j-1}}\right)\right)\mathbf{1}_{\left\{ \left(\gamma_{j}^{-1},x\right)\notin\mathcal{H}\right\} }\right)\\
\le2^{j} & \mathbb{E}_{{\bf u}_{\mathsf{L}_{j}}}\left(f\left(d\left(o,{\bf z}\right)\right)\right)\le2^{j}\sum_{i=0}^{j}f(2^{j-i})2^{-i}.
\end{align*}
Combine with (ii) in Lemma \ref{invertedist}, we conclude that 

\begin{align*}
 & \sum_{j=1}^{n}\mathbb{E}\left(\sum_{x\in o\cdot G}f\left(d_{\mathcal{S}}\left(o,x\cdot\gamma_{j-1}^{\epsilon_{j-1}}\ldots\gamma_{1}^{\epsilon_{1}}\right)\right)\mathbf{1}_{\left\{ \left(\gamma_{j}^{-1},x\right)\notin\mathcal{H}\right\} }\right).\\
\le & \sum_{j=1}^{n-k_{n}-1}2^{j}\sum_{i=0}^{j}f(2^{j-i})2^{-i}++\sum_{j=n-k_{n}}^{n}2^{j}\left(f\left(2^{j+2k_{n}}\right)+f\left(2^{n}\right)k_{n}^{1+\frac{2}{D}}2^{-\frac{1}{D}\left(j+2k_{n}-n\right)}\right)\\
\le & C2^{n-k_{n}}f(2^{n-k_{n}})+C\sum_{j=n-k_{n}}^{n}2^{j}f\left(2^{j+2k_{n}}\right)\le C'2^{n}f\left(2^{n+2k_{n}}\right).
\end{align*}
In the last line we applied the assumption on $f$ that $f(2x)\ge2^{-1/D'}f(x)$
and $D'>D\ge3$. 

\end{proof}

\section{Applications to volume lower estimates of Grigorchuk groups\label{sec:maingrowth}}

In this section we derive volume lower estimate for $G_{\omega}$
from Theorem \ref{stabilization}, where $\omega$ satisfies Assumption
$({\rm Fr}(D))$ as defined in Notation \ref{D}. Throughout the section
we use notations for $G_{\omega}$ as reviewed in Subsection \ref{subsec:Grigorchuk-def}.

Let $\omega$ be a string that satisfies Assumption $({\rm Fr}(D))$
and $\mu_{\beta}$ be a measure with non-trivial Poisson boundary
as in Theorem \ref{stabilization}. The construction of $\mu_{\beta}$
is in Subsection \ref{subsec:elements}. Recall that $\mu_{\beta}$
is built from measures $\upsilon_{n}$, $n\in D\mathbb{N}$. We bound
the length of elements in the support of $\upsilon_{n}$. Let $M_{i}$
be the matrix associated with the substitution $\zeta_{i}$, $i\in\{{\bf 0},{\bf 1},{\bf 2}\}$,
such that ${\bf l}(\zeta_{i}(\omega))=M_{i}{\bf l}(\omega)$ where
${\bf l}(\omega)$ is the column vector that records occurrence of
$ab,ac,ad$ in $\omega$. Explicitly
\[
M_{{\bf 0}}=\left[\begin{array}{ccc}
2 & 0 & 1\\
0 & 2 & 1\\
0 & 0 & 1
\end{array}\right],M_{{\bf 1}}=\left[\begin{array}{ccc}
2 & 1 & 0\\
0 & 1 & 0\\
0 & 1 & 2
\end{array}\right],M_{{\bf 2}}=\left[\begin{array}{ccc}
1 & 0 & 0\\
1 & 2 & 0\\
1 & 0 & 2
\end{array}\right].
\]
Define the number $L_{n}^{\omega}$ to be
\begin{equation}
L_{n}^{\omega}:=\left(\begin{array}{ccc}
1 & 1 & 1\end{array}\right)M_{\omega_{0}}\ldots M_{\omega_{n-1}}\left(\begin{array}{c}
1\\
1\\
1
\end{array}\right).\label{eq:Ln}
\end{equation}

\begin{lem}\label{lengthv}

Let $\upsilon_{n}$ be defined as in (\ref{eq:zetan}) and $L_{n}^{\omega}$
be as in (\ref{eq:Ln}). Then 
\[
\sup\{|g|:g\in{\rm supp}\upsilon_{n}\}\le2^{2k_{n}+2D+4}L_{n}^{\omega}.
\]

\end{lem}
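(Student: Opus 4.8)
The statement to prove bounds the word length of any element in the support of $\upsilon_{n}$. An element $g \in \operatorname{supp}\upsilon_{n}$ is by construction (see the map $\theta_n$ in Subsection~\ref{subsec:elements}) of the form $g = \gamma_{n}^{\epsilon_{n}}\ldots\gamma_{1}^{\epsilon_{1}}$ with $\gamma_j \in \mathfrak{F}_{j,n}$ and $\epsilon_j \in \{0,1\}$. So by the triangle inequality it suffices to bound $|\gamma_j|$ for each $j$ and sum. I plan to split into the two cases defining $\mathfrak{F}_{j,n}$. For $j$ with $\omega_{j-1}\neq{\bf 2}$ or $j\le n-k_n$, we have $\mathfrak{F}_{j,n}=\{g_j\}$ with $g_j = \zeta_{\omega_0}\circ\ldots\circ\zeta_{\omega_{n-1}}(a\gamma_{\mathfrak{s}^n\omega})$ for $\gamma\in\{b,c\}$; the length of $g_j$ is controlled by the matrix product $M_{\omega_0}\ldots M_{\omega_{j-1}}$ applied to the length vector of the short word $a\gamma_{\mathfrak{s}^j\omega}$ (whose entries are each at most $1$), hence $|g_j| \le (1\ 1\ 1) M_{\omega_0}\ldots M_{\omega_{j-1}}(1\ 1\ 1)^{T} \le L_n^\omega$ (using that all entries of the matrices $M_i$ are nonnegative, so the partial products are dominated entrywise by the full product). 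For $j$ with $\omega_{j-1}={\bf 2}$ and $n-k_n<j\le n$, an element is $\tilde g_j^v = \zeta_{\omega_0}\circ\ldots\circ\zeta_{\omega_{j-1}}(a\mathfrak{c}_j^v)$; here I need to bound the length of $a\mathfrak{c}_j^v$ as a word in $\{ab,ac,ad\}^\ast$ (or its $a$-shifted version).

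\textbf{Bounding $a\mathfrak{c}_j^v$.} By definition $\mathfrak{c}_j^v$ is a conjugate of $c_{\mathfrak{s}^j\omega}$ by $h_1^v\ldots h_{k'}^v$, where each nontrivial $h_i^v$ is $\iota([b_{\mathfrak{s}^{j+i-2}\omega},a],v_1\ldots v_{i-2})$, and by Fact~\ref{rigid} such an element has length at most $2^{(i-2)+2}=2^{i}$ in $G_{\mathfrak{s}^j\omega}$. Here $v\in\mathsf{V}_{2k_n}^{j}$ has length $k' = D-\bar j + 2k_n + m_{\ell}+3 \le 2k_n + 2D$. So $|h_1^v\ldots h_{k'}^v| \le \sum_{i=1}^{k'} 2^{i} \le 2^{k'+1} \le 2^{2k_n+2D+1}$, whence $|a\mathfrak{c}_j^v| \le 1 + 2|h_1^v\ldots h_{k'}^v| + 1 \le 2^{2k_n+2D+3}$. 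This word, as an element of $G_{\mathfrak{s}^j\omega}$ representable in $\{ab,ac,ad\}^\ast$ (Lemma~\ref{lemgj}), has length vector ${\bf l}$ with $(1\ 1\ 1){\bf l} \le 2^{2k_n+2D+3}$. Applying the substitution matrices, $|\tilde g_j^v| \le (1\ 1\ 1) M_{\omega_0}\ldots M_{\omega_{j-1}}{\bf l} \le 2^{2k_n+2D+3}\cdot(1\ 1\ 1)M_{\omega_0}\ldots M_{\omega_{j-1}}(1\ 1\ 1)^{T} \le 2^{2k_n+2D+3} L_n^\omega$ (again by entrywise nonnegativity and monotonicity in $j$ of the partial products).

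\textbf{Summation.} Putting the two cases together, $|g| \le \sum_{j=1}^{n} |\gamma_j| \le \sum_{j=1}^{n} 2^{2k_n+2D+3} L_n^\omega = n\, 2^{2k_n+2D+3} L_n^\omega$. This has an extra factor $n$ compared with the claimed bound $2^{2k_n+2D+4}L_n^\omega$, so a crude termwise triangle inequality is slightly too lossy; the fix is to sum the length vectors \emph{before} applying the substitution matrices. That is, write $g = \zeta_{\omega_0}\circ\ldots\circ\zeta_{\omega_{n-1}}(w)$ for a suitable word $w$ in the appropriate alphabet (the substitutions compose, and on the relevant sub-block the various $\gamma_j$ all live in $G_{\mathfrak{s}^n\omega}$ or its shifts), and bound ${\bf l}(w)$ by summing the contributions: each $g_j$ contributes a vector with entry-sum $\le 1$ at level $n$ (after pulling back through the intermediate substitutions the bound becomes the partial matrix product, but one can instead track everything at the top alphabet), and each $\tilde g_j^v$ contributes entry-sum $\le 2^{2k_n+2D+3}$. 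Since there are at most $k_n$ indices of the second type and $n$ of the first, and $n \le 2^{2k_n}$ for the parameter choice $k_n = A\lfloor\log_2 n\rfloor$ with $A$ large, one gets $(1\ 1\ 1){\bf l}(w) \le 2^{2k_n+2D+3} + n \le 2^{2k_n+2D+4}$, and then $|g| \le (1\ 1\ 1)M_{\omega_0}\ldots M_{\omega_{n-1}}{\bf l}(w) \le 2^{2k_n+2D+4}L_n^\omega$.

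\textbf{Main obstacle.} The only delicate point is the bookkeeping in this last paragraph: making precise how the word lengths of the factors $g_j$ and $\tilde g_j^v$ (which are images of short words under \emph{different} prefixes of the substitution sequence $\zeta_{\omega_0}\circ\ldots\circ\zeta_{\omega_{j-1}}$) combine into a single length-vector bound at the level-$n$ alphabet, and checking that the resulting constant is exactly $2^{2k_n+2D+4}$ rather than something with an additional polynomial-in-$n$ factor. I expect this to go through cleanly by using $n \le 2^{k_n}$, which absorbs the number of terms into the exponential. Everything else is monotonicity of nonnegative-matrix products and the explicit length estimate from Fact~\ref{rigid}.
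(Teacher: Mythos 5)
Your setup is the same as the paper's: decompose $g=\gamma_n^{\epsilon_n}\ldots\gamma_1^{\epsilon_1}$, bound each factor via Fact~\ref{rigid} (giving $|a\mathfrak{c}_j^v|\le 2^{2k_n+2D+O(1)}$) and the substitution matrices, and sum. You also correctly diagnose that bounding every $|\gamma_j|$ by a multiple of $L_n^{\omega}$ loses a factor of $n$. But your proposed repair does not work. You cannot rewrite $g$ as $\zeta_{\omega_0}\circ\ldots\circ\zeta_{\omega_{n-1}}(w)$ for a single word $w$ over the level-$n$ alphabet: the factor $\gamma_j$ is the image of a short word under only the first $j$ substitutions, and since each $\zeta_{\omega_i}$ strictly increases length, an element such as $g_1=\zeta_{\omega_0}(a\gamma_{\mathfrak{s}\omega})$ (of length $\le 4$) is not in the image of the full composition $\zeta_{\omega_0}\circ\ldots\circ\zeta_{\omega_{n-1}}$, whose nontrivial images have length at least $2^n$. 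So "summing the length vectors before applying the matrices" has no meaning here. Your fallback, absorbing the factor $n$ via $n\le 2^{2k_n}$, imports the specific choice $k_n=A\lfloor\log_2 n\rfloor$ with $A$ large; the lemma is stated for the general construction (and is used in Corollary~\ref{betatail} assuming only $k_n\le n$), so this would fail, e.g., for bounded $k_n$.

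The missing idea is elementary: bound $|\gamma_j|$ by a multiple of $L_j^{\omega}$ (the partial product up to level $j$), not $L_n^{\omega}$, and then use that every column of each $M_i$ has entry sum at least $2$, so $\left(1\ 1\ 1\right)M_i\ge 2\left(1\ 1\ 1\right)$ entrywise and hence $L_{j+1}^{\omega}\ge 2L_j^{\omega}$. The sum $\sum_{j=1}^{n}L_j^{\omega}$ is then dominated by a geometric series and is at most $2L_n^{\omega}$, which removes the factor of $n$ unconditionally. Concretely, $|g|\le\sum_{j\le n-k_n}2L_j^{\omega}+\sum_{n-k_n<j\le n}2^{2k_n+2D+2}\cdot 2L_j^{\omega}\le 2^{2k_n+2D+4}L_n^{\omega}$ (up to the paper's constant bookkeeping), with no assumption on $(k_n)$ beyond what the construction already imposes.
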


\begin{proof}

An element in the support of $\upsilon_{n}$ is of the form $g=\gamma_{n}^{\epsilon_{n}}\ldots\gamma_{1}^{\epsilon_{1}}$,
where $\gamma_{j}\in\mathfrak{F}_{j,n}$. For those indices $j$ such
that $\mathfrak{F}_{j,n}=\{g_{j}\}$, we have 
\[
\left|\gamma_{j}\right|\le\left(\begin{array}{ccc}
2 & 2 & 2\end{array}\right)M_{\omega_{0}}\ldots M_{\omega_{j-1}}\left(\begin{array}{c}
1\\
1\\
1
\end{array}\right).
\]
Otherwise $\gamma_{j}=\tilde{g}_{j}^{v}$ for some $v$ as in (\ref{eq:gjv}).
By definition of $\mathfrak{c}_{j}^{v}$ in (\ref{eq:ckv}) we have
\[
\left|\mathfrak{c}_{j}^{v}\right|{}_{G_{\mathfrak{s}^{j}\omega}}\le1+2\sum_{i=1}^{2k_{n}+2D}\left|h_{i}^{v}\right|{}_{G_{\mathfrak{s}^{j}\omega}},
\]
where $h_{j}^{v}$ is defined in (\ref{eq:hv}). The element $\iota\left(\left[b_{\mathfrak{s}^{j+i-2}\omega},a\right],v_{1}\ldots v_{i-2}\right)$
can be obtained by applying the substitutions $\sigma_{i}$. We have
$\left|h_{i}^{v}\right|{}_{G_{\mathfrak{s}^{j}\omega}}\le2^{i+1}$.
Therefore 
\[
\left|a\mathfrak{c}_{j}^{v}\right|{}_{G_{\mathfrak{s}^{j}\omega}}\le2+2\sum_{i=1}^{2k_{n}+2D}\left|h_{i}^{v}\right|{}_{G_{\mathfrak{s}^{j}\omega}}\le2^{2k_{n}+2D+2}.
\]
Since $\tilde{g}_{j}^{v}$ is obtained by applying substitutions $\zeta_{i}$
to $a\mathfrak{c}_{j}^{v}$, 
\[
\left|\tilde{g}_{j}^{v}\right|\le2^{2k_{n}+2D+2}\left(\begin{array}{ccc}
2 & 2 & 2\end{array}\right)M_{\omega_{0}}\ldots M_{\omega_{j-1}}\left(\begin{array}{c}
1\\
1\\
1
\end{array}\right).
\]
From the triangle inequality $|g|\le\sum_{j=1}^{n}|\gamma_{j}|$,
we have for $g\in{\rm supp}\upsilon_{n}$, 
\[
|g|\le\sum_{j<n-k_{n}}2L_{j}^{\omega}+\sum_{n-k_{n}\le j\le n}2^{2k_{n}+D+2}2L_{j}^{\omega}\le2^{2k_{n}+D+4}L_{n}^{\omega}.
\]

\end{proof}

We obtain a tail estimate of $\mu_{\beta}$ as an immediate corollary
of Lemma \ref{lengthv}.

\begin{cor}\label{betatail}

Let $\mu_{\beta}$ be defined as in (\ref{eq:eta3}) with parameters
$\beta>0$ and $(k_{n})$, $k_{n}\le n$. Then $\mu_{\beta}$ is of
finite entropy and there exists constant $C>0$ depending on $\beta,D$
such that
\[
\mu_{\beta}\left(B\left(id,2^{2k_{n}}L_{n}^{\omega}\right)^{c}\right)\le\frac{C}{2^{n\beta}}.
\]

\end{cor}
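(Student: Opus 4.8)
The plan is to combine the length bound from Lemma \ref{lengthv} with the structure of $\mu_{\beta}$ as a convex combination. First I would recall that by definition \eqref{eq:eta3},
\[
\mu_{\beta}=\frac{1}{2}\mathbf{u}_{S}+\frac{1}{2}\sum_{m\in\mathbb{N},\,D|m}C_{\beta}2^{-m\beta}\left(\upsilon_{m}+\check{\upsilon}_{m}\right),
\]
and note that finiteness of the entropy is immediate from Lemma \ref{lengthv}: the measure $\upsilon_m$ is supported on $\theta_m(\varLambda_m)$, so $H(\upsilon_m)\le\log|\varLambda_m|$, and since $|\varLambda_m|=2^m\prod_{i}|\mathfrak{F}_{i,m}|\le 2^m\cdot 2^{2k_m n/D}$ grows only like a single exponential in $m$ (using $k_m\le m$), the weighted sum $\sum_{D|m}C_{\beta}2^{-m\beta}H(\upsilon_m)$ converges; the same reasoning (or the general argument in Corollary \ref{trivialmoment}) handles $\check{\upsilon}_m$ and the finitely supported part $\mathbf{u}_S$. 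This gives $H(\mu_\beta)<\infty$.

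Next, for the tail bound, I would observe that an element $g$ with $l_S(g)\ge r$ must come from one of the terms in the convex combination whose support reaches distance $r$. The uniform measure $\mathbf{u}_S$ is supported in $B(id,1)$, so it contributes nothing once $r>1$. For the summand indexed by $m$, Lemma \ref{lengthv} gives that every element in $\operatorname{supp}\upsilon_m\cup\operatorname{supp}\check{\upsilon}_m$ has length at most $2^{2k_m+2D+4}L_m^{\omega}$. With $k_n\le n$, I want to identify the smallest $m$ such that $2^{2k_m+2D+4}L_m^{\omega}\ge 2^{2k_n}L_n^{\omega}$; since $L_m^{\omega}$ and $k_m$ are non-decreasing in $m$ and $L_n^\omega \leq 2^{2D+4}\cdot 2^{2k_n}L_n^\omega$ trivially fails to be sharp, the point is simply that for $m< n$ one has $2^{2k_m+2D+4}L_m^\omega$ strictly smaller than $2^{2k_n}L_n^\omega$ provided $D\ge 3$ and the $k$-sequence and $L$-sequence grow enough — more carefully, I would just take $r=2^{2k_n}L_n^{\omega}$ and note that the only summands that can put mass outside $B(id,r)$ are those with $m\ge n$ (after adjusting the comparison constant by the factor $2^{2D+4}$, which can be absorbed: replace $r$ by $2^{-2D-4}r$ or equivalently note $\{g:l_S(g)\ge 2^{2k_n}L_n^\omega\}$ is reached only by $\upsilon_m,\check\upsilon_m$ with $m\ge n - O(1)$, and since $n$ ranges over $D\mathbb N$ this is $m\ge n$).

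Then I would bound
\[
\mu_{\beta}\left(B(id,2^{2k_n}L_n^{\omega})^{c}\right)\le\frac{1}{2}\sum_{D|m,\,m\ge n}C_{\beta}2^{-m\beta}\bigl(\upsilon_m+\check\upsilon_m\bigr)(G_\omega)=C_\beta\sum_{D|m,\,m\ge n}2^{-m\beta}\le \frac{C_\beta}{1-2^{-D\beta}}\,2^{-n\beta},
\]
which is the claimed estimate with $C=C_\beta/(1-2^{-D\beta})$. The main obstacle — really a bookkeeping point rather than a genuine difficulty — is making precise the claim that no summand with index $m<n$ contributes mass outside $B(id,2^{2k_n}L_n^{\omega})$: one has to check that $2^{2k_{n-D}+2D+4}L_{n-D}^{\omega}<2^{2k_n}L_n^{\omega}$, i.e. that passing from level $n-D$ to level $n$ increases $L^\omega$ (through the product $M_{\omega_{n-D}}\cdots M_{\omega_{n-1}}$ of $D$ substitution matrices) by more than the factor $2^{2(k_n-k_{n-D})+2D+4}$. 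Since $k_n=A\lfloor\log_2 n\rfloor$ grows only logarithmically while $L_n^\omega$ grows exponentially (each $M_i$ has spectral radius $\ge 2$), this holds for all large $n$, and for the finitely many small $n$ one adjusts the constant $C$. Alternatively, and more cleanly, I would not insist on sharpness at all: simply set $\varrho_n := 2^{2k_n}L_n^\omega$, note that every $g$ with $l_S(g)\ge \varrho_n$ lies in $\bigcup_{m\ge n}(\operatorname{supp}\upsilon_m\cup\operatorname{supp}\check\upsilon_m)$ up to the harmless constant $2^{2D+4}$ which I fold into the definition of the ball radius in the statement's constant $C$, and conclude as above. This completes the proof.
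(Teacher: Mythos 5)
Your proof is correct and follows essentially the same route as the paper's: entropy via $H(\upsilon_m)\le\log|\varLambda_m|$ (which is polynomial in $m$, so the weighted sum converges), and the tail bound via Lemma \ref{lengthv} plus the observation that only summands with $m\ge n-O(D)$ can charge the complement of $B(id,2^{2k_n}L_n^\omega)$, after which the geometric series gives $C2^{-n\beta}$. The only slip is cosmetic: $|\varLambda_m|\le 2^{m}\cdot 2^{2k_m^2/D}$ is not ``single exponential'' in $m$ when $k_m\sim m$, but its logarithm is still polynomial, which is all the convergence of $\sum_m 2^{-m\beta}H(\upsilon_m)$ requires.
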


\begin{proof}

Since the size of the support of $\upsilon_{n}$ is bounded by $\left|\varLambda_{n}\right|\le2^{n}\left(2^{2k_{n}}\right)^{k_{n}}$,
the entropy of $\mu_{\beta}$ is bounded by
\[
H(\mu_{\beta})\le C\sum_{D|n}2^{-n\beta}H(\upsilon_{n})\le C\sum_{D|n}2^{-n\beta}\left(n+2k_{n}^{2}\right)<\infty.
\]
By Lemma \ref{lengthv}, $\sup\{|g|:g\in{\rm supp}\upsilon_{n}\}\le2^{2k_{n}+2D+4}L_{n}$,
thus 
\[
\mu_{\beta}\left(B\left(id,2^{2k_{n}}L_{n}^{\omega}\right)^{c}\right)\le C_{\beta}\sum_{j\ge n-2D}\frac{1}{2^{j\beta}}\le C2^{-n\beta}.
\]

\end{proof}

Combine Theorem \ref{stabilization} with the tail estimate in Corollary
\ref{betatail} we deduce the following.

\begin{thm}\label{periodic1}

Let $\omega$ be a string satisfying Assumption $({\rm Fr}(D))$.
For any $\epsilon>0$, there exists a constant $C=C(D,\epsilon)>0$
and a non-degenerate symmetric probability measure $\mu$ on $G_{\omega}$
of finite entropy and nontrivial Poisson boundary with tail decay
\[
\mu\left(B(id,L_{n}^{\omega})^{c}\right)\le Cn^{-1+\epsilon},
\]
where the number $L_{n}^{\omega}$ is defined in (\ref{eq:Ln}). As
a consequence, there exists constant $c=c(\omega,\epsilon)>0$ such
that for all $n\ge1$,
\[
v_{G_{\omega},S}(L_{n}^{\omega})\ge\exp\left(cn^{1-\epsilon}\right).
\]

\end{thm}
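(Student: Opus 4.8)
\textbf{Proof plan for Theorem \ref{periodic1}.}
The plan is to combine the construction of Section \ref{sec:Main} with the growth lemma of Subsection \ref{subsec:growthlemma}. First I would fix $D$ so that $\omega$ satisfies Assumption $({\rm Fr}(D))$, pick $\beta\in(1-\tfrac1D,1)$ and an integer $A>\tfrac{D(1+\beta)}{2(1-\beta)}$ divisible by $D$, and take $\mu=\mu_\beta$ the measure of \eqref{eq:eta3} with $k_n=A\lfloor\log_2 n\rfloor$. By Theorem \ref{stabilization} this $\mu$ is non-degenerate, symmetric, of finite entropy and has non-trivial Poisson boundary; these are exactly the hypotheses needed to invoke the growth estimates. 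The key quantitative input is the tail bound: by Corollary \ref{betatail}, $\mu\left(B(id,2^{2k_n}L_n^\omega)^c\right)\le C 2^{-n\beta}$. Since $2^{2k_n}=2^{2A\lfloor\log_2 n\rfloor}\le n^{2A}$, this gives $\mu\left(B(id,n^{2A}L_n^\omega)^c\right)\le C2^{-n\beta}$.

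The next step is to convert this tail bound, together with non-triviality of the Poisson boundary, into a volume lower estimate via Lemma \ref{moment_growth}. Concretely I would estimate $\varrho_n$ and $\phi(\varrho_n)$ for $\mu_\beta$: since $L_n^\omega$ grows at least geometrically in $n$ (the matrices $M_i$ have spectral radius $>1$ on the relevant cone) while $2^{-n\beta}$ decays geometrically, the tail decay translates into $\varrho_m\le C m^{2A}L_{n(m)}^\omega$ where $n(m)\asymp\log m$, and the truncated first moment $\phi$ is dominated by its value near the cut-off radius. Plugging into $R_m=m\phi(\varrho_m)$ and applying Lemma \ref{moment_growth} yields $v_{G_\omega,S}(R_m)\ge\exp(c m)$ for a suitable sequence $R_m$. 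Re-indexing by setting $m$ so that $R_m$ is comparable to $L_n^\omega$ (absorbing polynomial-in-$n$ factors $n^{2A}$ into the $\epsilon$ loss) gives the cleaner statement $\mu\left(B(id,L_n^\omega)^c\right)\le Cn^{-1+\epsilon}$ and $v_{G_\omega,S}(L_n^\omega)\ge\exp(cn^{1-\epsilon})$. The slightly awkward bookkeeping is that $\beta$ is strictly less than $1$, so one takes $\beta=1-\epsilon'$ and $A$ correspondingly large; the exponent $n^\beta$ together with the polynomial correction from $2^{2k_n}$ produces the $n^{1-\epsilon}$ in the conclusion. Alternatively, and perhaps more directly, I would bypass $\varrho_n,\phi$ and apply Corollary \ref{tailgrowth} after rewriting the tail bound of $\mu_\beta$ as a function of radius $r$: from $\mu(B(id,L_n^\omega)^c)\lesssim n^{-1+\epsilon}$ and the asymptotics of $L_n^\omega$ one gets $\mu(B(id,r)^c)\lesssim (\log r/\log\lambda_\omega)^{-1+\epsilon}$-type decay, which after the substitution $n\mapsto r$ feeds into the growth lemma to give $v_{G_\omega,S}(L_n^\omega)\ge\exp(cn^{1-\epsilon})$.

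The main obstacle, which is really the only substantive point, is establishing the precise relation between the combinatorial quantity $L_n^\omega$ defined by the product of substitution matrices $M_{\omega_0}\cdots M_{\omega_{n-1}}$ and the actual word length of elements in ${\rm supp}\,\upsilon_n$; this is exactly Lemma \ref{lengthv}, whose proof I would carry out by tracking how $\zeta_{\omega_i}$ acts on length vectors ${\bf l}(\cdot)$ and bounding the extra cost of the conjugations $\mathfrak c_j^v$ by the factor $2^{2k_n+2D+4}$ coming from Fact \ref{rigid}. Once Lemma \ref{lengthv} and hence Corollary \ref{betatail} are in hand, the passage to growth is a routine application of Lemma \ref{moment_growth}: one just needs to verify that the polynomial factors $n^{2A}$ and the entropy contributions $n+2k_n^2$ are negligible on the doubly-exponential scale, which they are since $\log L_n^\omega$ grows linearly in $n$ while these corrections are polynomial. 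I would close by remarking that the constant $c$ depends on $\omega$ (through $D$ and the growth rate of $L_n^\omega$) and on $\epsilon$, as stated.
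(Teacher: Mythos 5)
Your proposal is correct and follows essentially the same route as the paper: fix $\beta$ with $\max\{1-\epsilon,1-\tfrac1D\}<\beta<1$ and $A$ large, invoke Theorem \ref{stabilization} for non-triviality of the boundary, Corollary \ref{betatail} (via Lemma \ref{lengthv}) for the tail bound $\mu_\beta\left(B(id,n^{2A}L_n^\omega)^c\right)\le C2^{-n\beta}$, absorb the $n^{2A}$ factor using $L_{n+1}^\omega\ge2L_n^\omega$, and finish with Lemma \ref{moment_growth}. One caution on your "alternative" closing remark: the tail rewritten as a function of the radius $r$ is a genuine power law of order $r^{-\beta/\log_2\lambda_\omega}$ (since the tail at radius $\approx\lambda_\omega^n$ is exponentially small in $n$), not a $(\log r)^{-1+\epsilon}$ decay, which would be far too weak to feed into Corollary \ref{tailgrowth}; your primary route through the exponential bound $2^{-n\beta}$ is the correct one.
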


\begin{proof}

Given $\epsilon>0$, take $\max\{1-\epsilon,1-1/D\}<\beta<1$. By
Theorem \ref{stabilization} there exists $A>0$ such that the measure
$\mu_{\beta}$ with $k_{n}=A\left\lfloor \log_{2}n\right\rfloor $
has non-trivial Poisson boundary on $G_{\omega}$. Note that $L_{n+1}^{\omega}\ge2L_{n}^{\omega}$
for any $n$ and $\omega$. By Corollary \ref{betatail}, 
\[
\mu_{\beta}\left(B\left(id,n^{2A}L_{n}^{\omega}\right)^{c}\right)\le\frac{C}{2^{n\beta}}.
\]
Then the tail estimate stated follows. The volume lower bound follows
from Lemma \ref{moment_growth}.

\end{proof}

Denote by $W_{\omega}$ the permutation wreath extension $(\mathbb{Z}/2\mathbb{Z})\wr_{\mathcal{S}}G_{\omega}$
and $T$ the generating set $\left\{ a,b_{\omega},c_{\omega},d_{\omega},\left(\delta_{1^{\infty}}^{1},id_{G_{\omega}}\right)\right\} $.
By \cite{BE2}, for any $\omega\in\{{\bf 0,1,2}\}^{\infty}$, the
growth function of $W_{\omega}$ satisfies 
\[
v_{W_{\omega},T}(L_{n}^{\omega})\le\exp\left(Cn\right)
\]
for some absolute constant $C>0$. Note that $v_{G_{\omega},S}\le v_{W_{\omega},T}$,
thus combined with Theorem \ref{periodic1}, for $\omega$ satisfying
Assumption $({\rm Fr}(D))$, for any $\epsilon>0$, we have that 
\begin{equation}
\exp\left(cn^{1-\epsilon}\right)\le v_{G_{\omega},S}(L_{n}^{\omega})\le\exp(Cn),\label{eq:twosided}
\end{equation}
where the constant $c$ depends on $\omega,\epsilon$ and the constant
$C$ is absolute. 

We note the following relation between the critical constant of recurrence
$c_{{\rm rt}}\left(G_{\omega},{\rm Stab}_{G_{\omega}}(1^{\infty})\right)$
as in Subsection \ref{subsec:critical} and the volume exponent of
$G_{\omega}$. We say the volume exponent of $G$ exists if the limit
$\lim_{r\to\infty}\log\log v_{G,S}(r)/\log r$ exists and the limit
value, if exists, is called the volume exponent of $G$. 

\begin{cor}

Let $\omega\in\{{\bf 0,1,2}\}^{\infty}$ be a string that satisfies
Assumption $({\rm Fr}(D))$. Assume that $\frac{1}{n}\log L_{n}^{\omega}$
converges when $n\to\infty$, and let $\lambda_{\omega}=\lim_{n\to\infty}\frac{1}{n}\log L_{n}^{\omega}$.
 Then the volume exponent $\alpha_{\omega}$ of $G_{\omega}$ exists
and it is related to $\lambda_{\omega}$ through
\[
\alpha_{\omega}=\frac{1}{\log_{2}\lambda_{\omega}}.
\]
Moreover, the critical constant of recurrence of $\left(G_{\omega},{\rm Stab}_{G_{\omega}}(1^{\infty})\right)$
satisfies
\[
c_{{\rm rt}}\left(G_{\omega},{\rm Stab}_{G_{\omega}}(1^{\infty})\right)=\alpha_{\omega}.
\]

\end{cor}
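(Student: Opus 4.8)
The statement is a direct consequence of the two-sided volume estimate \eqref{eq:twosided} together with the upper bound $c_{{\rm rt}}\left(G_{\omega},{\rm Stab}_{G_{\omega}}(1^{\infty})\right)\le\alpha_{\omega}$ and the lower bound construction that produces measures of near-optimal tail decay. First I would establish the value of $\alpha_{\omega}$. From \eqref{eq:twosided} we have $\exp(cn^{1-\epsilon})\le v_{G_{\omega},S}(L_{n}^{\omega})\le\exp(Cn)$ for every $\epsilon>0$, where the upper constant $C$ is absolute and $c=c(\omega,\epsilon)$. Taking $\log\log$ and dividing by $\log L_{n}^{\omega}$, and using the hypothesis $\frac{1}{n}\log L_{n}^{\omega}\to\log\lambda_{\omega}$ (so $\log L_{n}^{\omega}\sim n\log\lambda_{\omega}$), we get
\[
\frac{(1-\epsilon)\log n + \log c}{\log L_{n}^{\omega}}\le\frac{\log\log v_{G_{\omega},S}(L_{n}^{\omega})}{\log L_{n}^{\omega}}\le\frac{\log n+\log C}{\log L_{n}^{\omega}},
\]
and since $\log L_{n}^{\omega}\sim n\log\lambda_{\omega}$ the outer terms tend to $0$; this is not yet the statement. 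The correct normalization: write $r=L_{n}^{\omega}$, so $\log r\sim n\log\lambda_{\omega}$, hence $n\sim\log r/\log\lambda_{\omega}$ and $\log\log v(r)$ is pinched between $(1-\epsilon)\log n+O(1)$ and $\log n+O(1)$, i.e. $\log\log v(r)\sim\log n\sim\log\log r-\log\log\lambda_{\omega}\sim\log\log r$. Thus along the subsequence $r=L_{n}^{\omega}$ we obtain $\log\log v(r)/\log\log r\to 1$ — wait, this shows $\log\log v(r)\sim\log\log r$, which would give exponent... Let me restate: we need $\log\log v(r)/\log r\to\alpha_{\omega}$. Since $\log\log v(r)\sim\log n$ and $\log r\sim n\log\lambda_{\omega}$, the ratio $\log\log v(r)/\log r\to 0$, which is wrong. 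The resolution is that I have mis-transcribed \eqref{eq:twosided}: the bounds must read $\exp(c(L_{n}^{\omega})^{1-\epsilon})$-type or, more precisely, the growth of $G_{\omega}$ satisfies $v_{G_{\omega},S}(r)\simeq\exp(r^{\alpha_{\omega}})$, and \eqref{eq:twosided} with the substitution $r=L_{n}^{\omega}$, $n\sim(\log r)/(\log\lambda_{\omega})$, gives $\log v(r)$ between $\exp(c(\log r/\log\lambda_{\omega})^{1-\epsilon})$... I will instead use the clean route.

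\textbf{Clean route.} The plan is: (i) From \eqref{eq:twosided}, for $r=L_{n}^{\omega}$ one has $cn^{1-\epsilon}\le\log v_{G_{\omega},S}(r)\le Cn$, and $n=(1+o(1))\frac{\log r}{\log\lambda_{\omega}}$. Hence $\log\log v_{G_{\omega},S}(r)=(1+o(1))\log n=(1+o(1))\log\log r$ along this subsequence, and since $L_{n+1}^{\omega}/L_{n}^{\omega}$ is bounded (it lies between $2$ and a constant depending on $D$ because each $M_{i}$ has bounded entries), monotonicity of $v_{G_{\omega},S}$ interpolates to all $r$: $\log\log v_{G_{\omega},S}(r)/\log\log r\to1$. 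This already shows the limit $\lim_{r\to\infty}\log\log v_{G,S}(r)/\log r$, if it exists, equals $\lim \log\log r/\log r=0$ — again absurd. The error is mine in reading the target: re-examining, $v_{G_{\omega},S}(r)\simeq\exp(r^{\alpha_{\omega}})$ means $\log v(r)\simeq r^{\alpha_{\omega}}$, so $\log\log v(r)\simeq\alpha_{\omega}\log r$, giving $\log\log v(r)/\log r\to\alpha_{\omega}$. For this we need \eqref{eq:twosided} rewritten with $r=L_{n}^{\omega}$: the LHS $\exp(cn^{1-\epsilon})$ where $n\sim(\log r)/(\log\lambda_{\omega})$, so $\log v(r)\gtrsim (\log r)^{1-\epsilon}$ — polylog, not $r^{\alpha}$. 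So actually \eqref{eq:twosided} is NOT sharp enough as stated; the sharp statement $v\simeq\exp(r^{\alpha_{0}})$ for $G_{012}$ comes from combining Theorem \ref{periodic1}'s lower bound written in terms of $r$ directly with the matching \cite{Bartholdi98} upper bound. So the genuine plan: For the lower bound, invoke Theorem \ref{bound1st}/Theorem \ref{periodic1}: there is a measure $\mu$ of finite entropy, non-trivial Poisson boundary, with $\mu(\{g:l_S(g)\ge r\})\lesssim r^{-\alpha_\omega+\epsilon}$ where $\alpha_\omega=1/\log_2\lambda_\omega$; indeed Corollary \ref{betatail} gives $\mu_\beta(B(id,2^{2k_n}L_n^\omega)^c)\le C2^{-n\beta}$, and with $L_n^\omega\sim\lambda_\omega^n$ and $k_n=A\lfloor\log_2 n\rfloor$ this is a tail decay $\mu_\beta(B(id,r)^c)\lesssim r^{-\beta/\log_2\lambda_\omega}(\log r)^{O(1)}$. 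Letting $\beta\uparrow 1$ yields tail exponent approaching $\alpha_\omega=1/\log_2\lambda_\omega$, and Corollary \ref{tailgrowth} then gives $v_{G_\omega,S}(r)\ge\exp(cr^{\alpha_\omega-\epsilon})$ for every $\epsilon>0$.

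\textbf{Upper bound and conclusion.} For the matching upper bound $v_{G_\omega,S}(r)\le\exp(Cr^{\alpha_\omega+\epsilon})$, I would combine the wreath-product volume bound $v_{W_\omega,T}(L_n^\omega)\le\exp(Cn)$ from \cite{BE2} (quoted just before \eqref{eq:twosided}) with the hypothesis $\log L_n^\omega\sim n\log\lambda_\omega$: setting $r=L_n^\omega$ so $n\sim(\log r)/(\log\lambda_\omega)$, and since $v_{G_\omega,S}\le v_{W_\omega,T}$ and $L_{n+1}^\omega/L_n^\omega$ is bounded (hence $v_{G_\omega,S}(r)\le v_{G_\omega,S}(L_{n+1}^\omega)$ with $n$ the largest index with $L_n^\omega\le r$), we get $\log v_{G_\omega,S}(r)\le Cn\le C'(\log r)/(\log\lambda_\omega)$... again polylog. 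This confirms that the \cite{BE2} bound as stated bounds $\log v$ by something linear in $n\sim\log r$, which is far weaker than $r^{\alpha_\omega}$ — so the upper bound of the form $\exp(Cr^{\alpha_\omega})$ must come from a \emph{different} reading, namely that $v_{G_\omega,S}(r)\simeq\exp(r^{\alpha_\omega})$ is precisely the content of combining the matching two-sided estimates in the variable $n$ after inverting the relation $r=L_n^\omega$ correctly: if $v(L_n^\omega)\asymp\exp(n)$ and $L_n^\omega\asymp\lambda_\omega^n$, then $n\asymp\log_{\lambda_\omega} r$ and $v(r)\asymp\exp(\log_{\lambda_\omega}r)=r^{1/\log\lambda_\omega}$, i.e. $v$ is \emph{polynomial} — contradicting intermediate growth. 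The only consistent reading is that \eqref{eq:twosided} should be $\exp(c(L_n^\omega)^{?})$; I will therefore take as given (from Theorem \ref{periodic1} and \cite{BE2}, interpreted in the variable $r$) the sharp two-sided estimate
\[
\exp\!\big(c\,r^{\alpha_\omega-\epsilon}\big)\le v_{G_\omega,S}(r)\le\exp\!\big(C\,r^{\alpha_\omega+\epsilon}\big),\qquad \alpha_\omega=\tfrac{1}{\log_2\lambda_\omega},
\]
for every $\epsilon>0$ and all large $r$, whence $\log\log v_{G_\omega,S}(r)/\log r\to\alpha_\omega$; this gives existence of the volume exponent. Finally, for the critical constant, the lower bound $c_{\rm rt}\ge\alpha_\omega$ is exactly the tail-decay statement of Theorem \ref{periodic1} (the measure $\mu_\beta$ has transient induced random walk on $\mathrm{Stab}_{G_\omega}(1^\infty)\backslash G_\omega$ and finite $\beta/\log_2\lambda_\omega$-moment for all $\beta<1$, pushing $c_{\rm rt}\ge 1/\log_2\lambda_\omega$); the upper bound $c_{\rm rt}\le\alpha_\omega$ repeats the argument in the proof of the upper bound of Theorem \ref{critical}: a measure on $G_\omega$ with transient induced walk and finite $\alpha_\omega$-moment would, via \cite[Proposition 3.3]{BE3}, give $(W_\omega,\zeta)$ with $\zeta=\frac12(\mu+\nu)$ a non-trivial Poisson boundary, contradicting Corollary \ref{trivialmoment} applied to $v_{W_\omega,T}(r)\lesssim\exp(r^{\alpha_\omega})$. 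The main obstacle is pinning down the correct normalization in \eqref{eq:twosided} — the clean conversion between the ``$L_n^\omega$-indexed'' estimates of the form $\exp(cn^{1-\epsilon})\le v(L_n^\omega)\le\exp(Cn)$ and the ``$r$-indexed'' statement $\log\log v(r)/\log r\to\alpha_\omega$ — which hinges on reading $v(L_n^\omega)\asymp\exp(L_n^{\omega\,\alpha_\omega})$ and $L_n^\omega\asymp\lambda_\omega^n$, so that $\alpha_\omega\log L_n^\omega\asymp n\log\lambda_\omega$ forces $\alpha_\omega=1/\log_2\lambda_\omega$ after matching the base of logarithms; the rest is routine interpolation using boundedness of $L_{n+1}^\omega/L_n^\omega$ and monotonicity of $v_{G_\omega,S}$.
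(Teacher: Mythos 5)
Your final argument is correct and is essentially the paper's proof: the volume lower bound comes from the tail decay of $\mu_{\beta}$ (Corollary \ref{betatail} fed into Corollary \ref{tailgrowth}), the upper bound from the \cite{BE2} estimate on $v_{W_{\omega},T}$ together with $v_{G_{\omega},S}\le v_{W_{\omega},T}$, and the critical-constant claim is handled exactly as in Theorem \ref{critical} (upper bound by the same contradiction via \cite{BE3} and Corollary \ref{trivialmoment}; lower bound from $\mu_{\beta}$, whose induced walk is transient by Proposition \ref{eta3diag} and which has finite $\gamma$-moment for every $\gamma<\beta\alpha_{\omega}$). The repeated ``absurdities'' you ran into are detecting a genuine misprint rather than a flaw in your reasoning: since $L_{n}^{\omega}\asymp\lambda_{\omega}^{n}$, the displays in Theorem \ref{periodic1} and (\ref{eq:twosided}) must be read with $2^{n(1-\epsilon)}$ and $2^{n}$ (equivalently $(L_{n}^{\omega})^{\alpha_{\omega}(1\mp\epsilon)}$) in place of $n^{1-\epsilon}$ and $n$, and your rederivation in the variable $r$ directly from Corollary \ref{betatail} --- giving $\mu_{\beta}\left(B(id,r)^{c}\right)\lesssim r^{-\beta\alpha_{\omega}+o(1)}$ and hence $v_{G_{\omega},S}(r)\ge\exp\left(cr^{\alpha_{\omega}-\epsilon}\right)$ --- is exactly the intended normalization.
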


\begin{proof}

The statement on the volume exponent $\alpha_{\omega}$ follows from
(\ref{eq:twosided}) as explained above. The proof of the claim on
the critical exponent is the same as in Theorem \ref{critical}: suppose
it is not true, then there exists $\delta>0$ and a measure $\mu$
on $G_{\omega}$ with finite $\left(\alpha_{\omega}+\delta\right)$-moment
such that the induced random walk $P_{\mu}$ on the orbit of $1^{\infty}$
is transient. On the permutation wreath extension $W_{\omega}$, consider
the measure $\zeta=\frac{1}{2}$$(\mu+\nu),$where $\nu$ is uniform
on $\left\{ id,\delta_{1^{\infty}}^{1}\right\} $. Since the $P_{\mu}$-random
walk on $1^{\infty}\cdot G$ is transient, the measure $\frac{1}{2}(\mu+\nu)$
has non-trivial Poisson boundary by \cite[Proposition 3.3]{BE3}.
Since $\zeta$ has finite $(\alpha_{\omega}+\delta)$-moment, the
upper bound $v_{W_{\omega}}(n)\lesssim\exp(n^{\alpha_{\omega}})$
and Corollary \ref{trivialmoment} implies $(W_{\omega},\zeta)$ has
trivial Poisson boundary, which is a contradiction.

\end{proof}

Theorem A stated in the Introduction is the special case of Theorem
\ref{periodic1} applied to the periodic sequence $({\bf 012})^{\infty}$.

\begin{proof}[Proof of Theorem A]

The first Grigorchuk group is indexed by the string $\omega=({\bf 012})^{\infty}$,
which satisfies Assumption ${\rm Fr}(D)$ with $D=3$. Explicit calculation
of eigenvalues the substitution matrix shows that $L_{n}=L_{n}^{({\bf 012})}\le3\lambda_{0}^{n}$,
where $\lambda_{0}$ is the positive root of the polynomial $X^{3}-X^{2}-2X-4$.
The statement of Theorem \ref{bound1st} follows from Theorem \ref{periodic1}.

\end{proof}

More generally, apply Theorem \ref{periodic1} to periodic strings
which contain all three letters, we obtain Theorem B stated in the
Introduction. By \cite[Proposition 4.6]{BE2}, the set of quantities
$\frac{q\log2}{\log\lambda_{\omega}}$ , where $\lambda_{\omega}$
is the spectral radius of $M_{\omega_{0}}\ldots M_{\omega_{q-1}}$,
coming from periodic $\omega$ containing all three letters is dense
in $[\alpha_{0},1]$. Therefore the volume exponents of the collection
$\{G_{\omega}\}$, where $\omega$ is a periodic string with all three
letters, form a dense subset of $[\alpha_{0},1]$. 

\begin{proof}[Proof of Theorem B]

Up to a renaming of letters $\{{\bf 0},{\bf 1},{\bf 2}\}$, a periodic
string $\omega$ containing all three letters satisfies Assumption
$({\rm Fr}(D))$ with $D=2q$. For the tail estimate in Theorem \ref{periodic1},
note that the length $L_{n}^{\omega}$ satisfies $L_{n}^{\omega}\le3\lambda^{n/q}$
where $\lambda=\lambda_{\omega_{0}\ldots\omega_{q-1}}$ is the spectral
radius of $M_{\omega_{0}}\ldots M_{\omega_{q-1}}$. Therefore by Theorem
\ref{periodic1}, for any $\epsilon>0$, there exists a constant $c>0$
such that for all $n\ge1$,
\[
v_{G_{\omega}}(n)\ge\exp(cn^{\alpha-\epsilon}),
\]
where $\alpha=\frac{q\log2}{\log\lambda}$. By \cite[Proposition 4.4]{BE2},
for such a period string $\omega$, the permutation wreath extension
$W_{\omega}=(\mathbb{Z}/2\mathbb{Z})\wr_{\mathcal{S}}G_{\omega}$
has growth
\[
v_{W_{\omega}}(n)\simeq\exp\left(n^{\frac{q\log2}{\log\lambda}}\right).
\]
Therefore $v_{G_{\omega}}(n)\lesssim v_{W_{\omega}}(n)\lesssim\exp\left(n^{\alpha}\right)$.
The statement follows.

\end{proof}

We now consider strings $\omega$ in $\{{\bf 201},{\bf 211}\}^{\ast}$
that are not necessarily periodic. The main result of \cite{BE2}
states that any sub-multiplicative function that grows uniformly faster
than $\exp\left(n^{\alpha_{0}}\right)$ can be realized as the growth
function of some permutational wreath extension $W_{\omega}=(\mathbb{Z}/2\mathbb{Z})\wr_{\mathcal{S}}G_{\omega}$,
$\omega\in\{{\bf 0},{\bf 1},{\bf 2}\}^{\infty}$. The strategy in
\cite{BE2} to choose a string to approximate a prescribed volume
function can be implemented in $\{{\bf 201},{\bf 211}\}^{\ast}$.
Roughly speaking, the procedure relies on the fact that the string
${\bf 201}$ exhibits the sum contracting coefficient $\eta_{0}$
(as in the first Grigorchuk group) while for the string ${\bf 211}$
the coefficient is $1$ (no sum-contraction). Careful choice of length
$(i_{k},j_{k})$ in the product $({\bf 201})^{i_{1}}({\bf 211})^{j_{1}}({\bf 201})^{i_{2}}({\bf 211})^{j_{2}}\ldots$
of these two types of strings allows to approximate prescribed volume
function as in \cite[Section 5]{BE2}. 

\begin{thm}\label{prescribe}

Let $f:\mathbb{R}_{+}\to\mathbb{R}_{+}$ be a function such that $f(2R)\le f(R)^{2}\le f(2R/\eta_{0})$,
where $\eta_{0}$ is the real root of $X^{3}+X^{2}+X-2$. Then there
exists a string $\omega$ in $\{{\bf 201},{\bf 211}\}^{\ast}$ such
that the volume growth of $G_{\omega}$ satisfies that for any $\epsilon>0$,
\[
\exp\left(\frac{c_{\epsilon}}{n^{\epsilon}}\log f(n)\right)\le v_{G_{\omega},S}\left(n\right)\le f(Cn),
\]
where $C>0$ is a constant independent of $\epsilon$ and $c_{\epsilon}>0$
is a constant depending on $\epsilon$. 

\end{thm}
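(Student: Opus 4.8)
The plan is to combine the lower bound from Theorem~\ref{periodic1} with the upper bound machinery of \cite{BE2}, applied to strings in the restricted alphabet $\{{\bf 201},{\bf 211}\}$. First I would recall that any $\omega\in\{{\bf 201},{\bf 211}\}^\ast$ automatically satisfies Assumption $({\rm Fr}(D))$ with $D=3$ (indeed every $3$-block is literally one of the two allowed substrings, so $m_k=0$ and $I_\omega=3\mathbb{N}$). Hence Theorem~\ref{periodic1} applies verbatim: for any $\epsilon>0$ there is $c_\epsilon>0$ with $v_{G_\omega,S}(L_n^\omega)\ge\exp(c_\epsilon n^{1-\epsilon})$, where $L_n^\omega$ is the length function of~(\ref{eq:Ln}). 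The point of the restricted alphabet is that the block ${\bf 201}$ contributes the sum-contraction coefficient $\eta_0$ (root of $X^3+X^2+X-2$, equivalently the reciprocal-type relation to $\lambda_0$ governing the first Grigorchuk group), while ${\bf 211}$ contributes coefficient $1$; so by interleaving powers $({\bf 201})^{i_1}({\bf 211})^{j_1}({\bf 201})^{i_2}({\bf 211})^{j_2}\ldots$ one can make $\tfrac1n\log L_n^\omega$ track any prescribed sub-multiplicative growth rate lying between $\log\lambda_0$ and $\log 2$, exactly as in \cite[Section 5]{BE2}.

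The main step is the \emph{string selection}: given $f$ with $f(2R)\le f(R)^2\le f(2R/\eta_0)$, I would invoke the approximation procedure of \cite[Section 5]{BE2} to produce $\omega\in\{{\bf 201},{\bf 211}\}^\ast$ such that the permutational wreath extension $W_\omega=(\mathbb{Z}/2\mathbb{Z})\wr_{\mathcal S}G_\omega$ has $v_{W_\omega,T}(n)\le f(Cn)$ for an absolute constant $C$, and simultaneously $v_{W_\omega,T}(L_n^\omega)\ge\exp(cn)$; the two conditions on $f$ are precisely the sub-multiplicativity window that makes this interpolation possible (the lower condition $f(R)^2\le f(2R/\eta_0)$ says $f$ does not grow slower than the ${\bf 201}$-only rate, the upper condition $f(2R)\le f(R)^2$ is ordinary sub-multiplicativity). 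Since $v_{G_\omega,S}\le v_{W_\omega,T}$ and the upper bounds in \cite{BE2} are proved via the norm-contraction property~(\ref{eq:contraction}) which passes to $G_\omega$, we get the upper estimate $v_{G_\omega,S}(n)\le f(Cn)$ directly.

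For the lower estimate I would chain together: Theorem~\ref{periodic1} gives $v_{G_\omega,S}(L_n^\omega)\ge\exp(c_\epsilon n^{1-\epsilon})$; and from the calibration of the string we have $\log f(L_n^\omega)\asymp n$ (up to the same sub-multiplicative slack), so writing an arbitrary radius $R$ in the form $R\asymp L_n^\omega$ and inverting, $n\asymp \log f(R)$, whence $v_{G_\omega,S}(R)\ge\exp\!\big(c_\epsilon (\log f(R))^{1-\epsilon}\big)\ge\exp\!\big(c'_\epsilon R^{-\epsilon}\log f(R)\big)$ once one absorbs the $(\log f)^{1-\epsilon}$ versus $(\log f)$ discrepancy into a factor $R^{-\epsilon}$ using $\log f(R)\le R^{C'}$. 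The bookkeeping here — making sure the $\epsilon$'s in the two places are reconciled and that $L_n^\omega$ is not too sparse (one uses $L_{n+1}^\omega\ge 2L_n^\omega$, which holds since each matrix $M_i$ has a column sum $\ge 2$, so consecutive values are within a bounded multiplicative factor of any intermediate radius) — is the delicate part but is routine.

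The main obstacle I expect is verifying that the string-approximation argument of \cite[Section 5]{BE2}, which was designed to prescribe growth of the \emph{wreath extension} $W_\omega$ over the \emph{full} alphabet $\{{\bf 0},{\bf 1},{\bf 2}\}$, genuinely goes through when one is restricted to the two blocks $\{{\bf 201},{\bf 211}\}$: one must check that these two blocks alone realize the two extreme contraction regimes needed for the interpolation (they do — ${\bf 201}$ is a cyclic rotation of the first-Grigorchuk pattern giving coefficient $\eta_0$, and ${\bf 211}$ gives no contraction), and that the quantitative lemmas controlling how $L_n^\omega$ responds to switching block types are uniform in the restricted setting. Once that compatibility is in hand, everything else is assembling already-proved ingredients: Theorem~\ref{periodic1}, the upper bound $v_{G_\omega,S}\le v_{W_\omega,T}\lesssim f$ from \cite{BE2}, and elementary manipulation of sub-multiplicative functions.
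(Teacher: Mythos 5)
Your proposal is correct and follows essentially the same route as the paper: restrict to $\{{\bf 201},{\bf 211}\}$ so that Assumption $({\rm Fr}(3))$ holds and Theorem \ref{periodic1} gives the lower bound at radii $L_n^{\omega}$, then observe that the string-selection argument of \cite[Section 5]{BE2} carries over because ${\bf 201}$ and ${\bf 211}$ realize the two extreme contraction coefficients $\eta_0$ and $1$ (the paper makes exactly this point via the analogue of \cite[Lemma 5.3]{BE2}), and finally convert $\exp(c_\epsilon n^{1-\epsilon})$ at radius $L_n^\omega$ into $\exp(c_\epsilon n^{-\epsilon}\log f(n))$ using $v_{W_\omega,T}(L_n^\omega)\le\exp(Cn)$ and sub-multiplicativity of $f$. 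The obstacle you flag, and your resolution of it, coincide with what the paper actually does.
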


\begin{proof}

By the volume estimate \cite[Corollary 4.2]{BE2} and \cite[Lemma 5.1, Corollary 5.2]{BE2},
we have that for any string $\omega$ in $\{{\bf 0,1,2}\}^{\infty}$,
there exists a constants $C,c>0$ such that 
\begin{equation}
\exp(cn)\le v_{W_{\omega},T}\left(L_{n}^{\omega}\right)\le\exp(Cn),\label{eq:equiv}
\end{equation}
where $W_{\omega}$ is the permutation wreath product $(\mathbb{Z}/2\mathbb{Z})\wr_{\mathcal{S}}G_{\omega}$
and $L_{n}^{\omega}$ is defined in (\ref{eq:Ln}).

Let $f:\mathbb{R}_{+}\to\mathbb{R}_{+}$ be a function such that $f(2R)\le f(R)^{2}\le f(2R/\eta_{0})$,
where $\eta_{0}$ is the real root of $X^{3}+X^{2}+X-2$. For such
a function $f$, the proof in \cite[Section 5]{BE2} produces a string
$\omega$ is the form $({\bf 012})^{i_{1}}{\bf 2}^{j_{1}}({\bf 012})^{i_{2}}{\bf 2}^{j_{3}}\ldots$
such that the volume growth of $W_{\omega}$ is equivalent to $f$.
For this result, one can use a string of the form $({\bf 201})^{i_{1}}({\bf 211}){}^{j_{1}}({\bf 201})^{i_{2}}({\bf 211})^{j_{2}}\ldots$
instead. This is because the properties of $({\bf 012})^{n}$ and
${\bf 2}^{n}$ in \cite[Lemma 5.3 ]{BE2} are satisfied by ${\bf 201}$
and ${\bf 211}$. Following the notations of \cite{BE2}, similar
to \cite[Lemma 5.3 ]{BE2}, there exists constants $A'\le1$, $B'\ge1$
such that for all $V\in\Delta'$ and $n\in\mathbb{N}$,
\begin{align*}
\eta\left(V,({\bf 201})^{n}\right) & \ge\eta_{0}^{3n}A',\\
\eta(V,({\bf 211})^{n}) & \le2^{3n}B'.
\end{align*}

Using this property of $({\bf 201})^{n}$ instead of $({\bf 012})^{n}$
and $({\bf 211})^{n}$ instead of ${\bf 2}^{n}$, the proof in \cite[Section 5]{BE2}
carries over verbatim and shows the following: for a prescribed function
$f:\mathbb{R}_{+}\to\mathbb{R}_{+}$ be a function such that $f(2R)\le f(R)^{2}\le f(2R/\eta_{0})$,
there exists a string $\omega\in\{{\bf 201},{\bf 211}\}^{\infty}$
such that the volume growth of $W_{\omega}$ is equivalent to $f$,
that is, there exists constants $C,c>0$ such that $f(cn)\le v_{W_{\omega},T}(n)\le f(Cn)$. 

By Theorem \ref{periodic1}, since $\omega\in\{{\bf 201},{\bf 211}\}^{\infty}$
satisfies Assumption$(D)$ with $D=3$, we have that for any $\epsilon>0$,
there exists a constant $c>0$ such that for all $n\ge1$, 
\[
v_{G_{\omega},S}(L_{n}^{\omega})\ge\exp\left(cn^{1-\epsilon}\right).
\]
Combined with the upper estimates on $v_{W_{\omega}}(n)$ in (\ref{eq:equiv})
we have that if the volume of $W_{\omega}$ is equivalent to $f$,
then for any $\epsilon>0$ there exists $c_{\epsilon}>0$ such that
\[
\exp\left(\frac{c_{\epsilon}}{n^{\epsilon}}\log f(n)\right)\le v_{G_{\omega},S}\left(n\right)\le f(n).
\]

\end{proof}

\begin{proof}[Proof of Theorem \ref{liminf}]

Let $\alpha,\beta$ be as given in the statement. Take a function
$f$ satisfying $f(2r)\le f(r)\le f(2r/\eta_{0})$, 
\[
\liminf_{r\to\infty}\frac{\log\log f(r)}{\log r}=\alpha\mbox{ and }\limsup\frac{\log\log f(r)}{\log r}=\beta.
\]
Then by Theorem \ref{prescribe}, there exists a string $\omega\in\{{\bf 201},{\bf 211}\}^{\infty}$
such that for any $\epsilon>0$, $\exp\left(\frac{c_{\epsilon}}{n^{\epsilon}}\log f(n)\right)\le v_{G_{\omega},S}\left(n\right)\le f(Cn)$
for some constants $c_{\epsilon},C>0$. The statement follows. 

\end{proof}

\section{Final remarks and questions }

\subsection{On $G_{\omega}$ without Assumption $({\rm Fr}(D))$}

When the string $\omega$ does not satisfy Assumption $({\rm Fr}(D))$
in Notation \ref{D}, the construction in Section \ref{sec:Main}
does not apply. For example, consider strings of the form ${\bf 0}^{i_{1}}{\bf 1}^{i_{1}}{\bf 2}^{i_{1}}{\bf 0}^{i_{2}}{\bf 1}^{i_{2}}{\bf 2}^{i_{2}}\ldots$
where $i_{n}\to\infty$ as $n\to\infty$. It is known from \cite{Grigorchuk84}
that if the sequence $(i_{n})$ grows rapidly, the growth of $G_{\omega}$
for such $\omega$ is close to exponential along a subsequence. However
different ideas are needed to obtain good volume lower estimates for
these $\omega$'s. 

For any $\omega$ that is not eventually constant, one can apply the
construction that takes a measure with restricted germs as in Section
\ref{sec: growth-first} to obtain measures with non-trivial Poisson
boundary. The same argument that proves Corollary \ref{Hbound} extends
to general $\omega$, we omit the detail here.

For the periodic string $\omega_{t}=({\bf 0}^{t-2}{\bf 1}{\bf 2})^{\infty}$,
by Theorem \ref{periodic} explicit calculation shows that when $t\to\infty$,
the the volume exponent $\alpha_{\omega_{t}}=\lim_{n\to\infty}\frac{\log\log v_{G_{{\bf 0}^{t-2}{\bf 1}{\bf 2}}}(n)}{\log n}$
satisfies 
\[
1-\alpha_{\omega_{t}}\sim\frac{c}{t}\mbox{ \ \ as }t\to\infty,
\]
for some number $c>0$. This kind of dependence on frequencies of
the letters can be observed in general strings. Given $\omega\in\{{\bf 0},{\bf 1},{\bf 2}\}^{\infty}$,
let $\ell_{n}(x)$ count the number of occurrences of the symbol $x$
in the first $n$ digits, $x\in\{{\bf 0},{\bf 1},{\bf 2}\}$. Let
\[
\theta_{n}=\frac{1}{n}\min\left\{ \ell_{n}({\bf 0}),\ell_{n}({\bf 1}),\ell_{n}({\bf 2})\right\} .
\]
Then there exists absolute constants $c,C>0$ such that 
\[
n^{1-C\theta_{n}}/\log^{1+\epsilon}n\lesssim\log v_{G_{\omega},S}(n)\lesssim n^{1-c\theta_{n}}.
\]

\subsection{Critical constant of the Liouville property}

Kaimanovich and Vershik conjectured in \cite{KV83} that any group
of exponential growth admits a probability measure with non-trivial
Poisson boundary. In a very recent paper \cite{FHTV}, Frisch, Hartman,
Tamuz and Ferdowsi answered this conjecture positively. In \cite{FHTV}
the authors completely characterize countable groups that can admit
random walks with non-trivial Poisson boundary: a countable group
$G$ admits a probability measure with non-trivial Poisson boundary
if and only if it admits a quotient with infinite conjugacy class
property. In particular, for a finitely generated group $G$, there
exists a symmetric probability measure $\mu$ of finite entropy and
non-trivial Poisson boundary on $G$ if and only if $G$ is not virtually
nilpotent. 

On a finitely generated group that is not virtually nilpotent one
can then ask whether tail decay of measures with non-trivial Poisson
boundary provides useful information about the growth of the group.
Similar in flavor to the critical constant of recurrence (see Subsection
\ref{subsec:critical}), given a countable group $\Gamma$ equipped
with a length function $l$, one can define the \emph{critical constant
of the Liouville property }of $\Gamma$ with respect to $l$ to be
$\sup\beta$ where the $\sup$ is over all $\beta\ge0$ such that
there exists a non-degenerate symmetric probability measure $\mu$
on $\Gamma$ with finite entropy and non-trivial Poisson boundary
such that $\mu$ has finite $\beta$-moment, that is $\sum_{g\in\Gamma}l(g)^{\beta}\mu(g)<\infty.$
When $\Gamma$ is finitely generated and $l$ is the word length we
omit reference to $l$. 

For example, for the lamplighter group $(\mathbb{Z}/2\mathbb{Z})\wr\mathbb{Z}$
and polycyclic groups that are not virtually nilpotent, the critical
constant of the Liouville property is $1$; while for $(\mathbb{Z}/2\mathbb{Z})\wr\mathbb{Z}^{2}$
it is $2$ and for $(\mathbb{Z}/2\mathbb{Z})\wr\mathbb{Z}^{3}$, $d\ge3$,
it is $\infty$. These examples are of exponential growth. For intermediate
growth groups, the critical constant of the Liouville property is
bounded from above by the upper growth exponent. Results in this paper
imply that for $\omega$ periodic containing all letters ${\bf 0},{\bf 1},{\bf 2}$,
the volume exponent of the Grigorchuk group $G_{\omega}$, the critical
constant of recurrence of $(G_{\omega},{\rm St}_{G_{\omega}}(1^{\infty}))$
and the critical constant of Liouville property of $G_{\omega}$ all
coincide. By \cite{Erschler04}, if $\omega$ contains exactly two
letters infinitely often, then these three exponents are all equal
to $1$ on $G_{\omega}$. 

Recall that the FC-center of a group $\Gamma$, $Z_{{\rm FC}}(\Gamma)$,
consists of elements with finite conjugacy classes. By \cite[Theorem IV. 1]{azencott},
for any non-degenerate measure $\mu$ on $\Gamma$, bounded $\mu$-harmonic
functions are constant on $Z_{{\rm FC}}(\Gamma)$. As a consequence,
the critical constant of the Liouville property for $\Gamma$ with
respect to $l$ is the same as its quotient $\Gamma/Z_{{\rm FC}}(\Gamma)$
with respect to the quotient length function $\bar{l}$. On the other
hand, the volume growth of $\Gamma$ can be much faster than that
of $\Gamma/Z_{{\rm FC}}(\Gamma)$. For example, it is known that FC-central
extensions of the first Grigorchuk group can have growth arbitrarily
close to exponential and have arbitrarily large F{\o}lner function,
see the remarks below. Therefore one cannot expect random walks with
non-trivial Poisson boundary to provide near optimal lower bound on
general intermediate growth groups. 

The Gap Conjecture (strong version) of Grigorchuk asks if the growth
of a finitely generated group being strictly smaller than $e^{\sqrt{n}}$
implies that it is polynomial. A weaker formulation of the conjecture,
called the Gap Conjecture with parameter $\beta$, asks if growth
strictly smaller than $e^{n^{\beta}}$ implies polynomial growth.
There are related gap conjecture type questions regarding various
asymptotic characteristics of the groups, see the survey of Grigorchuk
\cite{GrigorchukGap} for more information. If the Gap Conjectures
of Grigorchuk, strong or weak, hold true, one can ask (having in mind
the results of this paper) whether stronger statements hold true for
any group $\Gamma$ of super-polynomial growth: 

\begin{ques}

Let $\Gamma$ be a finitely generated group of super-polynomial growth,
is it true that there exists a finite entropy measure $\mu$ with
power-law tail decay bound, that is for some $C,\beta>0$, $\mu\left(\{g:|g|_{S}\ge r\}\right)\le Cr^{-\beta}$,
such that the Poisson boundary of $(\Gamma,\mu)$ is non-trivial?
Can one always choose such $\mu$ with $\beta\ge1/2-\epsilon$? 

\end{ques}

\begin{rem}\label{FCextension}

The following construction provides FC-central extensions. Let ${\bf F}$
be a group equipped with a finite generating set $S$ (typically a
free group or free product). Suppose we have a sequence of quotients
${\bf F}\to\Gamma_{k}$, $k\in\mathbb{N}$, and ${\bf F}\to G$. Each
quotient group is marked with the generating set $S$. Let $R_{k}$
be the largest radius $r$ such that in the Cayley graph of $\left(\Gamma_{k},S\right)$,
the ball of radius $r$ around $id$ is the same as the ball of same
radius around $id$ in the Cayley graph $(G,S)$. Take the the universal
group of the sequence $(\Gamma_{k})$, 
\[
\Gamma={\bf F}/\left(\cap_{k=1}^{\infty}\ker\left({\bf F}\to\Gamma_{k}\right)\right).
\]
Suppose $(\Gamma_{k},S)$ is a sequence of marked\emph{ finite} groups
such that with respect to $(G,S)$, $R_{k}\to\infty$ as $k\to\infty$.
Then the group $\Gamma$ is an FC-central extension of $G$. 

To see this, first note that $G$ is a quotient of $\Gamma$. Suppose
$w\in\ker({\bf F}\to\Gamma)$, then it is in $\ker({\bf F}\to\Gamma_{k})$
for every $k$. It follows that $w$ is in $\ker({\bf F}\to G)$ because
$R_{k}\to\infty$ as $k\to\infty$. We need to show any element $\gamma\in\ker\left(\Gamma\to G\right)$
has finite conjugacy class in $\Gamma$. Let $w$ be a word in $S\cup S^{-1}$
which represents $\gamma$, where $\gamma\in\ker(\Gamma\to G)$. Since
$\lim R_{k}=\infty$, there is a finite $k_{0}$ such that for $k>k_{0}$,
the ball of radius $|w|$ around $id$ in $\Gamma_{k}$ agrees with
the ball of same radius in $G$. In particular the image of $w$ in
$\Gamma_{k}$ with $k>k_{0}$ is identity. Therefore the conjugacy
class of $\gamma$ is contained in the product of $\Gamma_{k}$, $1\le k\le k_{0}$.
Since each $\Gamma_{k}$ is assumed to be finite, we conclude that
the conjugacy class of $\gamma$ is finite. 

\end{rem}

\begin{rem}\label{piecewise}

In \cite{Erschler06} it is shown that intermediate growth groups
can have arbitrarily large F{\o}lner functions. The groups are obtained
as direct products of piecewise automatic groups with returns. Each
factor group $\Gamma_{k}$ (represented as a piecewise automatic group
with return ${\rm PA}(\tau_{1},\tau_{2},i_{k},j_{k})$) is a finite
quotient of the free product ${\bf F}=\mathbb{Z}/2\mathbb{Z}\ast(\mathbb{Z}/2\mathbb{Z}\times\mathbb{Z}/2\mathbb{Z})$.
Let $S=\{a,b,c,d\}$ be a generating set of the free product, $\mathbb{Z}/2\mathbb{Z}=\left\langle a\right\rangle $
and $\mathbb{Z}/2\mathbb{Z}\times\mathbb{Z}/2\mathbb{Z}=\{id,b,c,d\}$.
Because the action of the first Grigorchuk group $G$ on the tree
is contracting, this sequence $(\Gamma_{k},S)$ satisfies the property
that with respect to $(G,S)$, $R_{k}\to\infty$ as $k\to\infty$.
In other words $(\Gamma_{k},S)$ converges to $(G,S)$ as $k\to\infty$
in the Chabauty-Grigorchuk topology, which is also called the Cayley
topology. Then the universal group of the sequence $(\Gamma_{k})$
(referred to as direct product of piecewise automatic groups in \cite{Erschler06}),
$\Gamma={\bf F}/\left(\cap_{k=1}^{\infty}\ker\left({\bf F}\to\Gamma_{k}\right)\right)$,
is an FC-central extension of $G$. By \cite{Erschler06}, $\Gamma$
is of intermediate growth and its F{\o}lner function can be arbitrarily
large with suitable choices of parameters. In particular the growth
of $\Gamma$ can be arbitrarily close to exponential. Since the group
$\Gamma$ is torsion, by \cite[Theorem IV. 1]{azencott}, for any
measure $\mu$ on $\Gamma$, bounded $\mu$-harmonic functions factor
through the quotient $\Gamma/Z_{{\rm FC}}(\Gamma)=G$. Since the growth
of $G$ is bounded by $\exp\left(n^{\alpha_{0}}\right)$, it follows
that any measure $\mu$ on $\Gamma$ with finite $\alpha_{0}$-moment
has trivial Poisson boundary. 

\end{rem}

\bibliographystyle{plain}
\bibliography{Boundary2}

\textsc{\newline Anna Erschler ---  D\'{e}partement de math\'{e}matiques et applications, \'{E}cole normale sup\'{e}rieure, CNRS, PSL Research University, 45 rue d'Ulm, 75005 Paris } --- anna.erschler@ens.fr

\textsc{\newline Tianyi Zheng --- Department of Mathematics, UC San Diego, 9500 Giman Dr. La Jolla, CA 92093 } --- tzheng2@math.ucsd.edu 
\end{document}